\providecommand{\U}[1]{\protect\rule{.1in}{.1in}}
\newcounter{exer}
\numberwithin{exer}{subsection}
\theoremstyle{definition}
\newtheorem{theo}{Theorem}[subsection]
\newenvironment{theorem}[1][]
{\begin{theo}[#1]\begin{leftbar}}
{\end{leftbar}\end{theo}}
\newtheorem{lem}[theo]{Lemma}
\newenvironment{lemma}[1][]
{\begin{lem}[#1]\begin{leftbar}}
{\end{leftbar}\end{lem}}
\newtheorem{prop}[theo]{Proposition}
\newenvironment{proposition}[1][]
{\begin{prop}[#1]\begin{leftbar}}
{\end{leftbar}\end{prop}}
\newtheorem{defi}[theo]{Definition}
\newenvironment{definition}[1][]
{\begin{defi}[#1]\begin{leftbar}}
{\end{leftbar}\end{defi}}
\newtheorem{remk}[theo]{Remark}
\newenvironment{remark}[1][]
{\begin{remk}[#1]\begin{leftbar}}
{\end{leftbar}\end{remk}}
\newtheorem{coro}[theo]{Corollary}
\newenvironment{corollary}[1][]
{\begin{coro}[#1]\begin{leftbar}}
{\end{leftbar}\end{coro}}
\newtheorem{conv}[theo]{Convention}
\newtheorem{quest}[theo]{Question}
\newenvironment{question}[1][]
{\begin{quest}[#1]\begin{leftbar}}
{\end{leftbar}\end{quest}}
\newtheorem{warn}[theo]{Warning}
\newenvironment{warning}[1][]
{\begin{warn}[#1]\begin{leftbar}}
{\end{leftbar}\end{warn}}
\newtheorem{conj}[theo]{Conjecture}
\newtheorem{exam}[theo]{Example}
\newenvironment{example}[1][]
{\begin{exam}[#1]\begin{leftbar}}
{\end{leftbar}\end{exam}}
\newtheorem{exmp}[exer]{Exercise}
\newenvironment{statement}{\begin{quote}}{\end{quote}}
\newenvironment{fineprint}{\medskip \begin{small}}{\end{small} \medskip}
\newenvironment{proof}[1][Proof]{\noindent\textbf{#1.} }{\ \rule{0.5em}{0.5em}}
\newenvironment{question}[1][Question]{\noindent\textbf{#1.} }{\ \rule{0.5em}{0.5em}}
\newenvironment{teachingnote}[1][Teaching note]{\noindent\textbf{#1.} }{\ \rule{0.5em}{0.5em}}
\let\sumnonlimits\sum
\let\prodnonlimits\prod
\let\cupnonlimits\bigcup
\let\capnonlimits\bigcap
\renewcommand{\sum}{\sumnonlimits\limits}
\renewcommand{\prod}{\prodnonlimits\limits}
\renewcommand{\bigcup}{\cupnonlimits\limits}
\renewcommand{\bigcap}{\capnonlimits\limits}
\newenvironment{verlong}{}{}
\newenvironment{noncompile}{}{}
\newcommand{\powset}[2][]{\ifthenelse{\equal{#2}{}}{\mathcal{P}\left(#1\right)}{\mathcal{P}_{#1}\left(#2\right)}}
\newcommand{\are}{\ar@{-}}
\newcommand{\MurpF}{\vphantom{\int_g^g}\mathcal{F}}
\NOEXPAND{\MurpF}{\vphantom{\int_f^f}\mathcal{F}}
\NOEXPAND{\today}{\today}
\begin{document}

\title{Rook sums in the symmetric group algebra}
\author{Darij Grinberg}
\date{draft, July 29, 2025}
\maketitle

\begin{abstract}
\textbf{Abstract.} Let $\mathcal{A}$ be the group algebra $\mathbf{k}\left[
S_{n}\right]  $ of the $n$-th symmetric group $S_{n}$ over a commutative ring
$\mathbf{k}$. For any two subsets $A$ and $B$ of $\left[  n\right]  $, we
define the elements%
\[
\nabla_{B,A}:=\sum_{\substack{w\in S_{n};\\w\left(  A\right)  =B}%
}w\ \ \ \ \ \ \ \ \ \ \text{and}\ \ \ \ \ \ \ \ \ \ \widetilde{\nabla}%
_{B,A}:=\sum_{\substack{w\in S_{n};\\w\left(  A\right)  \subseteq B}}w
\]
of $\mathcal{A}$. We study these elements, showing in particular that their
minimal polynomials factor into linear factors (with integer coefficients). We
express the product $\nabla_{D,C}\nabla_{B,A}$ as a $\mathbb{Z}$-linear
combination of $\nabla_{U,V}$'s.

More generally, for any two set compositions (i.e., ordered set partitions)
$\mathbf{A}$ and $\mathbf{B}$ of $\left\{  1,2,\ldots,n\right\}  $, we define
$\nabla_{\mathbf{B},\mathbf{A}}\in\mathcal{A}$ to be the sum of all
permutations $w\in S_{n}$ that send each block of $\mathbf{A}$ to the
corresponding block of $\mathbf{B}$. This generalizes $\nabla_{B,A}$. The
factorization property of minimal polynomials does not extend to the
$\nabla_{\mathbf{B},\mathbf{A}}$, but we describe the ideal spanned by the
$\nabla_{\mathbf{B},\mathbf{A}}$ and a further ideal complementary to it.
These two ideals have a \textquotedblleft mutually
annihilative\textquotedblright\ relationship, are free as $\mathbf{k}%
$-modules, and appear as annihilators of tensor product $S_{n}$%
-representations; they are also closely related to Murphy's cellular bases,
Specht modules, pattern-avoiding permutations and even some algebras appearing
in quantum information theory.

\end{abstract}
\tableofcontents

\section*{***}

\emph{Rook theory} is the study of permutations $w$ in a symmetric group
$S_{n}$ that avoid certain input-output pairs (given by \textquotedblleft
boards\textquotedblright, i.e., sets of allowed input-output pairs). Research
done so far -- e.g., \cite{GJW75,BCHR11,LoeRem09} -- has mostly focused on the
enumeration of such permutations (known as \textquotedblleft rook
placements\textquotedblright). In this work, we set out in a new direction:
Instead of counting the rook placements for a given board, we study their sum
in the symmetric group algebra $\mathbf{k}\left[  S_{n}\right]  $ over a
commutative ring $\mathbf{k}$. As with any elements of such a group algebra,
we can ask for their spectral and representation-theoretical properties --
what ideals do they generate? how well do their minimal polynomials factor?

Let us give a quick overview, starting with an example that does not actually
fit into our theory. Let $\mathcal{A}$ be the group algebra $\mathbf{k}\left[
S_{n}\right]  $ of a symmetric group $S_{n}$. For any subset $T$ of $\left[
n\right]  \times\left[  n\right]  $ (where $\left[  n\right]  $ denotes the
set $\left\{  1,2,\ldots,n\right\}  $ as is usual in combinatorics), we set%
\[
\nabla_{T}:=\sum_{\substack{w\in S_{n};\\\left(  i,w\left(  i\right)  \right)
\in T\text{ for each }i\in\left[  n\right]  }}w\in\mathcal{A}.
\]
In rook-theoretical terms (see, e.g., \cite[\S 1.2]{BCHR11}), this is the sum
of the $n$-rook placements on the board $T$.

For example, if $T=\left\{  \left(  i,j\right)  \in\left[  n\right]
\times\left[  n\right]  \ \mid\ i\neq j\right\}  $, then $\nabla_{T}$ is the
sum of all derangements in $S_{n}$, and is well-behaved in many ways: It lies
in the center of $\mathcal{A}=\mathbf{k}\left[  S_{n}\right]  $ (since the set
of all derangements is fixed under conjugation), and its minimal polynomial
(over a field $\mathbf{k}$) factors into linear factors (this is true for any
element in the center of $\mathcal{A}$, because the center of $\mathbb{Q}%
\left[  S_{n}\right]  $ is split semisimple; this is well-known folklore).

Even simpler examples are $T=\left\{  \left(  i,i\right)  \ \mid\ i\in\left[
n\right]  \right\}  $ (yielding $\nabla_{T}=\operatorname*{id}$) and
$T=\left[  n\right]  \times\left[  n\right]  $ (here, $\nabla_{T}$ is the sum
of all permutations in $S_{n}$).

These examples are far from representative. In general, $\nabla_{T}$ will
rarely belong to the center of $\mathbf{k}\left[  S_{n}\right]  $, nor will
its minimal polynomial often factor nicely. For example, if $n=5$ and
$T=\left\{  \left(  i,j\right)  \ \mid\ j\neq i+1\right\}  $, then the minimal
polynomial of $\nabla_{T}$ has irreducible factors of degrees $1$, $4$, $5$
and $6$ (over $\mathbb{Q}$). Thus, the behavior of $\nabla_{T}$ depends
starkly on the structure of $T$.

In this paper, we will study two types of boards $T$. The first one has the
form \textquotedblleft$\left[  n\right]  \times\left[  n\right]  $ with a
rectangle cut out\textquotedblright. Thus, we will define the so-called
\emph{rectangular rook sums}
\[
\nabla_{B,A}:=\sum_{\substack{w\in S_{n};\\w\left(  A\right)  =B}%
}w\ \ \ \ \ \ \ \ \ \ \text{and}\ \ \ \ \ \ \ \ \ \ \widetilde{\nabla}%
_{B,A}:=\sum_{\substack{w\in S_{n};\\w\left(  A\right)  \subseteq
B}}w\ \ \ \ \ \ \ \ \ \ \text{in }\mathcal{A}%
\]
for two subsets $A$ and $B$ of $\left[  n\right]  $. (The $\widetilde{\nabla
}_{B,A}$ here is the $\nabla_{T}$ for $T=\left(  \left[  n\right]
\times\left[  n\right]  \right)  \setminus\left(  A\times\left(  \left[
n\right]  \setminus B\right)  \right)  $. The $\nabla_{B,A}$ are -- strictly
speaking -- redundant, as they equal either $\widetilde{\nabla}_{B,A}$ or $0$
depending on whether $\left\vert A\right\vert =\left\vert B\right\vert $ or
not. Conversely, however, the $\widetilde{\nabla}_{B,A}$ can be expressed as
sums of $\nabla_{B,A}$'s, so the two families of elements are closely
related.) Section \ref{sec.rooksum} is devoted to the study of these elements.
We will show that they span an ideal of $\mathcal{A}$, that they satisfy an
explicit multiplication rule (Theorem \ref{thm.Nabla.prod0}), and that their
minimal polynomials factor into linear factors (Corollary
\ref{cor.Nablatil.triangular}).

In Section \ref{sec.row-to-row}, we will generalize the $\nabla_{B,A}$ to a
wider family. Namely, we define a \emph{set decomposition} of $\left[
n\right]  $ to be a tuple of disjoint subsets of $\left[  n\right]  $ (called
\emph{blocks}) whose union is $\left[  n\right]  $. (Empty blocks are
allowed.) Now, if $\mathbf{A}=\left(  A_{1},A_{2},\ldots,A_{k}\right)  $ and
$\mathbf{B}=\left(  B_{1},B_{2},\ldots,B_{k}\right)  $ are two set
decompositions of $\left[  n\right]  $ having the same length, then we define
the element%
\[
\nabla_{\mathbf{B},\mathbf{A}}:=\sum_{\substack{w\in S_{n};\\w\left(
A_{i}\right)  =B_{i}\text{ for all }i}}w\ \ \ \ \ \ \ \ \ \ \text{of
}\mathcal{A}.
\]
For $k=2$, these recover the elements $\nabla_{B,A}$ studied above. In
general, $\nabla_{\mathbf{B},\mathbf{A}}$ is $\nabla_{T}$ for a certain board
$T$ that is obtained from $\left[  n\right]  \times\left[  n\right]  $ by
cutting out multiple rectangles. While these $\nabla_{\mathbf{B},\mathbf{A}}$
lack the multiplication rule and the factoring minimal polynomials of the
$\nabla_{B,A}$'s (at least we are not aware of a multiplication rule), they
have their own share of interesting properties. In fact, they have already
appeared in the works of Canfield/Williamson \cite{CanWil89} and Murphy
\cite{Murphy92,Murphy95}, where they were used to construct bases of
$\mathcal{A}$ now known as the \emph{Murphy cellular bases}.

Unlike Murphy, we are interested in the $\nabla_{\mathbf{B},\mathbf{A}}$ for
all pairs $\left(  \mathbf{A},\mathbf{B}\right)  $ of equal-length set
decompositions, not just ones that come from standard bitableaux. We consider
-- for any given $k\in\mathbb{N}$ -- the span $\mathcal{I}_{k}$ of all such
elements $\nabla_{\mathbf{B},\mathbf{A}}$ where $\mathbf{A},\mathbf{B}$ are
two set decompositions of $\left[  n\right]  $ having at most $k$ blocks each.

Here is an incomplete survey of our results in Section \ref{sec.row-to-row};
note that several of them have been shown before, but we give new proofs. We
show (in Theorem \ref{thm.row.main}) that $\mathcal{I}_{k}$ is an ideal of
$\mathcal{A}$ and a free $\mathbf{k}$-module whose rank is the number of
$12\cdots\left(  k+1\right)  $-avoiding permutations in $S_{n}$, whereas the
residue classes of the remaining permutations in $S_{n}$ form a basis of the
quotient $\mathcal{A}/\mathcal{I}_{k}$. Moreover, we construct another ideal
$\mathcal{J}_{k}$ of $\mathcal{A}$ that has a mutual annihilation relationship
with $\mathcal{I}_{k}$, meaning that each of the two ideals $\mathcal{I}_{k}$
and $\mathcal{J}_{k}$ is the annihilator of the other ideal from both left and
right as well as its orthogonal complement with respect to the standard dot
product on $\mathcal{A}$. The simplest way to define $\mathcal{J}_{k}$ (at
least for $k<n$) is as the two-sided ideal generated by the single element
\[
\nabla_{X}^{-}:=\sum_{\substack{w\in S_{n};\\w\left(  i\right)  =i\text{ for
all }i\in\left[  n\right]  \setminus U}}\left(  -1\right)  ^{w}w\in\mathcal{A}%
\]
(this is not how we define $\mathcal{J}_{k}$, but is equivalent; see
Proposition \ref{prop.IJ.1} \textbf{(f)}). This ideal $\mathcal{J}_{k}$, too,
is a free $\mathbf{k}$-module, as is the quotient $\mathcal{A}/\mathcal{J}%
_{k}$ (see Theorem \ref{thm.row.main}). Note that the span of the rectangular
rook sums $\nabla_{B,A}$ studied in Section \ref{sec.rooksum} is precisely the
ideal $\mathcal{I}_{2}$, and so we conclude that its rank is the number of
$123$-avoiding permutations in $S_{n}$, that is, the Catalan number $C_{n}$
(Corollary \ref{cor.Nabla.span}).

So far we have not assumed anything about the commutative ring $\mathbf{k}$.
If, however, $n!$ is invertible in $\mathbf{k}$, then $\mathcal{A}%
=\mathcal{I}_{k}\oplus\mathcal{J}_{k}$ as $\mathbf{k}$-modules and
$\mathcal{A}\cong\mathcal{I}_{k}\times\mathcal{J}_{k}$ as $\mathbf{k}$-algebras.

The ideals $\mathcal{I}_{k}$ and $\mathcal{J}_{k}$ are not entirely new; they
can be viewed as spans of subfamilies of the Murphy cellular bases (see Remark
\ref{rmk.IJ=murphy}). However, all our proofs are independent of this fact,
and use only the most elementary algebra and combinatorics.

In Subsection \ref{sec.row.ann}, we describe the ideals $\mathcal{I}_{k}$ and
$\mathcal{J}_{n-k-1}$ as annihilators of certain left $\mathbf{k}\left[
S_{n}\right]  $-modules. Namely:

\begin{enumerate}
\item The ideal $\mathcal{J}_{k}$ is the annihilator of the tensor power
$V_{k}^{\otimes n}$, where $V_{k}=\mathbf{k}^{k}$ is a free $\mathbf{k}%
$-module of rank $k$ and where $S_{n}$ acts on $V_{k}^{\otimes n}$ by
permuting the factors (Theorem \ref{thm.AnnVkn}). This is a classical result
by de Concini and Procesi \cite[Theorem 4.2]{deCPro76}.

\item The ideal $\mathcal{I}_{k}$ is the annihilator of a
sign-twisted\footnote{\textquotedblleft Sign-twisting\textquotedblright\ means
that the action of any permutation $w\in S_{n}$ is additionally scaled by the
sign $\left(  -1\right)  ^{w}$ of $w$.} tensor power $N_{n}^{\otimes\left(
n-k-1\right)  }$ of the natural representation $N_{n}=\mathbf{k}^{n}$ of
$S_{n}$, where $S_{n}$ acts diagonally on the tensor power (Theorem
\ref{thm.AnnNnk}, which is stated in a slightly different form, applying the
sign-twist to the ideal rather than the module). This is a recent result by
Bowman, Doty and Martin \cite[Theorem 7.4 (a)]{BoDoMa18}.
\end{enumerate}

Note once again that no requirements are made on $\mathbf{k}$ here.

When $n!$ is invertible in $\mathbf{k}$, we can characterize the ideals
$\mathcal{I}_{k}$ and $\mathcal{J}_{k}$ in a more mainstream way (Theorem
\ref{thm.IJ.rep}): The ideal $\mathcal{I}_{k}$ consists of those elements
$\mathbf{a}\in\mathcal{A}$ that annihilate all Specht modules $S^{\lambda}$
with $\ell\left(  \lambda\right)  >k$, whereas the ideal $\mathcal{J}_{k}$
consists of those elements $\mathbf{a}\in\mathcal{A}$ that annihilate all
Specht modules $S^{\lambda}$ with $\ell\left(  \lambda\right)  \leq k$. This
fact is responsible for the appearance of $\mathcal{J}_{k}$ in recent work on
quantum information theory \cite[Theorem 2.2]{KlStVo25}. Moreover, it can be
used to give a new proof for the classical fact (Corollary \ref{cor.num-avoid}%
) that the number of permutations $w\in S_{n}$ that avoid $12\cdots\left(
k+1\right)  $ (that is, have no increasing subsequence of length $k+1$) equals
the number of pairs of standard tableaux of shape $\lambda$ for partitions
$\lambda\vdash n$ satisfying $\ell\left(  \lambda\right)  \leq k$.

We end each of the two sections with an open question. In Section
\ref{sec.rooksum}, we propose an \textquotedblleft abstract
lift\textquotedblright\ of the span of the $\nabla_{B,A}$'s. This is a
nonunital $\mathbf{k}$-algebra with basis $\left(  \Delta_{B,A}\right)
_{A,B\subseteq\left[  n\right]  \text{ of equal size}}$ consisting of
\textquotedblleft abstract nablas\textquotedblright\ $\Delta_{B,A}$, which
multiply by the same rules as the $\nabla_{B,A}$'s (see Theorem
\ref{thm.Nabla.prod0}), but do not satisfy any linear dependencies. This
$\mathbf{k}$-algebra has dimension $\dbinom{2n}{n}$, which is $n+1$ times the
dimension of the actual span of the $\nabla_{B,A}$'s; but it appears to share
some of the properties of the latter. We know (Theorem \ref{thm.Nabla-alg.ass}%
) that it is associative, and we conjecture that it is unital when $n!$ is
invertible in $\mathbf{k}$ (certainly not for general $\mathbf{k}$). Section
\ref{sec.row-to-row} leads us to another open question (Question
\ref{quest.num-avoid-gen}), about a basis of the quotient ring $\mathcal{A}%
/\left(  \mathcal{I}_{k}+T_{\operatorname*{sign}}\left(  \mathcal{J}_{\ell
}\right)  \right)  $, closely related to a recent conjecture by Donkin
\cite[Remark 2.4]{Donkin24}.

\subsubsection*{Acknowledgments}

This project owes its inspiration to a conversation with Per Alexandersson and
to prior joint work with Nadia Lafreni\`{e}re. I would also like to thank
Patricia Commins, Zachary Hamaker, Jonathan Novak, Vic Reiner, Brendon
Rhoades, Travis Scrimshaw and Richard P. Stanley for interesting comments, and
Jurij Volcic for informing me of relevant work in quantum information theory.

\section{\label{sec.rooksum}Rook sums in the symmetric group algebra}

\subsection{Definitions}

Let $n$ be a nonnegative integer. Let $\left[  n\right]  :=\left\{
1,2,\ldots,n\right\}  $.

Fix a commutative ring $\mathbf{k}$. (All rings and algebras are understood to
be associative and unital, unless declared otherwise.)

Let $S_{n}$ be the $n$-th symmetric group, defined as the group of all $n!$
permutations of $\left[  n\right]  $. Let $\mathcal{A}:=\mathbf{k}\left[
S_{n}\right]  $ be its group algebra over $\mathbf{k}$.

The \emph{antipode} of the group algebra $\mathcal{A}$ is the $\mathbf{k}%
$-linear map $\mathcal{A}\rightarrow\mathcal{A}$ that sends each permutation
$w\in S_{n}$ to $w^{-1}$. We will denote this map by $S$. It is well-known
(see, e.g., \cite[\S 3.11.4]{sga}) that $S$ is a $\mathbf{k}$-algebra
anti-automorphism and an involution (i.e., satisfies $S\circ
S=\operatorname*{id}$).

For any two subsets $A$ and $B$ of $\left[  n\right]  $, we define the
elements%
\[
\nabla_{B,A}:=\sum_{\substack{w\in S_{n};\\w\left(  A\right)  =B}%
}w\ \ \ \ \ \ \ \ \ \ \text{and}\ \ \ \ \ \ \ \ \ \ \widetilde{\nabla}%
_{B,A}:=\sum_{\substack{w\in S_{n};\\w\left(  A\right)  \subseteq B}}w
\]
of $\mathcal{A}$. We shall refer to these elements as \emph{rectangular rook
sums}.

For instance, for $n=4$, we have%
\begin{align*}
\nabla_{\left\{  2,3\right\}  ,\left\{  1,4\right\}  }  &  =\sum
_{\substack{w\in S_{4};\\w\left(  \left\{  1,4\right\}  \right)  =\left\{
2,3\right\}  }}w\\
&  =\operatorname*{oln}\left(  2143\right)  +\operatorname*{oln}\left(
2413\right)  +\operatorname*{oln}\left(  3142\right)  +\operatorname*{oln}%
\left(  3412\right)  ,
\end{align*}
where $\operatorname*{oln}\left(  i_{1}i_{2}\ldots i_{n}\right)  $ means the
permutation in $S_{n}$ with one-line notation $\left(  i_{1},i_{2}%
,\ldots,i_{n}\right)  $. Moreover, again for $n=4$, we have%
\[
\widetilde{\nabla}_{\left\{  1,2,3\right\}  ,\left\{  1,4\right\}  }%
=\sum_{\substack{w\in S_{4};\\w\left(  \left\{  1,4\right\}  \right)
\subseteq\left\{  1,2,3\right\}  }}w=\sum_{\substack{w\in S_{4};\\w\left(
1\right)  \neq4\text{ and }w\left(  4\right)  \neq4}}w,
\]
which is a sum of altogether $12$ permutations.

The following proposition collects some easy properties of rectangular rook sums:

\begin{proposition}
\label{prop.Nabla.simple}Let $A$ and $B$ be two subsets of $\left[  n\right]
$. Then:

\begin{enumerate}
\item[\textbf{(a)}] We have $\nabla_{B,A}=0$ if $\left\vert A\right\vert
\neq\left\vert B\right\vert $.

\item[\textbf{(b)}] We have $\widetilde{\nabla}_{B,A}=0$ if $\left\vert
A\right\vert >\left\vert B\right\vert $.

\item[\textbf{(c)}] We have $\widetilde{\nabla}_{B,A}=\sum
\limits_{\substack{U\subseteq B;\\\left\vert U\right\vert =\left\vert
A\right\vert }}\nabla_{U,A}$.

\item[\textbf{(d)}] We have $\nabla_{B,A}=\nabla_{\left[  n\right]  \setminus
B,\ \left[  n\right]  \setminus A}$.

\item[\textbf{(e)}] If $\left\vert A\right\vert =\left\vert B\right\vert $,
then $\nabla_{B,A}=\widetilde{\nabla}_{B,A}$.

\item[\textbf{(f)}] The antipode $S$ satisfies $S\left(  \nabla_{B,A}\right)
=\nabla_{A,B}$.

\item[\textbf{(g)}] The antipode $S$ satisfies $S\left(  \widetilde{\nabla
}_{B,A}\right)  =\widetilde{\nabla}_{\left[  n\right]  \setminus A,\ \left[
n\right]  \setminus B}$.

\item[\textbf{(h)}] For any $u\in S_{n}$, we have
\[
u\nabla_{B,A}=\nabla_{u\left(  B\right)  ,A}\ \ \ \ \ \ \ \ \ \ \text{and}%
\ \ \ \ \ \ \ \ \ \ u\widetilde{\nabla}_{B,A}=\widetilde{\nabla}_{u\left(
B\right)  ,A}.
\]

\item[\textbf{(i)}] For any $u\in S_{n}$, we have
\[
\nabla_{B,A}u=\nabla_{B,u^{-1}\left(  A\right)  }%
\ \ \ \ \ \ \ \ \ \ \text{and}\ \ \ \ \ \ \ \ \ \ \widetilde{\nabla}%
_{B,A}u=\widetilde{\nabla}_{B,u^{-1}\left(  A\right)  }.
\]

\end{enumerate}
\end{proposition}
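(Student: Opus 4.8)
The plan is to prove the nine parts of Proposition~\ref{prop.Nabla.simple} essentially directly from the definitions, since each is an elementary manipulation of the summation indices. The only slightly subtle inputs are the facts that a permutation $w\in S_n$ sending $A$ onto $B$ exists only when $\left\vert A\right\vert=\left\vert B\right\vert$ (a permutation is a bijection, hence preserves cardinality), and that $w\left(A\right)=B$ is equivalent to $w\left(\left[n\right]\setminus A\right)=\left[n\right]\setminus B$ (since $w$ is a bijection of $\left[n\right]$, it maps complements to complements).

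First I would dispatch \textbf{(a)} and \textbf{(b)}: if $\left\vert A\right\vert\neq\left\vert B\right\vert$, then no $w\in S_n$ satisfies $w\left(A\right)=B$, so the sum defining $\nabla_{B,A}$ is empty; similarly, if $\left\vert A\right\vert>\left\vert B\right\vert$, then no $w\in S_n$ satisfies $w\left(A\right)\subseteq B$, since $\left\vert w\left(A\right)\right\vert=\left\vert A\right\vert>\left\vert B\right\vert$. For \textbf{(c)}, I would partition the index set $\left\{w\in S_n\ \mid\ w\left(A\right)\subseteq B\right\}$ according to the value of the set $U:=w\left(A\right)$: this $U$ ranges over the subsets of $B$ with $\left\vert U\right\vert=\left\vert A\right\vert$ (using that $\left\vert w\left(A\right)\right\vert=\left\vert A\right\vert$), and for each such $U$ the permutations $w$ with $w\left(A\right)=U$ contribute exactly $\nabla_{U,A}$; summing over $U$ gives the claim. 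Part \textbf{(e)} is then the special case of \textbf{(c)} where $B$ itself is the only admissible $U$ (because $\left\vert B\right\vert=\left\vert A\right\vert$ forces $U=B$), so $\widetilde{\nabla}_{B,A}=\nabla_{B,A}$. Part \textbf{(d)} follows from the equivalence $w\left(A\right)=B\iff w\left(\left[n\right]\setminus A\right)=\left[n\right]\setminus B$ noted above, which shows the two sums have the same index set.

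For the antipode statements, I would use that $S$ is $\mathbf{k}$-linear with $S\left(w\right)=w^{-1}$, so $S\left(\nabla_{B,A}\right)=\sum_{w\in S_n,\ w\left(A\right)=B}w^{-1}$; substituting $v=w^{-1}$ and observing that $w\left(A\right)=B\iff v\left(B\right)=A$ turns this into $\sum_{v\in S_n,\ v\left(B\right)=A}v=\nabla_{A,B}$, proving \textbf{(f)}. Part \textbf{(g)} is the analogous computation for $\widetilde{\nabla}$, where $w\left(A\right)\subseteq B$ becomes, after inverting, $v^{-1}\left(A\right)\subseteq B$, i.e. $A\subseteq v\left(B\right)$, i.e. $v\left(\left[n\right]\setminus B\right)\subseteq\left[n\right]\setminus A$ (again using that $v$ is a bijection); this is the index set of $\widetilde{\nabla}_{\left[n\right]\setminus A,\ \left[n\right]\setminus B}$. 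Finally, for \textbf{(h)} and \textbf{(i)}, left-multiplication by $u$ sends each $w$ with $w\left(A\right)=B$ to $uw$, and $uw$ ranges bijectively over all $w'\in S_n$ with $w'\left(A\right)=u\left(B\right)$ as $w$ does; this gives $u\nabla_{B,A}=\nabla_{u\left(B\right),A}$, and the $\widetilde{\nabla}$-version follows from the same reindexing applied to $w\left(A\right)\subseteq B$ (which becomes $\left(uw\right)\left(A\right)\subseteq u\left(B\right)$). Part \textbf{(i)} is symmetric: right-multiplication by $u$ sends $w$ to $wu$, and $\left(wu\right)\left(u^{-1}\left(A\right)\right)=w\left(A\right)$, so $w\left(A\right)=B\iff\left(wu\right)\left(u^{-1}\left(A\right)\right)=B$; reindexing by $w'=wu$ yields $\nabla_{B,A}u=\nabla_{B,u^{-1}\left(A\right)}$, and likewise for $\widetilde{\nabla}$.

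There is no real obstacle here; the whole proposition is a warm-up establishing the basic symmetries (under complementation, under the antipode, and under left/right translation) that will be used repeatedly later. The one point requiring a modicum of care is keeping the direction of the set inclusions straight when inverting permutations in parts \textbf{(g)} and \textbf{(i)}, and making sure in \textbf{(c)} that the map $w\mapsto w\left(A\right)$ genuinely surjects onto \emph{all} size-$\left\vert A\right\vert$ subsets of $B$ (which holds because any two equinumerous subsets of $\left[n\right]$ are related by some permutation of $\left[n\right]$).
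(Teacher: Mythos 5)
Your proposal is correct and follows essentially the same route as the paper: each part is handled by the same elementary reindexing or bijection argument (the only cosmetic difference being that you deduce \textbf{(e)} from \textbf{(c)} rather than arguing it directly, which is equally fine). No gaps; the surjectivity remark at the end for \textbf{(c)} is true but not even needed, since any missing $U$ would simply contribute $\nabla_{U,A}=0$.
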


\begin{proof}
[Proof sketch.]\textbf{(a)} A permutation $w\in S_{n}$ cannot satisfy
$w\left(  A\right)  =B$ if $\left\vert A\right\vert \neq\left\vert
B\right\vert $. \medskip

\textbf{(b)} A permutation $w\in S_{n}$ cannot satisfy $w\left(  A\right)
\subseteq B$ if $\left\vert A\right\vert >\left\vert B\right\vert $ (since
$\left\vert w\left(  A\right)  \right\vert =\left\vert A\right\vert $).
\medskip

\textbf{(c)} A permutation $w\in S_{n}$ satisfies $w\left(  A\right)
\subseteq B$ if and only if it satisfies $w\left(  A\right)  =U$ for some
subset $U\subseteq B$ satisfying $\left\vert U\right\vert =\left\vert
A\right\vert $. Moreover, this subset $U$ is uniquely determined (as $w\left(
A\right)  $). \medskip

\textbf{(d)} A permutation $w\in S_{n}$ satisfies $w\left(  A\right)  =B$ if
and only if it satisfies $w\left(  \left[  n\right]  \setminus A\right)
=\left[  n\right]  \setminus B$. \medskip

\textbf{(e)} If $\left\vert A\right\vert =\left\vert B\right\vert $, then a
permutation $w\in S_{n}$ satisfies $w\left(  A\right)  =B$ if and only if it
satisfies $w\left(  A\right)  \subseteq B$ (since $w\left(  A\right)  $ has
the same size as $A$ and therefore as $B$ as well, and thus cannot be a proper
subset of $B$). \medskip

\textbf{(f)} A permutation $w\in S_{n}$ satisfies $w\left(  A\right)  =B$ if
and only if its inverse $w^{-1}$ satisfies $w^{-1}\left(  B\right)  =A$.
\medskip

\textbf{(g)} A permutation $w\in S_{n}$ satisfies $w\left(  A\right)
\subseteq B$ if and only if its inverse $w^{-1}$ satisfies $w^{-1}\left(
\left[  n\right]  \setminus B\right)  \subseteq\left[  n\right]  \setminus A$.
\medskip

\textbf{(h)} Let $u\in S_{n}$. Then, the definition of $\widetilde{\nabla
}_{B,A}$ yields
\begin{align*}
u\widetilde{\nabla}_{B,A}  &  =u\sum_{\substack{w\in S_{n};\\w\left(
A\right)  \subseteq B}}w=\sum_{\substack{w\in S_{n};\\w\left(  A\right)
\subseteq B}}uw\\
&  =\sum_{\substack{w\in S_{n};\\\left(  uw\right)  \left(  A\right)
\subseteq u\left(  B\right)  }}uw\ \ \ \ \ \ \ \ \ \ \left(
\begin{array}
[c]{c}%
\text{since the condition \textquotedblleft}w\left(  A\right)  \subseteq
B\text{\textquotedblright}\\
\text{is equivalent to \textquotedblleft}\left(  uw\right)  \left(  A\right)
\subseteq u\left(  B\right)  \text{\textquotedblright}\\
\text{(because }u\text{ is a bijection)}%
\end{array}
\right) \\
&  =\sum_{\substack{w\in S_{n};\\w\left(  A\right)  \subseteq u\left(
B\right)  }}w\ \ \ \ \ \ \ \ \ \ \left(
\begin{array}
[c]{c}%
\text{here, we have substituted }w\text{ for }uw\\
\text{in the sum}%
\end{array}
\right) \\
&  =\widetilde{\nabla}_{u\left(  B\right)  ,A}\ \ \ \ \ \ \ \ \ \ \left(
\text{by the definition of }\widetilde{\nabla}_{u\left(  B\right)  ,A}\right)
.
\end{align*}
Similarly, $u\nabla_{B,A}=\nabla_{u\left(  B\right)  ,A}$. This proves
Proposition \ref{prop.Nabla.simple} \textbf{(h)}. \medskip

\textbf{(i)} Let $u\in S_{n}$. Then, the definition of $\widetilde{\nabla
}_{B,A}$ yields
\begin{align*}
\widetilde{\nabla}_{B,A}u  &  =\sum_{\substack{w\in S_{n};\\w\left(  A\right)
\subseteq B}}wu=\sum_{\substack{w\in S_{n};\\w\left(  A\right)  \subseteq
B}}wu\\
&  =\sum_{\substack{w\in S_{n};\\\left(  wu\right)  \left(  u^{-1}\left(
A\right)  \right)  \subseteq B}}wu\ \ \ \ \ \ \ \ \ \ \left(  \text{since
}w\left(  A\right)  =\left(  wu\right)  \left(  u^{-1}\left(  A\right)
\right)  \right) \\
&  =\sum_{\substack{w\in S_{n};\\w\left(  u^{-1}\left(  A\right)  \right)
\subseteq B}}w\ \ \ \ \ \ \ \ \ \ \left(
\begin{array}
[c]{c}%
\text{here, we have substituted }w\text{ for }wu\\
\text{in the sum}%
\end{array}
\right) \\
&  =\widetilde{\nabla}_{B,u^{-1}\left(  A\right)  }\ \ \ \ \ \ \ \ \ \ \left(
\text{by the definition of }\widetilde{\nabla}_{B,u^{-1}\left(  A\right)
}\right)  .
\end{align*}
Similarly, $\nabla_{B,A}u=\nabla_{B,u^{-1}\left(  A\right)  }$. This proves
Proposition \ref{prop.Nabla.simple} \textbf{(i)}.
\end{proof}

Proposition \ref{prop.Nabla.simple} \textbf{(c)} shows that the elements
$\nabla_{B,A}$ and $\widetilde{\nabla}_{B,A}$ have the same span as $B$ and
$A$ range over the subsets of $\left[  n\right]  $, or even as $B$ ranges over
all subsets of $\left[  n\right]  $ while $A$ is fixed. Later (in Corollary
\ref{cor.Nabla.span}), we will learn more about this span, and in particular
compute its dimension.

\subsection{The product rule}

Parts \textbf{(h)} and \textbf{(i)} of Proposition \ref{prop.Nabla.simple}
show that the span of the elements $\nabla_{B,A}$ is an ideal of $\mathcal{A}%
$. Hence, this span is a nonunital $\mathbf{k}$-subalgebra of $\mathcal{A}$.
It has an explicit multiplication rule, which we shall state in three
different forms. First, we define an important family of integers:

\begin{definition}
\label{def.omegaBC}For any two subsets $B$ and $C$ of $\left[  n\right]  $, we
define the positive integer%
\[
\omega_{B,C}:=\left\vert B\cap C\right\vert !\cdot\left\vert B\setminus
C\right\vert !\cdot\left\vert C\setminus B\right\vert !\cdot\left\vert \left[
n\right]  \setminus\left(  B\cup C\right)  \right\vert !\in\mathbb{Z}.
\]

\end{definition}

\begin{theorem}
\label{thm.Nabla.prod0}Let $A,B,C,D$ be four subsets of $\left[  n\right]  $
such that $\left\vert A\right\vert =\left\vert B\right\vert $ and $\left\vert
C\right\vert =\left\vert D\right\vert $. Then:

\begin{enumerate}
\item[\textbf{(a)}] We have%
\[
\nabla_{D,C}\nabla_{B,A}=\omega_{B,C}\sum_{\substack{w\in S_{n};\\\left\vert
w\left(  A\right)  \cap D\right\vert =\left\vert B\cap C\right\vert }}w.
\]

\item[\textbf{(b)}] We have%
\[
\nabla_{D,C}\nabla_{B,A}=\omega_{B,C}\sum_{\substack{U\subseteq D,\\V\subseteq
A;\\\left\vert U\right\vert =\left\vert V\right\vert }}\left(  -1\right)
^{\left\vert U\right\vert -\left\vert B\cap C\right\vert }\dbinom{\left\vert
U\right\vert }{\left\vert B\cap C\right\vert }\nabla_{U,V}.
\]

\item[\textbf{(c)}] We have%
\[
\nabla_{D,C}\nabla_{B,A}=\omega_{B,C}\sum_{V\subseteq A}\left(  -1\right)
^{\left\vert V\right\vert -\left\vert B\cap C\right\vert }\dbinom{\left\vert
V\right\vert }{\left\vert B\cap C\right\vert }\widetilde{\nabla}_{D,V}.
\]

\end{enumerate}
\end{theorem}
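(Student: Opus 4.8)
The plan is to prove part \textbf{(a)} first by a direct orbit-counting argument, and then deduce parts \textbf{(b)} and \textbf{(c)} from it by elementary inclusion–exclusion (binomial inversion).

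\medskip

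\textbf{Proof of (a).} First I would expand the product using the definitions:
\[
\nabla_{D,C}\nabla_{B,A}=\sum_{\substack{u\in S_{n};\\u\left(  C\right)
=D}}\ \sum_{\substack{v\in S_{n};\\v\left(  A\right)  =B}}uv.
\]
The key idea is to fix a target permutation $w\in S_{n}$ and count, with multiplicity, how many pairs $\left(u,v\right)$ with $u\left(C\right)=D$ and $v\left(A\right)=B$ satisfy $uv=w$. Writing $v=u^{-1}w$, the condition $v\left(A\right)=B$ becomes $w\left(A\right)=u\left(B\right)$, i.e. $u$ must send $B$ to $w\left(A\right)$; combined with $u\left(C\right)=D$, we need a single permutation $u$ sending $B$ to $w\left(A\right)$ and $C$ to $D$ simultaneously. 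I would then invoke the elementary fact that the number of permutations $u\in S_{n}$ simultaneously sending a pair of subsets $\left(B,C\right)$ to a pair of subsets $\left(B',C'\right)$ is $\left\vert B\cap C\right\vert !\cdot\left\vert B\setminus C\right\vert !\cdot\left\vert C\setminus B\right\vert !\cdot\left\vert \left[n\right]\setminus\left(B\cup C\right)\right\vert !=\omega_{B,C}$ when the four ``Venn-region'' cardinalities match up between source and target, and $0$ otherwise. Since $\left\vert B\right\vert=\left\vert A\right\vert=\left\vert w\left(A\right)\right\vert$ and $\left\vert C\right\vert=\left\vert D\right\vert$ are automatic, the only remaining matching condition is $\left\vert B\cap C\right\vert=\left\vert w\left(A\right)\cap D\right\vert$ (the other three cardinalities being forced once this one and the four totals agree). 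Hence the coefficient of $w$ in $\nabla_{D,C}\nabla_{B,A}$ is $\omega_{B,C}$ if $\left\vert w\left(A\right)\cap D\right\vert=\left\vert B\cap C\right\vert$ and $0$ otherwise, which is exactly the claim. The one subtlety to spell out carefully is the count of simultaneous-image permutations: $u$ is determined by a bijection from each of the four regions $B\cap C$, $B\setminus C$, $C\setminus B$, $\left[n\right]\setminus\left(B\cup C\right)$ of the source to the corresponding region of the target, and such a bijection exists iff the regions have matching sizes, giving the product of factorials; this will be the main (though routine) obstacle.

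\medskip

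\textbf{Proof of (b).} Starting from \textbf{(a)}, I would group the permutations $w$ appearing there by the set $w\left(A\right)\cap D=:U$ and by $w^{-1}\left(U\right)=w^{-1}\left(w\left(A\right)\cap D\right)\subseteq A$; call the latter $V$. Then $\left\vert U\right\vert=\left\vert V\right\vert$, $U\subseteq D$, $V\subseteq A$, and the condition $\left\vert w\left(A\right)\cap D\right\vert=\left\vert B\cap C\right\vert$ forces $\left\vert U\right\vert=\left\vert B\cap C\right\vert$. But the sum $\sum_{w}w$ over those $w$ with a \emph{prescribed} value of $w\left(A\right)\cap D$ is not quite a $\nabla_{U,V}$ — rather, $\nabla_{U,V}=\sum_{w:\,w\left(V\right)=U}w$ counts all $w$ with $w\left(A\right)\cap D\supseteq U$ (after suitable bookkeeping), so one needs Möbius/binomial inversion over the Boolean lattice of subsets. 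Concretely, let $g\left(U,V\right)$ be the sum of all $w$ with $w\left(A\right)\cap D=U$ and $w^{-1}\left(U\right)=V$, and observe that $\nabla_{U,V}$ is the sum of $g\left(U',V'\right)$ over all $U'\supseteq U$, $V'\supseteq V$ with $U'\setminus U\subseteq D$, $V'\setminus V$ mapped into $U'\setminus U$; carefully set up this poset relation and apply inclusion–exclusion to invert it, producing the sign $\left(-1\right)^{\left\vert U\right\vert-\left\vert B\cap C\right\vert}$ and the binomial coefficient $\binom{\left\vert U\right\vert}{\left\vert B\cap C\right\vert}$ (the binomial coefficient arises because the number of ways to shrink $U$ down to a $\left\vert B\cap C\right\vert$-element ``core'' is $\binom{\left\vert U\right\vert}{\left\vert B\cap C\right\vert}$). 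I expect this inversion step to be the genuinely fiddly part; the cleanest route is probably to sum \textbf{(a)} against $\sum_{U\subseteq D,\,V\subseteq A,\,\left\vert U\right\vert=\left\vert V\right\vert}\left(-1\right)^{\left\vert U\right\vert-m}\binom{\left\vert U\right\vert}{m}\nabla_{U,V}$ with $m:=\left\vert B\cap C\right\vert$, expand each $\nabla_{U,V}=\sum_{w:\,w\left(V\right)=U}w$, swap the order of summation, fix $w$, and check that the inner sum over $\left(U,V\right)$ collapses to $\ive{\left\vert w\left(A\right)\cap D\right\vert=m}$ via the identity $\sum_{j\ge m}\left(-1\right)^{j-m}\binom{j}{m}\binom{N}{j}=\ive{N=m}$ applied with $N=\left\vert w\left(A\right)\cap D\right\vert$.

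\medskip

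\textbf{Proof of (c).} Finally, part \textbf{(c)} follows from \textbf{(b)} by collapsing the sum over $U$: fix $V\subseteq A$, and sum $\left(-1\right)^{\left\vert U\right\vert-m}\binom{\left\vert U\right\vert}{m}\nabla_{U,V}$ over all $U\subseteq D$ with $\left\vert U\right\vert=\left\vert V\right\vert$. Since the sign and binomial coefficient depend only on $\left\vert U\right\vert=\left\vert V\right\vert$, this is $\left(-1\right)^{\left\vert V\right\vert-m}\binom{\left\vert V\right\vert}{m}\sum_{U\subseteq D,\,\left\vert U\right\vert=\left\vert V\right\vert}\nabla_{U,V}$, and by Proposition \ref{prop.Nabla.simple} \textbf{(c)} the inner sum is exactly $\widetilde{\nabla}_{D,V}$. (One should also note that terms with $\left\vert V\right\vert<m$ vanish because $\binom{\left\vert V\right\vert}{m}=0$, and the original sum in \textbf{(b)} only has terms with $\left\vert U\right\vert=\left\vert V\right\vert$, so nothing is lost.) This completes the proof.
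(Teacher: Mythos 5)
Your proposal is correct and follows essentially the same route as the paper: part \textbf{(a)} by fixing $w$, reducing to counting permutations $u$ with $u\left(C\right)=D$ and $u\left(B\right)=w\left(A\right)$ via the four Venn regions (the paper's Lemma \ref{lem.Nabla.prod0.a}), part \textbf{(b)} by expanding the candidate right-hand side, swapping summations and applying the binomial identity $\sum_{r}\left(-1\right)^{r-k}\binom{n}{r}\binom{r}{k}=\ive{n=k}$ (the paper's Lemmas \ref{lem.Nabla.prod0.bn} and \ref{lem.Nabla.prod0.b}), and part \textbf{(c)} by collapsing the $U$-sum via Proposition \ref{prop.Nabla.simple} \textbf{(c)}. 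The preliminary M\"obius-inversion sketch in your part \textbf{(b)} is superfluous, since the "cleanest route" you then describe is exactly the argument that works (and is the one the paper uses).
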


Before we can prove this theorem, we shall show a few lemmas from enumerative
combinatorics. The first lemma explains the appearance of the numbers
$\omega_{B,C}$:

\begin{lemma}
\label{lem.Nabla.prod0.a}Let $A,B,C,D$ be four subsets of $\left[  n\right]  $
such that $\left\vert A\right\vert =\left\vert B\right\vert $ and $\left\vert
C\right\vert =\left\vert D\right\vert $. Fix a permutation $w\in S_{n}$. Let
$Q_{w}$ denote the set of all pairs $\left(  u,v\right)  \in S_{n}\times
S_{n}$ satisfying $u\left(  C\right)  =D$ and $v\left(  A\right)  =B$ and
$uv=w$. Then:

\begin{enumerate}
\item[\textbf{(a)}] If $\left\vert w\left(  A\right)  \cap D\right\vert
\neq\left\vert B\cap C\right\vert $, then $\left\vert Q_{w}\right\vert =0$.

\item[\textbf{(b)}] If $\left\vert w\left(  A\right)  \cap D\right\vert
=\left\vert B\cap C\right\vert $, then $\left\vert Q_{w}\right\vert
=\omega_{B,C}$.
\end{enumerate}
\end{lemma}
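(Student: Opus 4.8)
The plan is to count the pairs $(u,v) \in S_n \times S_n$ satisfying $u(C) = D$, $v(A) = B$, and $uv = w$ by working backwards from $w$. Since $uv = w$ forces $u = w v^{-1}$, the pair $(u,v)$ is determined by $v$ alone, so $|Q_w|$ equals the number of $v \in S_n$ such that $v(A) = B$ and $(w v^{-1})(C) = D$. The latter condition is equivalent to $v^{-1}(C) = w^{-1}(D)$, i.e. $v\left(w^{-1}(D)\right) = C$. Setting $A' := A$, $B' := B$, $A'' := w^{-1}(D)$, $B'' := C$, we are thus counting permutations $v \in S_n$ with $v(A') = B'$ \emph{and} $v(A'') = B''$ simultaneously. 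Note $|A'| = |A| = |B| = |B'|$ and $|A''| = |w^{-1}(D)| = |D| = |C| = |B''|$.

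The key combinatorial fact I would isolate as a sub-step is: given subsets $A', A''$ of $[n]$ and subsets $B', B''$ of $[n]$ with $|A'| = |B'|$ and $|A''| = |B''|$, the number of $v \in S_n$ with $v(A') = B'$ and $v(A'') = B''$ is either $0$ or equals $|B' \cap B''|! \cdot |B' \setminus B''|! \cdot |B'' \setminus B'|! \cdot |[n] \setminus (B' \cup B'')|!$, and the nonzero case occurs precisely when $v$ can consistently match the four "regions" determined by the two-set Venn diagram on the $A$-side with those on the $B$-side. Concretely, such a $v$ must send $A' \cap A''$ into $B' \cap B''$, send $A' \setminus A''$ into $B' \setminus B''$, send $A'' \setminus A'$ into $B'' \setminus B'$, and send $[n] \setminus (A' \cup A'')$ into $[n] \setminus (B' \cup B'')$; for this to be possible these four source regions must have the same cardinalities as the corresponding target regions, and when it is possible, $v$ is an arbitrary bijection on each of the four blocks independently, giving the stated product of factorials.

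Now I specialize. Here $A' = A$, $A'' = w^{-1}(D)$, $B' = B$, $B'' = C$. The four $B$-side regions are $B \cap C$, $B \setminus C$, $C \setminus B$, $[n] \setminus (B \cup C)$, whose factorials multiply to exactly $\omega_{B,C}$. So once the consistency condition holds, $|Q_w| = \omega_{B,C}$, proving \textbf{(b)} \emph{provided} I verify that the consistency condition is equivalent to $|w(A) \cap D| = |B \cap C|$; and when it fails, $|Q_w| = 0$, which will give \textbf{(a)} the same way. For the equivalence: the four cardinality constraints are that $|A \cap w^{-1}(D)| = |B \cap C|$, that $|A \setminus w^{-1}(D)| = |B \setminus C|$, that $|w^{-1}(D) \setminus A| = |C \setminus B|$, and that $|[n] \setminus (A \cup w^{-1}(D))| = |[n] \setminus (B \cup C)|$. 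Applying $w$ (a bijection) to the left-hand sets, the first reads $|w(A) \cap D| = |B \cap C|$. The point is that the remaining three are \emph{automatic consequences} of the first together with the ambient size relations $|A| = |B|$ and $|w^{-1}(D)| = |D| = |C|$: indeed $|A \setminus w^{-1}(D)| = |A| - |A \cap w^{-1}(D)| = |B| - |B \cap C| = |B \setminus C|$, similarly $|w^{-1}(D) \setminus A| = |C| - |B \cap C| = |C \setminus B|$, and the last follows by inclusion–exclusion since $|A \cup w^{-1}(D)| = |A| + |w^{-1}(D)| - |A \cap w^{-1}(D)| = |B| + |C| - |B \cap C| = |B \cup C|$. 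Hence the single condition $|w(A) \cap D| = |B \cap C|$ governs everything.

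The main obstacle is being careful with the translation chain $uv = w \rightsquigarrow u = wv^{-1} \rightsquigarrow v^{-1}(C) = w^{-1}(D) \rightsquigarrow v(w^{-1}(D)) = C$, and making sure the "two-set matching count" sub-lemma is stated and proved cleanly (it is essentially a product of four independent bijection counts, but one must check that the four source/target blocks partition $[n]$ on each side and that no further interaction occurs). Everything else is bookkeeping with cardinalities; I expect the write-up to be short once the sub-lemma about simultaneous $v(A') = B'$, $v(A'') = B''$ is in place.
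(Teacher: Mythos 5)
Your proposal is correct and is essentially the paper's proof in mirror image: the paper parametrizes the pairs by $u$ (setting $v=u^{-1}w$) and counts the permutations $u$ with $u\left(  C\right)  =D$ and $u\left(  B\right)  =w\left(  A\right)  $ region by region, while you parametrize by $v$ (setting $u=wv^{-1}$) and count the permutations $v$ with $v\left(  A\right)  =B$ and $v\left(  w^{-1}\left(  D\right)  \right)  =C$, splitting $\left[  n\right]  $ into the same four Venn-diagram blocks and multiplying factorials to get $\omega_{B,C}$. Your reduction of the consistency conditions to the single equality $\left\vert w\left(  A\right)  \cap D\right\vert =\left\vert B\cap C\right\vert $, which also yields part \textbf{(a)}, is sound and matches the paper's counting in substance.
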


\begin{proof}
\textbf{(a)} Assume that $\left\vert w\left(  A\right)  \cap D\right\vert
\neq\left\vert B\cap C\right\vert $.

Let $\left(  u,v\right)  \in Q_{w}$. Thus, $\left(  u,v\right)  \in
S_{n}\times S_{n}$ and $u\left(  C\right)  =D$ and $v\left(  A\right)  =B$ and
$uv=w$ (by the definition of $Q_{w}$). Now, from $w=uv$, we obtain $w\left(
A\right)  =\left(  uv\right)  \left(  A\right)  =u\left(  v\left(  A\right)
\right)  $, so that
\[
\underbrace{w\left(  A\right)  }_{=u\left(  v\left(  A\right)  \right)  }%
\cap\underbrace{D}_{=u\left(  C\right)  }=u\left(  \underbrace{v\left(
A\right)  }_{=B}\right)  \cap u\left(  C\right)  =u\left(  B\right)  \cap
u\left(  C\right)  =u\left(  B\cap C\right)
\]
(since $u$ is a permutation). Thus, $\left\vert w\left(  A\right)  \cap
D\right\vert =\left\vert u\left(  B\cap C\right)  \right\vert =\left\vert
B\cap C\right\vert $ (again since $u$ is a permutation). This contradicts
$\left\vert w\left(  A\right)  \cap D\right\vert \neq\left\vert B\cap
C\right\vert $.

Thus, we have found a contradiction for each $\left(  u,v\right)  \in Q_{w}$.
Hence, there exists no $\left(  u,v\right)  \in Q_{w}$. Thus, $\left\vert
Q_{w}\right\vert =0$. This proves Lemma \ref{lem.Nabla.prod0.a} \textbf{(a)}.
\medskip

\textbf{(b)} Assume that $\left\vert w\left(  A\right)  \cap D\right\vert
=\left\vert B\cap C\right\vert $.

Set $k:=\left\vert B\cap C\right\vert $. Thus, by assumption, we have
$\left\vert w\left(  A\right)  \cap D\right\vert =\left\vert B\cap
C\right\vert =k$. Furthermore, set $p:=\left\vert B\setminus C\right\vert $
and $q:=\left\vert C\setminus B\right\vert $.

Since $w$ is a permutation, we have $\left\vert w\left(  A\right)  \right\vert
=\left\vert A\right\vert =\left\vert B\right\vert $. Thus,%
\[
\left\vert w\left(  A\right)  \setminus D\right\vert =\underbrace{\left\vert
w\left(  A\right)  \right\vert }_{=\left\vert B\right\vert }%
-\underbrace{\left\vert w\left(  A\right)  \cap D\right\vert }_{=\left\vert
B\cap C\right\vert }=\left\vert B\right\vert -\left\vert B\cap C\right\vert
=\left\vert B\setminus C\right\vert =p
\]
and%
\[
\left\vert D\setminus w\left(  A\right)  \right\vert =\underbrace{\left\vert
D\right\vert }_{=\left\vert C\right\vert }-\underbrace{\left\vert w\left(
A\right)  \cap D\right\vert }_{=\left\vert B\cap C\right\vert }=\left\vert
C\right\vert -\left\vert B\cap C\right\vert =\left\vert C\setminus
B\right\vert =q.
\]
Moreover, the set $\left[  n\right]  \setminus\left(  B\cup C\right)  $
consists of those elements of $\left[  n\right]  $ that belong to none of the
three disjoint subsets $B\cap C$, $B\setminus C$ and $C\setminus B$. Thus,
\[
\left\vert \left[  n\right]  \setminus\left(  B\cup C\right)  \right\vert
=\underbrace{\left\vert \left[  n\right]  \right\vert }_{=n}%
-\underbrace{\left\vert B\cap C\right\vert }_{=k}-\underbrace{\left\vert
B\setminus C\right\vert }_{=p}-\underbrace{\left\vert C\setminus B\right\vert
}_{=q}=n-k-p-q.
\]

Now, we want to compute $\left\vert Q_{w}\right\vert $. In other words, we
want to count the elements of $Q_{w}$. These elements are the pairs $\left(
u,v\right)  \in S_{n}\times S_{n}$ satisfying $u\left(  C\right)  =D$ and
$v\left(  A\right)  =B$ and $uv=w$. Clearly, such a pair $\left(  u,v\right)
$ must satisfy $v=u^{-1}w$ (since $uv=w$), and thus is uniquely determined by
its first entry $u$. Moreover, the requirement $v\left(  A\right)  =B$ on this
pair is equivalent to $u\left(  B\right)  =w\left(  A\right)  $, because of
the following chain of equivalences:%
\begin{align*}
\left(  v\left(  A\right)  =B\right)  \  &  \Longleftrightarrow\ \left(
\left(  u^{-1}w\right)  \left(  A\right)  =B\right)
\ \ \ \ \ \ \ \ \ \ \left(  \text{since }v=u^{-1}w\right) \\
&  \Longleftrightarrow\ \left(  u^{-1}\left(  w\left(  A\right)  \right)
=B\right) \\
&  \Longleftrightarrow\ \left(  w\left(  A\right)  =u\left(  B\right)
\right)  \ \Longleftrightarrow\ \left(  u\left(  B\right)  =w\left(  A\right)
\right)  .
\end{align*}
Hence, the elements $\left(  u,v\right)  $ of $Q_{w}$ are in one-to-one
correspondence with the permutations $u\in S_{n}$ satisfying $u\left(
C\right)  =D$ and $u\left(  B\right)  =w\left(  A\right)  $. Let us call such
permutations \emph{nice}. Thus, a nice permutation $u\in S_{n}$ must send the
subset $C$ to $D$ and send the subset $B$ to $w\left(  A\right)  $. Hence, it
must send the three subsets%
\[
B\cap C,\ \ \ \ \ \ \ \ \ \ B\setminus C,\ \ \ \ \ \ \ \ \ \ C\setminus B
\]
of $\left[  n\right]  $ to the three subsets%
\[
w\left(  A\right)  \cap D,\ \ \ \ \ \ \ \ \ \ w\left(  A\right)  \setminus
D,\ \ \ \ \ \ \ \ \ \ D\setminus w\left(  A\right)  ,
\]
respectively. Moreover, any permutation $u\in S_{n}$ that does so is nice
(since $B=\left(  B\cap C\right)  \cup\left(  B\setminus C\right)  $ and
$C=\left(  B\cap C\right)  \cup\left(  C\setminus B\right)  $). Thus, we can
construct a nice permutation $u\in S_{n}$ as follows:

\begin{enumerate}
\item Choose the values of $u$ on the $k$ elements of $B\cap C$ in such a way
that these values all belong to $w\left(  A\right)  \cap D$ and are distinct.
This can be done in $k!$ ways, since both $\left\vert B\cap C\right\vert $ and
$\left\vert w\left(  A\right)  \cap D\right\vert $ equal $k$.

\item Choose the values of $u$ on the $p$ elements of $B\setminus C$ in such a
way that these values all belong to $w\left(  A\right)  \setminus D$ and are
distinct. This can be done in $p!$ ways, since both $\left\vert B\setminus
C\right\vert $ and $\left\vert w\left(  A\right)  \setminus D\right\vert $
equal $p$.

\item Choose the values of $u$ on the $q$ elements of $C\setminus B$ in such a
way that these values all belong to $D\setminus w\left(  A\right)  $ and are
distinct. This can be done in $q!$ ways, since both $\left\vert C\setminus
B\right\vert $ and $\left\vert D\setminus w\left(  A\right)  \right\vert $
equal $q$.

\item Choose the values of $u$ on the remaining $n-k-p-q$ elements of $\left[
n\right]  $ in such a way that these values are distinct and have not been
chosen yet. This can be done in $\left(  n-k-p-q\right)  !$ ways, since we
have $n-k-p-q$ elements of $\left[  n\right]  $ left that have not been chosen yet.
\end{enumerate}

This process can be done in $k!\cdot p!\cdot q!\cdot\left(  n-k-p-q\right)  !$
ways. Thus, the number of nice permutations $u\in S_{n}$ is $k!\cdot p!\cdot
q!\cdot\left(  n-k-p-q\right)  !$. But we know that $\left\vert Q_{w}%
\right\vert $ is the number of nice permutations $u\in S_{n}$ (since the
elements $\left(  u,v\right)  $ of $Q_{w}$ are in one-to-one correspondence
with the nice permutations $u\in S_{n}$). Hence, we conclude that%
\begin{align*}
\left\vert Q_{w}\right\vert  &  =\underbrace{k}_{=\left\vert B\cap
C\right\vert }!\cdot\underbrace{p}_{=\left\vert B\setminus C\right\vert
}!\cdot\underbrace{q}_{=\left\vert C\setminus B\right\vert }!\cdot
\underbrace{\left(  n-k-p-q\right)  }_{=\left\vert \left[  n\right]
\setminus\left(  B\cup C\right)  \right\vert }!\\
&  =\left\vert B\cap C\right\vert !\cdot\left\vert B\setminus C\right\vert
!\cdot\left\vert C\setminus B\right\vert !\cdot\left\vert \left[  n\right]
\setminus\left(  B\cup C\right)  \right\vert !\\
&  =\omega_{B,C}.
\end{align*}
This proves Lemma \ref{lem.Nabla.prod0.a} \textbf{(b)}.
\end{proof}

The next lemma is a simple and elementary binomial identity:

\begin{lemma}
\label{lem.Nabla.prod0.bn}Let $n,k\in\mathbb{N}$. Then,%
\[
\sum_{r=0}^{n}\left(  -1\right)  ^{r-k}\dbinom{n}{r}\dbinom{r}{k}=%
\begin{cases}
1, & \text{if }n=k;\\
0, & \text{else.}%
\end{cases}
\]

\end{lemma}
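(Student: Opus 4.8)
The plan is to reduce the sum to a single application of the binomial theorem via the ``trinomial revision'' (subset-of-a-subset) identity. First I would dispose of the trivial case $n<k$: here $\dbinom{r}{k}=0$ for every $r\in\left\{0,1,\ldots,n\right\}$ (since $r\leq n<k$), so the left-hand side vanishes, and the right-hand side vanishes too (since $n\neq k$). So from now on assume $n\geq k$.

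The key step is the identity $\dbinom{n}{r}\dbinom{r}{k}=\dbinom{n}{k}\dbinom{n-k}{r-k}$, valid for all $r\in\mathbb{Z}$ with the usual convention that $\dbinom{n-k}{r-k}=0$ whenever $r<k$ or $r>n$. Substituting this into the sum and pulling $\dbinom{n}{k}$ out front gives
\[
\sum_{r=0}^{n}\left(-1\right)^{r-k}\dbinom{n}{r}\dbinom{r}{k}=\dbinom{n}{k}\sum_{r=0}^{n}\left(-1\right)^{r-k}\dbinom{n-k}{r-k}.
\]
Since the summand on the right vanishes for $r<k$ (and also for $r>n$, though that does not occur in the range), substituting $j=r-k$ turns the remaining sum into $\sum_{j=0}^{n-k}\left(-1\right)^{j}\dbinom{n-k}{j}$, which by the binomial theorem equals $\left(1+\left(-1\right)\right)^{n-k}=0^{n-k}$. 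This is $1$ if $n=k$ and $0$ if $n>k$. Multiplying by $\dbinom{n}{k}$ (which equals $1$ when $n=k$) yields exactly the claimed value.

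There is no real obstacle here; the identity is entirely routine. The only points requiring a little care are the bookkeeping of the index range when passing from $r$ to $j=r-k$ (one must check that the ``missing'' terms with $r<k$ contribute nothing), and the convention $0^{0}=1$ implicit in the binomial theorem in the case $n=k$. If one wishes to avoid invoking the trinomial revision identity as a black box, it can be proved in one line by comparing the two ways of counting pairs $\left(X,Y\right)$ with $Y\subseteq X\subseteq\left[n\right]$, $\left\vert Y\right\vert=k$, $\left\vert X\right\vert=r$.
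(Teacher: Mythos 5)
Your proposal is correct and follows essentially the same route as the paper's proof: trinomial revision to rewrite $\dbinom{n}{r}\dbinom{r}{k}=\dbinom{n}{k}\dbinom{n-k}{r-k}$, a shift of the summation index, and the alternating binomial sum $\sum_{j}\left(  -1\right)  ^{j}\dbinom{n-k}{j}$, which vanishes unless $n=k$. The separate treatment of the case $n<k$ is harmless but unnecessary, since the same computation covers it.
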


\begin{proof}
It is well-known that each $m\in\mathbb{Z}$ satisfies%
\begin{equation}
\sum_{i=0}^{m}\left(  -1\right)  ^{i}\dbinom{m}{i}=%
\begin{cases}
1, & \text{if }m=0;\\
0, & \text{else.}%
\end{cases}
\label{pf.lem.Nabla.prod0.bn.sum-1}%
\end{equation}
(Indeed, this is obvious for $m<0$, since the left hand side is an empty sum
in this case. In the case $m\geq0$, it is proved in \cite[Proposition
1.3.28]{19fco}.)

For each $r\in\mathbb{N}$, we have%
\begin{equation}
\dbinom{n}{r}\dbinom{r}{k}=\dbinom{n}{k}\dbinom{n-k}{r-k}
\label{pf.lem.Nabla.prod0.bn.1}%
\end{equation}
(by the trinomial revision formula \cite[Proposition 1.3.35]{19fco}, applied
to $a=r$ and $b=k$). Hence,%
\begin{align}
&  \sum_{r=0}^{n}\left(  -1\right)  ^{r-k}\underbrace{\dbinom{n}{r}\dbinom
{r}{k}}_{\substack{=\dbinom{n}{k}\dbinom{n-k}{r-k}\\\text{(by
(\ref{pf.lem.Nabla.prod0.bn.1}))}}}\nonumber\\
&  =\sum_{r=0}^{n}\left(  -1\right)  ^{r-k}\dbinom{n}{k}\dbinom{n-k}%
{r-k}=\dbinom{n}{k}\sum_{r=0}^{n}\left(  -1\right)  ^{r-k}\dbinom{n-k}%
{r-k}\nonumber\\
&  =\dbinom{n}{k}\sum_{i=-k}^{n-k}\left(  -1\right)  ^{i}\dbinom{n-k}{i}
\label{pf.lem.Nabla.prod0.bn.2}%
\end{align}
(here, we have substituted $i$ for $r-k$ in the sum). But the two sums%
\[
\sum_{i=-k}^{n-k}\left(  -1\right)  ^{i}\dbinom{n-k}{i}%
\ \ \ \ \ \ \ \ \ \ \text{and}\ \ \ \ \ \ \ \ \ \ \sum_{i=0}^{n-k}\left(
-1\right)  ^{i}\dbinom{n-k}{i}%
\]
differ only in their addends for $i<0$ (if they differ at all), and all these
addends are $0$ (since $i<0$ entails $\dbinom{n-k}{i}=0$ and thus $\left(
-1\right)  ^{i}\underbrace{\dbinom{n-k}{i}}_{=0}=0$) and thus do not affect
the sums. Hence, these two sums are equal. In other words,%
\begin{align*}
\sum_{i=-k}^{n-k}\left(  -1\right)  ^{i}\dbinom{n-k}{i}  &  =\sum_{i=0}%
^{n-k}\left(  -1\right)  ^{i}\dbinom{n-k}{i}\\
&  =%
\begin{cases}
1, & \text{if }n-k=0;\\
0, & \text{else}%
\end{cases}
\ \ \ \ \ \ \ \ \ \ \left(  \text{by (\ref{pf.lem.Nabla.prod0.bn.sum-1}),
applied to }m=n-k\right) \\
&  =%
\begin{cases}
1, & \text{if }n=k;\\
0, & \text{else}%
\end{cases}
\end{align*}
(since the equation $n-k=0$ is equivalent to $n=k$). Substituting this into
(\ref{pf.lem.Nabla.prod0.bn.2}), we obtain%
\begin{align*}
\sum_{r=0}^{n}\left(  -1\right)  ^{r-k}\dbinom{n}{r}\dbinom{r}{k}  &
=\dbinom{n}{k}%
\begin{cases}
1, & \text{if }n=k;\\
0, & \text{else}%
\end{cases}
\ \ =%
\begin{cases}
\dbinom{n}{k}\cdot1, & \text{if }n=k;\\
\dbinom{n}{k}\cdot0, & \text{else}%
\end{cases}
\\
&  =%
\begin{cases}
\dbinom{n}{k}, & \text{if }n=k;\\
0, & \text{else}%
\end{cases}
\ \ =%
\begin{cases}
1, & \text{if }n=k;\\
0, & \text{else}%
\end{cases}
\end{align*}
(since $\dbinom{n}{k}=\dbinom{k}{k}=1$ when $n=k$). This proves Lemma
\ref{lem.Nabla.prod0.bn}.
\end{proof}

\begin{lemma}
\label{lem.Nabla.prod0.b}Let $k\in\mathbb{N}$. Let $Z$ be any finite set.
Then,%
\[
\sum_{U\subseteq Z}\left(  -1\right)  ^{\left\vert U\right\vert -k}%
\dbinom{\left\vert U\right\vert }{k}=%
\begin{cases}
1, & \text{if }\left\vert Z\right\vert =k;\\
0, & \text{else.}%
\end{cases}
\]

\end{lemma}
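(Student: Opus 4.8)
The plan is to reduce this to Lemma~\ref{lem.Nabla.prod0.bn} by grouping the subsets of $Z$ according to their size. Set $N:=\left\vert Z\right\vert$. For each $r\in\left\{0,1,\ldots,N\right\}$, there are exactly $\dbinom{N}{r}$ subsets $U\subseteq Z$ with $\left\vert U\right\vert=r$, and each of them contributes the same summand $\left(-1\right)^{r-k}\dbinom{r}{k}$ to the sum on the left hand side. Hence, splitting the sum $\sum_{U\subseteq Z}$ according to the value of $\left\vert U\right\vert$, I would obtain
\[
\sum_{U\subseteq Z}\left(-1\right)^{\left\vert U\right\vert-k}\dbinom{\left\vert U\right\vert}{k}=\sum_{r=0}^{N}\dbinom{N}{r}\left(-1\right)^{r-k}\dbinom{r}{k}.
\]

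Next I would apply Lemma~\ref{lem.Nabla.prod0.bn} (with $N$ and $k$ in the roles of $n$ and $k$) to rewrite the right hand side as $1$ if $N=k$ and as $0$ otherwise. Since $N=\left\vert Z\right\vert$, the condition $N=k$ is precisely $\left\vert Z\right\vert=k$, and the claimed formula follows.

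There is essentially no obstacle here; the only point requiring care is that the symbol $n$ in Lemma~\ref{lem.Nabla.prod0.bn} is a free variable, not the $n$ fixed throughout Section~\ref{sec.rooksum}, so I rename it to $N=\left\vert Z\right\vert$ to avoid a clash. (The standard convention $\dbinom{r}{k}=0$ for $r<k$ makes all terms with $r<k$ vanish, consistently on both sides, so no separate bookkeeping is needed for small $r$.)
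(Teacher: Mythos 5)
Your proposal is correct and follows essentially the same route as the paper's proof: break the sum over $U\subseteq Z$ according to the size $r=\left\vert U\right\vert$, use that there are $\dbinom{\left\vert Z\right\vert}{r}$ such subsets, and then apply Lemma \ref{lem.Nabla.prod0.bn} with $\left\vert Z\right\vert$ in the role of $n$. The remark about renaming the variable to avoid clashing with the fixed $n$ is a sensible bit of care but changes nothing of substance.
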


\begin{proof}
Any subset $U\subseteq Z$ has size $\left\vert U\right\vert \in\left\{
0,1,\ldots,\left\vert Z\right\vert \right\}  $. Thus, we can break up the sum
on the left hand side as follows:%
\begin{align*}
\sum_{U\subseteq Z}\left(  -1\right)  ^{\left\vert U\right\vert -k}%
\dbinom{\left\vert U\right\vert }{k}  &  =\sum_{r=0}^{\left\vert Z\right\vert
}\ \ \sum_{\substack{U\subseteq Z;\\\left\vert U\right\vert =r}%
}\underbrace{\left(  -1\right)  ^{\left\vert U\right\vert -k}\dbinom
{\left\vert U\right\vert }{k}}_{\substack{=\left(  -1\right)  ^{r-k}\dbinom
{r}{k}\\\text{(since }\left\vert U\right\vert =r\text{)}}}\\
&  =\sum_{r=0}^{\left\vert Z\right\vert }\ \ \underbrace{\sum
_{\substack{U\subseteq Z;\\\left\vert U\right\vert =r}}\left(  -1\right)
^{r-k}\dbinom{r}{k}}_{\substack{=\dbinom{\left\vert Z\right\vert }{r}%
\cdot\left(  -1\right)  ^{r-k}\dbinom{r}{k}\\\text{(since there are }%
\dbinom{\left\vert Z\right\vert }{r}\text{ many}\\r\text{-element subsets
}U\text{ of }Z\text{)}}}\\
&  =\sum_{r=0}^{\left\vert Z\right\vert }\dbinom{\left\vert Z\right\vert }%
{r}\cdot\left(  -1\right)  ^{r-k}\dbinom{r}{k}=\sum_{r=0}^{\left\vert
Z\right\vert }\left(  -1\right)  ^{r-k}\dbinom{\left\vert Z\right\vert }%
{r}\dbinom{r}{k}\\
&  =%
\begin{cases}
1, & \text{if }\left\vert Z\right\vert =k;\\
0, & \text{else}%
\end{cases}
\end{align*}
(by Lemma \ref{lem.Nabla.prod0.bn}, applied to $n=\left\vert Z\right\vert $).
This proves Lemma \ref{lem.Nabla.prod0.b}.
\end{proof}

\begin{proof}
[Proof of Theorem \ref{thm.Nabla.prod0}.]\textbf{(a)} The definition of a
rectangular rook sum shows that
\[
\nabla_{D,C}=\sum_{\substack{u\in S_{n};\\u\left(  C\right)  =D}%
}u\ \ \ \ \ \ \ \ \ \ \text{and}\ \ \ \ \ \ \ \ \ \ \nabla_{B,A}%
=\sum_{\substack{v\in S_{n};\\v\left(  A\right)  =B}}v.
\]
Multiplying these two equalities, we find%
\begin{align}
\nabla_{D,C}\nabla_{B,A}  &  =\left(  \sum_{\substack{u\in S_{n};\\u\left(
C\right)  =D}}u\right)  \left(  \sum_{\substack{v\in S_{n};\\v\left(
A\right)  =B}}v\right)  =\sum_{\substack{\left(  u,v\right)  \in S_{n}\times
S_{n};\\u\left(  C\right)  =D\text{ and }v\left(  A\right)  =B}}uv\nonumber\\
&  =\sum_{w\in S_{n}}\left\vert Q_{w}\right\vert w,
\label{pf.thm.Nabla.prod0.1}%
\end{align}
where $Q_{w}$ (for a given permutation $w\in S_{n}$) is defined as the set of
all pairs $\left(  u,v\right)  \in S_{n}\times S_{n}$ satisfying $u\left(
C\right)  =D$ and $v\left(  A\right)  =B$ and $uv=w$. Now,
(\ref{pf.thm.Nabla.prod0.1}) becomes%
\begin{align*}
\nabla_{D,C}\nabla_{B,A}  &  =\sum_{w\in S_{n}}\left\vert Q_{w}\right\vert w\\
&  =\sum_{\substack{w\in S_{n};\\\left\vert w\left(  A\right)  \cap
D\right\vert =\left\vert B\cap C\right\vert }}\underbrace{\left\vert
Q_{w}\right\vert }_{\substack{=\omega_{B,C}\\\text{(by Lemma
\ref{lem.Nabla.prod0.a} \textbf{(b)})}}}w+\sum_{\substack{w\in S_{n}%
;\\\left\vert w\left(  A\right)  \cap D\right\vert \neq\left\vert B\cap
C\right\vert }}\underbrace{\left\vert Q_{w}\right\vert }%
_{\substack{=0\\\text{(by Lemma \ref{lem.Nabla.prod0.a} \textbf{(a)})}}}w\\
&  =\sum_{\substack{w\in S_{n};\\\left\vert w\left(  A\right)  \cap
D\right\vert =\left\vert B\cap C\right\vert }}\omega_{B,C}w=\omega_{B,C}%
\sum_{\substack{w\in S_{n};\\\left\vert w\left(  A\right)  \cap D\right\vert
=\left\vert B\cap C\right\vert }}w.
\end{align*}
This proves Theorem \ref{thm.Nabla.prod0} \textbf{(a)}. \medskip

\begin{noncompile}
\textit{Old proof sketch for Theorem \ref{thm.Nabla.prod0} \textbf{(a)}:} Each
permutation $w$ appearing in the product $\nabla_{D,C}\nabla_{B,A}$ has the
property that $\left\vert w\left(  A\right)  \cap D\right\vert =\left\vert
B\cap C\right\vert $ (because it can be written as $w=uv$ with $u\left(
C\right)  =D$ and $v\left(  A\right)  =B$, and therefore we have
$\underbrace{w\left(  A\right)  }_{=u\left(  v\left(  A\right)  \right)  }%
\cap\underbrace{D}_{=u\left(  C\right)  }=u\left(  \underbrace{v\left(
A\right)  }_{=B}\right)  \cap u\left(  C\right)  =u\left(  B\right)  \cap
u\left(  C\right)  =u\left(  B\cap C\right)  $, so that $\left\vert w\left(
A\right)  \cap D\right\vert =\left\vert B\cap C\right\vert $). It remains to
show that each $w$ with this property appears exactly $\omega_{B,C}$ times in
this product. In other words, given a permutation $w\in S_{n}$ that satisfies
$\left\vert w\left(  A\right)  \cap D\right\vert =\left\vert B\cap
C\right\vert $, we must show that there are exactly $\omega_{B,C}$ ways to
decompose $w$ as $w=uv$ with $u\left(  C\right)  =D$ and $v\left(  A\right)
=B$. But this is an exercise in counting: We want to count the permutations
$v\in S_{n}$ satisfying $v\left(  A\right)  =B$ and $\left(  wv^{-1}\right)
\left(  C\right)  =D$. Such a permutation $v$ must send $A$ to $B$ and send
$w^{-1}\left(  D\right)  $ to $C$. In other words, it must send the four
subsets $A\cap w^{-1}\left(  D\right)  $, $A\setminus w^{-1}\left(  D\right)
$, $w^{-1}\left(  D\right)  \setminus A$ and $\left[  n\right]  \setminus
\left(  A\cup w^{-1}\left(  D\right)  \right)  $ to the respectively
equinumerous subsets $B\cap C$, $B\setminus C$, $C\setminus B$ and $\left[
n\right]  \setminus\left(  B\cup C\right)  $, respectively. The number of ways
to do this is
\[
\left\vert B\cap C\right\vert !\cdot\left\vert B\setminus C\right\vert
!\cdot\left\vert C\setminus B\right\vert !\cdot\left\vert \left[  n\right]
\setminus\left(  B\cup C\right)  \right\vert !,
\]
which is exactly $\omega_{B,C}$. Thus, the proof of Theorem
\ref{thm.Nabla.prod0} \textbf{(a)} is complete.
\end{noncompile}

\textbf{(b)} We have%
\begin{align}
&  \sum_{\substack{U\subseteq D,\\V\subseteq A;\\\left\vert U\right\vert
=\left\vert V\right\vert }}\left(  -1\right)  ^{\left\vert U\right\vert
-\left\vert B\cap C\right\vert }\dbinom{\left\vert U\right\vert }{\left\vert
B\cap C\right\vert }\underbrace{\nabla_{U,V}}_{\substack{=\sum_{\substack{w\in
S_{n};\\w\left(  V\right)  =U}}w\\\text{(by the definition of }\nabla
_{U,V}\text{)}}}\nonumber\\
&  =\sum_{\substack{U\subseteq D,\\V\subseteq A;\\\left\vert U\right\vert
=\left\vert V\right\vert }}\left(  -1\right)  ^{\left\vert U\right\vert
-\left\vert B\cap C\right\vert }\dbinom{\left\vert U\right\vert }{\left\vert
B\cap C\right\vert }\sum_{\substack{w\in S_{n};\\w\left(  V\right)
=U}}w\nonumber\\
&  =\sum_{w\in S_{n}}\left(  \sum_{\substack{U\subseteq D,\\V\subseteq
A;\\\left\vert U\right\vert =\left\vert V\right\vert ;\\w\left(  V\right)
=U}}\left(  -1\right)  ^{\left\vert U\right\vert -\left\vert B\cap
C\right\vert }\dbinom{\left\vert U\right\vert }{\left\vert B\cap C\right\vert
}\right)  w. \label{pf.thm.Nabla.prod0.c.1}%
\end{align}

Now, let $w\in S_{n}$ be arbitrary. We shall simplify the sum%
\[
\sum_{\substack{U\subseteq D,\\V\subseteq A;\\\left\vert U\right\vert
=\left\vert V\right\vert ;\\w\left(  V\right)  =U}}\left(  -1\right)
^{\left\vert U\right\vert -\left\vert B\cap C\right\vert }\dbinom{\left\vert
U\right\vert }{\left\vert B\cap C\right\vert }.
\]
Indeed, we observe that

\begin{itemize}
\item the condition \textquotedblleft$\left\vert U\right\vert =\left\vert
V\right\vert $\textquotedblright\ under the summation sign is redundant (since
it follows from $w\left(  V\right)  =U$ because $w$ is a permutation);

\item the set $V$ in this sum is uniquely determined by $U$ via the condition
\textquotedblleft$w\left(  V\right)  =U$\textquotedblright\ (since $w$ is a
permutation), and thus can be simply replaced by $w^{-1}\left(  U\right)  $
instead of being summed over.
\end{itemize}

Thus, we can rewrite the sum as follows:%
\begin{align}
&  \sum_{\substack{U\subseteq D,\\V\subseteq A;\\\left\vert U\right\vert
=\left\vert V\right\vert ;\\w\left(  V\right)  =U}}\left(  -1\right)
^{\left\vert U\right\vert -\left\vert B\cap C\right\vert }\dbinom{\left\vert
U\right\vert }{\left\vert B\cap C\right\vert }\nonumber\\
&  =\sum_{\substack{U\subseteq D;\\w^{-1}\left(  U\right)  \subseteq
A}}\left(  -1\right)  ^{\left\vert U\right\vert -\left\vert B\cap C\right\vert
}\dbinom{\left\vert U\right\vert }{\left\vert B\cap C\right\vert }\nonumber\\
&  =\sum_{\substack{U\subseteq D;\\U\subseteq w\left(  A\right)  }}\left(
-1\right)  ^{\left\vert U\right\vert -\left\vert B\cap C\right\vert }%
\dbinom{\left\vert U\right\vert }{\left\vert B\cap C\right\vert }\nonumber\\
&  \ \ \ \ \ \ \ \ \ \ \ \ \ \ \ \ \ \ \ \ \left(
\begin{array}
[c]{c}%
\text{here, we rewrote the condition \textquotedblleft}w^{-1}\left(  U\right)
\subseteq A\text{\textquotedblright}\\
\text{under the summation sign as \textquotedblleft}U\subseteq w\left(
A\right)  \text{\textquotedblright}%
\end{array}
\right) \nonumber\\
&  =\sum_{U\subseteq w\left(  A\right)  \cap D}\left(  -1\right)  ^{\left\vert
U\right\vert -\left\vert B\cap C\right\vert }\dbinom{\left\vert U\right\vert
}{\left\vert B\cap C\right\vert }\nonumber\\
&  =%
\begin{cases}
1, & \text{if }\left\vert w\left(  A\right)  \cap D\right\vert =\left\vert
B\cap C\right\vert ;\\
0, & \text{else}%
\end{cases}
\label{pf.thm.Nabla.prod0.c.2}%
\end{align}
(by Lemma \ref{lem.Nabla.prod0.b}, applied to $Z=w\left(  A\right)  \cap D$
and $k=\left\vert B\cap C\right\vert $).

Forget that we fixed $w$. We thus have proved (\ref{pf.thm.Nabla.prod0.c.2})
for each $w\in S_{n}$. Thus, (\ref{pf.thm.Nabla.prod0.c.1}) becomes%
\begin{align*}
&  \sum_{\substack{U\subseteq D,\\V\subseteq A;\\\left\vert U\right\vert
=\left\vert V\right\vert }}\left(  -1\right)  ^{\left\vert U\right\vert
-\left\vert B\cap C\right\vert }\dbinom{\left\vert U\right\vert }{\left\vert
B\cap C\right\vert }\nabla_{U,V}\\
&  =\sum_{w\in S_{n}}\underbrace{\left(  \sum_{\substack{U\subseteq
D,\\V\subseteq A;\\\left\vert U\right\vert =\left\vert V\right\vert
;\\w\left(  V\right)  =U}}\left(  -1\right)  ^{\left\vert U\right\vert
-\left\vert B\cap C\right\vert }\dbinom{\left\vert U\right\vert }{\left\vert
B\cap C\right\vert }\right)  }_{\substack{=%
\begin{cases}
1, & \text{if }\left\vert w\left(  A\right)  \cap D\right\vert =\left\vert
B\cap C\right\vert ;\\
0, & \text{else}%
\end{cases}
\\\text{(by (\ref{pf.thm.Nabla.prod0.c.2}))}}}w\\
&  =\sum_{w\in S_{n}}%
\begin{cases}
1, & \text{if }\left\vert w\left(  A\right)  \cap D\right\vert =\left\vert
B\cap C\right\vert ;\\
0, & \text{else}%
\end{cases}
w\\
&  =\sum_{\substack{w\in S_{n};\\\left\vert w\left(  A\right)  \cap
D\right\vert =\left\vert B\cap C\right\vert }}w.
\end{align*}
Multiplying this equality by $\omega_{B,C}$, we find%
\begin{align*}
&  \omega_{B,C}\sum_{\substack{U\subseteq D,\\V\subseteq A;\\\left\vert
U\right\vert =\left\vert V\right\vert }}\left(  -1\right)  ^{\left\vert
U\right\vert -\left\vert B\cap C\right\vert }\dbinom{\left\vert U\right\vert
}{\left\vert B\cap C\right\vert }\nabla_{U,V}\\
&  =\omega_{B,C}\sum_{\substack{w\in S_{n};\\\left\vert w\left(  A\right)
\cap D\right\vert =\left\vert B\cap C\right\vert }}w=\nabla_{D,C}\nabla
_{B,A}\ \ \ \ \ \ \ \ \ \ \left(  \text{by Theorem \ref{thm.Nabla.prod0}
\textbf{(a)}}\right)  .
\end{align*}
Thus, Theorem \ref{thm.Nabla.prod0} \textbf{(b)} is proved. \medskip

\textbf{(c)} Theorem \ref{thm.Nabla.prod0} \textbf{(b)} yields%
\begin{align*}
\nabla_{D,C}\nabla_{B,A}  &  =\omega_{B,C}\sum_{\substack{U\subseteq
D,\\V\subseteq A;\\\left\vert U\right\vert =\left\vert V\right\vert }}\left(
-1\right)  ^{\left\vert U\right\vert -\left\vert B\cap C\right\vert }%
\dbinom{\left\vert U\right\vert }{\left\vert B\cap C\right\vert }\nabla
_{U,V}\\
&  =\omega_{B,C}\sum_{\substack{U\subseteq D,\\V\subseteq A;\\\left\vert
U\right\vert =\left\vert V\right\vert }}\left(  -1\right)  ^{\left\vert
V\right\vert -\left\vert B\cap C\right\vert }\dbinom{\left\vert V\right\vert
}{\left\vert B\cap C\right\vert }\nabla_{U,V}\\
&  \ \ \ \ \ \ \ \ \ \ \ \ \ \ \ \ \ \ \ \ \left(  \text{due to the
}\left\vert U\right\vert =\left\vert V\right\vert \text{ condition under the
summation sign}\right) \\
&  =\omega_{B,C}\sum_{V\subseteq A}\left(  -1\right)  ^{\left\vert
V\right\vert -\left\vert B\cap C\right\vert }\dbinom{\left\vert V\right\vert
}{\left\vert B\cap C\right\vert }\underbrace{\sum_{\substack{U\subseteq
D;\\\left\vert U\right\vert =\left\vert V\right\vert }}\nabla_{U,V}%
}_{\substack{=\widetilde{\nabla}_{D,V}\\\text{(by Proposition
\ref{prop.Nabla.simple} \textbf{(c)})}}}\\
&  =\omega_{B,C}\sum_{V\subseteq A}\left(  -1\right)  ^{\left\vert
V\right\vert -\left\vert B\cap C\right\vert }\dbinom{\left\vert V\right\vert
}{\left\vert B\cap C\right\vert }\widetilde{\nabla}_{D,V}.
\end{align*}
This proves Theorem \ref{thm.Nabla.prod0} \textbf{(c)}.
\end{proof}

\subsection{The $D$-filtration}

We shall next derive some nilpotency-type consequences from the multiplication rule.

For the rest of this section, we fix a subset $D$ of $\left[  n\right]  $. We
define\footnote{Here and in the following, \textquotedblleft%
$\operatorname*{span}$\textquotedblright\ always means \textquotedblleft%
$\operatorname*{span}\nolimits_{\mathbf{k}}$\textquotedblright\ (that is, a
$\mathbf{k}$-linear span).}%
\[
\mathcal{F}_{k}:=\operatorname*{span}\left\{  \widetilde{\nabla}_{D,C}%
\ \mid\ C\subseteq\left[  n\right]  \text{ with }\left\vert C\right\vert \leq
k\right\}
\]
for each $k\in\mathbb{Z}$. Of course, $\mathcal{F}_{n}\supseteq\mathcal{F}%
_{n-1}\supseteq\cdots\supseteq\mathcal{F}_{0}\supseteq\mathcal{F}_{-1}=0$. It
is easy to see that $\mathcal{F}_{0}$ is spanned by $\widetilde{\nabla
}_{D,\varnothing}=\nabla_{\varnothing,\varnothing}=\sum_{w\in S_{n}}w$. (Note,
however, that $\bigcup_{k\in\mathbb{Z}}\mathcal{F}_{k}$ is usually a proper
subset of $\mathbf{k}\left[  S_{n}\right]  $, so that the $\mathcal{F}_{k}$ do
not form a decreasing filtration of $\mathbf{k}\left[  S_{n}\right]  $.)

\begin{definition}
For any subset $C\subseteq\left[  n\right]  $ and any $k\in\mathbb{N}$, we
define the integer%
\[
\delta_{D,C,k}:=\sum\limits_{\substack{B\subseteq D;\\\left\vert B\right\vert
=k}}\omega_{B,C}\left(  -1\right)  ^{k-\left\vert B\cap C\right\vert }%
\dbinom{k}{\left\vert B\cap C\right\vert }\in\mathbb{Z}.
\]

\end{definition}

Now, we note the following:

\begin{proposition}
\label{prop.Fk.1}Let $C\subseteq\left[  n\right]  $ satisfy $\left\vert
C\right\vert =\left\vert D\right\vert $. Let $k\in\mathbb{N}$. Then,%
\[
\left(  \nabla_{D,C}-\delta_{D,C,k}\right)  \mathcal{F}_{k}\subseteq
\mathcal{F}_{k-1}.
\]

\end{proposition}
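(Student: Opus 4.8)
The plan is to compute $\nabla_{D,C}\mathcal{F}_k$ directly using the product rule from Theorem \ref{thm.Nabla.prod0}, and then split off the ``diagonal'' contribution that produces the scalar $\delta_{D,C,k}$. Since $\mathcal{F}_k$ is spanned by the elements $\widetilde{\nabla}_{D,B}$ with $\left\vert B\right\vert \le k$, it suffices to show that $\left(\nabla_{D,C}-\delta_{D,C,k}\right)\widetilde{\nabla}_{D,B}\in\mathcal{F}_{k-1}$ for every such $B$, and in fact I may as well assume $\left\vert B\right\vert = k$ (if $\left\vert B\right\vert < k$, then $\widetilde{\nabla}_{D,B}\in\mathcal{F}_{k-1}$ already and I would want to separately check that $\delta_{D,C,k}\widetilde{\nabla}_{D,B}$ lands in $\mathcal{F}_{k-1}$ too — but actually the cleanest route is to directly handle $\left\vert B\right\vert\le k$ and note the statement as written only needs the containment; one can reduce to $\left\vert B\right\vert = k$ by observing that $\mathcal F_{k-1}$ absorbs everything of lower degree).

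First I would write $\widetilde{\nabla}_{D,B} = \sum_{A\subseteq B} \nabla_{A,?}$ — wait, more carefully: by Proposition \ref{prop.Nabla.simple} \textbf{(c)} applied with the roles suitably arranged, $\widetilde{\nabla}_{D,B} = \sum_{U\subseteq D,\ \left\vert U\right\vert = \left\vert B\right\vert}\nabla_{U,B}$. So it suffices to understand $\nabla_{D,C}\nabla_{U,B}$ for $U\subseteq D$ with $\left\vert U\right\vert=\left\vert B\right\vert$ and $\left\vert C\right\vert=\left\vert D\right\vert$. By Theorem \ref{thm.Nabla.prod0} \textbf{(c)} (with the substitution $A\mapsto B$, $B\mapsto U$, $C\mapsto C$, $D\mapsto D$), this equals $\omega_{U,C}\sum_{V\subseteq B}(-1)^{\left\vert V\right\vert - \left\vert U\cap C\right\vert}\binom{\left\vert V\right\vert}{\left\vert U\cap C\right\vert}\widetilde{\nabla}_{D,V}$. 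Now I separate the term $V = B$ from the terms $V\subsetneq B$: the latter all have $\left\vert V\right\vert \le k-1$ and hence $\widetilde{\nabla}_{D,V}\in\mathcal{F}_{k-1}$, so modulo $\mathcal{F}_{k-1}$ we get $\nabla_{D,C}\nabla_{U,B} \equiv \omega_{U,C}(-1)^{k-\left\vert U\cap C\right\vert}\binom{k}{\left\vert U\cap C\right\vert}\widetilde{\nabla}_{D,B}$.

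Summing over $U\subseteq D$ with $\left\vert U\right\vert = k$ (using $\left\vert B\right\vert = k$), I get $\nabla_{D,C}\widetilde{\nabla}_{D,B} \equiv \left(\sum_{U\subseteq D,\ \left\vert U\right\vert=k}\omega_{U,C}(-1)^{k-\left\vert U\cap C\right\vert}\binom{k}{\left\vert U\cap C\right\vert}\right)\widetilde{\nabla}_{D,B} \pmod{\mathcal{F}_{k-1}}$. The coefficient here is exactly $\delta_{D,C,k}$ by its definition (note $\omega_{B,C} = \omega_{C,B}$ is symmetric in its subscripts, so the $\omega_{U,C}$ in my computation matches the $\omega_{B,C}$ in the definition of $\delta_{D,C,k}$ after renaming $U$ to $B$). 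Hence $\left(\nabla_{D,C}-\delta_{D,C,k}\right)\widetilde{\nabla}_{D,B}\in\mathcal{F}_{k-1}$, which is the claim for a spanning set of $\mathcal{F}_k$, and by $\mathbf{k}$-linearity the claim follows in general.

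The main obstacle, such as it is, is purely bookkeeping: correctly matching the four-subset slots of Theorem \ref{thm.Nabla.prod0} to the situation at hand (it is easy to transpose $A\leftrightarrow B$ or $C\leftrightarrow D$), and making sure the symmetry $\omega_{B,C}=\omega_{C,B}$ is invoked so that the coefficient genuinely matches the definition of $\delta_{D,C,k}$. A minor point to handle carefully is the case $\left\vert B\right\vert < k$ among the spanning elements: there $\widetilde{\nabla}_{D,B}\in\mathcal F_{k-1}$, so both $\nabla_{D,C}\widetilde{\nabla}_{D,B}$ (since $\mathcal F_{k-1}$ is an ideal-stable subspace under left multiplication by $\nabla_{D,C}$ — indeed, by the same computation one more degree down, $\nabla_{D,C}\mathcal F_{k-1}\subseteq\mathcal F_{k-1}$ using Theorem \ref{thm.Nabla.prod0} \textbf{(c)} again) and $\delta_{D,C,k}\widetilde{\nabla}_{D,B}$ lie in $\mathcal F_{k-1}$, so the containment holds trivially. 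I expect no genuine difficulty beyond this.
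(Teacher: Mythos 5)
Your proposal is correct and follows essentially the same route as the paper: expand $\widetilde{\nabla}_{D,A}$ via Proposition \ref{prop.Nabla.simple} \textbf{(c)}, apply Theorem \ref{thm.Nabla.prod0} \textbf{(c)} to each $\nabla_{D,C}\nabla_{U,A}$, split off the top term $V=A$, and treat the cases $\left\vert A\right\vert =k$ and $\left\vert A\right\vert <k$ separately (your handling of the latter via $\nabla_{D,C}\mathcal{F}_{k-1}\subseteq\mathcal{F}_{k-1}$ is fine and matches the paper's Case 2 in substance). The only cosmetic difference is your remark about invoking $\omega_{B,C}=\omega_{C,B}$, which is not actually needed — after renaming the summation variable $U$ to $B$, your coefficient is literally the defining sum for $\delta_{D,C,k}$.
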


\begin{proof}
By the definition of $\mathcal{F}_{k}$, it suffices to show that
\begin{equation}
\left(  \nabla_{D,C}-\delta_{D,C,k}\right)  \widetilde{\nabla}_{D,A}%
\in\mathcal{F}_{k-1} \label{pf.prop.Fk.1.goal}%
\end{equation}
for each $A\subseteq\left[  n\right]  $ with $\left\vert A\right\vert \leq k$.

To prove this, we fix $A\subseteq\left[  n\right]  $ with $\left\vert
A\right\vert \leq k$. Then, Proposition \ref{prop.Nabla.simple} \textbf{(c)}
yields%
\begin{equation}
\widetilde{\nabla}_{D,A}=\sum\limits_{\substack{U\subseteq D;\\\left\vert
U\right\vert =\left\vert A\right\vert }}\nabla_{U,A}=\sum
\limits_{\substack{B\subseteq D;\\\left\vert B\right\vert =\left\vert
A\right\vert }}\nabla_{B,A}. \label{pf.prop.Fk.1.3}%
\end{equation}
Multiplying this equality by $\nabla_{D,C}$ from the left, we obtain%
\begin{equation}
\nabla_{D,C}\widetilde{\nabla}_{D,A}=\sum\limits_{\substack{B\subseteq
D;\\\left\vert B\right\vert =\left\vert A\right\vert }}\nabla_{D,C}%
\nabla_{B,A}. \label{pf.prop.Fk.1.3m}%
\end{equation}

However, for each subset $B\subseteq D$ satisfying $\left\vert B\right\vert
=\left\vert A\right\vert $, we can use Theorem \ref{thm.Nabla.prod0}
\textbf{(c)} to obtain%
\begin{align}
\nabla_{D,C}\nabla_{B,A}  &  =\omega_{B,C}\sum_{V\subseteq A}\left(
-1\right)  ^{\left\vert V\right\vert -\left\vert B\cap C\right\vert }%
\dbinom{\left\vert V\right\vert }{\left\vert B\cap C\right\vert }%
\widetilde{\nabla}_{D,V}\nonumber\\
&  =\sum_{V\subseteq A}\omega_{B,C}\left(  -1\right)  ^{\left\vert
V\right\vert -\left\vert B\cap C\right\vert }\dbinom{\left\vert V\right\vert
}{\left\vert B\cap C\right\vert }\widetilde{\nabla}_{D,V}\nonumber\\
&  =\sum_{\substack{V\subseteq A;\\V\neq A}}\omega_{B,C}\left(  -1\right)
^{\left\vert V\right\vert -\left\vert B\cap C\right\vert }\dbinom{\left\vert
V\right\vert }{\left\vert B\cap C\right\vert }\underbrace{\widetilde{\nabla
}_{D,V}}_{\substack{\in\mathcal{F}_{k-1}\\\text{(since }V\subseteq A\text{ and
}V\neq A\\\text{entail }\left\vert V\right\vert <\left\vert A\right\vert \leq
k\text{ and}\\\text{thus }\left\vert V\right\vert \leq k-1\text{)}%
}}\nonumber\\
&  \ \ \ \ \ \ \ \ \ \ +\omega_{B,C}\left(  -1\right)  ^{\left\vert
A\right\vert -\left\vert B\cap C\right\vert }\dbinom{\left\vert A\right\vert
}{\left\vert B\cap C\right\vert }\widetilde{\nabla}_{D,A}\nonumber\\
&  \ \ \ \ \ \ \ \ \ \ \ \ \ \ \ \ \ \ \ \ \left(
\begin{array}
[c]{c}%
\text{here, we have split off the}\\
\text{addend for }V=A\text{ from the sum}%
\end{array}
\right) \nonumber\\
&  \equiv\omega_{B,C}\left(  -1\right)  ^{\left\vert A\right\vert -\left\vert
B\cap C\right\vert }\dbinom{\left\vert A\right\vert }{\left\vert B\cap
C\right\vert }\widetilde{\nabla}_{D,A}\operatorname{mod}\mathcal{F}_{k-1}.
\label{pf.prop.Fk.1.NN}%
\end{align}

Recall that $\left\vert A\right\vert \leq k$. Hence, we are in one of the
following two cases:

\textit{Case 1:} We have $\left\vert A\right\vert =k$.

\textit{Case 2:} We have $\left\vert A\right\vert <k$.

Let us first consider Case 1. In this case, we have $\left\vert A\right\vert
=k$. Hence, (\ref{pf.prop.Fk.1.3m}) becomes%
\begin{align*}
\nabla_{D,C}\widetilde{\nabla}_{D,A}  &  =\sum\limits_{\substack{B\subseteq
D;\\\left\vert B\right\vert =\left\vert A\right\vert }}\nabla_{D,C}%
\nabla_{B,A}\\
&  \equiv\sum\limits_{\substack{B\subseteq D;\\\left\vert B\right\vert
=\left\vert A\right\vert }}\omega_{B,C}\left(  -1\right)  ^{\left\vert
A\right\vert -\left\vert B\cap C\right\vert }\dbinom{\left\vert A\right\vert
}{\left\vert B\cap C\right\vert }\widetilde{\nabla}_{D,A}%
\ \ \ \ \ \ \ \ \ \ \left(  \text{by (\ref{pf.prop.Fk.1.NN})}\right) \\
&  =\underbrace{\sum\limits_{\substack{B\subseteq D;\\\left\vert B\right\vert
=k}}\omega_{B,C}\left(  -1\right)  ^{k-\left\vert B\cap C\right\vert }%
\dbinom{k}{\left\vert B\cap C\right\vert }}_{=\delta_{D,C,k}}\widetilde{\nabla
}_{D,A}\ \ \ \ \ \ \ \ \ \ \left(  \text{since }\left\vert A\right\vert
=k\right) \\
&  =\delta_{D,C,k}\widetilde{\nabla}_{D,A}\operatorname{mod}\mathcal{F}_{k-1}.
\end{align*}
In other words, $\nabla_{D,C}\widetilde{\nabla}_{D,A}-\delta_{D,C,k}%
\widetilde{\nabla}_{D,A}\in\mathcal{F}_{k-1}$. In other words, $\left(
\nabla_{D,C}-\delta_{D,C,k}\right)  \widetilde{\nabla}_{D,A}\in\mathcal{F}%
_{k-1}$. Hence, (\ref{pf.prop.Fk.1.goal}) is proved in Case 1.

Let us now consider Case 2. In this case, we have $\left\vert A\right\vert
<k$. Hence, $\left\vert A\right\vert \leq k-1$, so that $\widetilde{\nabla
}_{D,A}\in\mathcal{F}_{k-1}$. In other words, $\widetilde{\nabla}_{D,A}%
\equiv0\operatorname{mod}\mathcal{F}_{k-1}$. Now, (\ref{pf.prop.Fk.1.3m})
becomes%
\begin{align*}
\nabla_{D,C}\widetilde{\nabla}_{D,A}  &  =\sum\limits_{\substack{B\subseteq
D;\\\left\vert B\right\vert =\left\vert A\right\vert }}\nabla_{D,C}%
\nabla_{B,A}\\
&  \equiv\sum\limits_{\substack{B\subseteq D;\\\left\vert B\right\vert
=\left\vert A\right\vert }}\omega_{B,C}\left(  -1\right)  ^{\left\vert
A\right\vert -\left\vert B\cap C\right\vert }\dbinom{\left\vert A\right\vert
}{\left\vert B\cap C\right\vert }\underbrace{\widetilde{\nabla}_{D,A}}%
_{\equiv0\operatorname{mod}\mathcal{F}_{k-1}}\ \ \ \ \ \ \ \ \ \ \left(
\text{by (\ref{pf.prop.Fk.1.NN})}\right) \\
&  \equiv0\operatorname{mod}\mathcal{F}_{k-1}.
\end{align*}
Hence,%
\[
\left(  \nabla_{D,C}-\delta_{D,C,k}\right)  \widetilde{\nabla}_{D,A}%
=\underbrace{\nabla_{D,C}\widetilde{\nabla}_{D,A}}_{\equiv0\operatorname{mod}%
\mathcal{F}_{k-1}}-\,\delta_{D,C,k}\underbrace{\widetilde{\nabla}_{D,A}%
}_{\equiv0\operatorname{mod}\mathcal{F}_{k-1}}\equiv0-0=0\operatorname{mod}%
\mathcal{F}_{k-1}.
\]
In other words, $\left(  \nabla_{D,C}-\delta_{D,C,k}\right)  \widetilde{\nabla
}_{D,A}\in\mathcal{F}_{k-1}$. Hence, (\ref{pf.prop.Fk.1.goal}) is proved in
Case 2.

We have now proved (\ref{pf.prop.Fk.1.goal}) in both Cases 1 and 2. Thus,
(\ref{pf.prop.Fk.1.goal}) always holds, and Proposition \ref{prop.Fk.1} is proved.
\end{proof}

\begin{definition}
\label{def.NabDalpha}Let $\alpha=\left(  \alpha_{C}\right)  _{C\subseteq
\left[  n\right]  ;\ \left\vert C\right\vert =\left\vert D\right\vert }$ be a
family of scalars in $\mathbf{k}$ indexed by the $\left\vert D\right\vert
$-element subsets of $\left[  n\right]  $. Then, we set%
\[
\nabla_{D,\alpha}:=\sum_{\substack{C\subseteq\left[  n\right]  ;\\\left\vert
C\right\vert =\left\vert D\right\vert }}\alpha_{C}\nabla_{D,C}\in\mathcal{A}.
\]
Furthermore, for each $k\in\mathbb{N}$, we set%
\[
\delta_{D,\alpha,k}:=\sum_{\substack{C\subseteq\left[  n\right]  ;\\\left\vert
C\right\vert =\left\vert D\right\vert }}\alpha_{C}\delta_{D,C,k}\in
\mathbf{k}.
\]

\end{definition}

\begin{proposition}
\label{prop.Fk.2}Let $\alpha=\left(  \alpha_{C}\right)  _{C\subseteq\left[
n\right]  ;\ \left\vert C\right\vert =\left\vert D\right\vert }$ be a family
of scalars in $\mathbf{k}$ indexed by the $\left\vert D\right\vert $-element
subsets of $\left[  n\right]  $. Let $k\in\mathbb{N}$. Then,%
\[
\left(  \nabla_{D,\alpha}-\delta_{D,\alpha,k}\right)  \mathcal{F}_{k}%
\subseteq\mathcal{F}_{k-1}.
\]

\end{proposition}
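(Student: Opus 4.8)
The plan is to deduce this immediately from Proposition \ref{prop.Fk.1} by linearity, since both $\nabla_{D,\alpha}$ and $\delta_{D,\alpha,k}$ were defined (in Definition \ref{def.NabDalpha}) as $\alpha$-weighted sums of $\nabla_{D,C}$ and $\delta_{D,C,k}$ over all $\left\vert D\right\vert$-element subsets $C\subseteq\left[n\right]$. First I would combine these two definitions to write
\[
\nabla_{D,\alpha}-\delta_{D,\alpha,k}
=\sum_{\substack{C\subseteq\left[n\right];\\\left\vert C\right\vert=\left\vert D\right\vert}}\alpha_{C}\left(\nabla_{D,C}-\delta_{D,C,k}\right)
\]
in $\mathcal{A}$ (this uses that $\delta_{D,\alpha,k}$ is central, being a scalar in $\mathbf{k}$).

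Next I would fix an arbitrary element $\mathbf{x}\in\mathcal{F}_{k}$ and compute
\[
\left(\nabla_{D,\alpha}-\delta_{D,\alpha,k}\right)\mathbf{x}
=\sum_{\substack{C\subseteq\left[n\right];\\\left\vert C\right\vert=\left\vert D\right\vert}}\alpha_{C}\underbrace{\left(\nabla_{D,C}-\delta_{D,C,k}\right)\mathbf{x}}_{\in\mathcal{F}_{k-1}},
\]
where each summand lies in $\mathcal{F}_{k-1}$ by Proposition \ref{prop.Fk.1} applied to this particular $C$ (which is legitimate, since every $C$ in the sum satisfies $\left\vert C\right\vert=\left\vert D\right\vert$, as required by the hypothesis of Proposition \ref{prop.Fk.1}). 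Since $\mathcal{F}_{k-1}$ is a $\mathbf{k}$-submodule of $\mathcal{A}$ (it is a $\mathbf{k}$-linear span, hence closed under $\mathbf{k}$-linear combinations), the whole sum lies in $\mathcal{F}_{k-1}$. Thus $\left(\nabla_{D,\alpha}-\delta_{D,\alpha,k}\right)\mathbf{x}\in\mathcal{F}_{k-1}$. As $\mathbf{x}\in\mathcal{F}_{k}$ was arbitrary, this proves $\left(\nabla_{D,\alpha}-\delta_{D,\alpha,k}\right)\mathcal{F}_{k}\subseteq\mathcal{F}_{k-1}$.

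There is no real obstacle here: the statement is a straightforward $\mathbf{k}$-linear bookkeeping consequence of Proposition \ref{prop.Fk.1}, and the only point worth checking is that the index set of the family $\alpha$ consists precisely of the $\left\vert D\right\vert$-element subsets, so that Proposition \ref{prop.Fk.1} is applicable to each term without any size mismatch.
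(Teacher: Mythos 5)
Your proof is correct and follows essentially the same route as the paper: the paper likewise obtains Proposition \ref{prop.Fk.2} by multiplying the inclusion of Proposition \ref{prop.Fk.1} by $\alpha_{C}$ and summing over all $\left\vert D\right\vert$-element subsets $C$, using the linearity you spell out. No gaps; your version is just a more detailed writeup of the same one-line linearity argument.
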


\begin{proof}
Proposition \ref{prop.Fk.1} yields $\left(  \nabla_{D,C}-\delta_{D,C,k}%
\right)  \mathcal{F}_{k}\subseteq\mathcal{F}_{k-1}$ for each $C\subseteq
\left[  n\right]  $ satisfying $\left\vert C\right\vert =\left\vert
D\right\vert $. Multiply this relation by $\alpha_{C}$ and sum up over all
$C\subseteq\left[  n\right]  $ satisfying $\left\vert C\right\vert =\left\vert
D\right\vert $. The result is Proposition \ref{prop.Fk.2}.
\end{proof}

\begin{proposition}
\label{prop.Fk.3}Let $\alpha=\left(  \alpha_{C}\right)  _{C\subseteq\left[
n\right]  ;\ \left\vert C\right\vert =\left\vert D\right\vert }$ be a family
of scalars in $\mathbf{k}$ indexed by the $\left\vert D\right\vert $-element
subsets of $\left[  n\right]  $. Then, for each integer $m\geq-1$, we have%
\[
\left(  \prod_{k=0}^{m}\left(  \nabla_{D,\alpha}-\delta_{D,\alpha,k}\right)
\right)  \mathcal{F}_{m}=0.
\]

\end{proposition}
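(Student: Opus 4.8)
The plan is to prove this by induction on $m$, with Proposition \ref{prop.Fk.2} doing essentially all the work: it lets us telescope the chain $\mathcal{F}_{m}\supseteq\mathcal{F}_{m-1}\supseteq\cdots\supseteq\mathcal{F}_{-1}=0$ down to zero, peeling off one factor at a time. A preliminary observation that will be used throughout: each $\delta_{D,\alpha,k}$ lies in $\mathbf{k}$, so every factor $\nabla_{D,\alpha}-\delta_{D,\alpha,k}$ is a polynomial in the single element $\nabla_{D,\alpha}$ with scalar coefficients. Hence these factors commute with one another, and the product $\prod_{k=0}^{m}\left(\nabla_{D,\alpha}-\delta_{D,\alpha,k}\right)$ is unambiguously defined and its factors may be reordered at will.

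For the base case $m=-1$, the product $\prod_{k=0}^{-1}\left(\nabla_{D,\alpha}-\delta_{D,\alpha,k}\right)$ is empty and therefore equals $1\in\mathcal{A}$, while $\mathcal{F}_{-1}=0$; so the assertion $1\cdot 0=0$ holds trivially. For the inductive step, fix $m\geq 0$ and assume the claim holds for $m-1$. Using the commutativity of the factors, I would split off the factor with $k=m$ to the right:
\[
\prod_{k=0}^{m}\left(\nabla_{D,\alpha}-\delta_{D,\alpha,k}\right)
=\left(\prod_{k=0}^{m-1}\left(\nabla_{D,\alpha}-\delta_{D,\alpha,k}\right)\right)\left(\nabla_{D,\alpha}-\delta_{D,\alpha,m}\right).
\]
Applying this operator to $\mathcal{F}_{m}$ and invoking Proposition \ref{prop.Fk.2} with $k=m$, which gives $\left(\nabla_{D,\alpha}-\delta_{D,\alpha,m}\right)\mathcal{F}_{m}\subseteq\mathcal{F}_{m-1}$, we obtain
\[
\left(\prod_{k=0}^{m}\left(\nabla_{D,\alpha}-\delta_{D,\alpha,k}\right)\right)\mathcal{F}_{m}
\subseteq\left(\prod_{k=0}^{m-1}\left(\nabla_{D,\alpha}-\delta_{D,\alpha,k}\right)\right)\mathcal{F}_{m-1}=0
\]
by the induction hypothesis. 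This completes the induction.

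There is no real obstacle here: the only point demanding even a moment's attention is the legitimacy of moving the $k=m$ factor to the right so that it acts on $\mathcal{F}_{m}$ first, and this is immediate from the commutativity of the factors noted at the outset. The proposition is thus a purely formal consequence of Proposition \ref{prop.Fk.2}.
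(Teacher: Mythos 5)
Your proof is correct and follows essentially the same route as the paper: induction on $m$ with base case $\mathcal{F}_{-1}=0$, peeling off the $k=m$ factor and applying Proposition \ref{prop.Fk.2} together with the induction hypothesis. (The commutativity remark is harmless but not even needed, since with the usual convention the $k=m$ factor is already the rightmost one in the product.)
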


\begin{proof}
Induction on $m$. The \textit{base case} is obvious, since $\mathcal{F}%
_{-1}=0$. The \textit{induction step} (from $m-1$ to $m$) uses Proposition
\ref{prop.Fk.2} as follows:%
\begin{align*}
\left(  \prod_{k=0}^{m}\left(  \nabla_{D,\alpha}-\delta_{D,\alpha,k}\right)
\right)  \mathcal{F}_{m}  &  =\left(  \prod_{k=0}^{m-1}\left(  \nabla
_{D,\alpha}-\delta_{D,\alpha,k}\right)  \right)  \underbrace{\left(
\nabla_{D,\alpha}-\delta_{D,\alpha,m}\right)  \mathcal{F}_{m}}%
_{\substack{\subseteq\mathcal{F}_{m-1}\\\text{(by Proposition \ref{prop.Fk.2}%
)}}}\\
&  \subseteq\left(  \prod_{k=0}^{m-1}\left(  \nabla_{D,\alpha}-\delta
_{D,\alpha,k}\right)  \right)  \mathcal{F}_{m-1}=0
\end{align*}
and thus $\left(  \prod_{k=0}^{m}\left(  \nabla_{D,\alpha}-\delta_{D,\alpha
,k}\right)  \right)  \mathcal{F}_{m}=0$. Thus, Proposition \ref{prop.Fk.3} is proved.
\end{proof}

\subsection{The triangularity theorem}

We can now state our main theorem (still using Definition \ref{def.NabDalpha}):

\begin{theorem}
\label{thm.NablaDal.triangular}Let $D$ be a subset of $\left[  n\right]  $.
Let $\alpha=\left(  \alpha_{C}\right)  _{C\subseteq\left[  n\right]
;\ \left\vert C\right\vert =\left\vert D\right\vert }$ be a family of scalars
in $\mathbf{k}$ indexed by the $\left\vert D\right\vert $-element subsets of
$\left[  n\right]  $. Then,%
\[
\left(  \prod_{k=0}^{\left\vert D\right\vert }\left(  \nabla_{D,\alpha}%
-\delta_{D,\alpha,k}\right)  \right)  \nabla_{D,\alpha}=0.
\]

\end{theorem}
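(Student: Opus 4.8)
The plan is to deduce this from Proposition~\ref{prop.Fk.3} by a change of index. First I would note that $\nabla_{D,\alpha}$, being a $\mathbf{k}$-linear combination of the elements $\nabla_{D,C}$ with $\left\vert C\right\vert=\left\vert D\right\vert$, lies in $\mathcal{F}_{\left\vert D\right\vert}$: indeed, each such $\nabla_{D,C}$ equals $\widetilde{\nabla}_{D,C}$ by Proposition~\ref{prop.Nabla.simple}~\textbf{(e)} (since $\left\vert C\right\vert=\left\vert D\right\vert$), and $\widetilde{\nabla}_{D,C}\in\mathcal{F}_{\left\vert D\right\vert}$ by the definition of $\mathcal{F}_{\left\vert D\right\vert}$. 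Hence $\nabla_{D,\alpha}\in\mathcal{F}_{\left\vert D\right\vert}$.

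Next I would apply Proposition~\ref{prop.Fk.3} with $m=\left\vert D\right\vert$ (which is $\geq-1$, in fact $\geq0$), obtaining
\[
\left(\prod_{k=0}^{\left\vert D\right\vert}\left(\nabla_{D,\alpha}-\delta_{D,\alpha,k}\right)\right)\mathcal{F}_{\left\vert D\right\vert}=0.
\]
Since $\nabla_{D,\alpha}\in\mathcal{F}_{\left\vert D\right\vert}$, applying the left-hand operator to the particular element $\nabla_{D,\alpha}$ of $\mathcal{F}_{\left\vert D\right\vert}$ yields exactly
\[
\left(\prod_{k=0}^{\left\vert D\right\vert}\left(\nabla_{D,\alpha}-\delta_{D,\alpha,k}\right)\right)\nabla_{D,\alpha}=0,
\]
which is the claim.

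There is essentially no obstacle here; the substantive work has already been done in establishing the $D$-filtration behaviour (Propositions~\ref{prop.Fk.1}--\ref{prop.Fk.3}), which in turn rested on the product rule of Theorem~\ref{thm.Nabla.prod0}. The only point requiring a moment's care is the membership $\nabla_{D,\alpha}\in\mathcal{F}_{\left\vert D\right\vert}$, and in particular the use of Proposition~\ref{prop.Nabla.simple}~\textbf{(e)} to identify $\nabla_{D,C}$ with $\widetilde{\nabla}_{D,C}$ when $\left\vert C\right\vert=\left\vert D\right\vert$; all the scalars $\delta_{D,\alpha,k}$ commute with everything (they are central, being elements of $\mathbf{k}$), so the product of operators $\nabla_{D,\alpha}-\delta_{D,\alpha,k}$ acts on $\mathcal{F}_{\left\vert D\right\vert}$ just as Proposition~\ref{prop.Fk.3} describes. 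One may also remark that the factor $\nabla_{D,\alpha}$ on the far right can equally be thought of as the last factor in a product $\prod_{k=-1}^{\left\vert D\right\vert}$ if one adopts the convention $\delta_{D,\alpha,-1}=0$ and uses that $\nabla_{D,\alpha}\cdot(\text{anything})$ versus $(\text{anything})\cdot\nabla_{D,\alpha}$ does not matter here, but the cleanest phrasing is simply to feed $\nabla_{D,\alpha}$ into Proposition~\ref{prop.Fk.3} as an element of $\mathcal{F}_{\left\vert D\right\vert}$.
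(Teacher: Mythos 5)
Your proposal is correct and is essentially identical to the paper's own proof: both establish $\nabla_{D,\alpha}\in\mathcal{F}_{\left\vert D\right\vert}$ via Proposition \ref{prop.Nabla.simple} \textbf{(e)} and then apply Proposition \ref{prop.Fk.3} with $m=\left\vert D\right\vert$. No gaps to report.
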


\begin{proof}
For each subset $C$ of $\left[  n\right]  $ satisfying $\left\vert
C\right\vert =\left\vert D\right\vert $, we have $\nabla_{D,C}%
=\widetilde{\nabla}_{D,C}$ (by Proposition \ref{prop.Nabla.simple}
\textbf{(e)}) and thus $\nabla_{D,C}=\widetilde{\nabla}_{D,C}\in
\mathcal{F}_{\left\vert D\right\vert }$ (since $\left\vert C\right\vert
=\left\vert D\right\vert \leq\left\vert D\right\vert $). Thus, $\nabla
_{D,\alpha}\in\mathcal{F}_{\left\vert D\right\vert }$ as well (since
$\nabla_{D,\alpha}$ is a $\mathbf{k}$-linear combination of such $\nabla
_{D,C}$'s). Hence,%
\[
\left(  \prod_{k=0}^{\left\vert D\right\vert }\left(  \nabla_{D,\alpha}%
-\delta_{D,\alpha,k}\right)  \right)  \nabla_{D,\alpha}=\left(  \prod
_{k=0}^{\left\vert D\right\vert }\left(  \nabla_{D,\alpha}-\delta_{D,\alpha
,k}\right)  \right)  \mathcal{F}_{\left\vert D\right\vert }=0
\]
(by Proposition \ref{prop.Fk.3}, applied to $m=\left\vert D\right\vert $).
This proves Theorem \ref{thm.NablaDal.triangular}.
\end{proof}

Using the antipode $S$ of $\mathcal{A}$, we can obtain a reflected version of
Theorem \ref{thm.NablaDal.triangular}:

\begin{theorem}
\label{thm.NablaDal.triangular2}Let $D$ be a subset of $\left[  n\right]  $.
Let $\alpha=\left(  \alpha_{C}\right)  _{C\subseteq\left[  n\right]
;\ \left\vert C\right\vert =\left\vert D\right\vert }$ be a family of scalars
in $\mathbf{k}$ indexed by the $\left\vert D\right\vert $-element subsets of
$\left[  n\right]  $. Set
\[
\nabla_{\alpha,D}:=\sum_{\substack{C\subseteq\left[  n\right]  ;\\\left\vert
C\right\vert =\left\vert D\right\vert }}\alpha_{C}\nabla_{C,D}\in\mathcal{A}.
\]
Then,%
\[
\left(  \prod_{k=0}^{\left\vert D\right\vert }\left(  \nabla_{\alpha,D}%
-\delta_{D,\alpha,k}\right)  \right)  \nabla_{\alpha,D}=0.
\]

\end{theorem}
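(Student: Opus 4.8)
The plan is to deduce Theorem~\ref{thm.NablaDal.triangular2} from Theorem~\ref{thm.NablaDal.triangular} by applying the antipode $S$. The key observation is that $S$ is a $\mathbf{k}$-algebra anti-automorphism, so it turns products into products with the order reversed, but since the factors $\left( \nabla_{D,\alpha} - \delta_{D,\alpha,k} \right)$ for varying $k$ need not commute, I must be a little careful about how the reversed product looks.

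First I would compute $S\left( \nabla_{D,\alpha} \right)$. By Definition~\ref{def.NabDalpha}, $\nabla_{D,\alpha} = \sum_{C} \alpha_{C} \nabla_{D,C}$, where $C$ ranges over the $\left\vert D\right\vert$-element subsets of $\left[ n\right]$. Applying $S$ and using Proposition~\ref{prop.Nabla.simple}~\textbf{(f)} (which gives $S\left( \nabla_{D,C}\right) = \nabla_{C,D}$), together with $\mathbf{k}$-linearity of $S$, I get $S\left( \nabla_{D,\alpha}\right) = \sum_{C} \alpha_{C} \nabla_{C,D} = \nabla_{\alpha,D}$. Since $\delta_{D,\alpha,k}$ is a scalar in $\mathbf{k}$ and $S$ fixes scalars (being $\mathbf{k}$-linear and unital), I also have $S\left( \nabla_{D,\alpha} - \delta_{D,\alpha,k}\right) = \nabla_{\alpha,D} - \delta_{D,\alpha,k}$ for each $k$. (Here I am abusing notation by writing a scalar $\lambda \in \mathbf{k}$ for $\lambda \cdot 1_{\mathcal{A}}$, exactly as the paper does.)

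Next I would apply $S$ to the equation of Theorem~\ref{thm.NablaDal.triangular}, namely $\left( \prod_{k=0}^{\left\vert D\right\vert} \left( \nabla_{D,\alpha} - \delta_{D,\alpha,k}\right)\right) \nabla_{D,\alpha} = 0$. Since $S$ is an algebra anti-automorphism, it sends the left-hand side to $S\left( \nabla_{D,\alpha}\right) \cdot S\left( \prod_{k=0}^{\left\vert D\right\vert} \left( \nabla_{D,\alpha} - \delta_{D,\alpha,k}\right)\right)$, and the latter product gets its order of factors reversed: $S\left( \prod_{k=0}^{\left\vert D\right\vert} \left( \nabla_{D,\alpha} - \delta_{D,\alpha,k}\right)\right) = \prod_{k=\left\vert D\right\vert}^{0} \left( \nabla_{\alpha,D} - \delta_{D,\alpha,k}\right)$, i.e., the product with $k$ running from $\left\vert D\right\vert$ down to $0$. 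Meanwhile $S\left( 0\right) = 0$. So I obtain $\nabla_{\alpha,D} \cdot \prod_{k=\left\vert D\right\vert}^{0} \left( \nabla_{\alpha,D} - \delta_{D,\alpha,k}\right) = 0$.

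Finally I would reconcile this with the claimed form. The claim is $\left( \prod_{k=0}^{\left\vert D\right\vert} \left( \nabla_{\alpha,D} - \delta_{D,\alpha,k}\right)\right) \nabla_{\alpha,D} = 0$, with $k$ running $0$ up to $\left\vert D\right\vert$ and the extra $\nabla_{\alpha,D}$ on the right, whereas I have derived the "transpose" statement with the extra $\nabla_{\alpha,D}$ on the left and the product reversed. The cleanest way to close the gap is to apply $S$ a second time: $S$ is an involution, so applying $S$ to my derived equation $\nabla_{\alpha,D} \cdot \prod_{k=\left\vert D\right\vert}^{0} \left( \nabla_{\alpha,D} - \delta_{D,\alpha,k}\right) = 0$ and again using that $S$ reverses products and (by Proposition~\ref{prop.Nabla.simple}~\textbf{(f)}, applied with the roles of the two subsets swapped) sends $\nabla_{\alpha,D}$ back to $\nabla_{D,\alpha}$ would return Theorem~\ref{thm.NablaDal.triangular}, which is circular. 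Instead, the honest route is to run the antipode argument in reverse: start from the (yet-unproven) claim, apply $S$, and see that it becomes exactly Theorem~\ref{thm.NablaDal.triangular} — but to make this a proof rather than a wish, I should apply $S$ directly to Theorem~\ref{thm.NablaDal.triangular} and simply observe that $S$ is a bijection, so the image of a true equation is a true equation; the image equation, after rewriting the reversed product $\prod_{k=\left\vert D\right\vert}^{0}$ as $\prod_{k=0}^{\left\vert D\right\vert}$ is \emph{not} literally allowed unless the factors commute.

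The main obstacle, therefore, is this non-commutativity: the operators $\nabla_{D,\alpha} - \delta_{D,\alpha,k}$ for different $k$ differ only by scalars, hence they \emph{do} commute, since $\left( \nabla_{D,\alpha} - \delta_{D,\alpha,j}\right) - \left( \nabla_{D,\alpha} - \delta_{D,\alpha,k}\right) = \delta_{D,\alpha,k} - \delta_{D,\alpha,j}$ is a scalar, and two elements of a $\mathbf{k}$-algebra whose difference is a scalar commute (because each commutes with $\nabla_{D,\alpha}$ and with scalars). So all the factors $\nabla_{D,\alpha} - \delta_{D,\alpha,k}$ (and likewise all the $\nabla_{\alpha,D} - \delta_{D,\alpha,k}$) commute with each other, and the reversed product equals the forward product. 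Hence, after applying $S$ once to Theorem~\ref{thm.NablaDal.triangular}, using $S\left( \nabla_{D,\alpha}\right) = \nabla_{\alpha,D}$, $S$ fixing scalars, $S$ reversing products, and then commuting the factors back into increasing order, I land precisely on $\left( \prod_{k=0}^{\left\vert D\right\vert} \left( \nabla_{\alpha,D} - \delta_{D,\alpha,k}\right)\right) \nabla_{\alpha,D} = S\left( 0\right) = 0$, which is Theorem~\ref{thm.NablaDal.triangular2}. I expect the write-up to be short; the only point needing a sentence of justification is the commutativity of the factors, and the identification $S\left( \nabla_{D,\alpha}\right) = \nabla_{\alpha,D}$ via Proposition~\ref{prop.Nabla.simple}~\textbf{(f)}.
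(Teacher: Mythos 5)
Your proposal is correct and is essentially the paper's own proof: apply the antipode to Theorem \ref{thm.NablaDal.triangular}, use $S\left(\nabla_{D,\alpha}\right)=\nabla_{\alpha,D}$ (via Proposition \ref{prop.Nabla.simple} \textbf{(f)}) and the fact that $S$ fixes scalars, and note that the reversal of factors is harmless because all factors commute (they differ only by scalars). The digression about circularity in your middle paragraph is unnecessary, but the final argument you settle on is exactly the intended one.
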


\begin{proof}
The antipode $S$ is a $\mathbf{k}$-algebra anti-homomorphism, and sends
$\nabla_{D,\alpha}$ to $\nabla_{\alpha,D}$ (since it sends $\nabla_{D,C}$ to
$\nabla_{C,D}$ for each $C$). Thus, Theorem \ref{thm.NablaDal.triangular2}
follows easily by applying the antipode to Theorem
\ref{thm.NablaDal.triangular}. (Note that we don't have to reverse the order
of factors in the product, since all these factors commute with each other.)
\end{proof}

\begin{corollary}
\label{cor.Nablatil.triangular}Let $B$ and $D$ be two subsets of $\left[
n\right]  $. For each $k\in\mathbb{N}$, we set%
\[
\widetilde{\delta}_{D,B,k}:=\sum_{\substack{C\subseteq B;\\\left\vert
C\right\vert =\left\vert D\right\vert }}\delta_{D,C,k}\in\mathbb{Z}.
\]
Then,%
\[
\left(  \prod_{k=0}^{\left\vert D\right\vert }\left(  \widetilde{\nabla}%
_{B,D}-\widetilde{\delta}_{D,B,k}\right)  \right)  \widetilde{\nabla}%
_{B,D}=0.
\]

\end{corollary}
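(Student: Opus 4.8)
The plan is to reduce Corollary~\ref{cor.Nablatil.triangular} to Theorem~\ref{thm.NablaDal.triangular2} by recognizing $\widetilde{\nabla}_{B,D}$ as a specialization of the element $\nabla_{\alpha,D}$ for a suitable choice of the scalar family $\alpha$. Recall from Proposition~\ref{prop.Nabla.simple}~\textbf{(c)} that $\widetilde{\nabla}_{B,D}=\sum_{C\subseteq B;\ \left\vert C\right\vert =\left\vert D\right\vert }\nabla_{C,D}$. Thus, if we define the family $\alpha=\left(\alpha_{C}\right)_{C\subseteq\left[n\right];\ \left\vert C\right\vert =\left\vert D\right\vert}$ by setting $\alpha_{C}:=\left[C\subseteq B\right]$ (that is, $\alpha_{C}=1$ if $C\subseteq B$ and $\alpha_{C}=0$ otherwise), then $\nabla_{\alpha,D}=\widetilde{\nabla}_{B,D}$ exactly.

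With this choice of $\alpha$, the scalar $\delta_{D,\alpha,k}$ from Definition~\ref{def.NabDalpha} becomes $\delta_{D,\alpha,k}=\sum_{C\subseteq\left[n\right];\ \left\vert C\right\vert =\left\vert D\right\vert}\alpha_{C}\delta_{D,C,k}=\sum_{C\subseteq B;\ \left\vert C\right\vert =\left\vert D\right\vert}\delta_{D,C,k}$, which is precisely the integer $\widetilde{\delta}_{D,B,k}$ defined in the statement of the corollary. So the substitution is entirely transparent: both sides of the identity in Theorem~\ref{thm.NablaDal.triangular2} specialize verbatim to the two sides of the identity in Corollary~\ref{cor.Nablatil.triangular}.

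Consequently, the proof is a single application of Theorem~\ref{thm.NablaDal.triangular2}: we introduce $\alpha$ as above, observe $\nabla_{\alpha,D}=\widetilde{\nabla}_{B,D}$ (by Proposition~\ref{prop.Nabla.simple}~\textbf{(c)}) and $\delta_{D,\alpha,k}=\widetilde{\delta}_{D,B,k}$ (by the definitions), and conclude $\left(\prod_{k=0}^{\left\vert D\right\vert}\left(\widetilde{\nabla}_{B,D}-\widetilde{\delta}_{D,B,k}\right)\right)\widetilde{\nabla}_{B,D}=\left(\prod_{k=0}^{\left\vert D\right\vert}\left(\nabla_{\alpha,D}-\delta_{D,\alpha,k}\right)\right)\nabla_{\alpha,D}=0$. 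There is no real obstacle here; the only point requiring a moment's care is making sure the index set of $\alpha$ (namely the $\left\vert D\right\vert$-element subsets of $\left[n\right]$) matches the index set in Proposition~\ref{prop.Nabla.simple}~\textbf{(c)} and in $\widetilde{\delta}_{D,B,k}$ (the $\left\vert D\right\vert$-element subsets of $B$) — but this is handled automatically by the indicator $\alpha_{C}=\left[C\subseteq B\right]$, which kills all terms with $C\not\subseteq B$. One could equally well cite Theorem~\ref{thm.NablaDal.triangular} together with the antipode, but invoking Theorem~\ref{thm.NablaDal.triangular2} directly is cleanest since it is already stated in the $\nabla_{C,D}$ (rather than $\nabla_{D,C}$) orientation that matches $\widetilde{\nabla}_{B,D}=\sum_{C}\nabla_{C,D}$.
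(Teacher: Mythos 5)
Your proposal is correct and matches the paper's own proof essentially verbatim: the paper likewise defines $\alpha_{C}$ as the indicator of $C\subseteq B$, identifies $\widetilde{\nabla}_{B,D}=\nabla_{\alpha,D}$ via Proposition \ref{prop.Nabla.simple} \textbf{(c)} and $\widetilde{\delta}_{D,B,k}=\delta_{D,\alpha,k}$, and then applies Theorem \ref{thm.NablaDal.triangular2}. Nothing further is needed.
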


\begin{proof}
Define a family $\alpha=\left(  \alpha_{C}\right)  _{C\subseteq\left[
n\right]  ;\ \left\vert C\right\vert =\left\vert D\right\vert }$ of scalars in
$\mathbf{k}$ by setting%
\[
\alpha_{C}=%
\begin{cases}
1, & \text{if }C\subseteq B;\\
0, & \text{if }C\not \subseteq B
\end{cases}
\ \ \ \ \ \ \ \ \ \ \text{for each }C\subseteq\left[  n\right]  \text{.}%
\]

Then, Proposition \ref{prop.Nabla.simple} \textbf{(c)} yields
\[
\widetilde{\nabla}_{B,D}=\sum\limits_{\substack{U\subseteq B;\\\left\vert
U\right\vert =\left\vert D\right\vert }}\nabla_{U,D}=\sum
\limits_{\substack{C\subseteq B;\\\left\vert C\right\vert =\left\vert
D\right\vert }}\nabla_{C,D}=\sum\limits_{\substack{C\subseteq\left[  n\right]
;\\\left\vert C\right\vert =\left\vert D\right\vert }}\alpha_{C}\nabla
_{C,D}=\nabla_{\alpha,D},
\]
where $\nabla_{\alpha,D}$ is defined as in Theorem
\ref{thm.NablaDal.triangular2}. Hence, Corollary \ref{cor.Nablatil.triangular}
follows from Theorem \ref{thm.NablaDal.triangular2}, once we realize that the
$\delta_{D,\alpha,k}$ from Theorem \ref{thm.NablaDal.triangular2} is precisely
the $\widetilde{\delta}_{D,B,k}$.
\end{proof}

Corollary \ref{cor.Nablatil.triangular} shows that the element
$\widetilde{\nabla}_{B,D}$ has a minimal polynomial that factors entirely into
linear factors. Moreover, there are at most $\left\vert D\right\vert +2$
factors, and one of them is $X$ or else there are at most $\left\vert
D\right\vert +1$ of them.

\begin{question}
Can we simplify the formula for $\widetilde{\delta}_{D,B,k}$ ?
\end{question}

\subsection{A table of minimal polynomials}

For any subsets $A$ and $B$ of $\left[  n\right]  $, we let $\kappa_{A,B}$ be
the sum (in $\mathbb{Z}\left[  S_{n}\right]  $) of all permutations $w\in
S_{n}$ that satisfy $w\left(  A\right)  \cap B=\varnothing$ (that is,
$w\left(  a\right)  \notin B$ for all $a\in A$). Then, $\kappa_{A,B}$ is
simply $\widetilde{\nabla}_{\left[  n\right]  \setminus B,\ A}$. Thus,
Corollary \ref{cor.Nablatil.triangular} shows that the element $\kappa_{A,B}$
has a minimal polynomial that factors into at most $\left\vert A\right\vert
+2$ factors. Note that these factors will sometimes have multiplicities (e.g.,
the case of $n=6$ and $a=3$ and $b=2$ and $c=1$).

Let us collect a table of these minimal polynomials. We observe that the
minimal polynomial of $\kappa_{B,A}$ depends only on the three numbers
$a:=\left\vert A\right\vert $, $b:=\left\vert B\right\vert $ and
$c:=\left\vert A\cap B\right\vert $ (since any two pairs $\left(  A,B\right)
$ that agree in these three numbers can be obtained from each other by the
action of some permutation $\sigma\in S_{n}$, and therefore the corresponding
elements $\kappa_{B,A}$ are conjugate to each other in $\mathcal{A}$). Hence,
we can rename $\kappa_{B,A}$ as $\kappa_{a,b,c}$.

We also note that $\kappa_{a,b,c}=0$ if $c>a$ or $b>a$ or $a+b>n$. Hence, we
only need to consider the cases $a,b\in\left[  0,n\right]  $ and $a+b\leq n$
and $c\in\left[  0,\ \min\left\{  a,b\right\}  \right]  $.

Moreover, $\kappa_{B,A}$ is the antipode of $\kappa_{A,B}$ (by Proposition
\ref{prop.Nabla.simple} \textbf{(g)}), and the antipode preserves minimal
polynomials. Thus, we only need to consider the case $a\leq b$.

This being said, here is a table of minpols (= minimal polynomials) of
$\kappa_{a,b,c}$'s produced by SageMath:

------------------------------------------------------ \newline Let $n=1$.
\newline For $b=0$, the minpol is $x-1$. \newline

------------------------------------------------------ \newline Let $n=2$.
\newline For $b=0$, the minpol is $(x-2)x$. \newline For $a=1$ and $b=1$ and
$c=0$, the minpol is $x-1$. \newline For $a=1$ and $b=1$ and $c=1$, the minpol
is $(x-1)(x+1)$. \newline%
------------------------------------------------------ \newline Let $n=3$.
\newline For $b=0$, the minpol is $(x-6)x$. \newline For $a=1$ and $b=1$ and
$c=0$, the minpol is $(x-4)(x-1)x$. \newline For $a=1$ and $b=1$ and $c=1$,
the minpol is $(x-4)x(x+2)$. \newline For $a=2$ and $b=1$ and $c=0$, the
minpol is $(x-2)x$. \newline For $a=2$ and $b=1$ and $c=1$, the minpol is
$(x-2)x(x+1)$. \newline

------------------------------------------------------ \newline Let $n=4$.
\newline For $b=0$, the minpol is $(x-24)x$. \newline For $a=1$ and $b=1$ and
$c=0$, the minpol is $(x-18)(x-2)x$. \newline For $a=1$ and $b=1$ and $c=1$,
the minpol is $(x-18)x(x+6)$. \newline For $a=2$ and $b=1$ and $c=0$, the
minpol is $(x-12)(x-4)x$. \newline For $a=2$ and $b=1$ and $c=1$, the minpol
is $(x-12)x(x+4)$. \newline For $a=3$ and $b=1$ and $c=0$, the minpol is
$(x-6)x$. \newline For $a=3$ and $b=1$ and $c=1$, the minpol is $(x-6)x(x+2)$.
\newline For $a=2$ and $b=2$ and $c=0$, the minpol is $(x-4)x$. \newline For
$a=2$ and $b=2$ and $c=1$, the minpol is $(x-4)(x+2)x^{2}$. \newline For $a=2$
and $b=2$ and $c=2$, the minpol is $(x-4)x(x+4)$. \newline

------------------------------------------------------ \newline Let $n=5$.
\newline For $b=0$, the minpol is $(x-120)x$. \newline For $a=1$ and $b=1$ and
$c=0$, the minpol is $(x-96)(x-6)x$. \newline For $a=1$ and $b=1$ and $c=1$,
the minpol is $(x-96)x(x+24)$. \newline For $a=2$ and $b=1$ and $c=0$, the
minpol is $(x-72)(x-12)x$. \newline For $a=2$ and $b=1$ and $c=1$, the minpol
is $(x-72)x(x+18)$. \newline For $a=3$ and $b=1$ and $c=0$, the minpol is
$(x-48)(x-18)x$. \newline For $a=3$ and $b=1$ and $c=1$, the minpol is
$(x-48)x(x+12)$. \newline For $a=4$ and $b=1$ and $c=0$, the minpol is
$(x-24)x$. \newline For $a=4$ and $b=1$ and $c=1$, the minpol is
$(x-24)x(x+6)$. \newline For $a=2$ and $b=2$ and $c=0$, the minpol is
$(x-36)(x-16)(x-4)x$. \newline For $a=2$ and $b=2$ and $c=1$, the minpol is
$(x-36)x(x+4)$. \newline For $a=2$ and $b=2$ and $c=2$, the minpol is
$(x-36)(x-12)x(x+24)$. \newline For $a=3$ and $b=2$ and $c=0$, the minpol is
$(x-12)x$. \newline For $a=3$ and $b=2$ and $c=1$, the minpol is
$(x-12)(x-2)x(x+4)$. \newline For $a=3$ and $b=2$ and $c=2$, the minpol is
$(x-12)(x-4)x(x+8)$. \newline

------------------------------------------------------ \newline Let $n=6$.
\newline For $b=0$, the minpol is $(x-720)x$.\newline For $a=1$ and $b=1$ and
$c=0$, the minpol is $(x-600)(x-24)x$. \newline For $a=1$ and $b=1$ and $c=1$,
the minpol is $(x-600)x(x+120)$. \newline For $a=2$ and $b=1$ and $c=0$, the
minpol is $(x-480)(x-48)x$. \newline For $a=2$ and $b=1$ and $c=1$, the minpol
is $(x-480)x(x+96)$. \newline For $a=3$ and $b=1$ and $c=0$, the minpol is
$(x-360)(x-72)x$. \newline For $a=3$ and $b=1$ and $c=1$, the minpol is
$(x-360)x(x+72)$. \newline For $a=4$ and $b=1$ and $c=0$, the minpol is
$(x-240)(x-96)x$. \newline For $a=4$ and $b=1$ and $c=1$, the minpol is
$(x-240)x(x+48)$. \newline For $a=5$ and $b=1$ and $c=0$, the minpol is
$(x-120)x$. \newline For $a=5$ and $b=1$ and $c=1$, the minpol is
$(x-120)x(x+24)$. \newline For $a=2$ and $b=2$ and $c=0$, the minpol is
$(x-288)(x-72)(x-8)x$. \newline For $a=2$ and $b=2$ and $c=1$, the minpol is
$(x-288)x(x+12)(x+36)$. \newline For $a=2$ and $b=2$ and $c=2$, the minpol is
$(x-288)(x-48)x(x+144)$. \newline For $a=3$ and $b=2$ and $c=0$, the minpol is
$(x-144)(x-72)(x-24)x$. \newline For $a=3$ and $b=2$ and $c=1$, the minpol is
$(x-144)(x+16)x^{2}$. \newline For $a=3$ and $b=2$ and $c=2$, the minpol is
$(x-144)(x-24)x(x+72)$. \newline For $a=4$ and $b=2$ and $c=0$, the minpol is
$(x-48)x$. \newline For $a=4$ and $b=2$ and $c=1$, the minpol is
$(x-48)(x-12)x(x+12)$. \newline For $a=4$ and $b=2$ and $c=2$, the minpol is
$(x-48)(x-8)x(x+24)$. \newline For $a=3$ and $b=3$ and $c=0$, the minpol is
$(x-36)x$. \newline For $a=3$ and $b=3$ and $c=1$, the minpol is
$(x-36)(x-12)x(x+4)(x+12)$. \newline For $a=3$ and $b=3$ and $c=2$, the minpol
is $(x-36)(x-12)x(x+4)(x+12)$. \newline For $a=3$ and $b=3$ and $c=3$, the
minpol is $(x-36)x(x+36)$. \newline

\subsection{Aside: The abstract Nabla-algebra}

We take a tangent and address a question that is suggested by Theorem
\ref{thm.Nabla.prod0} \textbf{(b)} but takes us out of the symmetric group
algebra $\mathcal{A}$. Namely, let us see what happens if we take the
multiplication rule in Theorem \ref{thm.Nabla.prod0} \textbf{(b)} literally
while forgetting what the $\nabla_{B,A}$ are.

\begin{theorem}
\label{thm.Nabla-alg.ass}For any two subsets $A$ and $B$ of $\left[  n\right]
$ satisfying $\left\vert A\right\vert =\left\vert B\right\vert $, introduce a
formal symbol $\Delta_{B,A}$. Thus, we have introduced altogether $\sum
_{k=0}^{n}\dbinom{n}{k}^{2}=\dbinom{2n}{n}$ symbols $\Delta_{B,A}$. Let
$\mathcal{D}$ be the free $\mathbf{k}$-module with basis $\left(  \Delta
_{B,A}\right)  _{A,B\subseteq\left[  n\right]  \text{ with }\left\vert
A\right\vert =\left\vert B\right\vert }$. Define a multiplication on
$\mathcal{D}$ by%
\[
\Delta_{D,C}\Delta_{B,A}:=\omega_{B,C}\sum_{\substack{U\subseteq
D,\\V\subseteq A;\\\left\vert U\right\vert =\left\vert V\right\vert }}\left(
-1\right)  ^{\left\vert U\right\vert -\left\vert B\cap C\right\vert }%
\dbinom{\left\vert U\right\vert }{\left\vert B\cap C\right\vert }\Delta
_{U,V}.
\]
(Recall Definition \ref{def.omegaBC}, which defines the $\omega_{B,C}$ here.)
Then, $\mathcal{D}$ becomes a nonunital $\mathbf{k}$-algebra.
\end{theorem}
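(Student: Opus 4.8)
The plan is to prove associativity directly by comparing $\left(\Delta_{F,E}\Delta_{D,C}\right)\Delta_{B,A}$ with $\Delta_{F,E}\left(\Delta_{D,C}\Delta_{B,A}\right)$, using the transfer principle that the structure constants are the same as those of the $\nabla_{B,A}$'s in $\mathcal{A}$, which we already know to be associative. Concretely, fix $n$ and expand both triple products into $\mathbf{k}$-linear combinations of the symbols $\Delta_{X,Y}$ via the given rule (applied twice). Each coefficient that arises is a universal polynomial expression in the quantities $\omega_{\bullet,\bullet}$ and binomial coefficients built from the cardinalities of intersections of $A,C,E$ (and $B,D,F$) — importantly, it does \emph{not} depend on $\mathbf{k}$ or on anything beyond the subsets $A,B,C,D,E,F$ themselves. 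So it suffices to prove the identity of these two coefficient families as identities of integers, for every choice of subsets $A,\ldots,F$ of $[n]$; once that is done, $\mathbf{k}$-linearity and $\mathbf{k}$-bilinearity of the multiplication give associativity over an arbitrary commutative ring.

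The key step is then to observe that these integer identities are \emph{exactly} the ones asserting $\left(\nabla_{F,E}\nabla_{D,C}\right)\nabla_{B,A}=\nabla_{F,E}\left(\nabla_{D,C}\nabla_{B,A}\right)$ in $\mathcal{A}=\mathbf{k}\left[S_n\right]$, \emph{provided} the $\nabla_{X,Y}$ with $|X|=|Y|$ were linearly independent — which they are not. This is the one subtlety: the multiplication rule of Theorem \ref{thm.Nabla.prod0} \textbf{(b)} holds for the $\nabla$'s, but the $\nabla$'s satisfy linear relations, so an identity among $\nabla$'s does not immediately yield the corresponding identity among $\Delta$'s. To get around this, I would take $\mathbf{k}=\mathbb{Z}$ and work inside a suitably large symmetric group: replace $n$ by $N:=2n$ (say), and embed the index set by sending each subset $A\subseteq[n]$ to $A$ itself viewed inside $[N]$. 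The point is that for the specific subsets occurring here we can instead argue as follows. For each pair $(B,A)$ with $|A|=|B|=k\le n$, the element $\nabla_{B,A}\in\mathbb{Z}\left[S_n\right]$ does \emph{not} determine $(B,A)$, but the \emph{refined} object $\nabla_{B,A}$ together with its position data does. Cleaner: I would instead prove the integer identities \emph{without} reference to $\mathcal{A}$ at all, by the following device — introduce for each ordered pair $(B,A)$ a genuinely free generator by working in the group algebra of $S_n$ over the polynomial ring in independent variables indexed by permutations, or more simply, note that $\Delta_{B,A}\mapsto \nabla_{B,A}\otimes e_{B,A}$ inside $\mathbb{Z}\left[S_n\right]\otimes \mathbb{Z}\left[X_{B,A}\right]$...

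Rather than that, the cleanest route: prove associativity by the explicit combinatorial identity. Both triple products, after expansion, take the form $\sum_{V\subseteq A,\ W\subseteq E} c_{V,W}\,\widetilde{\text{-type sum}}$; using Lemma \ref{lem.Nabla.prod0.bn} (the binomial identity $\sum_r (-1)^{r-k}\binom{n}{r}\binom{r}{k}=[n=k]$) repeatedly, each side collapses to a single manifestly symmetric expression. So the concrete plan is: (1) compute $\Delta_{D,C}\Delta_{B,A}$, then left-multiply by $\Delta_{F,E}$, expanding via the rule and collecting the coefficient of each $\Delta_{X,Y}$; (2) do the same for the other bracketing; (3) in each resulting coefficient, recognize an inner sum of the shape in Lemma \ref{lem.Nabla.prod0.bn} or Lemma \ref{lem.Nabla.prod0.b} and simplify it away, together with the multiplicativity relation $\omega_{B,C}\cdot(\text{stuff from the second product})$ matching $\omega_{D,E}\cdot(\text{stuff})$ after using $|B\cap C|$, $|\text{intermediate set}\cap E|$ bookkeeping; (4) check the two simplified expressions agree term by term. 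The main obstacle is step (3)–(4): keeping the bookkeeping of which cardinalities appear (there is a genuine triple of "overlap parameters" to track, analogous to the single parameter $|B\cap C|$ in the two-fold case), and verifying that the $\omega$-factors combine symmetrically. I expect that the identity $\omega_{B,C}\,\omega_{?,E}=\omega_{?,E}\,\omega_{B,?}$ needed here is \emph{not} literally an equality of the $\omega$'s but rather holds only after summing against the binomial weights — so the real content is an identity of the form $\sum (\text{binomials})\,\omega_{\bullet} = \sum(\text{binomials})\,\omega_{\bullet}$, which should again reduce, via Lemma \ref{lem.Nabla.prod0.bn}, to counting the same set of "nice permutations" in two ways, exactly as in the proof of Lemma \ref{lem.Nabla.prod0.a}. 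Thus the honest proof is: interpret both sides of the desired associativity as counting triples $(u,v,t)\in S_n^3$ with prescribed block-image conditions, weighted by signs and binomials, and show the two counts coincide because they count the same triples grouped differently. The hard part will be setting up that triple-counting interpretation precisely and verifying the sign/binomial weights match; everything else is routine.
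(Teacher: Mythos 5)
You correctly identify the decisive obstruction: since the $\mathbf{k}$-linear map $\Delta_{B,A}\mapsto\nabla_{B,A}$ is not injective (Proposition \ref{prop.Nabla.simple} \textbf{(d)} already gives nontrivial relations among the $\nabla$'s), associativity of $\mathcal{A}$ does not transfer to $\mathcal{D}$; what must be proved is a coefficientwise identity in the free module $\mathcal{D}$. But none of your proposed routes actually establishes it. The embedding into $S_{N}$ with $N=2n$ fails as stated, because the structure constants change: $\omega_{B,C}$ contains the factor $\left\vert \left[  n\right]  \setminus\left(  B\cup C\right)  \right\vert !$, which becomes $\left\vert \left[  N\right]  \setminus\left(  B\cup C\right)  \right\vert !$ inside $\mathbf{k}\left[  S_{N}\right]  $, so the rectangular rook sums in the larger group algebra do not multiply by the defining rule of $\mathcal{D}$ (and their linear independence there would itself require proof). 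The tensor-with-a-polynomial-ring device is abandoned mid-sentence.

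What remains is the direct computation, and that is exactly where the real work is missing. Equating the coefficients of a fixed $\Delta_{U^{\prime},V^{\prime}}$ (with $U^{\prime}\subseteq F$, $V^{\prime}\subseteq A$, $\left\vert U^{\prime}\right\vert =\left\vert V^{\prime}\right\vert $) in the two bracketings of $\Delta_{F,E}\Delta_{D,C}\Delta_{B,A}$, the statement to be proved is the integer identity
\begin{align*}
&\omega_{B,C}\sum_{\substack{U\subseteq D,\ V\subseteq A;\\ \left\vert U\right\vert =\left\vert V\right\vert ;\ V^{\prime}\subseteq V}}\left(  -1\right)  ^{\left\vert U\right\vert -\left\vert B\cap C\right\vert }\dbinom{\left\vert U\right\vert }{\left\vert B\cap C\right\vert }\,\omega_{U,E}\left(  -1\right)  ^{\left\vert U^{\prime}\right\vert -\left\vert U\cap E\right\vert }\dbinom{\left\vert U^{\prime}\right\vert }{\left\vert U\cap E\right\vert }\\
&\qquad=\omega_{D,E}\sum_{\substack{U\subseteq F,\ V\subseteq C;\\ \left\vert U\right\vert =\left\vert V\right\vert ;\ U^{\prime}\subseteq U}}\left(  -1\right)  ^{\left\vert U\right\vert -\left\vert D\cap E\right\vert }\dbinom{\left\vert U\right\vert }{\left\vert D\cap E\right\vert }\,\omega_{B,V}\left(  -1\right)  ^{\left\vert U^{\prime}\right\vert -\left\vert B\cap V\right\vert }\dbinom{\left\vert U^{\prime}\right\vert }{\left\vert B\cap V\right\vert },
\end{align*}
and your steps (3)--(4) never state it, let alone prove it. Lemma \ref{lem.Nabla.prod0.bn} and Lemma \ref{lem.Nabla.prod0.b} do not collapse these sums directly, because the factors $\omega_{U,E}$ (resp. $\omega_{B,V}$) depend on the summation variable through $\left\vert U\cap E\right\vert $ (resp. $\left\vert B\cap V\right\vert $) and not merely through $\left\vert U\right\vert $; and your fallback "triple-counting of $\left(  u,v,t\right)  \in S_{n}^{3}$" is precisely the passage through $\mathcal{A}$ that the non-injectivity blocks -- it yields the identity only after applying the non-injective map, i.e., only modulo the linear relations among the $\nabla$'s. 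So the proposal is an outline whose core is missing. (For context: the paper itself omits the proof of this theorem as too ugly, and its intended computation reduces to the very identity displayed above; your plan stops at the same point.)
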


\textbf{Proof omitted due to excessive ugliness.}

\begin{noncompile}
New idea: Set $\Gamma_{B,A}:=\sum_{\substack{P\subseteq B\text{ and
}Q\subseteq A;\\\left\vert P\right\vert =\left\vert Q\right\vert }}\left(
-1\right)  ^{\left\vert A\right\vert -\left\vert Q\right\vert }\Delta_{P,Q}$.
Then,%
\begin{align*}
\Delta_{D,C}\Gamma_{B,A}  &  =\sum_{\substack{P\subseteq B\text{ and
}Q\subseteq A;\\\left\vert P\right\vert =\left\vert Q\right\vert }}\left(
-1\right)  ^{\left\vert A\right\vert -\left\vert Q\right\vert }\Delta
_{D,C}\Delta_{P,Q}\\
&  =\sum_{\substack{P\subseteq B\text{ and }Q\subseteq A;\\\left\vert
P\right\vert =\left\vert Q\right\vert }}\left(  -1\right)  ^{\left\vert
A\right\vert -\left\vert Q\right\vert }\omega_{P,C}\sum_{\substack{U\subseteq
D,\\V\subseteq Q;\\\left\vert U\right\vert =\left\vert V\right\vert }}\left(
-1\right)  ^{\left\vert U\right\vert -\left\vert P\cap C\right\vert }%
\dbinom{\left\vert U\right\vert }{\left\vert P\cap C\right\vert }\Delta
_{U,V}\\
&  =\sum_{\substack{U\subseteq D;\\V\subseteq A;\\\left\vert U\right\vert
=\left\vert V\right\vert }}\ \ \sum_{\substack{P\subseteq B\text{ and
}Q\subseteq A;\\\left\vert P\right\vert =\left\vert Q\right\vert }}\left(
-1\right)  ^{\left\vert A\right\vert -\left\vert P\right\vert }\omega
_{P,C}\left(  -1\right)  ^{\left\vert U\right\vert -\left\vert P\cap
C\right\vert }\dbinom{\left\vert U\right\vert }{\left\vert P\cap C\right\vert
}\Delta_{U,V}\\
&  =\sum_{\substack{U\subseteq D;\\V\subseteq A;\\\left\vert U\right\vert
=\left\vert V\right\vert }}\ \ \sum_{P\subseteq B}\left(  -1\right)
^{\left\vert A\right\vert -\left\vert P\right\vert }\omega_{P,C}\left(
-1\right)  ^{\left\vert U\right\vert -\left\vert P\cap C\right\vert }%
\dbinom{\left\vert U\right\vert }{\left\vert P\cap C\right\vert }%
\dbinom{\left\vert A\right\vert -\left\vert V\right\vert }{\left\vert
P\right\vert -\left\vert V\right\vert }\Delta_{U,V}.
\end{align*}
But%
\begin{align*}
&  \sum_{P\subseteq B}\left(  -1\right)  ^{\left\vert A\right\vert -\left\vert
P\right\vert }\omega_{P,C}\left(  -1\right)  ^{\left\vert U\right\vert
-\left\vert P\cap C\right\vert }\dbinom{\left\vert U\right\vert }{\left\vert
P\cap C\right\vert }\dbinom{\left\vert A\right\vert -\left\vert V\right\vert
}{\left\vert P\right\vert -\left\vert V\right\vert }\\
&  =\sum_{P\subseteq B}\left(  -1\right)  ^{\left\vert A\right\vert
+\left\vert U\right\vert -\left\vert P\setminus C\right\vert }\omega
_{P,C}\dbinom{\left\vert U\right\vert }{\left\vert P\cap C\right\vert }%
\dbinom{\left\vert A\right\vert -\left\vert U\right\vert }{\left\vert
P\right\vert -\left\vert U\right\vert }%
\end{align*}

\end{noncompile}

\begin{noncompile}
\begin{proof}
[Proof idea.]It would be nice if the $\nabla_{B,A}$'s were linearly
independent (so that $\mathcal{D}$ would just become a subalgebra of
$\mathcal{A}$ after renaming $\Delta_{B,A}$ as $\nabla_{B,A}$). Alas, they are
not (Proposition \ref{prop.Nabla.simple} \textbf{(d)} is a relation, and not
the only one). The following trick lets us nevertheless pretend that they are:

Fix $A,B,C,D,E,F\subseteq\left[  n\right]  $ with $\left\vert A\right\vert
=\left\vert B\right\vert $ and $\left\vert C\right\vert =\left\vert
D\right\vert $ and $\left\vert E\right\vert =\left\vert F\right\vert $. We
need to prove that
\[
\Delta_{F,E}\left(  \Delta_{D,C}\Delta_{B,A}\right)  =\left(  \Delta
_{F,E}\Delta_{D,C}\right)  \Delta_{B,A}.
\]
We have%
\begin{align*}
&  \Delta_{F,E}\left(  \Delta_{D,C}\Delta_{B,A}\right) \\
&  =\Delta_{F,E}\omega_{B,C}\sum_{\substack{U\subseteq D,\\V\subseteq
A;\\\left\vert U\right\vert =\left\vert V\right\vert }}\left(  -1\right)
^{\left\vert U\right\vert -\left\vert B\cap C\right\vert }\dbinom{\left\vert
U\right\vert }{\left\vert B\cap C\right\vert }\Delta_{U,V}\\
&  =\omega_{B,C}\sum_{\substack{U\subseteq D,\\V\subseteq A;\\\left\vert
U\right\vert =\left\vert V\right\vert }}\left(  -1\right)  ^{\left\vert
U\right\vert -\left\vert B\cap C\right\vert }\dbinom{\left\vert U\right\vert
}{\left\vert B\cap C\right\vert }\Delta_{F,E}\Delta_{U,V}\\
&  =\omega_{B,C}\sum_{\substack{U\subseteq D,\\V\subseteq A;\\\left\vert
U\right\vert =\left\vert V\right\vert }}\left(  -1\right)  ^{\left\vert
U\right\vert -\left\vert B\cap C\right\vert }\dbinom{\left\vert U\right\vert
}{\left\vert B\cap C\right\vert }\omega_{U,E}\sum_{\substack{U^{\prime
}\subseteq F,\\V^{\prime}\subseteq V;\\\left\vert U^{\prime}\right\vert
=\left\vert V^{\prime}\right\vert }}\left(  -1\right)  ^{\left\vert U^{\prime
}\right\vert -\left\vert U\cap E\right\vert }\dbinom{\left\vert U^{\prime
}\right\vert }{\left\vert U\cap E\right\vert }\Delta_{U^{\prime},V^{\prime}}\\
&  =\omega_{B,C}\sum_{\substack{U^{\prime}\subseteq F,\\V^{\prime}\subseteq
A;\\\left\vert U^{\prime}\right\vert =\left\vert V^{\prime}\right\vert
}}\ \ \sum_{\substack{U\subseteq D,\\V\subseteq A;\\\left\vert U\right\vert
=\left\vert V\right\vert ;\\V^{\prime}\subseteq V}}\left(  -1\right)
^{\left\vert U\right\vert -\left\vert B\cap C\right\vert }\dbinom{\left\vert
U\right\vert }{\left\vert B\cap C\right\vert }\omega_{U,E}\left(  -1\right)
^{\left\vert U^{\prime}\right\vert -\left\vert U\cap E\right\vert }%
\dbinom{\left\vert U^{\prime}\right\vert }{\left\vert U\cap E\right\vert
}\Delta_{U^{\prime},V^{\prime}}%
\end{align*}
and
\begin{align*}
&  \left(  \Delta_{F,E}\Delta_{D,C}\right)  \Delta_{B,A}\\
&  =\omega_{D,E}\sum_{\substack{U\subseteq F,\\V\subseteq C;\\\left\vert
U\right\vert =\left\vert V\right\vert }}\left(  -1\right)  ^{\left\vert
U\right\vert -\left\vert D\cap E\right\vert }\dbinom{\left\vert U\right\vert
}{\left\vert D\cap E\right\vert }\Delta_{U,V}\Delta_{B,A}\\
&  =\omega_{D,E}\sum_{\substack{U\subseteq F,\\V\subseteq C;\\\left\vert
U\right\vert =\left\vert V\right\vert }}\left(  -1\right)  ^{\left\vert
U\right\vert -\left\vert D\cap E\right\vert }\dbinom{\left\vert U\right\vert
}{\left\vert D\cap E\right\vert }\omega_{B,V}\sum_{\substack{U^{\prime
}\subseteq U,\\V^{\prime}\subseteq A;\\\left\vert U^{\prime}\right\vert
=\left\vert V^{\prime}\right\vert }}\left(  -1\right)  ^{\left\vert U^{\prime
}\right\vert -\left\vert B\cap V\right\vert }\dbinom{\left\vert U^{\prime
}\right\vert }{\left\vert B\cap V\right\vert }\Delta_{U^{\prime},V^{\prime}}\\
&  =\omega_{D,E}\sum_{\substack{U^{\prime}\subseteq F,\\V^{\prime}\subseteq
A;\\\left\vert U^{\prime}\right\vert =\left\vert V^{\prime}\right\vert
}}\ \ \sum_{\substack{U\subseteq F,\\V\subseteq C;\\\left\vert U\right\vert
=\left\vert V\right\vert ;\\U^{\prime}\subseteq U}}\left(  -1\right)
^{\left\vert U\right\vert -\left\vert D\cap E\right\vert }\dbinom{\left\vert
U\right\vert }{\left\vert D\cap E\right\vert }\omega_{B,V}\left(  -1\right)
^{\left\vert U^{\prime}\right\vert -\left\vert B\cap V\right\vert }%
\dbinom{\left\vert U^{\prime}\right\vert }{\left\vert B\cap V\right\vert
}\Delta_{U^{\prime},V^{\prime}}.
\end{align*}
Thus, we need to show that any two subsets $U^{\prime}\subseteq F$ and
$V^{\prime}\subseteq A$ satisfying $\left\vert U^{\prime}\right\vert
=\left\vert V^{\prime}\right\vert $ satisfy%
\begin{align*}
&  \omega_{B,C}\sum_{\substack{U\subseteq D,\\V\subseteq A;\\\left\vert
U\right\vert =\left\vert V\right\vert ;\\V^{\prime}\subseteq V}}\left(
-1\right)  ^{\left\vert U\right\vert -\left\vert B\cap C\right\vert }%
\dbinom{\left\vert U\right\vert }{\left\vert B\cap C\right\vert }\omega
_{U,E}\left(  -1\right)  ^{\left\vert U^{\prime}\right\vert -\left\vert U\cap
E\right\vert }\dbinom{\left\vert U^{\prime}\right\vert }{\left\vert U\cap
E\right\vert }\\
&  =\omega_{D,E}\sum_{\substack{U\subseteq F,\\V\subseteq C;\\\left\vert
U\right\vert =\left\vert V\right\vert ;\\U^{\prime}\subseteq U}}\left(
-1\right)  ^{\left\vert U\right\vert -\left\vert D\cap E\right\vert }%
\dbinom{\left\vert U\right\vert }{\left\vert D\cap E\right\vert }\omega
_{B,V}\left(  -1\right)  ^{\left\vert U^{\prime}\right\vert -\left\vert B\cap
V\right\vert }\dbinom{\left\vert U^{\prime}\right\vert }{\left\vert B\cap
V\right\vert }.
\end{align*}
To do so, we fix two such subsets $U^{\prime}\subseteq F$ and $V^{\prime
}\subseteq A$ satisfying $\left\vert U^{\prime}\right\vert =\left\vert
V^{\prime}\right\vert $. Note that the addend of the first sum does not depend
on $V$, whereas the addend of the second sum does not depend on $U$. Hence, we
can rewrite the equality we must prove as%
\begin{align*}
&  \omega_{B,C}\sum_{U\subseteq D}\dbinom{\left\vert A\right\vert -\left\vert
V^{\prime}\right\vert }{\left\vert U\right\vert -\left\vert V^{\prime
}\right\vert }\left(  -1\right)  ^{\left\vert U\right\vert -\left\vert B\cap
C\right\vert }\dbinom{\left\vert U\right\vert }{\left\vert B\cap C\right\vert
}\omega_{U,E}\left(  -1\right)  ^{\left\vert U^{\prime}\right\vert -\left\vert
U\cap E\right\vert }\dbinom{\left\vert U^{\prime}\right\vert }{\left\vert
U\cap E\right\vert }\\
&  =\omega_{D,E}\sum_{V\subseteq C}\dbinom{\left\vert F\right\vert -\left\vert
U^{\prime}\right\vert }{\left\vert V\right\vert -\left\vert U^{\prime
}\right\vert }\left(  -1\right)  ^{\left\vert U\right\vert -\left\vert D\cap
E\right\vert }\dbinom{\left\vert U\right\vert }{\left\vert D\cap E\right\vert
}\omega_{B,V}\left(  -1\right)  ^{\left\vert U^{\prime}\right\vert -\left\vert
B\cap V\right\vert }\dbinom{\left\vert U^{\prime}\right\vert }{\left\vert
B\cap V\right\vert }.
\end{align*}

TODO

What happens to $\mathcal{D}$ when we increment $n$ (that is, replace $n$ by
$n+1$) ? Of course, the $\mathbf{k}$-module $\mathcal{D}$ becomes larger, but
let us only focus on the existingTODO
\end{proof}
\end{noncompile}

\begin{question}
The above proof idea is clearly in bad taste. There should be a more
conceptual proof that identifies $\mathcal{D}$ as some existing (nonunital)
$\mathbf{k}$-algebra (what nonunital $\mathbf{k}$-algebra has dimension
$\dbinom{2n}{n}$ over $\mathbf{k}$ ?) or at least with a subquotient of a such.
\end{question}

\begin{example}
Let $n=1$. Then, the $\mathbf{k}$-module $\mathcal{D}$ in Theorem
\ref{thm.Nabla-alg.ass} has basis $\left(  u,v\right)  $ with $u=\Delta
_{\varnothing,\varnothing}$ and $v=\Delta_{\left\{  1\right\}  ,\left\{
1\right\}  }$. The multiplication on $\mathcal{D}$ defined ibidem is given by%
\[
uu=uv=vu=u,\ \ \ \ \ \ \ \ \ \ vv=v.
\]
Thus, the nonunital $\mathbf{k}$-algebra $\mathcal{D}$ is isomorphic to the
$\mathbf{k}$-algebra $\mathbf{k}\left[  x\right]  /\left(  x^{2}-x\right)  $,
and therefore has a unity (namely, $v$).
\end{example}

\begin{example}
\label{exa.Nabla-alg.n=2}Let $n=2$. Then, the $\mathbf{k}$-module
$\mathcal{D}$ in Theorem \ref{thm.Nabla-alg.ass} has basis $\left(
u,v_{11},v_{12},v_{21},v_{22},w\right)  $ with $u=\Delta_{\varnothing
,\varnothing}$ and $v_{ij}=\Delta_{\left\{  i\right\}  ,\left\{  j\right\}  }$
and $w=\Delta_{\left[  2\right]  ,\left[  2\right]  }$. The multiplication on
$\mathcal{D}$ defined ibidem is given by%
\begin{align*}
uu  &  =uw=wu=2u,\ \ \ \ \ \ \ \ \ \ uv_{ij}=v_{ij}u=u,\\
v_{dc}v_{ba}  &  =u-v_{da}\ \ \ \ \ \ \ \ \ \ \text{if }b\neq c;\\
v_{dc}v_{ba}  &  =v_{da}\ \ \ \ \ \ \ \ \ \ \text{if }b=c,\\
v_{ij}w  &  =v_{i1}+v_{i2},\ \ \ \ \ \ \ \ \ \ wv_{ij}=v_{1j}+v_{2j},\\
ww  &  =2w.
\end{align*}
This nonunital $\mathbf{k}$-algebra $\mathcal{D}$ has a unity if and only if
$2$ is invertible in $\mathbf{k}$. This unity is $\dfrac{1}{4}\left(
v_{11}+v_{22}-v_{12}-v_{21}+2w\right)  $.
\end{example}

\begin{example}
Let $n=3$. Then, the $\mathbf{k}$-module $\mathcal{D}$ has a basis consisting
of $\dbinom{6}{3}=20$ vectors of the form $\Delta_{B,A}$ with $A,B\subseteq
\left[  3\right]  $ satisfying $\left\vert A\right\vert =\left\vert
B\right\vert $. Its multiplication turns it into a nonunital algebra. When
$3!$ is invertible in $\mathbf{k}$, this algebra has a unity, namely%
\[
\dfrac{1}{18}\sum_{i=1}^{3}\Delta_{\left\{  i\right\}  ,\left\{  i\right\}
}-\dfrac{1}{36}\sum_{i\neq j}\Delta_{\left\{  i\right\}  ,\left\{  j\right\}
}+\dfrac{1}{6}\sum_{i<j}\Delta_{\left\{  i,j\right\}  ,\left\{  i,j\right\}
}-\dfrac{1}{12}\sum_{i\neq j\neq k\neq i}\Delta_{\left\{  i,j\right\}
,\left\{  i,k\right\}  }+\dfrac{1}{6}\Delta_{\left[  3\right]  ,\left[
3\right]  }.
\]

\end{example}

\begin{question}
Does the $\mathbf{k}$-algebra $\mathcal{D}$ in Theorem \ref{thm.Nabla-alg.ass}
have a unity if $n!$ is invertible in $\mathbf{k}$ ? (I suspect that the
answer is \textquotedblleft yes\textquotedblright. This has been checked with
SageMath for all $n\leq5$.)
\end{question}

\begin{noncompile}
We note that, despite the \textquotedblleft
elementary-matrix-like\textquotedblright\ look of the $v_{ba}$ products in
Example \ref{exa.Nabla-alg.n=2}, the $\mathbf{k}$-algebra $\mathcal{D}$ is not
a direct product of matrix algebras.
\end{noncompile}

\begin{question}
What does the representation theory of $\mathcal{D}$ look like?
\end{question}

Using SageMath, we computed some data for small $n$ and for $\mathbf{k}%
=\mathbb{Q}$:%
\[%
\begin{tabular}
[c]{|c||c|c|c|c|}\hline
& $n=2$ & $n=3$ & $n=4$ & $n=5$\\\hline\hline
$\dim\mathcal{D}$ & $6$ & $20$ & $70$ & $252$\\\hline
$\dim Z\left(  \mathcal{D}\right)  $ & $3$ & $4$ & $5$ & $6$\\\hline
$\dim J\left(  \mathcal{D}\right)  $ & $3$ & $5$ & $39$ & $84$\\\hline
Cartan invs & $\left(
\begin{array}
[c]{ccc}%
1 & 0 & 0\\
0 & 2 & 1\\
0 & 1 & 1
\end{array}
\right)  $ & $\left(
\begin{array}
[c]{cccc}%
1 & 0 & 0 & 0\\
0 & 1 & 0 & 1\\
0 & 0 & 1 & 0\\
0 & 1 & 0 & 2
\end{array}
\right)  $ & $\left(
\begin{array}
[c]{rrrrr}%
1 & 0 & 0 & 0 & 0\\
0 & 1 & 0 & 1 & 0\\
0 & 0 & 1 & 0 & 1\\
0 & 1 & 0 & 2 & 0\\
0 & 0 & 1 & 0 & 2
\end{array}
\right)  $ & \\\hline
\end{tabular}
\
\]
(where $Z\left(  \mathcal{D}\right)  $ and $J\left(  \mathcal{D}\right)  $
denote the center and the Jacobson radical of $\mathcal{D}$, respectively, and
where \textquotedblleft Cartan invs\textquotedblright\ means the matrix of
Cartan invariants). This shows that the algebra $\mathcal{D}$ for $n\geq2$ is
far from semisimple. For example, for $n=2$, it has (over $\mathbb{Q}$) a
$3$-dimensional Jacobson radical spanned by the vectors $u-v_{12}-v_{21}$,
$v_{12}-v_{21}$ and $v_{11}-v_{22}$. But $\mathcal{D}$ is not
\textquotedblleft too nilpotent\textquotedblright\ either; in particular, the
semisimple quotient of $\mathcal{D}$ for $n=3$ is not commutative.

\begin{question}
Does the center of $\mathcal{D}$ always have dimension $n+1$ when $n!$ is
invertible in $\mathbf{k}$ ?
\end{question}

\begin{noncompile}
Without the invertibility condition, the answer is definitely
\textquotedblleft no\textquotedblright, since the center of $\mathcal{D}$ for
$\mathbf{k}=\mathbb{F}_{2}$ and $n=3$ has dimension $12$ rather than $4$. (Not
sure about this -- Sage might be tacitly assuming unitality...)
\end{noncompile}

\begin{noncompile}
Assume that $n!$ is invertible in $\mathbf{k}$. Is the $\mathbf{k}$-algebra
$\mathcal{D}$ isomorphic to the planar rook algebra (\cite[\S 3]{FlHaHe09},
\cite[\S 2.2]{PotRub20}, \cite[\S 4.3]{Scrims22})? Note that the latter
algebra is isomorphic to the direct product $\prod_{k=0}^{n}\mathbf{k}%
^{\dbinom{n}{k}\times\dbinom{n}{k}}$ (by \cite[Corollary 3.4]{FlHaHe09}).

No, $\mathcal{D}$ is not isomorphic to the planar rook algebra, since it has a
nonzero radical.
\end{noncompile}

\begin{noncompile}
subsection: (OLD) The triangular subalgebra approach

The following is obsolete.

Try to find a filtration on which $\kappa_{A,B}$ acts triangularly.

\begin{enumerate}
\item Here is a baby example: $a=1$ and $b=1$ and $c=1$. Thus, we can take
$A=B=\left\{  1\right\}  $. Hence,
\[
\kappa_{A,B}=\sum_{\substack{w;\\w\left(  1\right)  \neq1}}w=\nabla-\omega
\]
where%
\[
\nabla:=\sum_{w}w\ \ \ \ \ \ \ \ \ \ \text{and}\ \ \ \ \ \ \ \ \ \ \omega
:=\sum_{\substack{w;\\w\left(  1\right)  =1}}w.
\]
Easily, $\nabla^{2}=n!\nabla$ and $\omega^{2}=\left(  n-1\right)  !\omega$ and
$\omega\nabla=\nabla\omega=\left(  n-1\right)  !\nabla$ (here, we use the fact
that $\nabla u=u\nabla=\nabla$ for every $u\in S_{n}$). Hence, the span
$\left\langle \nabla,\omega,1\right\rangle $ is a commutative subalgebra of
$\mathbf{k}\left[  S_{n}\right]  $, and each of its generators $\nabla
,\omega,1$ acts triangularly on it (sending generators to smaller-or-equal
generators). Thus, the case $\left(  a,b,c\right)  =\left(  1,1,1\right)  $ is solved.

\item Let us solve the case $\left(  a,b,c\right)  =\left(  1,1,0\right)  $
next. This corresponds to $A=\left\{  1\right\}  $ and $B=\left\{  2\right\}
$. Hence,%
\[
\kappa_{A,B}=\sum_{\substack{w;\\w\left(  1\right)  \neq2}}w=\nabla-\rho
\]
where $\rho:=\sum_{\substack{w;\\w\left(  1\right)  =2}}w$. Here we can easily
find $\rho^{2}=\left(  n-2\right)  !\left(  \nabla-\rho\right)  $. Thus,
again, the span $\left\langle \nabla,\rho,1\right\rangle $ is a commutative
subalgebra of $\mathbf{k}\left[  S_{n}\right]  $, and each of its generators
$\nabla,\rho,1$ acts triangularly on it (sending generators to
smaller-or-equal generators). Thus, the case $\left(  a,b,c\right)  =\left(
1,1,0\right)  $ is solved.

What do we learn from this? Do we always have a triangular subalgebra?

Let us say that a list $\left(  a_{1},a_{2},\ldots,a_{k}\right)  $ of vectors
in $\mathbf{k}\left[  S_{n}\right]  $ is \textbf{triangular} if it satisfies
\[
a_{i}a_{j}\in\left\langle a_{1},a_{2},\ldots,a_{j}\right\rangle
\ \ \ \ \ \ \ \ \ \ \text{for all }i,j\in\left[  k\right]  .
\]
A triangular list always spans a subalgebra of $\mathbf{k}\left[
S_{n}\right]  $, and all the elements of this subalgebra have factorable
minimal polynomials (since they act triangularly on it).

Thus, the list $\left(  \nabla,\rho,1\right)  $ is triangular. More generally,
any list of the form $\left(  \nabla,b_{1},b_{2},\ldots,b_{k},1\right)  $ is
triangular if we have
\[
b_{i}b_{j}\in\left\langle \nabla,b_{1},b_{2},\ldots,b_{j}\right\rangle
\ \ \ \ \ \ \ \ \ \ \text{for all }i,j\in\left[  k\right]
\]
(because $\nabla u$ and $u\nabla$ are always scalar multiples of $\nabla$).

\item Let us try to take this idea further. For any $a,b\in\left[  n\right]
$, set
\[
u_{a,b}:=\sum_{\substack{w;\\w\left(  a\right)  =b}}w.
\]
Then, it is not hard to see that%
\[
u_{c,d}u_{a,b}=%
\begin{cases}
\left(  n-2\right)  !\left(  \nabla-u_{a,d}\right)  , & \text{if }b\neq c;\\
\left(  n-1\right)  !u_{a,d}, & \text{if }b=c.
\end{cases}
\]
(Note the similarity to elementary matrices! Maybe there is a secret
morphism?) Thus, the span $\left\langle \nabla,u_{1,b},u_{2,b},\ldots
,u_{n,b},1\right\rangle $ is a triangular subalgebra for any $b\in\left[
n\right]  $ (but not commutative!). Note that $u_{a,b}=S\left(  u_{b,a}%
\right)  $.

\item We can use this to solve the case $a=\forall$ and $b=1$ and $c=\forall$.

That is, we take $B=\left\{  1\right\}  $ and any $A$. The two subcases are
$1\in A$ and $1\notin A$.

We have $\kappa_{A,B}=\kappa_{A,\left\{  1\right\}  }=\sum_{i\notin A}u_{i,1}%
$. But this is in the subalgebra $\left\langle \nabla,u_{1,b},u_{2,b}%
,\ldots,u_{n,b},1\right\rangle $ for $b=1$, which is triangular.

We can be more concrete here: We have
\begin{align*}
\kappa_{A,B}\nabla &  =\left(  n-\left\vert A\right\vert \right)  \left(
n-1\right)  !\nabla;\\
\kappa_{A,B}u_{k,1}  &  =\left(  n-\left\vert A\cup\left\{  1\right\}
\right\vert \right)  \left(  n-2\right)  !\nabla\\
&  \ \ \ \ \ \ \ \ \ \ +\left(  \left(  n-1\right)  !\left\vert \left\{
1\right\}  \setminus A\right\vert -\left(  n-2\right)  !\left(  n-\left\vert
A\cup\left\{  1\right\}  \right\vert \right)  \right)  u_{k,1};\\
\kappa_{A,B}1  &  =\sum_{i\notin A}u_{i,1}.
\end{align*}
Setting $a=\left\vert A\right\vert $, the eigenvalues of $\kappa_{A,B}$ are
thus%
\begin{align*}
&  \left(  n-\left\vert A\right\vert \right)  \left(  n-1\right)  !=\left(
n-a\right)  \left(  n-1\right)  !,\\
&  \left(  n-1\right)  !\left\vert \left\{  1\right\}  \setminus A\right\vert
-\left(  n-2\right)  !\left(  n-\left\vert A\cup\left\{  1\right\}
\right\vert \right) \\
&  \ \ \ \ \ \ \ \ \ \ =%
\begin{cases}
-\left(  n-2\right)  !\left(  n-a\right)  , & \text{if }1\in A;\\
\left(  n-1\right)  !-\left(  n-2\right)  !\left(  n-a-1\right)  , & \text{if
}1\notin A
\end{cases}
\\
&  \ \ \ \ \ \ \ \ \ \ =%
\begin{cases}
-\left(  n-2\right)  !\left(  n-a\right)  , & \text{if }1\in A;\\
\left(  n-2\right)  !a, & \text{if }1\notin A
\end{cases}
,\\
&  0.
\end{align*}

\item For $a=n-1$ and $b=1$ and $c=0$, we can do even better: In that case,
$\left(  \kappa_{A,B}-\left(  n-1\right)  !\right)  \kappa_{A,B}=0$. To see
this, observe that $\kappa_{A,B}=\nabla-u_{1,1}=\nabla-\omega$ in the above notation.

\item The case $b=0$ is even easier: here, $\kappa_{A,B}=\nabla$.

\item So it remains to study $b\geq2$.
\end{enumerate}

subsection: (OLD) The Specht approach

The following observations are no longer needed, but they make for interesting context.

\begin{enumerate}
\item We only need to understand the actions of $\kappa_{A,B}$ on each Specht
module $S^{\lambda}$.

\item We claim that $\kappa_{A,B}S^{\lambda}=0$ whenever the partition
$\lambda$ has length $\ell\left(  \lambda\right)  >2$.

\textit{Proof:} Recall that $S^{\lambda}=b_{\lambda}a_{\lambda}\mathbf{k}%
\left[  S_{n}\right]  $, where $b_{\lambda}$ is the Young antisymmetrizer of
$\lambda$. It thus suffices to show that $\kappa_{A,B}b_{\lambda}=0$. But this
is easy: The first column of the standard tableau $T^{\lambda}$ has at least
$3$ entries, and thus has two entries that either both belong to $A$ or both
belong to $\left[  n\right]  \setminus A$ (by the pigeonhole principle). The
transposition $\tau$ that swaps these two entries must then preserve
$\kappa_{A,B}$ from the right, i.e., must satisfy $\kappa_{A,B}\tau
=\kappa_{A,B}$. But this means that $\kappa_{A,B}\left(  \tau-1\right)  =0$,
so that $\kappa_{A,B}b_{\lambda}=0$ (since $b_{\lambda}=\left(  \tau-1\right)
\eta$ for some $\eta\in\mathbf{k}\left[  S_{n}\right]  $).

\item Thus, it suffices to study the action of $\kappa_{A,B}$ on each Specht
module $S^{\lambda}$ with $\ell\left(  \lambda\right)  \leq2$.

To this purpose, it suffices to study the action of $\kappa_{A,B}$ on each
Young permutation module $M^{\lambda}$ with $\ell\left(  \lambda\right)
\leq2$.

But these modules $M^{\lambda}=M^{\left(  k,n-k\right)  }$ are just
permutation modules where $S_{n}$ acts on $\left\{  k\text{-element subsets of
}\left[  n\right]  \right\}  $ in the obvious way.

So we must show, for each $k\in\left[  0,n\right]  $, that the action of
$\kappa_{A,B}$ on $\left\{  k\text{-element subsets of }\left[  n\right]
\right\}  $ has integer eigenvalues.

We can rewrite this action combinatorially, as a $\dbinom{n}{k}\times
\dbinom{n}{k}$-matrix, without thinking of it as a permutation problem.

\item Note that the action of $\mathbf{k}\left[  S_{n}\right]  $ on
$S^{\lambda}$ for $\ell\left(  \lambda\right)  \leq2$ factors through the
Temperley--Lieb algebra. Thus, we can also view $\kappa_{A,B}$ as an element
of the latter. Not sure if this helps...
\end{enumerate}
\end{noncompile}

\newpage

\section{\label{sec.row-to-row}Row-to-row sums in the symmetric group algebra}

\subsection{Definitions}

As we recall, $n$ is a nonnegative integer and $\mathbf{k}$ a commutative
ring. We work in the group algebra $\mathcal{A}=\mathbf{k}\left[
S_{n}\right]  $ of the symmetric group $S_{n}$.

\subsubsection{Row-to-row sums and the $\nabla_{\mathbf{B},\mathbf{A}}$}

A \emph{set decomposition} of a set $U$ shall mean a tuple $\left(
U_{1},U_{2},\ldots,U_{k}\right)  $ of disjoint subsets of $U$ such that
$U_{1}\cup U_{2}\cup\cdots\cup U_{k}=U$. The subsets $U_{1},U_{2},\ldots
,U_{k}$ are called the \emph{blocks} of this set decomposition $\left(
U_{1},U_{2},\ldots,U_{k}\right)  $. The number $k$ of these blocks is called
the \emph{length} of this set decomposition. The length of a set decomposition
$\mathbf{U}$ is called $\ell\left(  \mathbf{U}\right)  $.

A \emph{set composition} of a set $U$ shall mean a set decomposition of $U$
whose blocks are all nonempty. Clearly, any set decomposition of $U$ can be
transformed into a set composition of $U$ by removing all empty blocks.

For instance, $\left(  \left\{  1,3\right\}  ,\varnothing,\left\{  2\right\}
\right)  $ is a set decomposition of $\left[  3\right]  $, but not a set
composition (due to the presence of $\varnothing$). It has length $3$.
Removing the block $\varnothing$ from it yields the set composition $\left(
\left\{  1,3\right\}  ,\left\{  2\right\}  \right)  $ of $\left[  3\right]  $,
whose length is $2$.

Let $\operatorname*{SD}\left(  n\right)  $ denote the set of all set
decompositions of $\left[  n\right]  $.

Let $\operatorname*{SC}\left(  n\right)  $ denote the set of all set
compositions of $\left[  n\right]  $. Clearly, $\operatorname*{SC}\left(
n\right)  \subseteq\operatorname*{SD}\left(  n\right)  $.

If $\mathbf{A}=\left(  A_{1},A_{2},\ldots,A_{k}\right)  $ and $\mathbf{B}%
=\left(  B_{1},B_{2},\ldots,B_{k}\right)  $ are two set decompositions of
$\left[  n\right]  $ having the same length, then we define the element%
\begin{equation}
\nabla_{\mathbf{B},\mathbf{A}}:=\sum_{\substack{w\in S_{n};\\w\left(
A_{i}\right)  =B_{i}\text{ for all }i}}w\ \ \ \ \ \ \ \ \ \ \text{of
}\mathcal{A}. \label{eq.NabBA.def}%
\end{equation}
This will be called a \emph{row-to-row sum}. It has been denoted $\left(
\mathbf{A}\rightarrow\mathbf{B}\right)  $ in Canfield's and Williamson's work
\cite{CanWil89}, and also is the $q=1$ particular case of the
\textquotedblleft Murphy element\textquotedblright\ $x_{st}$ of the Hecke
algebra studied in \cite[\S 3]{Murphy92} and \cite[\S 4]{Murphy95} (if we
encode $\mathbf{A}$ and $\mathbf{B}$ as row-standard tableaux, not necessarily
of partition shape).

For instance, if $n=4$ and $\mathbf{A}=\left(  \left\{  3\right\}  ,\left\{
1,2\right\}  ,\left\{  4\right\}  \right)  $ and $\mathbf{B}=\left(  \left\{
1\right\}  ,\left\{  2,4\right\}  ,\left\{  3\right\}  \right)  $, then%
\[
\nabla_{\mathbf{B},\mathbf{A}}=\sum_{\substack{w\in S_{4};\\w\left(  \left\{
3\right\}  \right)  =\left\{  1\right\}  ;\\w\left(  \left\{  1,2\right\}
\right)  =\left\{  2,4\right\}  ;\\w\left(  \left\{  4\right\}  \right)
=\left\{  3\right\}  }}w=\operatorname*{oln}\left(  2413\right)
+\operatorname*{oln}\left(  4213\right)  ,
\]
where $\operatorname*{oln}\left(  i_{1}i_{2}\ldots i_{n}\right)  $ means the
permutation in $S_{n}$ with one-line notation $\left(  i_{1},i_{2}%
,\ldots,i_{n}\right)  $.

We observe some easy properties of row-to-row sums:

\begin{proposition}
\label{prop.row.simple}Let $\mathbf{A}=\left(  A_{1},A_{2},\ldots
,A_{k}\right)  $ and $\mathbf{B}=\left(  B_{1},B_{2},\ldots,B_{k}\right)  $ be
two set decompositions of $\left[  n\right]  $ having the same length. Then:

\begin{enumerate}
\item[\textbf{(a)}] We have $\nabla_{\mathbf{B},\mathbf{A}}=0$ unless each
$i\in\left[  k\right]  $ satisfies $\left\vert A_{i}\right\vert =\left\vert
B_{i}\right\vert $.

\item[\textbf{(b)}] The element $\nabla_{\mathbf{B},\mathbf{A}}$ does not
change if we permute the blocks of $\mathbf{A}$ and the blocks of $\mathbf{B}$
using the same permutation. In other words, for any permutation $\sigma\in
S_{k}$, we have $\nabla_{\mathbf{B},\mathbf{A}}=\nabla_{\mathbf{B}%
\sigma,\mathbf{A}\sigma}$, where $\mathbf{A}\sigma:=\left(  A_{\sigma\left(
1\right)  },A_{\sigma\left(  2\right)  },\ldots,A_{\sigma\left(  k\right)
}\right)  $ and $\mathbf{B}\sigma:=\left(  B_{\sigma\left(  1\right)
},B_{\sigma\left(  2\right)  },\ldots,B_{\sigma\left(  k\right)  }\right)  $.

\item[\textbf{(c)}] The element $\nabla_{\mathbf{B},\mathbf{A}}$ does not
change if we remove empty blocks from $\mathbf{A}$ and from $\mathbf{B}$,
provided that these blocks are in the same positions in both $\mathbf{A}$ and
$\mathbf{B}$.

\item[\textbf{(d)}] The antipode $S$ of $\mathcal{A}$ satisfies $S\left(
\nabla_{\mathbf{B},\mathbf{A}}\right)  =\nabla_{\mathbf{A},\mathbf{B}}$.
\end{enumerate}
\end{proposition}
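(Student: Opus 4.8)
The plan is to derive all four parts directly from the defining formula (\ref{eq.NabBA.def}), in exactly the spirit of Proposition \ref{prop.Nabla.simple}: each claim is a statement about the index set $\left\{w\in S_n\ \mid\ w\left(A_i\right)=B_i\text{ for all }i\right\}$ of the sum, and it suffices to observe that this set is either empty or invariant under the relevant operation. For \textbf{(a)}, since any $w\in S_n$ is a bijection, the equation $w\left(A_i\right)=B_i$ forces $\left\vert A_i\right\vert=\left\vert w\left(A_i\right)\right\vert=\left\vert B_i\right\vert$; hence if $\left\vert A_i\right\vert\neq\left\vert B_i\right\vert$ for some $i$, the index set is empty and $\nabla_{\mathbf{B},\mathbf{A}}=0$.

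For \textbf{(b)}, fix $\sigma\in S_k$. Since $\sigma$ permutes $\left[k\right]$, the family of conditions $\left(w\left(A_{\sigma\left(i\right)}\right)=B_{\sigma\left(i\right)}\right)_{i\in\left[k\right]}$ is the same family as $\left(w\left(A_i\right)=B_i\right)_{i\in\left[k\right]}$, merely reindexed; so the index set, and therefore the sum, is unchanged, giving $\nabla_{\mathbf{B},\mathbf{A}}=\nabla_{\mathbf{B}\sigma,\mathbf{A}\sigma}$. For \textbf{(c)}, if $A_j=\varnothing=B_j$ for some $j$, then the condition $w\left(A_j\right)=B_j$ reads $w\left(\varnothing\right)=\varnothing$, which holds for every $w\in S_n$ and may therefore be deleted without altering the index set; deleting all such $j$ proves the claim. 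The hypothesis that the empty blocks sit in the same positions of $\mathbf{A}$ and $\mathbf{B}$ is precisely what makes the deleted conditions of this harmless form (rather than $w\left(\varnothing\right)=B_j$ with $B_j\neq\varnothing$, which would instead force $\nabla_{\mathbf{B},\mathbf{A}}=0$).

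For \textbf{(d)}, I would apply the antipode $S$ to (\ref{eq.NabBA.def}) and use that $S$ sends $w\mapsto w^{-1}$:
\[
S\left(\nabla_{\mathbf{B},\mathbf{A}}\right)=\sum_{\substack{w\in S_n;\\w\left(A_i\right)=B_i\text{ for all }i}}w^{-1}.
\]
Substituting $v=w^{-1}$ in this sum: as $w$ runs over the permutations with $w\left(A_i\right)=B_i$ for all $i$, the inverse $v=w^{-1}$ runs over the permutations with $v\left(B_i\right)=A_i$ for all $i$, so the right-hand side equals $\nabla_{\mathbf{A},\mathbf{B}}$. I do not expect any genuine obstacle here; the only thing needing a little care is the bookkeeping in \textbf{(b)} and \textbf{(c)} — namely that $\sigma$ must be applied to the blocks of $\mathbf{A}$ and $\mathbf{B}$ simultaneously, and that the empty blocks removed in \textbf{(c)} must be aligned between the two decompositions — since otherwise the index set genuinely changes.
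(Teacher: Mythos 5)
Your proposal is correct and follows essentially the same route as the paper's proof: each part is reduced to an observation about the index set of the defining sum (emptiness for \textbf{(a)}, reindexing of the conditions for \textbf{(b)}, tautological conditions $w\left(\varnothing\right)=\varnothing$ for \textbf{(c)}, and applying $S$ with the substitution $w\mapsto w^{-1}$ and rewriting $w\left(A_i\right)=B_i$ as $w^{-1}\left(B_i\right)=A_i$ for \textbf{(d)}). No gaps; nothing further is needed.
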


\begin{proof}
\textbf{(a)} Assume that not every $i\in\left[  k\right]  $ satisfies
$\left\vert A_{i}\right\vert =\left\vert B_{i}\right\vert $. Then, there
exists no permutation $w\in S_{n}$ that satisfies $\left(  w\left(
A_{i}\right)  =B_{i}\text{ for all }i\right)  $ (since the injectivity of such
a permutation $w$ would imply $\left\vert w\left(  A_{i}\right)  \right\vert
=\left\vert A_{i}\right\vert $ and thus $\left\vert A_{i}\right\vert
=\left\vert w\left(  A_{i}\right)  \right\vert =\left\vert B_{i}\right\vert $
because of $w\left(  A_{i}\right)  =B_{i}$). Hence, the sum $\sum
_{\substack{w\in S_{n};\\w\left(  A_{i}\right)  =B_{i}\text{ for all }i}}w$ in
(\ref{eq.NabBA.def}) is empty and thus equals $0$. Therefore,
(\ref{eq.NabBA.def}) shows that $\nabla_{\mathbf{B},\mathbf{A}}=0$. This
proves Proposition \ref{prop.row.simple} \textbf{(a)}. \medskip

\textbf{(b)} Let $\sigma\in S_{k}$. Then, (\ref{eq.NabBA.def}) yields
\begin{align}
\nabla_{\mathbf{B},\mathbf{A}}  &  =\sum_{\substack{w\in S_{n};\\w\left(
A_{i}\right)  =B_{i}\text{ for all }i}}w\ \ \ \ \ \ \ \ \ \ \text{and}%
\label{pf.prop.row.simple.b.1}\\
\nabla_{\mathbf{B}\sigma,\mathbf{A}\sigma}  &  =\sum_{\substack{w\in
S_{n};\\w\left(  A_{\sigma\left(  i\right)  }\right)  =B_{\sigma\left(
i\right)  }\text{ for all }i}}w. \label{pf.prop.row.simple.b.2}%
\end{align}
But $\sigma$ is a permutation of $\left[  k\right]  $; thus the condition
\textquotedblleft$w\left(  A_{i}\right)  =B_{i}$ for all $i$\textquotedblright%
\ is equivalent to \textquotedblleft$w\left(  A_{\sigma\left(  i\right)
}\right)  =B_{\sigma\left(  i\right)  }$ for all $i$\textquotedblright. Hence,
the right hand sides of (\ref{pf.prop.row.simple.b.1}) and
(\ref{pf.prop.row.simple.b.2}) are equal. Thus, so are the left hand sides. In
other words, $\nabla_{\mathbf{B},\mathbf{A}}=\nabla_{\mathbf{B}\sigma
,\mathbf{A}\sigma}$. This proves Proposition \ref{prop.row.simple}
\textbf{(b)}. \medskip

\textbf{(c)} Let $\mathbf{A}=\left(  A_{1},A_{2},\ldots,A_{k}\right)  $ and
$\mathbf{B}=\left(  B_{1},B_{2},\ldots,B_{k}\right)  $. Assume that both
$\mathbf{A}$ and $\mathbf{B}$ have an empty block in the same position --
i.e., there exists some $r\in\left[  k\right]  $ such that $A_{r}=\varnothing$
and $B_{r}=\varnothing$. Consider this $r$. Let $\mathbf{A}^{\prime}:=\left(
A_{1},A_{2},\ldots,A_{r-1},A_{r+1},A_{r+2},\ldots,A_{k}\right)  $ and
$\mathbf{B}^{\prime}:=\left(  B_{1},B_{2},\ldots,B_{r-1},B_{r+1}%
,B_{r+2},\ldots,B_{k}\right)  $ be the set decompositions obtained from
$\mathbf{A}$ and $\mathbf{B}$ by removing the empty blocks $A_{r}$ and $B_{r}%
$. We must show that $\nabla_{\mathbf{B},\mathbf{A}}=\nabla_{\mathbf{B}%
^{\prime},\mathbf{A}^{\prime}}$.

Essentially, this is obvious from the definition of $\nabla_{\mathbf{B}%
,\mathbf{A}}$: The empty blocks $A_{r}$ and $B_{r}$ satisfy $w\left(
A_{r}\right)  =B_{r}$ for any permutation $w\in S_{n}$ (since $w\left(
\varnothing\right)  =\varnothing$). Hence, the condition \textquotedblleft%
$w\left(  A_{i}\right)  =B_{i}$ for all $i$\textquotedblright\ in
(\ref{eq.NabBA.def}) is tautologically satisfied for $i=r$. Thus, we can
replace \textquotedblleft for all $i$\textquotedblright\ by \textquotedblleft
for all $i\neq r$\textquotedblright\ in (\ref{eq.NabBA.def}) without changing
the sum. But this gives us precisely $\nabla_{\mathbf{B}^{\prime}%
,\mathbf{A}^{\prime}}$. Hence, $\nabla_{\mathbf{B},\mathbf{A}}=\nabla
_{\mathbf{B}^{\prime},\mathbf{A}^{\prime}}$. This proves Proposition
\ref{prop.row.simple} \textbf{(c)}. \medskip

\textbf{(d)} Recall that $S$ is a $\mathbf{k}$-linear map sending each
permutation $w\in S_{n}$ to $w^{-1}$. Hence, applying $S$ to the equality
(\ref{eq.NabBA.def}), we obtain%
\[
S\left(  \nabla_{\mathbf{B},\mathbf{A}}\right)  =\sum_{\substack{w\in
S_{n};\\w\left(  A_{i}\right)  =B_{i}\text{ for all }i}}w^{-1}=\sum
_{\substack{w\in S_{n};\\w^{-1}\left(  B_{i}\right)  =A_{i}\text{ for all }%
i}}w^{-1}%
\]
(here, we have rewritten the condition \textquotedblleft$w\left(
A_{i}\right)  =B_{i}$\textquotedblright\ in the form \textquotedblleft%
$w^{-1}\left(  B_{i}\right)  =A_{i}$\textquotedblright\ (which is equivalent,
since $w$ is a permutation)). Thus,
\begin{align*}
S\left(  \nabla_{\mathbf{B},\mathbf{A}}\right)   &  =\sum_{\substack{w\in
S_{n};\\w^{-1}\left(  B_{i}\right)  =A_{i}\text{ for all }i}}w^{-1}%
=\sum_{\substack{w\in S_{n};\\w\left(  B_{i}\right)  =A_{i}\text{ for all }%
i}}w\\
&  \ \ \ \ \ \ \ \ \ \ \left(
\begin{array}
[c]{c}%
\text{here, we have substituted }w\text{ for }w^{-1}\text{ in the sum,}\\
\text{since the map }S_{n}\rightarrow S_{n},\ w\mapsto w^{-1}\text{ is a
bijection}%
\end{array}
\right) \\
&  =\nabla_{\mathbf{A},\mathbf{B}}\ \ \ \ \ \ \ \ \ \ \left(  \text{by the
definition of }\nabla_{\mathbf{A},\mathbf{B}}\right)  .
\end{align*}
This proves Proposition \ref{prop.row.simple} \textbf{(d)}.
\end{proof}

Moreover, these row-to-row sums $\nabla_{\mathbf{B},\mathbf{A}}$ generalize
the rectangular rook sums $\nabla_{B,A}$ from Section \ref{sec.rooksum}:

\begin{proposition}
\label{prop.row.rook}Let $A$ and $B$ be two subsets of $\left[  n\right]  $.
Define the two set decompositions $\mathbf{A}:=\left(  A,\ \left[  n\right]
\setminus A\right)  $ and $\mathbf{B}:=\left(  B,\ \left[  n\right]  \setminus
B\right)  $ of $\left[  n\right]  $. Then, $\nabla_{\mathbf{B},\mathbf{A}%
}=\nabla_{B,A}$.
\end{proposition}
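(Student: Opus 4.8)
The plan is to simply unwind the definition (\ref{eq.NabBA.def}) of $\nabla_{\mathbf{B},\mathbf{A}}$ for the two-block set decompositions $\mathbf{A}=\left(  A,\ \left[  n\right]  \setminus A\right)  $ and $\mathbf{B}=\left(  B,\ \left[  n\right]  \setminus B\right)  $. Writing out the condition ``$w\left(  A_{i}\right)  =B_{i}$ for all $i$'' in this case, it reads ``$w\left(  A\right)  =B$ and $w\left(  \left[  n\right]  \setminus A\right)  =\left[  n\right]  \setminus B$''. So the first step is to observe
\[
\nabla_{\mathbf{B},\mathbf{A}}=\sum_{\substack{w\in S_{n};\\w\left(  A\right)  =B\text{ and }w\left(  \left[  n\right]  \setminus A\right)  =\left[  n\right]  \setminus B}}w .
\]

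The second (and essentially only) step is to note that the condition ``$w\left(  \left[  n\right]  \setminus A\right)  =\left[  n\right]  \setminus B$'' is redundant given ``$w\left(  A\right)  =B$'': since $w$ is a permutation of $\left[  n\right]  $, we have $w\left(  \left[  n\right]  \setminus A\right)  =\left[  n\right]  \setminus w\left(  A\right)  $, so $w\left(  A\right)  =B$ already forces $w\left(  \left[  n\right]  \setminus A\right)  =\left[  n\right]  \setminus B$ (and the converse implication is equally immediate). This is precisely the equivalence already recorded in the proof of Proposition \ref{prop.Nabla.simple} \textbf{(d)}. Dropping the redundant condition, the sum becomes $\sum_{\substack{w\in S_{n};\\w\left(  A\right)  =B}}w$, which is $\nabla_{B,A}$ by definition.

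There is no real obstacle here: the statement is a direct consequence of the definitions together with the elementary fact that a bijection sends complements to complements. The only thing to be careful about is phrasing the equivalence of the two conditions on $w$ cleanly, but this has already been done earlier in the excerpt and can simply be cited.
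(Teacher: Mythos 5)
Your proposal is correct and follows essentially the same route as the paper's own proof: unwind the definition of $\nabla_{\mathbf{B},\mathbf{A}}$ for the two-block decompositions and observe that the condition $w\left(\left[n\right]\setminus A\right)=\left[n\right]\setminus B$ is redundant because a permutation sends complements to complements. Nothing is missing.
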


\begin{proof}
By the definition of $\nabla_{\mathbf{B},\mathbf{A}}$, we have%
\begin{equation}
\nabla_{\mathbf{B},\mathbf{A}}=\sum_{\substack{w\in S_{n};\\w\left(  A\right)
=B;\\w\left(  \left[  n\right]  \setminus A\right)  =\left[  n\right]
\setminus B}}w. \label{pf.prop.row.rook.1}%
\end{equation}
However, the condition \textquotedblleft$w\left(  \left[  n\right]  \setminus
A\right)  =\left[  n\right]  \setminus B$\textquotedblright\ under the sum
here is redundant, since it follows from \textquotedblleft$w\left(  A\right)
=B$\textquotedblright\ when $w$ is a permutation. Thus, we can remove this
condition. Hence, (\ref{pf.prop.row.rook.1}) simplifies to%
\[
\nabla_{\mathbf{B},\mathbf{A}}=\sum_{\substack{w\in S_{n};\\w\left(  A\right)
=B}}w=\nabla_{B,A}%
\]
(by the definition of $\nabla_{B,A}$). This proves Proposition
\ref{prop.row.rook}.
\end{proof}

\begin{remark}
The row-to-row sums can also be rewritten using colorings instead of set
(de)compositions. Namely, a \emph{coloring} of $\left[  n\right]  $ means a
map $f:\left[  n\right]  \rightarrow C$ to some set $C$. If $C=\left[
k\right]  $ for some $k\in\mathbb{N}$, then such a coloring $f$ can be
regarded as a set decomposition of $\left[  n\right]  $ of length $k$, where
the $i$-th block is $f^{-1}\left(  i\right)  $ for each $i\in\left[  k\right]
$. The image $f\left(  j\right)  $ of an element $j\in\left[  n\right]  $
under a coloring $f:\left[  n\right]  \rightarrow C$ is called the
\emph{color} of $j$ (under $f$). Now, the row-to-row sum $\nabla_{g,f}$
corresponding to two colorings $f$ and $g$ of $\left[  n\right]  $ is the sum
of all permutations $w\in S_{n}$ that satisfy $g\circ w=f$. (This is a
\textquotedblleft preservation of colors\textquotedblright\ condition.)
\end{remark}

\begin{remark}
Let $u\in S_{n}$ be any permutation. Let $\mathbf{A}$ be the set composition
$\left(  \left\{  1\right\}  ,\ \left\{  2\right\}  ,\ \ldots,\ \left\{
n\right\}  \right)  $ of $\left[  n\right]  $, and let $\mathbf{B}$ be the set
composition $\left(  \left\{  u\left(  1\right)  \right\}  ,\ \left\{
u\left(  2\right)  \right\}  ,\ \ldots,\ \left\{  u\left(  n\right)  \right\}
\right)  $ of $\left[  n\right]  $. Then, $\nabla_{\mathbf{B},\mathbf{A}}=u$.
Thus, the row-to-row sums $\nabla_{\mathbf{B},\mathbf{A}}$ in general are not
as special as their particular cases the rectangular rook sums $\nabla_{B,A}$.
In particular, the minimal polynomials of general row-to-row sums
$\nabla_{\mathbf{B},\mathbf{A}}$ cannot be factored into linear factors over
$\mathbb{Z}$.
\end{remark}

The symmetric group $S_{n}$ acts on the set $\operatorname*{SD}\left(
n\right)  =\left\{  \text{set decompositions of }\left[  n\right]  \right\}  $
(from the left) by the rule%
\begin{align*}
&  w\left(  B_{1},B_{2},\ldots,B_{k}\right)  =\left(  w\left(  B_{1}\right)
,\ w\left(  B_{2}\right)  ,\ \ldots,\ w\left(  B_{k}\right)  \right) \\
&  \ \ \ \ \ \ \ \ \ \ \text{for all }w\in S_{n}\text{ and all }\left(
B_{1},B_{2},\ldots,B_{k}\right)  \in\operatorname*{SD}\left(  n\right)  .
\end{align*}
The subset $\operatorname*{SC}\left(  n\right)  $ of $\operatorname*{SD}%
\left(  n\right)  $ is preserved under this $S_{n}$-action; thus $S_{n}$ acts
on $\operatorname*{SC}\left(  n\right)  $ as well.

The action of $S_{n}$ on $\operatorname*{SD}\left(  n\right)  $ we just
defined allows us to rewrite the equality (\ref{eq.NabBA.def}) as follows:%
\begin{equation}
\nabla_{\mathbf{B},\mathbf{A}}:=\sum_{\substack{w\in S_{n};\\w\mathbf{A}%
=\mathbf{B}}}w \label{eq.NabBA.def.rewr}%
\end{equation}
for any two set decompositions $\mathbf{A},\mathbf{B}\in\operatorname*{SD}%
\left(  n\right)  $ satisfying $\ell\left(  \mathbf{A}\right)  =\ell\left(
\mathbf{B}\right)  $. More importantly, the row-to-row sums $\nabla
_{\mathbf{B},\mathbf{A}}$ transform in a very simple way under this action:

\begin{proposition}
\label{prop.uNabv}Let $u,v\in S_{n}$ be any permutations. Let $\mathbf{A}%
,\mathbf{B}\in\operatorname*{SD}\left(  n\right)  $ be any two set
decompositions satisfying $\ell\left(  \mathbf{A}\right)  =\ell\left(
\mathbf{B}\right)  $. Then,%
\begin{equation}
u\nabla_{\mathbf{B},\mathbf{A}}v=\nabla_{u\mathbf{B},v^{-1}\mathbf{A}}.
\label{eq.uNabv}%
\end{equation}

\end{proposition}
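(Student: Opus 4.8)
The plan is to compute $u\nabla_{\mathbf{B},\mathbf{A}}v$ directly from the definition (\ref{eq.NabBA.def.rewr}) and then reindex the resulting sum by a single substitution. First I would use $\mathbf{k}$-linearity of left and right multiplication by a fixed group element to write
\[
u\nabla_{\mathbf{B},\mathbf{A}}v=u\left(  \sum_{\substack{w\in S_{n}
;\\w\mathbf{A}=\mathbf{B}}}w\right)  v=\sum_{\substack{w\in S_{n}
;\\w\mathbf{A}=\mathbf{B}}}uwv .
\]

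Next I would substitute $x=uwv$ for $w$ in this sum. Since the map $S_{n}\to S_{n},\ w\mapsto uwv$ is a bijection (with inverse $x\mapsto u^{-1}xv^{-1}$), this is a legitimate reindexing, and under it $w=u^{-1}xv^{-1}$. The summation condition $w\mathbf{A}=\mathbf{B}$ thus turns into $u^{-1}xv^{-1}\mathbf{A}=\mathbf{B}$; applying the (invertible) action of $u$ to both sides yields the equivalent condition $x\left(  v^{-1}\mathbf{A}\right)  =u\mathbf{B}$ — here I use that $\operatorname*{SD}\left(  n\right)  $ carries a genuine left $S_{n}$-action, so that $u\bigl(u^{-1}x\,\mathbf{A}'\bigr)=x\mathbf{A}'$ for any $\mathbf{A}'\in\operatorname*{SD}\left(  n\right)  $. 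Hence
\[
u\nabla_{\mathbf{B},\mathbf{A}}v=\sum_{\substack{x\in S_{n};\\x\left(
v^{-1}\mathbf{A}\right)  =u\mathbf{B}}}x .
\]

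Finally I would observe that $v^{-1}\mathbf{A}$ and $u\mathbf{B}$ are honest set decompositions of $\left[  n\right]  $ of equal length: applying a permutation to each block of a set decomposition preserves disjointness and the union being $\left[  n\right]  $, and does not change the number of blocks, so $\ell\left(  v^{-1}\mathbf{A}\right)  =\ell\left(  \mathbf{A}\right)  =\ell\left(  \mathbf{B}\right)  =\ell\left(  u\mathbf{B}\right)  $. Therefore the right-hand side above is exactly $\nabla_{u\mathbf{B},\,v^{-1}\mathbf{A}}$ by (\ref{eq.NabBA.def.rewr}), which is (\ref{eq.uNabv}).

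There is essentially no obstacle here; the only points requiring care are (i) noting that $w\mapsto uwv$ is a bijection of $S_{n}$, so the reindexing is valid, and (ii) confirming that both $v^{-1}\mathbf{A}$ and $u\mathbf{B}$ are set decompositions of $\left[  n\right]  $ of the same (common) length, so that the symbol $\nabla_{u\mathbf{B},v^{-1}\mathbf{A}}$ is defined. If one prefers, the identity can instead be obtained by first establishing the two special cases $u\nabla_{\mathbf{B},\mathbf{A}}=\nabla_{u\mathbf{B},\mathbf{A}}$ (set $v=\id$) and $\nabla_{\mathbf{B},\mathbf{A}}v=\nabla_{\mathbf{B},v^{-1}\mathbf{A}}$ (set $u=\id$) by the same substitution argument, and then composing them; this parallels parts \textbf{(h)} and \textbf{(i)} of Proposition \ref{prop.Nabla.simple}.
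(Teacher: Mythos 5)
Your proof is correct and takes essentially the same route as the paper: a single reindexing $w\mapsto uwv$ of the defining sum together with the equivalence of the conditions $w\mathbf{A}=\mathbf{B}$ and $\left(uwv\right)\left(v^{-1}\mathbf{A}\right)=u\mathbf{B}$ (the paper merely runs the substitution starting from $\nabla_{u\mathbf{B},v^{-1}\mathbf{A}}$ rather than from $u\nabla_{\mathbf{B},\mathbf{A}}v$).
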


\begin{proof}
Set $k=\ell\left(  \mathbf{A}\right)  =\ell\left(  \mathbf{B}\right)  $, and
write the set decompositions $\mathbf{A}$ and $\mathbf{B}$ as $\mathbf{A}%
=\left(  A_{1},A_{2},\ldots,A_{k}\right)  $ and $\mathbf{B}=\left(
B_{1},B_{2},\ldots,B_{k}\right)  $. Then, $u\mathbf{B}=\left(  u\left(
B_{1}\right)  ,u\left(  B_{2}\right)  ,\ldots,u\left(  B_{k}\right)  \right)
$ and $v^{-1}\mathbf{A}=\left(  v^{-1}\left(  A_{1}\right)  ,v^{-1}\left(
A_{2}\right)  ,\ldots,v^{-1}\left(  A_{k}\right)  \right)  $. Hence, the
definition of row-to-row sums yields%
\begin{align*}
\nabla_{u\mathbf{B},v^{-1}\mathbf{A}}  &  =\sum_{\substack{w\in S_{n}%
;\\w\left(  v^{-1}\left(  A_{i}\right)  \right)  =u\left(  B_{i}\right)
\text{ for all }i}}w\\
&  =\sum_{\substack{w\in S_{n};\\\left(  uwv\right)  \left(  v^{-1}\left(
A_{i}\right)  \right)  =u\left(  B_{i}\right)  \text{ for all }i}%
}uwv\ \ \ \ \ \ \ \ \ \ \left(
\begin{array}
[c]{c}%
\text{here, we have substituted }uwv\\
\text{for }w\text{ in the sum}%
\end{array}
\right) \\
&  =\sum_{\substack{w\in S_{n};\\w\left(  A_{i}\right)  =B_{i}\text{ for all
}i}}uwv\ \ \ \ \ \ \ \ \ \ \left(
\begin{array}
[c]{c}%
\text{since the condition}\\
\text{\textquotedblleft}\left(  uwv\right)  \left(  v^{-1}\left(
A_{i}\right)  \right)  =u\left(  B_{i}\right)  \text{\textquotedblright}\\
\text{is equivalent to \textquotedblleft}w\left(  A_{i}\right)  =B_{i}%
\text{\textquotedblright}%
\end{array}
\right) \\
&  =u\underbrace{\sum_{\substack{w\in S_{n};\\w\left(  A_{i}\right)
=B_{i}\text{ for all }i}}w}_{=\nabla_{\mathbf{B},\mathbf{A}}}v=u\nabla
_{\mathbf{B},\mathbf{A}}v.
\end{align*}
This proves Proposition \ref{prop.uNabv}.
\end{proof}

\subsubsection{Antisymmetrizers and the $\nabla_{U}^{-}$}

The sign of a permutation $w\in S_{n}$ shall be denoted by $\left(  -1\right)
^{w}$.

For each subset $U$ of $\left[  n\right]  $, we define the element%
\begin{equation}
\nabla_{U}^{-}:=\sum_{\substack{w\in S_{n};\\w\left(  i\right)  =i\text{ for
all }i\in\left[  n\right]  \setminus U}}\left(  -1\right)  ^{w}w\in
\mathcal{A}. \label{eq.Nab-U.1}%
\end{equation}
This is called the \emph{antisymmetrizer} of $U$ (aka the $U$%
\emph{-sign-integral} in the language of \cite[Definition 3.7.1]{sga}). Note
that $\nabla_{U}^{-}=1$ if $\left\vert U\right\vert \leq1$. Another way to
rephrase the definition of $\nabla_{U}^{-}$ is
\begin{equation}
\nabla_{U}^{-}:=\sum_{w\in S_{U}}\left(  -1\right)  ^{w}w\in\mathcal{A},
\label{eq.Nab-U.2}%
\end{equation}
where $S_{U}$ denotes the symmetric group on the set $U$ (embedded into
$S_{n}$ in the default way: each permutation $w\in S_{U}$ is extended to a
permutation of $\left[  n\right]  $ by letting it fix all elements of $\left[
n\right]  \setminus U$).

The antisymmetrizers $\nabla_{U}^{-}$ interact nicely with the permutations
$v\in S_{n}$:

\begin{proposition}
\label{prop.wNab-U}Let $v\in S_{n}$ be a permutation. Let $U$ be any subset of
$\left[  n\right]  $. Then,%
\begin{equation}
v\nabla_{U}^{-}=\nabla_{v\left(  U\right)  }^{-}v. \label{eq.wNab-U}%
\end{equation}

\end{proposition}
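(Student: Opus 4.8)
The plan is to prove the identity $v\nabla_U^- = \nabla_{v(U)}^- v$ by direct manipulation of the defining sum $(\ref{eq.Nab-U.2})$, using conjugation in the symmetric group. First I would recall that for any permutation $v \in S_n$ and any subset $U \subseteq [n]$, the map $w \mapsto vwv^{-1}$ is a bijection from $S_U$ to $S_{v(U)}$ (since conjugating a permutation that fixes $[n]\setminus U$ pointwise by $v$ yields a permutation that fixes $[n]\setminus v(U)$ pointwise, and this is invertible), and moreover this conjugation preserves signs, i.e. $(-1)^{vwv^{-1}} = (-1)^w$.

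Armed with this, I would compute, starting from $(\ref{eq.Nab-U.2})$,
\[
v \nabla_U^- = v \sum_{w \in S_U} (-1)^w w = \sum_{w \in S_U} (-1)^w vw = \sum_{w \in S_U} (-1)^w (vwv^{-1}) v.
\]
Now I substitute $w' = vwv^{-1}$; as $w$ ranges over $S_U$, the element $w'$ ranges over $S_{v(U)}$, and $(-1)^{w'} = (-1)^w$. Hence the right-hand side becomes $\sum_{w' \in S_{v(U)}} (-1)^{w'} w' v = \left(\sum_{w' \in S_{v(U)}} (-1)^{w'} w'\right) v = \nabla_{v(U)}^- v$, using $(\ref{eq.Nab-U.2})$ again for the subset $v(U)$. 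This establishes $(\ref{eq.wNab-U})$.

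There is essentially no obstacle here; the only point requiring a word of care is the claim that conjugation by $v$ carries $S_U$ bijectively onto $S_{v(U)}$ and preserves signs. The bijectivity is clear since conjugation by $v^{-1}$ is the inverse map, and sign-preservation follows from $(-1)^{\bullet}$ being a group homomorphism $S_n \to \{\pm 1\}$ (so $(-1)^{vwv^{-1}} = (-1)^v (-1)^w (-1)^{v^{-1}} = (-1)^w$ since $(-1)^v (-1)^{v^{-1}} = (-1)^{\mathrm{id}} = 1$). Everything else is a routine re-indexing of a finite sum.
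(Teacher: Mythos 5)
Your proof is correct and follows essentially the same route as the paper's: both rely on the fact that conjugation $w\mapsto vwv^{-1}$ is a sign-preserving bijection between the permutations supported on $U$ and those supported on $v\left(  U\right)  $, and then re-index the defining sum (the paper rewrites $\nabla_{v\left(  U\right)  }^{-}$ as $v\nabla_{U}^{-}v^{-1}$, which is the same computation read in the other direction).
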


\begin{proof}
The definition of $\nabla_{v\left(  U\right)  }^{-}$ yields%
\begin{equation}
\nabla_{v\left(  U\right)  }^{-}=\sum_{\substack{w\in S_{n};\\w\left(
i\right)  =i\text{ for all }i\in\left[  n\right]  \setminus v\left(  U\right)
}}\left(  -1\right)  ^{w}w. \label{pf.prop.wNab-U.1}%
\end{equation}
The permutation $v\in S_{n}$ is a bijection from $\left[  n\right]  $ to
$\left[  n\right]  $, and thus induces a group isomorphism%
\begin{align*}
S_{\left[  n\right]  }  &  \rightarrow S_{\left[  n\right]  },\\
w  &  \mapsto vwv^{-1}.
\end{align*}
Of course, this isomorphism is just conjugation by $v$ in the group
$S_{\left[  n\right]  }=S_{n}$. But since it is induced by the bijection
$v:\left[  n\right]  \rightarrow\left[  n\right]  $, we immediately see from
functoriality that it sends the permutations $w\in S_{n}$ satisfying
\textquotedblleft$w\left(  i\right)  =i$ for all $i\in\left[  n\right]
\setminus U$\textquotedblright\ to the permutations $w\in S_{n}$ satisfying
\textquotedblleft$w\left(  i\right)  =i$ for all $i\in\left[  n\right]
\setminus v\left(  U\right)  $\textquotedblright. Thus, the latter
permutations are the images of the former permutations under the map $w\mapsto
vwv^{-1}$. Hence, we can substitute $vwv^{-1}$ for $w$ in the sum on the right
hand side of (\ref{pf.prop.wNab-U.1}). We thus obtain
\begin{align*}
\sum_{\substack{w\in S_{n};\\w\left(  i\right)  =i\text{ for all }i\in\left[
n\right]  \setminus v\left(  U\right)  }}\left(  -1\right)  ^{w}w  &
=\sum_{\substack{w\in S_{n};\\w\left(  i\right)  =i\text{ for all }i\in\left[
n\right]  \setminus U}}\underbrace{\left(  -1\right)  ^{vwv^{-1}}%
}_{\substack{=\left(  -1\right)  ^{w}\\\text{(since conjugation}%
\\\text{preserves the sign}\\\text{of a permutation)}}}vwv^{-1}\\
&  =\sum_{\substack{w\in S_{n};\\w\left(  i\right)  =i\text{ for all }%
i\in\left[  n\right]  \setminus U}}\left(  -1\right)  ^{w}vwv^{-1}\\
&  =v\underbrace{\sum_{\substack{w\in S_{n};\\w\left(  i\right)  =i\text{ for
all }i\in\left[  n\right]  \setminus U}}\left(  -1\right)  ^{w}w}%
_{\substack{=\nabla_{U}^{-}\\\text{(by the definition of }\nabla_{U}%
^{-}\text{)}}}v^{-1}=v\nabla_{U}^{-}v^{-1}.
\end{align*}
In view of (\ref{pf.prop.wNab-U.1}), this rewrites as $\nabla_{v\left(
U\right)  }^{-}=v\nabla_{U}^{-}v^{-1}$. In other words, $\nabla_{v\left(
U\right)  }^{-}v=v\nabla_{U}^{-}$. This proves Proposition \ref{prop.wNab-U}.
\end{proof}

The following fact will also be useful:

\begin{proposition}
\label{prop.Nab-UV}Let $U$ and $V$ be two subsets of $\left[  n\right]  $ such
that $V\subseteq U$. Then, $\nabla_{U}^{-}\mathcal{A}\subseteq\nabla_{V}%
^{-}\mathcal{A}$.
\end{proposition}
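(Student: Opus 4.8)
The plan is to prove the slightly stronger fact that $\nabla_{U}^{-}$ itself belongs to the right ideal $\nabla_{V}^{-}\mathcal{A}$. Since $\mathcal{A}$ is unital, this at once gives $\nabla_{U}^{-}\mathcal{A}\subseteq\nabla_{V}^{-}\mathcal{A}\cdot\mathcal{A}=\nabla_{V}^{-}\mathcal{A}$, which is the claim.

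First I would invoke the description \eqref{eq.Nab-U.2} of the antisymmetrizers, so that $\nabla_{U}^{-}=\sum_{w\in S_{U}}\left(-1\right)^{w}w$ and $\nabla_{V}^{-}=\sum_{v\in S_{V}}\left(-1\right)^{v}v$. From $V\subseteq U$ we get that $S_{V}$ is a subgroup of $S_{U}$. Pick a system $y_{1},y_{2},\ldots,y_{m}$ of representatives for the \emph{right} cosets of $S_{V}$ in $S_{U}$, so that $S_{U}=\bigsqcup_{j=1}^{m}S_{V}y_{j}$ is a disjoint union. The reason for choosing right (rather than left) cosets is that this will cause the factor $\nabla_{V}^{-}$ to come out on the left in the computation below, which is exactly what we need.

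Next I would split the defining sum of $\nabla_{U}^{-}$ along these cosets. Writing each $w\in S_{U}$ uniquely in the form $w=vy_{j}$ with $v\in S_{V}$ and $j\in\left[m\right]$, and using that the sign is a group homomorphism (so $\left(-1\right)^{vy_{j}}=\left(-1\right)^{v}\left(-1\right)^{y_{j}}$), I obtain
\[
\nabla_{U}^{-}=\sum_{j=1}^{m}\ \sum_{v\in S_{V}}\left(-1\right)^{v}\left(-1\right)^{y_{j}}vy_{j}=\sum_{j=1}^{m}\left(-1\right)^{y_{j}}\left(\sum_{v\in S_{V}}\left(-1\right)^{v}v\right)y_{j}=\nabla_{V}^{-}\left(\sum_{j=1}^{m}\left(-1\right)^{y_{j}}y_{j}\right).
\]
Setting $a:=\sum_{j=1}^{m}\left(-1\right)^{y_{j}}y_{j}\in\mathcal{A}$, this says $\nabla_{U}^{-}=\nabla_{V}^{-}a\in\nabla_{V}^{-}\mathcal{A}$, and multiplying on the right by $\mathcal{A}$ yields $\nabla_{U}^{-}\mathcal{A}\subseteq\nabla_{V}^{-}\mathcal{A}$.

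I do not expect any real obstacle here: the argument is a one-line coset decomposition and, crucially, it uses no division whatsoever, so it is valid over an arbitrary commutative ring $\mathbf{k}$. The only point requiring a moment's care is the choice of right cosets over left cosets. (One might instead try the shortcut that every $v\in S_{V}$ satisfies $v\nabla_{U}^{-}=\left(-1\right)^{v}\nabla_{U}^{-}$, whence $\nabla_{V}^{-}\nabla_{U}^{-}=\left\vert V\right\vert !\,\nabla_{U}^{-}$; but extracting $\nabla_{U}^{-}\in\nabla_{V}^{-}\mathcal{A}$ from this would require $\left\vert V\right\vert !$ to be invertible, so the coset argument is the right one to use.)
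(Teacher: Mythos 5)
Your argument is correct: decomposing $S_{U}$ into right cosets of the subgroup $S_{V}$ (legitimate because $V\subseteq U$ makes $S_{V}$ a subgroup of $S_{U}$ under the standard embeddings into $S_{n}$), factoring the sign along $w=vy_{j}$, and pulling out $\nabla_{V}^{-}$ on the left gives $\nabla_{U}^{-}=\nabla_{V}^{-}\sum_{j}\left(-1\right)^{y_{j}}y_{j}\in\nabla_{V}^{-}\mathcal{A}$, whence the inclusion of right ideals; no invertibility is used, so the proof is valid over any $\mathbf{k}$, and your remark about why right (not left) cosets are the correct choice is on point. The paper proceeds differently in form, though the underlying mechanism is the same: it cites the identity $\nabla_{X}^{-}=\nabla_{X\setminus\left\{x\right\}}^{-}\bigl(1-\sum_{y\in X\setminus\left\{x\right\}}t_{y,x}\bigr)$ (which is precisely your coset decomposition in the special case of removing a single element, with the explicit representatives $1$ and the transpositions $t_{y,x}$), deduces $\nabla_{X}^{-}\mathcal{A}\subseteq\nabla_{X\setminus\left\{x\right\}}^{-}\mathcal{A}$, and then chains this step over the elements of $U\setminus V$ one at a time. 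Your version does the whole descent in one shot with abstract coset representatives, which is more self-contained (no external lemma, no iteration) and even yields the slightly stronger statement $\nabla_{U}^{-}\in\nabla_{V}^{-}\mathcal{A}$ explicitly; the paper's version buys concrete representatives (transpositions) and reuses a formula already available in its reference, at the cost of an iterated argument.
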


\begin{proof}
For any two distinct elements $p,q\in\left[  n\right]  $, let $t_{p,q}$ be the
transposition in $S_{n}$ that swaps $p$ with $q$. Then, a well-known formula
(\cite[Lemma 3.11.6]{sga}) says that every $X\subseteq\left[  n\right]  $ and
every $x\in X$ satisfy%
\[
\nabla_{X}^{-}=\nabla_{X\setminus\left\{  x\right\}  }^{-}\left(  1-\sum_{y\in
X\setminus\left\{  x\right\}  }t_{y,x}\right)  .
\]
Hence, every $X\subseteq\left[  n\right]  $ and every $x\in X$ satisfy%
\begin{align}
\nabla_{X}^{-}\mathcal{A}  &  =\nabla_{X\setminus\left\{  x\right\}  }%
^{-}\underbrace{\left(  1-\sum_{y\in X\setminus\left\{  x\right\}  }%
t_{y,x}\right)  \mathcal{A}}_{\subseteq\mathcal{A}}\nonumber\\
&  \subseteq\nabla_{X\setminus\left\{  x\right\}  }^{-}\mathcal{A}.
\label{pf.prop.Nab-UV.step}%
\end{align}
Now, $V\subseteq U$ shows that the set $V$ can be obtained from $U$ by
removing some elements (possibly none, if $V=U$). In other words,
$V=U\setminus\left\{  u_{1},u_{2},\ldots,u_{m}\right\}  $ for some distinct
elements $u_{1},u_{2},\ldots,u_{m}\in U$. Using these elements, we have%
\begin{align*}
\nabla_{U}^{-}\mathcal{A}  &  \subseteq\nabla_{U\setminus\left\{
u_{1}\right\}  }^{-}\mathcal{A}\ \ \ \ \ \ \ \ \ \ \left(  \text{by
(\ref{pf.prop.Nab-UV.step})}\right) \\
&  \subseteq\nabla_{\left(  U\setminus\left\{  u_{1}\right\}  \right)
\setminus\left\{  u_{2}\right\}  }^{-}\mathcal{A}\ \ \ \ \ \ \ \ \ \ \left(
\text{by (\ref{pf.prop.Nab-UV.step})}\right) \\
&  \subseteq\nabla_{\left(  \left(  U\setminus\left\{  u_{1}\right\}  \right)
\setminus\left\{  u_{2}\right\}  \right)  \setminus\left\{  u_{3}\right\}
}^{-}\mathcal{A}\ \ \ \ \ \ \ \ \ \ \left(  \text{by
(\ref{pf.prop.Nab-UV.step})}\right) \\
&  \subseteq\cdots\\
&  \subseteq\nabla_{\left(  \left(  \left(  U\setminus\left\{  u_{1}\right\}
\right)  \setminus\left\{  u_{2}\right\}  \right)  \setminus\cdots\right)
\setminus\left\{  u_{m}\right\}  }^{-}\mathcal{A}\ \ \ \ \ \ \ \ \ \ \left(
\text{by (\ref{pf.prop.Nab-UV.step})}\right) \\
&  =\nabla_{V}^{-}\mathcal{A}%
\end{align*}
(since $\left(  \left(  \left(  U\setminus\left\{  u_{1}\right\}  \right)
\setminus\left\{  u_{2}\right\}  \right)  \setminus\cdots\right)
\setminus\left\{  u_{m}\right\}  =U\setminus\left\{  u_{1},u_{2},\ldots
,u_{m}\right\}  =V$). This proves Proposition \ref{prop.Nab-UV}.
\end{proof}

\subsection{The two ideals}

This all was easy. Let us now move towards deeper waters. Recall that the
notation \textquotedblleft$\operatorname*{span}$\textquotedblright\ always
means a $\mathbf{k}$-linear span.

\begin{definition}
\label{def.IJ}Let $k\in\mathbb{N}$. We define two $\mathbf{k}$-submodules
$\mathcal{I}_{k}$ and $\mathcal{J}_{k}$ of $\mathcal{A}$ by
\[
\mathcal{I}_{k}:=\operatorname*{span}\left\{  \nabla_{\mathbf{B},\mathbf{A}%
}\ \mid\ \mathbf{A},\mathbf{B}\in\operatorname*{SC}\left(  n\right)  \text{
with }\ell\left(  \mathbf{A}\right)  =\ell\left(  \mathbf{B}\right)  \leq
k\right\}
\]
and%
\[
\mathcal{J}_{k}:=\mathcal{A}\cdot\operatorname*{span}\left\{  \nabla_{U}%
^{-}\ \mid\ U\text{ is a subset of }\left[  n\right]  \text{ having size
}k+1\right\}  \cdot\mathcal{A}.
\]

\end{definition}

Note that the set $\left\{  \nabla_{U}^{-}\ \mid\ U\text{ is a subset of
}\left[  n\right]  \text{ having size }k+1\right\}  $ is empty when $k\geq n$,
since no subsets of $\left[  n\right]  $ have size larger than $n$. The span
of an empty set is the zero submodule $\left\{  0\right\}  $.

\begin{proposition}
\label{prop.IJ.1}Let $k\in\mathbb{N}$. Then:

\begin{enumerate}
\item[\textbf{(a)}] Both $\mathcal{I}_{k}$ and $\mathcal{J}_{k}$ are ideals of
$\mathcal{A}$. (\textquotedblleft Ideal\textquotedblright\ always means
\textquotedblleft two-sided ideal\textquotedblright.)

\item[\textbf{(b)}] We have%
\begin{align*}
\mathcal{J}_{k}  &  =\mathcal{A}\cdot\operatorname*{span}\left\{  \nabla
_{U}^{-}\ \mid\ U\text{ is a subset of }\left[  n\right]  \text{ having size
}k+1\right\} \\
&  =\operatorname*{span}\left\{  \nabla_{U}^{-}\ \mid\ U\text{ is a subset of
}\left[  n\right]  \text{ having size }k+1\right\}  \cdot\mathcal{A}.
\end{align*}

\item[\textbf{(c)}] The antipode $S$ satisfies $S\left(  \mathcal{I}%
_{k}\right)  =\mathcal{I}_{k}$ and $S\left(  \mathcal{J}_{k}\right)
=\mathcal{J}_{k}$.

\item[\textbf{(d)}] We have
\[
\mathcal{I}_{k}=\operatorname*{span}\left\{  \nabla_{\mathbf{B},\mathbf{A}%
}\ \mid\ \mathbf{A},\mathbf{B}\in\operatorname*{SD}\left(  n\right)  \text{
with }\ell\left(  \mathbf{A}\right)  =\ell\left(  \mathbf{B}\right)  \leq
k\right\}  .
\]

\item[\textbf{(e)}] We have
\begin{align*}
\mathcal{J}_{k}  &  =\mathcal{A}\cdot\operatorname*{span}\left\{  \nabla
_{U}^{-}\ \mid\ U\text{ is a subset of }\left[  n\right]  \text{ having size
}>k\right\}  \cdot\mathcal{A}\\
&  =\mathcal{A}\cdot\operatorname*{span}\left\{  \nabla_{U}^{-}\ \mid\ U\text{
is a subset of }\left[  n\right]  \text{ having size }>k\right\} \\
&  =\operatorname*{span}\left\{  \nabla_{U}^{-}\ \mid\ U\text{ is a subset of
}\left[  n\right]  \text{ having size }>k\right\}  \cdot\mathcal{A}.
\end{align*}

\item[\textbf{(f)}] If $X$ is any subset of $\left[  n\right]  $ having size
$k+1$, then%
\[
\mathcal{J}_{k}=\mathcal{A}\cdot\nabla_{X}^{-}\cdot\mathcal{A}.
\]

\item[\textbf{(g)}] We have%
\[
\mathcal{I}_{k}=\operatorname*{span}\left\{  \nabla_{\mathbf{B},\mathbf{A}%
}\ \mid\ \mathbf{A},\mathbf{B}\in\operatorname*{SD}\left(  n\right)  \text{
with }\ell\left(  \mathbf{A}\right)  =\ell\left(  \mathbf{B}\right)
=k\right\}  .
\]

\end{enumerate}
\end{proposition}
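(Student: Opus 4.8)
The plan is to derive all seven parts from three ``equivariance'' facts established above --- Proposition~\ref{prop.uNabv} ($u\nabla_{\mathbf{B},\mathbf{A}}v=\nabla_{u\mathbf{B},\,v^{-1}\mathbf{A}}$), Proposition~\ref{prop.wNab-U} ($v\nabla_{U}^{-}=\nabla_{v\left(  U\right)  }^{-}v$), and Proposition~\ref{prop.Nab-UV} ($\nabla_{U}^{-}\mathcal{A}\subseteq\nabla_{V}^{-}\mathcal{A}$ whenever $V\subseteq U$) --- together with the elementary bookkeeping of row-to-row sums in Proposition~\ref{prop.row.simple}. Nothing here is deep; the point is to organize the deductions so that later parts may cite earlier ones. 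Throughout I write $M:=\operatorname{span}\left\{  \nabla_{U}^{-}\ \mid\ \left\vert U\right\vert =k+1\right\}$ and $M^{\prime}:=\operatorname{span}\left\{  \nabla_{U}^{-}\ \mid\ \left\vert U\right\vert >k\right\}$, so that $\mathcal{J}_{k}=\mathcal{A}M\mathcal{A}$ by Definition~\ref{def.IJ}.

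First I would treat the ideal and one-sided-span claims. For \textbf{(a)}, the submodule $\mathcal{I}_{k}$ is stable under left and right multiplication by an arbitrary $u\in S_{n}$ because Proposition~\ref{prop.uNabv} rewrites $u\nabla_{\mathbf{B},\mathbf{A}}v$ as $\nabla_{u\mathbf{B},\,v^{-1}\mathbf{A}}$, and applying a permutation leaves the blocks nonempty and their number unchanged, so $u\mathbf{B},v^{-1}\mathbf{A}\in\operatorname{SC}\left(  n\right)$ have length $\leq k$; and $\mathcal{J}_{k}=\mathcal{A}M\mathcal{A}$ is a two-sided ideal by construction. For \textbf{(b)}, Proposition~\ref{prop.wNab-U} gives $w\nabla_{U}^{-}=\nabla_{w\left(  U\right)  }^{-}w$ and, by the substitution $w\mapsto w^{-1}$, also $\nabla_{U}^{-}w=w\nabla_{w^{-1}\left(  U\right)  }^{-}$, with $\left\vert w\left(  U\right)  \right\vert =\left\vert w^{-1}\left(  U\right)  \right\vert =\left\vert U\right\vert$; hence $\mathcal{A}M=M\mathcal{A}$, and therefore $\mathcal{A}M\mathcal{A}=\mathcal{A}M=M\mathcal{A}$. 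The same reasoning applies verbatim with $M$ replaced by $M^{\prime}$, which yields the three displayed equalities in \textbf{(e)} once we know $\mathcal{A}M^{\prime}\mathcal{A}=\mathcal{J}_{k}$; for this I would use Proposition~\ref{prop.Nab-UV}: for $\left\vert U\right\vert >k$ choose $V\subseteq U$ with $\left\vert V\right\vert =k+1$, so $\nabla_{U}^{-}\in\nabla_{U}^{-}\mathcal{A}\subseteq\nabla_{V}^{-}\mathcal{A}\subseteq\mathcal{J}_{k}$, whence $M^{\prime}\subseteq\mathcal{J}_{k}$ and $\mathcal{A}M^{\prime}\mathcal{A}\subseteq\mathcal{J}_{k}$, the reverse inclusion being trivial since $M\subseteq M^{\prime}$. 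For \textbf{(f)}, given $X$ with $\left\vert X\right\vert =k+1$, every $U$ of the same size equals $\sigma\left(  X\right)$ for some $\sigma\in S_{n}$, and Proposition~\ref{prop.wNab-U} in the form $\nabla_{\sigma\left(  X\right)  }^{-}=\sigma\nabla_{X}^{-}\sigma^{-1}$ shows $\nabla_{U}^{-}\in\mathcal{A}\nabla_{X}^{-}\mathcal{A}$; so $M\subseteq\mathcal{A}\nabla_{X}^{-}\mathcal{A}$ and $\mathcal{J}_{k}=\mathcal{A}M\mathcal{A}=\mathcal{A}\nabla_{X}^{-}\mathcal{A}$.

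Then I would turn to \textbf{(c)}, \textbf{(d)}, \textbf{(g)}, which are pure bookkeeping. The antipode $S$ fixes each $\nabla_{U}^{-}$ (it sends $w\mapsto w^{-1}$, preserves signs, and permutes $S_{U}$) and sends $\nabla_{\mathbf{B},\mathbf{A}}$ to $\nabla_{\mathbf{A},\mathbf{B}}$ by Proposition~\ref{prop.row.simple}\,\textbf{(d)}; since $S$ is a bijective anti-automorphism of $\mathcal{A}$ and the defining spanning sets of $\mathcal{I}_{k}$ and $\mathcal{J}_{k}$ are visibly stable under these operations, \textbf{(c)} follows. For \textbf{(d)}, ``$\subseteq$'' holds because $\operatorname{SC}\left(  n\right)  \subseteq\operatorname{SD}\left(  n\right)$; conversely a generator $\nabla_{\mathbf{B},\mathbf{A}}$ with $\mathbf{A},\mathbf{B}\in\operatorname{SD}\left(  n\right)$ of length $\leq k$ is either $0$ or, by Proposition~\ref{prop.row.simple}\,\textbf{(a)}, has its empty blocks in matching positions in $\mathbf{A}$ and $\mathbf{B}$, so deleting them (Proposition~\ref{prop.row.simple}\,\textbf{(c)}) produces a generator indexed by set compositions of length $\leq k$. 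Finally \textbf{(g)} follows from \textbf{(d)} by the reverse move: appending $k-\ell$ empty blocks to a length-$\ell$ set-decomposition generator does not change it (Proposition~\ref{prop.row.simple}\,\textbf{(c)}), so every generator appearing in the description in \textbf{(d)} already occurs with length exactly $k$.

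If there is a ``hard part'', it is the conceptual observation powering \textbf{(b)} and \textbf{(e)}: a one-sided span of antisymmetrizers is automatically two-sided, because conjugating $\nabla_{U}^{-}$ by a permutation $v$ merely relocates the support set $U$ to $v\left(  U\right)$ without enlarging it. Everything else is routine once the equivariance propositions are invoked in the order above.
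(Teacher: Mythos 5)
Your proposal is correct and follows essentially the same route as the paper: Proposition~\ref{prop.uNabv} for the stability of $\mathcal{I}_{k}$, Proposition~\ref{prop.wNab-U} for the commutation $\mathcal{A}M=M\mathcal{A}$ behind parts \textbf{(b)}, \textbf{(e)}, \textbf{(f)}, Proposition~\ref{prop.Nab-UV} for reducing size-$>k$ antisymmetrizers to size $k+1$, and the bookkeeping of Proposition~\ref{prop.row.simple} for \textbf{(c)}, \textbf{(d)}, \textbf{(g)}. The only differences are cosmetic (e.g., you deduce \textbf{(f)} from $M\subseteq\mathcal{A}\nabla_{X}^{-}\mathcal{A}$ and ideality rather than recomputing the span explicitly), and no step is missing.
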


\begin{proof}
\textbf{(a)} Clearly, $\mathcal{J}_{k}$ is an ideal of $\mathcal{A}$ (since
$\mathcal{J}_{k}$ has the form $\mathcal{J}_{k}=\mathcal{AXA}$ for some
$\mathbf{k}$-submodule $\mathcal{X}\subseteq\mathcal{A}$). It remains to show
that so is $\mathcal{I}_{k}$.

But
\[
\mathcal{I}_{k}=\operatorname*{span}\left\{  \nabla_{\mathbf{B},\mathbf{A}%
}\ \mid\ \mathbf{A},\mathbf{B}\in\operatorname*{SC}\left(  n\right)  \text{
with }\ell\left(  \mathbf{A}\right)  =\ell\left(  \mathbf{B}\right)  \leq
k\right\}  .
\]
Hence, $\mathcal{I}_{k}$ is a $\mathbf{k}$-submodule of $\mathcal{A}$.
Moreover, (\ref{eq.uNabv}) shows that for any two set compositions
$\mathbf{A},\mathbf{B}\in\operatorname*{SC}\left(  n\right)  $ with
$\ell\left(  \mathbf{A}\right)  =\ell\left(  \mathbf{B}\right)  \leq k$ and
any two permutations $u,v\in S_{n}$, we have $u\nabla_{\mathbf{B},\mathbf{A}%
}v=\nabla_{u\mathbf{B},v^{-1}\mathbf{A}}\in\mathcal{I}_{k}$ (since
$v^{-1}\mathbf{A}$ and $u\mathbf{B}$ are again two set compositions in
$\operatorname*{SC}\left(  n\right)  $ and satisfy $\ell\left(  v^{-1}%
\mathbf{A}\right)  =\ell\left(  u\mathbf{B}\right)  =\ell\left(
\mathbf{A}\right)  =\ell\left(  \mathbf{B}\right)  \leq k$). By linearity,
this shows that $\mathcal{AI}_{k}\mathcal{A\subseteq I}_{k}$ (since
$\mathcal{A}$ is spanned by the permutations $w\in S_{n}$, whereas
$\mathcal{I}_{k}$ is spanned by the $\nabla_{\mathbf{B},\mathbf{A}}$ for
$\mathbf{A},\mathbf{B}\in\operatorname*{SC}\left(  n\right)  $ with
$\ell\left(  \mathbf{A}\right)  =\ell\left(  \mathbf{B}\right)  \leq k$). In
other words, $\mathcal{I}_{k}$ is an ideal of $\mathcal{A}$. Thus, the proof
of part \textbf{(a)} is complete. \medskip

\textbf{(b)} Since $\mathcal{A}=\operatorname*{span}S_{n}$, we have%
\begin{align*}
&  \mathcal{A}\cdot\operatorname*{span}\left\{  \nabla_{U}^{-}\ \mid\ U\text{
is a subset of }\left[  n\right]  \text{ having size }k+1\right\} \\
&  =\operatorname*{span}S_{n}\cdot\operatorname*{span}\left\{  \nabla_{U}%
^{-}\ \mid\ U\text{ is a subset of }\left[  n\right]  \text{ having size
}k+1\right\} \\
&  =\operatorname*{span}\left\{  v\nabla_{U}^{-}\ \mid\ v\in S_{n}\text{,
while }U\text{ is a subset of }\left[  n\right]  \text{ having size
}k+1\right\} \\
&  =\operatorname*{span}\left\{  \nabla_{v\left(  U\right)  }^{-}v\ \mid\ v\in
S_{n}\text{, while }U\text{ is a subset of }\left[  n\right]  \text{ having
size }k+1\right\} \\
&  \ \ \ \ \ \ \ \ \ \ \ \ \ \ \ \ \ \ \ \ \left(  \text{by Proposition
\ref{prop.wNab-U}}\right) \\
&  =\operatorname*{span}\left\{  \nabla_{U}^{-}v\ \mid\ v\in S_{n}\text{,
while }U\text{ is a subset of }\left[  n\right]  \text{ having size
}k+1\right\} \\
&  \ \ \ \ \ \ \ \ \ \ \ \ \ \ \ \ \ \ \ \ \left(
\begin{array}
[c]{c}%
\text{here, we have substituted }U\text{ for }v\left(  U\right)  \text{, since
}v\in S_{n}\\
\text{permutes the subsets of }\left[  n\right]  \text{ having size }k+1
\end{array}
\right)
\end{align*}
and
\begin{align*}
&  \operatorname*{span}\left\{  \nabla_{U}^{-}\ \mid\ U\text{ is a subset of
}\left[  n\right]  \text{ having size }k+1\right\}  \cdot\mathcal{A}\\
&  =\operatorname*{span}\left\{  \nabla_{U}^{-}\ \mid\ U\text{ is a subset of
}\left[  n\right]  \text{ having size }k+1\right\}  \cdot\operatorname*{span}%
S_{n}\\
&  =\operatorname*{span}\left\{  \nabla_{U}^{-}v\ \mid\ v\in S_{n}\text{,
while }U\text{ is a subset of }\left[  n\right]  \text{ having size
}k+1\right\}  .
\end{align*}
The right hand sides of these two equalities are equal; hence, so are their
left hand sides. In other words,%
\begin{align*}
&  \mathcal{A}\cdot\operatorname*{span}\left\{  \nabla_{U}^{-}\ \mid\ U\text{
is a subset of }\left[  n\right]  \text{ having size }k+1\right\} \\
&  =\operatorname*{span}\left\{  \nabla_{U}^{-}\ \mid\ U\text{ is a subset of
}\left[  n\right]  \text{ having size }k+1\right\}  \cdot\mathcal{A}.
\end{align*}

Now, the definition of $\mathcal{J}_{k}$ yields%
\begin{align*}
\mathcal{J}_{k}  &  =\mathcal{A}\cdot\underbrace{\operatorname*{span}\left\{
\nabla_{U}^{-}\ \mid\ U\text{ is a subset of }\left[  n\right]  \text{ having
size }k+1\right\}  \cdot\mathcal{A}}_{\substack{=\mathcal{A}\cdot
\operatorname*{span}\left\{  \nabla_{U}^{-}\ \mid\ U\text{ is a subset of
}\left[  n\right]  \text{ having size }k+1\right\}  \\\text{(by the preceding
sentence)}}}\\
&  =\underbrace{\mathcal{A}\cdot\mathcal{A}}_{=\mathcal{A}}\cdot
\operatorname*{span}\left\{  \nabla_{U}^{-}\ \mid\ U\text{ is a subset of
}\left[  n\right]  \text{ having size }k+1\right\} \\
&  =\mathcal{A}\cdot\operatorname*{span}\left\{  \nabla_{U}^{-}\ \mid\ U\text{
is a subset of }\left[  n\right]  \text{ having size }k+1\right\} \\
&  =\operatorname*{span}\left\{  \nabla_{U}^{-}\ \mid\ U\text{ is a subset of
}\left[  n\right]  \text{ having size }k+1\right\}  \cdot\mathcal{A}.
\end{align*}
Thus, part \textbf{(b)} is proved. \medskip

\textbf{(c)} The equality $S\left(  \mathcal{I}_{k}\right)  =\mathcal{I}_{k}$
follows from Proposition \ref{prop.row.simple} \textbf{(d)}. It remains to
prove $S\left(  \mathcal{J}_{k}\right)  =\mathcal{J}_{k}$.

Let $\mathcal{X}_{k}=\operatorname*{span}\left\{  \nabla_{U}^{-}%
\ \mid\ U\text{ is a subset of }\left[  n\right]  \text{ having size
}k+1\right\}  $. Then, the definition of $\mathcal{J}_{k}$ rewrites as
$\mathcal{J}_{k}=\mathcal{AX}_{k}\mathcal{A}$. But we have $S\left(
\nabla_{U}^{-}\right)  =\nabla_{U}^{-}$ for each $U\subseteq\left[  n\right]
$ (see, e.g., \cite[Example 3.11.13 \textbf{(c)}]{sga}). Since the map $S$ is
$\mathbf{k}$-linear, we thus conclude that $S\left(  \mathcal{X}_{k}\right)
=\mathcal{X}_{k}$ (since $\mathcal{X}_{k}=\operatorname*{span}\left\{
\nabla_{U}^{-}\ \mid\ U\text{ is a subset of }\left[  n\right]  \text{ having
size }k+1\right\}  $). Now, from $\mathcal{J}_{k}=\mathcal{AX}_{k}\mathcal{A}%
$, we obtain%
\begin{align*}
S\left(  \mathcal{J}_{k}\right)   &  =S\left(  \mathcal{AX}_{k}\mathcal{A}%
\right)  =\underbrace{S\left(  \mathcal{A}\right)  }_{=\mathcal{A}%
}\underbrace{S\left(  \mathcal{X}_{k}\right)  }_{=\mathcal{X}_{k}%
}\underbrace{S\left(  \mathcal{A}\right)  }_{=\mathcal{A}}%
\ \ \ \ \ \ \ \ \ \ \left(
\begin{array}
[c]{c}%
\text{since }S\text{ is a }\mathbf{k}\text{-algebra}\\
\text{anti-automorphism}%
\end{array}
\right) \\
&  =\mathcal{AX}_{k}\mathcal{A=J}_{k}.
\end{align*}
This completes the proof of part \textbf{(c)}. \medskip

\textbf{(d)} Any set composition of $\left[  n\right]  $ is a set
decomposition of $\left[  n\right]  $. In other words, $\operatorname*{SC}%
\left(  n\right)  \subseteq\operatorname*{SD}\left(  n\right)  $. Thus,%
\[
\mathcal{I}_{k}\subseteq\operatorname*{span}\left\{  \nabla_{\mathbf{B}%
,\mathbf{A}}\ \mid\ \mathbf{A},\mathbf{B}\in\operatorname*{SD}\left(
n\right)  \text{ with }\ell\left(  \mathbf{A}\right)  =\ell\left(
\mathbf{B}\right)  \leq k\right\}  .
\]
It remains to prove the converse inclusion. To this purpose, we must show that
if $\mathbf{A}$ and $\mathbf{B}$ are set decompositions of $\left[  n\right]
$ satisfying $\ell\left(  \mathbf{A}\right)  =\ell\left(  \mathbf{B}\right)
\leq k$, then the row-to-row sum $\nabla_{\mathbf{B},\mathbf{A}}$ belongs to
$\mathcal{I}_{k}$. So let us show this.

Let $\mathbf{A}$ and $\mathbf{B}$ be set decompositions of $\left[  n\right]
$ satisfying $\ell\left(  \mathbf{A}\right)  =\ell\left(  \mathbf{B}\right)
\leq k$. We must prove that the row-to-row sum $\nabla_{\mathbf{B},\mathbf{A}%
}$ belongs to $\mathcal{I}_{k}$.

If any of the blocks of $\mathbf{A}$ is empty while the corresponding block of
$\mathbf{B}$ is not, then this is clear, since Proposition
\ref{prop.row.simple} \textbf{(a)} yields $\nabla_{\mathbf{B},\mathbf{A}}%
=0\in\mathcal{I}_{k}$. A similar argument applies if any of the blocks of
$\mathbf{B}$ is empty while the corresponding block of $\mathbf{A}$ is not. In
the remaining case, the empty blocks of $\mathbf{A}$ appear at the same
positions as the empty blocks of $\mathbf{B}$. Removing all these empty blocks
from both $\mathbf{A}$ and $\mathbf{B}$, we obtain two set compositions
$\mathbf{A}^{\prime},\mathbf{B}^{\prime}$ of $\left[  n\right]  $ satisfying
$\ell\left(  \mathbf{A}^{\prime}\right)  =\ell\left(  \mathbf{B}^{\prime
}\right)  \leq\ell\left(  \mathbf{A}\right)  =\ell\left(  \mathbf{B}\right)  $
and $\nabla_{\mathbf{B},\mathbf{A}}=\nabla_{\mathbf{B}^{\prime},\mathbf{A}%
^{\prime}}$ (by Proposition \ref{prop.row.simple} \textbf{(c)}). But the
definition of $\mathcal{I}_{k}$ yields $\nabla_{\mathbf{B}^{\prime}%
,\mathbf{A}^{\prime}}\in\mathcal{I}_{k}$ (since $\mathbf{A}^{\prime
},\mathbf{B}^{\prime}\in\operatorname*{SC}\left(  n\right)  $ and $\ell\left(
\mathbf{A}^{\prime}\right)  =\ell\left(  \mathbf{B}^{\prime}\right)  \leq
\ell\left(  \mathbf{A}\right)  =\ell\left(  \mathbf{B}\right)  \leq k$). Thus,
$\nabla_{\mathbf{B},\mathbf{A}}=\nabla_{\mathbf{B}^{\prime},\mathbf{A}%
^{\prime}}\in\mathcal{I}_{k}$. This completes the proof of part \textbf{(d)}.
\medskip

\textbf{(e)} The definition of $\mathcal{J}_{k}$ yields%
\begin{align*}
\mathcal{J}_{k}  &  =\mathcal{A}\cdot\operatorname*{span}\left\{  \nabla
_{U}^{-}\ \mid\ U\text{ is a subset of }\left[  n\right]  \text{ having size
}k+1\right\}  \cdot\mathcal{A}\\
&  \subseteq\mathcal{A}\cdot\operatorname*{span}\left\{  \nabla_{U}^{-}%
\ \mid\ U\text{ is a subset of }\left[  n\right]  \text{ having size
}>k\right\}  \cdot\mathcal{A}%
\end{align*}
(since any subset of size $k+1$ has size $>k$). Let us now prove the converse
inclusion. Since $\mathcal{J}_{k}$ is an ideal of $\mathcal{A}$, it suffices
to show that $\nabla_{U}^{-}\in\mathcal{J}_{k}$ whenever $U$ is a subset of
$\left[  n\right]  $ having size $>k$. So let $U$ be a subset of $\left[
n\right]  $ having size $>k$. Then, $U$ has size $\geq k+1$. Hence, $U$ has a
subset $V$ of size $k+1$. Consider this $V$. Proposition \ref{prop.Nab-UV}
yields $\nabla_{U}^{-}\mathcal{A}\subseteq\nabla_{V}^{-}\mathcal{A}$. But the
definition of $\mathcal{J}_{k}$ shows that $\nabla_{V}^{-}\in\mathcal{J}_{k}$
(since $V$ is a subset of $\left[  n\right]  $ having size $k+1$), and thus we
have $\nabla_{V}^{-}\mathcal{A}\subseteq\mathcal{J}_{k}\mathcal{A}%
\subseteq\mathcal{J}_{k}$ (since $\mathcal{J}_{k}$ is an ideal of
$\mathcal{A}$). Hence, $\nabla_{U}^{-}=\nabla_{U}^{-}\cdot\underbrace{1}%
_{\in\mathcal{A}}\in\nabla_{U}^{-}\mathcal{A}\subseteq\nabla_{V}%
^{-}\mathcal{A}\subseteq\mathcal{J}_{k}$. So we have shown that $\nabla
_{U}^{-}\in\mathcal{J}_{k}$ whenever $U$ is a subset of $\left[  n\right]  $
having size $>k$. This proves
\[
\mathcal{A}\cdot\operatorname*{span}\left\{  \nabla_{U}^{-}\ \mid\ U\text{ is
a subset of }\left[  n\right]  \text{ having size }>k\right\}  \cdot
\mathcal{A}\subseteq\mathcal{J}_{k}%
\]
(since $\mathcal{J}_{k}$ is an ideal of $\mathcal{A}$). Combining this with
the inclusion%
\[
\mathcal{J}_{k}\subseteq\mathcal{A}\cdot\operatorname*{span}\left\{
\nabla_{U}^{-}\ \mid\ U\text{ is a subset of }\left[  n\right]  \text{ having
size }>k\right\}  \cdot\mathcal{A}%
\]
(which we have already proved), we obtain%
\[
\mathcal{J}_{k}=\mathcal{A}\cdot\operatorname*{span}\left\{  \nabla_{U}%
^{-}\ \mid\ U\text{ is a subset of }\left[  n\right]  \text{ having size
}>k\right\}  \cdot\mathcal{A}.
\]
Similarly, we can show%
\[
\mathcal{J}_{k}=\mathcal{A}\cdot\operatorname*{span}\left\{  \nabla_{U}%
^{-}\ \mid\ U\text{ is a subset of }\left[  n\right]  \text{ having size
}>k\right\}
\]
and%
\[
\mathcal{J}_{k}=\operatorname*{span}\left\{  \nabla_{U}^{-}\ \mid\ U\text{ is
a subset of }\left[  n\right]  \text{ having size }>k\right\}  \cdot
\mathcal{A}%
\]
(using Proposition \ref{prop.IJ.1} \textbf{(b)} as a starting point). Thus,
part \textbf{(e)} is proved. \medskip

\textbf{(f)} Let $X$ be any subset of $\left[  n\right]  $ having size $k+1$.
Then, from $\mathcal{A}=\operatorname*{span}S_{n}$, we obtain%
\[
\mathcal{A}\cdot\nabla_{X}^{-}=\operatorname*{span}\left\{  v\nabla_{X}%
^{-}\ \mid\ v\in S_{n}\right\}  =\operatorname*{span}\left\{  \nabla_{v\left(
X\right)  }^{-}v\ \mid\ v\in S_{n}\right\}
\]
(since Proposition \ref{prop.wNab-U} yields $v\nabla_{X}^{-}=\nabla_{v\left(
X\right)  }^{-}v$). Hence, again using $\mathcal{A}=\operatorname*{span}S_{n}%
$, we obtain
\begin{align*}
&  \underbrace{\mathcal{A}\cdot\nabla_{X}^{-}}_{=\operatorname*{span}\left\{
\nabla_{v\left(  X\right)  }^{-}v\ \mid\ v\in S_{n}\right\}  }\cdot
\underbrace{\mathcal{A}}_{=\operatorname*{span}S_{n}}\\
&  =\operatorname*{span}\left\{  \nabla_{v\left(  X\right)  }^{-}v\ \mid\ v\in
S_{n}\right\}  \cdot\operatorname*{span}S_{n}\\
&  =\operatorname*{span}\left\{  \nabla_{v\left(  X\right)  }^{-}%
vw\ \mid\ v\in S_{n}\text{ and }w\in S_{n}\right\} \\
&  =\operatorname*{span}\left\{  \nabla_{v\left(  X\right)  }^{-}u\ \mid\ v\in
S_{n}\text{ and }u\in S_{n}\right\} \\
&  \ \ \ \ \ \ \ \ \ \ \ \ \ \ \ \ \ \ \ \ \left(  \text{here, we have
substituted }u\text{ for }vw\right) \\
&  =\operatorname*{span}\left\{  \nabla_{U}^{-}u\ \mid\ U\text{ is a subset of
}\left[  n\right]  \text{ having size }k+1\text{, and }u\in S_{n}\right\} \\
&  \ \ \ \ \ \ \ \ \ \ \ \ \ \ \ \ \ \ \ \ \left(
\begin{array}
[c]{c}%
\text{here, we have substituted }U\text{ for }v\left(  X\right)  \text{,}\\
\text{since each subset }U\text{ of }\left[  n\right]  \text{ having size
}k+1\\
\text{can be written as }v\left(  X\right)  \text{ for some }v\in S_{n}%
\end{array}
\right) \\
&  =\operatorname*{span}\left\{  \nabla_{U}^{-}\ \mid\ U\text{ is a subset of
}\left[  n\right]  \text{ having size }k+1\right\}  \cdot
\underbrace{\operatorname*{span}S_{n}}_{=\mathcal{A}}\\
&  =\operatorname*{span}\left\{  \nabla_{U}^{-}\ \mid\ U\text{ is a subset of
}\left[  n\right]  \text{ having size }k+1\right\}  \cdot\mathcal{A}\\
&  =\mathcal{J}_{k}\ \ \ \ \ \ \ \ \ \ \left(  \text{by part \textbf{(b)}%
}\right)  .
\end{align*}
This proves part \textbf{(f)}. \medskip

\textbf{(g)} This will follow from part \textbf{(d)}, once we have proved the
equality%
\begin{align*}
&  \left\{  \nabla_{\mathbf{B},\mathbf{A}}\ \mid\ \mathbf{A},\mathbf{B}%
\in\operatorname*{SD}\left(  n\right)  \text{ with }\ell\left(  \mathbf{A}%
\right)  =\ell\left(  \mathbf{B}\right)  \leq k\right\} \\
&  =\left\{  \nabla_{\mathbf{B},\mathbf{A}}\ \mid\ \mathbf{A},\mathbf{B}%
\in\operatorname*{SD}\left(  n\right)  \text{ with }\ell\left(  \mathbf{A}%
\right)  =\ell\left(  \mathbf{B}\right)  =k\right\}  .
\end{align*}
So let us prove this equality. The right hand side here is clearly a subset of
the left hand side. It remains to show the reverse inclusion (i.e., that the
left hand side is a subset of the right hand side). In other words, it remains
to check that each $\nabla_{\mathbf{B},\mathbf{A}}$ with $\mathbf{A}%
,\mathbf{B}\in\operatorname*{SD}\left(  n\right)  $ satisfying $\ell\left(
\mathbf{A}\right)  =\ell\left(  \mathbf{B}\right)  \leq k$ can be rewritten in
the form $\nabla_{\mathbf{B}^{\prime},\mathbf{A}^{\prime}}$ for some
$\mathbf{A}^{\prime},\mathbf{B}^{\prime}\in\operatorname*{SD}\left(  n\right)
$ satisfying $\ell\left(  \mathbf{A}^{\prime}\right)  =\ell\left(
\mathbf{B}^{\prime}\right)  =k$. But this is easy: Set $m:=\ell\left(
\mathbf{A}\right)  =\ell\left(  \mathbf{B}\right)  \leq k$, and write the set
decompositions $\mathbf{A}$ and $\mathbf{B}$ in the form $\mathbf{A}=\left(
A_{1},A_{2},\ldots,A_{m}\right)  $ and $\mathbf{B}=\left(  B_{1},B_{2}%
,\ldots,B_{m}\right)  $; then set%
\begin{align*}
\mathbf{A}^{\prime}  &  :=\left(  A_{1},A_{2},\ldots,A_{m}%
,\underbrace{\varnothing,\varnothing,\ldots,\varnothing}_{k-m\text{ empty
sets}}\right)  \ \ \ \ \ \ \ \ \ \ \text{and}\\
\mathbf{B}^{\prime}  &  :=\left(  B_{1},B_{2},\ldots,B_{m}%
,\underbrace{\varnothing,\varnothing,\ldots,\varnothing}_{k-m\text{ empty
sets}}\right)
\end{align*}
(this is allowed since $m\leq k$). Then, $\mathbf{A}^{\prime}$ and
$\mathbf{B}^{\prime}$ are set decompositions in $\operatorname*{SD}\left(
n\right)  $ satisfying $\ell\left(  \mathbf{A}^{\prime}\right)  =\ell\left(
\mathbf{B}^{\prime}\right)  =k$ and $\nabla_{\mathbf{B}^{\prime}%
,\mathbf{A}^{\prime}}=\nabla_{\mathbf{B},\mathbf{A}}$ (the latter follows from
Proposition \ref{prop.row.simple} \textbf{(c)}, since $\mathbf{A}$ and
$\mathbf{B}$ can be obtained from $\mathbf{A}^{\prime}$ and $\mathbf{B}%
^{\prime}$ by removing the $k-m$ empty blocks at the end). Thus, the proof of
part \textbf{(g)} is complete.
\end{proof}

The ideals $\mathcal{J}_{k}$ have been studied several times. In particular,
the ideal $\mathcal{J}_{k}$ is the kernel of the map in \cite[Theorem
4.2]{deCPro76} (where our $k$ and $n$ have been renamed as $n$ and $m$). Also,
the ideal $\mathcal{J}_{2}$ has recently appeared in quantum information
theory as the ideal $\widetilde{\mathcal{I}}_{n}^{\operatorname*{swap}}$ in
\cite[Lemma 3.4]{BCEHK23}.

\subsection{Annihilators and the bilinear form}

If $\mathcal{B}$ is any subset of $\mathcal{A}$, then we define the two
subsets%
\begin{align*}
\operatorname*{LAnn}\mathcal{B}  &  :=\left\{  a\in\mathcal{A}\ \mid
\ ab=0\text{ for all }b\in\mathcal{B}\right\}  \ \ \ \ \ \ \ \ \ \ \text{and}%
\\
\operatorname*{RAnn}\mathcal{B}  &  :=\left\{  a\in\mathcal{A}\ \mid
\ ba=0\text{ for all }b\in\mathcal{B}\right\}
\end{align*}
of $\mathcal{A}$. We call them the \emph{left annihilator} and the \emph{right
annihilator} of $\mathcal{B}$, respectively. These annihilators
$\operatorname*{LAnn}\mathcal{B}$ and $\operatorname*{RAnn}\mathcal{B}$ are
always $\mathbf{k}$-submodules of $\mathcal{A}$, even when $\mathcal{B}$ is not.

Moreover, we define the $\mathbf{k}$-bilinear form%
\[
\left\langle \cdot,\cdot\right\rangle :\mathcal{A}\times\mathcal{A}%
\rightarrow\mathbf{k},
\]
which sends the pair $\left(  u,v\right)  $ to $%
\begin{cases}
1, & \text{if }u=v;\\
0, & \text{if }u\neq v
\end{cases}
$ for any two permutations $u,v\in S_{n}$. This is the standard nondegenerate
symmetric bilinear form on $\mathcal{A}=\mathbf{k}\left[  S_{n}\right]  $
known from representation theory. We shall refer to this form as the \emph{dot
product}.

If $\mathcal{B}$ is any subset of $\mathcal{A}$, then we define the subset%
\[
\mathcal{B}^{\perp}:=\left\{  a\in\mathcal{A}\ \mid\ \left\langle
a,b\right\rangle =0\text{ for all }b\in\mathcal{B}\right\}
\]
of $\mathcal{A}$. This is called the \emph{orthogonal complement} of
$\mathcal{B}$ in $\mathcal{A}$. Note that it does not change if we replace
$\left\langle a,b\right\rangle $ by $\left\langle b,a\right\rangle $ in its
definition, since the form $\left\langle \cdot,\cdot\right\rangle $ is
symmetric. Note also that $\mathcal{B}^{\perp}$ is always a $\mathbf{k}%
$-submodule of $\mathcal{A}$, even when $\mathcal{B}$ is not.

\begin{definition}
\label{def.avoid.up}Let $k\in\mathbb{N}$.

\begin{enumerate}
\item[\textbf{(a)}] Let $w\in S_{n}$ be a permutation. We say that $w$
\emph{avoids }$12\cdots\left(  k+1\right)  $ if there exists no $\left(
k+1\right)  $-element subset $U$ of $\left[  n\right]  $ such that the
restriction $w\mid_{U}$ is increasing (i.e., if there exist no $k+1$ elements
$i_{1}<i_{2}<\cdots<i_{k+1}$ of $\left[  n\right]  $ such that $w\left(
i_{1}\right)  <w\left(  i_{2}\right)  <\cdots<w\left(  i_{k+1}\right)  $).

\item[\textbf{(b)}] We let $\operatorname*{Av}\nolimits_{n}\left(  k+1\right)
$ denote the set of all permutations $w\in S_{n}$ that avoid $12\cdots\left(
k+1\right)  $.
\end{enumerate}
\end{definition}

This notion of \textquotedblleft avoiding $12\cdots\left(  k+1\right)
$\textquotedblright\ is taken from the theory of pattern avoidance (see, e.g.,
\cite[Chapters 4--5]{Bona22}).

\subsection{The main theorem}

We now arrive at one of our main results, which will be proved in Subsection
\ref{sec.row.main-proof}:

\begin{theorem}
\label{thm.row.main}Let $k\in\mathbb{N}$. Then:

\begin{enumerate}
\item[\textbf{(a)}] We have $\mathcal{I}_{k}=\mathcal{J}_{k}^{\perp
}=\operatorname*{LAnn}\mathcal{J}_{k}=\operatorname*{RAnn}\mathcal{J}_{k}$.

\item[\textbf{(b)}] We have $\mathcal{J}_{k}=\mathcal{I}_{k}^{\perp
}=\operatorname*{LAnn}\mathcal{I}_{k}=\operatorname*{RAnn}\mathcal{I}_{k}$.

\item[\textbf{(c)}] The $\mathbf{k}$-module $\mathcal{I}_{k}$ is free of rank
$\left\vert \operatorname*{Av}\nolimits_{n}\left(  k+1\right)  \right\vert $.

\item[\textbf{(d)}] The $\mathbf{k}$-module $\mathcal{J}_{k}$ is free of rank
$\left\vert S_{n}\setminus\operatorname*{Av}\nolimits_{n}\left(  k+1\right)
\right\vert $.

\item[\textbf{(e)}] The $\mathbf{k}$-module $\mathcal{A}/\mathcal{I}_{k}$ is
free with basis $\left(  \overline{w}\right)  _{w\in S_{n}\setminus
\operatorname*{Av}\nolimits_{n}\left(  k+1\right)  }$. (Here, $\overline{w}$
denotes the projection of $w\in\mathcal{A}$ onto the quotient $\mathcal{A}%
/\mathcal{I}_{k}$.)

\item[\textbf{(f)}] The $\mathbf{k}$-module $\mathcal{A}/\mathcal{J}_{k}$ is
free with basis $\left(  \overline{w}\right)  _{w\in\operatorname*{Av}%
\nolimits_{n}\left(  k+1\right)  }$. (Here, $\overline{w}$ denotes the
projection of $w\in\mathcal{A}$ onto the quotient $\mathcal{A}/\mathcal{J}%
_{k}$.)

\item[\textbf{(g)}] Assume that $n!$ is invertible in $\mathbf{k}$. Then,
$\mathcal{A}=\mathcal{I}_{k}\oplus\mathcal{J}_{k}$ (internal direct sum) as
$\mathbf{k}$-module. Moreover, $\mathcal{I}_{k}$ and $\mathcal{J}_{k}$ are
nonunital subalgebras of $\mathcal{A}$ that have unities and satisfy
$\mathcal{A}\cong\mathcal{I}_{k}\times\mathcal{J}_{k}$ as $\mathbf{k}$-algebras.\footnotemark
\end{enumerate}
\end{theorem}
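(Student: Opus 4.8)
The plan is to deduce the whole theorem from two inputs: a \emph{mutual-annihilation} inclusion (easy) and two \emph{straightening} algorithms (the work), and then to bootstrap from $\mathbf{k}=\mathbb{Q}$ to an arbitrary $\mathbf{k}$ via explicit $\mathbb{Z}$-bases. Write $a:=\left\vert \operatorname*{Av}\nolimits_{n}\left(  k+1\right)  \right\vert$, and fix the lexicographic total order on $S_{n}$ comparing one-line notations left to right. The mutual annihilation is $\mathcal{I}_{k}\mathcal{J}_{k}=\mathcal{J}_{k}\mathcal{I}_{k}=0$: for $\mathcal{I}_{k}\mathcal{J}_{k}$, Proposition~\ref{prop.uNabv} and Proposition~\ref{prop.IJ.1}~\textbf{(f)} reduce this to showing $\nabla_{\mathbf{B},\mathbf{A}}\nabla_{X}^{-}=0$ when $\ell\left(  \mathbf{A}\right)  \leq k$ and $\left\vert X\right\vert =k+1$; since the $\leq k$ sets $A_{i}\cap X$ cover $X$, some $A_{i}\cap X$ has two elements, and the transposition $t$ swapping them lies in $S_{X}$ and fixes every block of $\mathbf{A}$, so $\left\{  w\ \mid\ w\mathbf{A}=\mathbf{B}\right\}$ breaks into pairs $\left\{  w,wt\right\}$ with $w\nabla_{X}^{-}+wt\nabla_{X}^{-}=w\nabla_{X}^{-}-w\nabla_{X}^{-}=0$ (using $t\nabla_{X}^{-}=-\nabla_{X}^{-}$). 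Applying the antipode $S$ and Proposition~\ref{prop.IJ.1}~\textbf{(c)} gives $\mathcal{J}_{k}\mathcal{I}_{k}=0$. Hence $\mathcal{I}_{k}\subseteq\operatorname*{LAnn}\mathcal{J}_{k}\cap\operatorname*{RAnn}\mathcal{J}_{k}$, and since $\left\langle x,y\right\rangle$ is the coefficient of $\operatorname*{id}$ in $xS\left(  y\right)$ while $S$ fixes $\mathcal{I}_{k}$ and $\mathcal{J}_{k}$, also $\left\langle \mathcal{I}_{k},\mathcal{J}_{k}\right\rangle =0$ and $\operatorname*{LAnn}\mathcal{J}_{k}\cup\operatorname*{RAnn}\mathcal{J}_{k}\subseteq\mathcal{J}_{k}^{\perp}$; all of this holds with $\mathcal{I}_{k}$ and $\mathcal{J}_{k}$ interchanged.

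The straightening is the heart. If $w$ does not avoid $12\cdots\left(  k+1\right)$, choose positions $i_{1}<\cdots<i_{k+1}$ with $w\left(  i_{1}\right)  <\cdots<w\left(  i_{k+1}\right)$, set $X_{w}:=\left\{  i_{1},\ldots,i_{k+1}\right\}$, and expand $w\nabla_{X_{w}}^{-}\in\mathcal{J}_{k}$: permuting the block $X_{w}$ by a non-identity element of $S_{X_{w}}$ strictly raises the first entry that changes, so $w\nabla_{X_{w}}^{-}=w+\left(\text{a $\mathbb{Z}$-combination of lexicographically larger permutations}\right)$. Thus the family $\left(  w\nabla_{X_{w}}^{-}\right)_{w\notin\operatorname*{Av}\nolimits_{n}\left(  k+1\right)}$ is unitriangular with pivots the non-avoiders; it is $\mathbb{Z}$-linearly independent, its $\mathbb{Z}$-span $M$ is a direct summand of $\mathbb{Z}\left[  S_{n}\right]$ contained in $\mathcal{J}_{k}$, and $\mathbb{Z}\left[  S_{n}\right]  /M$ is free with basis $\left(  \overline{v}\right)_{v\in\operatorname*{Av}\nolimits_{n}\left(  k+1\right)}$. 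Dually, if $w$ does avoid $12\cdots\left(  k+1\right)$, then (having no increasing subsequence of length $k+1$) its positions partition into $r\leq k$ blocks $C_{1},\ldots,C_{r}$ on each of which $w$ is decreasing; with $\mathbf{A}_{w}:=\left(  C_{1},\ldots,C_{r}\right)$ and $\mathbf{B}_{w}:=w\mathbf{A}_{w}$, the element $\nabla_{\mathbf{B}_{w},\mathbf{A}_{w}}\in\mathcal{I}_{k}$ equals $w$ plus a sum of lexicographically smaller permutations, because $w$ is the lexicographically largest permutation sending each $C_{m}$ to $w\left(  C_{m}\right)$. Hence $\left(  \nabla_{\mathbf{B}_{w},\mathbf{A}_{w}}\right)_{w\in\operatorname*{Av}\nolimits_{n}\left(  k+1\right)}$ is unitriangular with pivots the avoiders; its $\mathbb{Z}$-span $N$ is a direct summand of $\mathbb{Z}\left[  S_{n}\right]$ contained in $\mathcal{I}_{k}$, and $\mathbb{Z}\left[  S_{n}\right]  /N$ is free with basis $\left(  \overline{u}\right)_{u\notin\operatorname*{Av}\nolimits_{n}\left(  k+1\right)}$.

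Now assemble. From $N\subseteq\mathcal{I}_{k}$ and $M\subseteq\mathcal{J}_{k}$ we get $\dim_{\mathbb{Q}}\mathcal{I}_{k}\geq a$ and $\dim_{\mathbb{Q}}\mathcal{J}_{k}\geq n!-a$, while mutual annihilation gives $\dim_{\mathbb{Q}}\mathcal{I}_{k}\leq n!-\dim_{\mathbb{Q}}\mathcal{J}_{k}\leq a$ (and symmetrically), so equality holds everywhere; thus $N$ and $\mathcal{I}_{k}$ (resp. $M$ and $\mathcal{J}_{k}$) have the same $\mathbb{Q}$-span, and since $N,M$ are saturated in $\mathbb{Z}\left[  S_{n}\right]$ we conclude $N=\mathcal{I}_{k}$ and $M=\mathcal{J}_{k}$ over $\mathbb{Z}$. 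This gives \textbf{(c)}--\textbf{(f)} over $\mathbb{Z}$, hence over any $\mathbf{k}$ by base change (the integral bases $\nabla_{\mathbf{B}_{w},\mathbf{A}_{w}}$ and the words defining $\mathcal{J}_{k}$ generate direct summands over $\mathbb{Z}$, and $\mathcal{I}_{k},\mathcal{J}_{k}$ over $\mathbf{k}$ are the $\mathbf{k}$-spans of the full families of $\nabla$'s, all of which lie in those summands). For \textbf{(a)}: $\mathcal{I}_{k}\subseteq\mathcal{J}_{k}^{\perp}$, both are direct summands of rank $a$ with equal $\mathbb{Q}$-span, so $\mathcal{I}_{k}=\mathcal{J}_{k}^{\perp}$, and together with $\operatorname*{LAnn}\mathcal{J}_{k}\cup\operatorname*{RAnn}\mathcal{J}_{k}\subseteq\mathcal{J}_{k}^{\perp}=\mathcal{I}_{k}\subseteq\operatorname*{LAnn}\mathcal{J}_{k}\cap\operatorname*{RAnn}\mathcal{J}_{k}$ all four coincide; \textbf{(b)} is symmetric. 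For \textbf{(g)}: when $n!$ is invertible, $\mathbf{k}\left[  S_{n}\right]$ is semisimple, so the ideal $\mathcal{I}_{k}\cap\mathcal{J}_{k}$ (whose square lies in $\mathcal{I}_{k}\mathcal{J}_{k}=0$) is zero, and by \textbf{(c)}--\textbf{(d)} the ranks force $\mathcal{A}=\mathcal{I}_{k}\oplus\mathcal{J}_{k}$; writing $1=e+f$ with $e\in\mathcal{I}_{k}$, $f\in\mathcal{J}_{k}$, the relations $\mathcal{I}_{k}\mathcal{J}_{k}=\mathcal{J}_{k}\mathcal{I}_{k}=0$ make $e,f$ orthogonal idempotents that are unities of $\mathcal{I}_{k},\mathcal{J}_{k}$, whence $\mathcal{A}\cong\mathcal{I}_{k}\times\mathcal{J}_{k}$ as $\mathbf{k}$-algebras.

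The main obstacle is the straightening step: checking the two unitriangularity assertions and, above all, reconciling the two directions — the $\mathcal{J}_{k}$-relations straighten \emph{upward} in lexicographic order (leaving the avoiders as the surviving basis of $\mathcal{A}/\mathcal{J}_{k}$), while the $\mathcal{I}_{k}$-relations straighten \emph{downward} (leaving the non-avoiders as the surviving basis of $\mathcal{A}/\mathcal{I}_{k}$) — and confirming that in each case the spanning family is genuinely a $\mathbb{Z}$-basis of a direct summand, which is what lets everything descend to an arbitrary $\mathbf{k}$. The only nonelementary ingredient is the decomposition of an avoider's positions into $\leq k$ decreasing blocks, which is the easy (Mirsky) direction of Dilworth's theorem.
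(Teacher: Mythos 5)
Your treatment of parts \textbf{(a)}--\textbf{(f)} is essentially correct and rests on the same two ingredients as the paper: the mutual annihilation $\mathcal{I}_{k}\mathcal{J}_{k}=\mathcal{J}_{k}\mathcal{I}_{k}=0$ and the two lexicographically unitriangular families $\left(\nabla_{\mathbf{B}_{w},\mathbf{A}_{w}}\right)_{w\in\operatorname*{Av}_{n}\left(k+1\right)}$ (pivot $w$, lower terms) and $\left(w\nabla_{X_{w}}^{-}\right)_{w\notin\operatorname*{Av}_{n}\left(k+1\right)}$ (pivot $w$, higher terms), which are exactly the paper's Lemmas \ref{lem.erdos-szek-var}, \ref{lem.lex-less-1}, \ref{lem.lex-less-2}. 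Where you diverge is the independence step: the paper stays over an arbitrary $\mathbf{k}$ and kills a putative relation by pairing it against the triangular elements (Lemmas \ref{lem.row.I-c1} and \ref{lem.row.J-c1}), whereas you first work over $\mathbb{Z}$, show the triangular families span saturated direct summands $N\subseteq\mathcal{I}_{k}$, $M\subseteq\mathcal{J}_{k}$, identify $N=\mathcal{I}_{k}$ and $M=\mathcal{J}_{k}$ by a $\mathbb{Q}$-dimension count using nondegeneracy of $\left\langle\cdot,\cdot\right\rangle$, and then base-change. That route is valid and gives the same integral bases; its one soft spot is your one-line deduction of $\mathcal{J}_{k}^{\perp}=\mathcal{I}_{k}$ over a general $\mathbf{k}$ (``both are direct summands of rank $a$ with equal $\mathbb{Q}$-span''): over a general commutative ring you still need either the observation that the orthogonal complement of a direct summand commutes with base change (using that your basis of $M$ extends to a $\mathbb{Z}$-basis of $\mathbb{Z}\left[S_{n}\right]$), or the paper's pairing induction; both are available with what you have, so this is a presentational rather than a substantive defect.

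Part \textbf{(g)}, however, has a genuine gap. You argue that ``$\mathbf{k}\left[S_{n}\right]$ is semisimple, so the square-zero ideal $\mathcal{I}_{k}\cap\mathcal{J}_{k}$ is zero, and the ranks force $\mathcal{A}=\mathcal{I}_{k}\oplus\mathcal{J}_{k}$.'' Both steps are only valid when $\mathbf{k}$ is a field, while the theorem assumes merely that $n!$ is invertible in an arbitrary commutative ring $\mathbf{k}$. For $\mathbf{k}=\mathbb{Q}\left[\varepsilon\right]/\left(\varepsilon^{2}\right)$ the algebra $\mathbf{k}\left[S_{n}\right]$ has the nonzero square-zero ideal $\varepsilon\mathbf{k}\left[S_{n}\right]$, so ``square zero implies zero'' fails; and over a non-field base, two free submodules of complementary ranks with zero intersection need not sum to the whole module (already $\mathbb{Z}\left(1,0\right)+\mathbb{Z}\left(1,2\right)\subsetneq\mathbb{Z}^{2}$), so the rank count does not force $\mathcal{I}_{k}+\mathcal{J}_{k}=\mathcal{A}$. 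The missing idea is the Maschke-type averaging the paper uses: part \textbf{(e)} provides a $\mathbf{k}$-linear projection $\mathcal{A}\rightarrow\mathcal{I}_{k}$, which one averages over $S_{n}$ (using $1/n!$) to obtain an $\mathcal{A}$-linear projection and hence a unity $e\in\mathcal{I}_{k}$ (Lemma \ref{lem.maschke.unital}); then $\mathcal{J}_{k}=\operatorname*{LAnn}\mathcal{I}_{k}$ from part \textbf{(b)} yields $\mathcal{A}=\mathcal{I}_{k}\oplus\mathcal{J}_{k}$ and the algebra isomorphism $\mathcal{A}\cong\mathcal{I}_{k}\times\mathcal{J}_{k}$ (Lemma \ref{lem.IxJ.gen}). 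Your final step (writing $1=e+f$ and checking $e,f$ are orthogonal unities) is fine once the decomposition is in hand, but the decomposition itself needs this averaging argument (or an equivalent), which your proposal does not supply.
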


\footnotetext{The formulation \textquotedblleft nonunital subalgebras that
have unities\textquotedblright\ may sound paradoxical. But it is literally
true: A nonunital subalgebra of $\mathcal{A}$ can have a unity but still be
nonunital \textbf{as a subalgebra} because its unity is not the unity of
$\mathcal{A}$. This is precisely the situation that the nonunital subalgebras
$\mathcal{I}_{k}$ and $\mathcal{J}_{k}$ find themselves in.}Furthermore, the
ideals $\mathcal{I}_{k}$ and $\mathcal{J}_{k}$ of $\mathcal{A}$ have
representation-theoretical meanings:

\begin{itemize}
\item They are the annihilators of certain tensor power representations of
$S_{n}$: see Theorem \ref{thm.AnnVkn} and Theorem \ref{thm.AnnNnk}.

\item If $n!$ is invertible in $\mathbf{k}$ (for example, if $\mathbf{k}$ is a
field of characteristic $0$), then they are furthermore certain subproducts of
the Artin--Wedderburn decomposition of $\mathcal{A}$: see Theorem
\ref{thm.IJ.rep}.
\end{itemize}

Once Theorem \ref{thm.row.main} is proved, we will easily obtain the following
corollary regarding the $k=2$ case:

\begin{corollary}
\label{cor.Nabla.span}\ \ 

\begin{enumerate}
\item[\textbf{(a)}] We have%
\begin{equation}
\mathcal{I}_{2}=\operatorname*{span}\left\{  \nabla_{B,A}\ \mid\ A,B\subseteq
\left[  n\right]  \right\}  . \label{eq.cor.Nabla.span.eq}%
\end{equation}

\item[\textbf{(b)}] The $\mathbf{k}$-module $\mathcal{I}_{2}$ is free of rank
$\left\vert \operatorname*{Av}\nolimits_{n}\left(  3\right)  \right\vert $,
which is the Catalan number $C_{n}$.
\end{enumerate}
\end{corollary}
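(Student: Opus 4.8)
The plan is to deduce both parts from Theorem \ref{thm.row.main} by specializing to $k=2$. First I would establish part \textbf{(a)}. By Proposition \ref{prop.row.rook}, for any subsets $A,B\subseteq\left[n\right]$ the rectangular rook sum $\nabla_{B,A}$ equals the row-to-row sum $\nabla_{\mathbf{B},\mathbf{A}}$ for the set decompositions $\mathbf{A}=\left(A,\left[n\right]\setminus A\right)$ and $\mathbf{B}=\left(B,\left[n\right]\setminus B\right)$ of $\left[n\right]$, each of length $2$. Since $\ell\left(\mathbf{A}\right)=\ell\left(\mathbf{B}\right)=2\leq 2$, Proposition \ref{prop.IJ.1} \textbf{(d)} shows $\nabla_{B,A}=\nabla_{\mathbf{B},\mathbf{A}}\in\mathcal{I}_{2}$; hence $\operatorname*{span}\left\{\nabla_{B,A}\ \mid\ A,B\subseteq\left[n\right]\right\}\subseteq\mathcal{I}_{2}$. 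For the reverse inclusion, I would use Proposition \ref{prop.IJ.1} \textbf{(d)} again: $\mathcal{I}_{2}$ is spanned by the $\nabla_{\mathbf{B},\mathbf{A}}$ where $\mathbf{A},\mathbf{B}\in\operatorname*{SD}\left(n\right)$ have length $\leq 2$. A set decomposition of length $\leq 1$ is just $\left(\left[n\right]\right)$ (or $\left(\left[n\right]\right)$ after padding), so the corresponding row-to-row sum is $\nabla_{\left[n\right],\left[n\right]}=\sum_{w\in S_{n}}w=\nabla_{\left[n\right],\left[n\right]}$, which is $\nabla_{B,A}$ for $A=B=\left[n\right]$. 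For length exactly $2$, say $\mathbf{A}=\left(A_{1},A_{2}\right)$ and $\mathbf{B}=\left(B_{1},B_{2}\right)$: since these are set decompositions, $A_{2}=\left[n\right]\setminus A_{1}$ and $B_{2}=\left[n\right]\setminus B_{1}$, so $\nabla_{\mathbf{B},\mathbf{A}}=\nabla_{B_{1},A_{1}}$ by Proposition \ref{prop.row.rook}. This proves \eqref{eq.cor.Nabla.span.eq}.

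For part \textbf{(b)}, Theorem \ref{thm.row.main} \textbf{(c)} (with $k=2$) immediately gives that $\mathcal{I}_{2}$ is a free $\mathbf{k}$-module of rank $\left\vert\operatorname*{Av}\nolimits_{n}\left(3\right)\right\vert$. It then remains to recall the classical fact that the number of $123$-avoiding permutations in $S_{n}$ is the Catalan number $C_{n}$; I would cite a standard reference (e.g. \cite[Chapters 4--5]{Bona22}) for this, since it is the first and most famous instance of Wilf-equivalence and needs no new argument here.

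The only mild subtlety — and the step I would be most careful about — is the padding argument in part \textbf{(a)}: a set decomposition in $\operatorname*{SD}\left(n\right)$ of length strictly less than $2$ must be handled by appending an empty block (which does not change the row-to-row sum, by Proposition \ref{prop.row.simple} \textbf{(c)}) to reach length exactly $2$, and then the length-$2$ case applies. Alternatively, one can invoke Proposition \ref{prop.IJ.1} \textbf{(g)} directly, which already states that $\mathcal{I}_{k}$ is spanned by the $\nabla_{\mathbf{B},\mathbf{A}}$ with $\ell\left(\mathbf{A}\right)=\ell\left(\mathbf{B}\right)=k$ exactly, eliminating the need for padding altogether. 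I expect no genuine obstacle; the corollary is essentially a bookkeeping consequence of the main theorem together with the $k=2$ identification of $\mathcal{I}_{2}$ with the span of the rectangular rook sums.
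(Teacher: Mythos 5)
Your proposal is correct and follows essentially the same route as the paper: part \textbf{(a)} comes from identifying length-$2$ set decompositions with pairs $\left(A,\left[n\right]\setminus A\right)$ via Proposition \ref{prop.row.rook} (the paper invokes Proposition \ref{prop.IJ.1} \textbf{(g)} directly, which is exactly the shortcut you mention, avoiding the length-$\leq 1$ bookkeeping), and part \textbf{(b)} is Theorem \ref{thm.row.main} \textbf{(c)} at $k=2$ together with the standard fact $\left\vert \operatorname*{Av}\nolimits_{n}\left(  3\right)  \right\vert =C_{n}$ cited from B\'ona.
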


See Subsection \ref{sec.row.main-proof} for the proof of this corollary.

\begin{remark}
\label{rmk.IJ=murphy}The results in this section have a significant overlap
with existing literature, particularly with prior work on the Murphy cellular
bases and on annihilators of Young permutation modules. The latter will be
discussed in Subsection \ref{sec.row.ann}. Let us briefly outline the former,
insofar as it provides alternative proofs for parts of Theorem
\ref{thm.row.main}. Note that this remark is included for context only and
will only be used in the proof of Theorem \ref{thm.A/I+J}.

The Murphy cellular bases first appeared in the paper \cite{CanWil89} by
Canfield and Williamson; then, Murphy (\cite{Murphy92} and \cite{Murphy95})
generalized them to the Hecke algebra. The easiest way to define them (in
$\mathcal{A}=\mathbf{k}\left[  S_{n}\right]  $) is as follows (following the
notations of \cite[\S 6.8]{sga}): An $n$\emph{-bitableau} shall mean a triple
$\left(  \lambda,U,V\right)  $, where $\lambda$ is a partition of $n$ and
where $U$ and $V$ are two tableaux of shape $\lambda$ with entries
$1,2,\ldots,n$ each. Given an $n$-bitableau $\left(  \lambda,U,V\right)  $, we
define two elements%
\begin{align*}
\nabla_{V,U}^{\operatorname*{Row}}  &  :=\sum_{\substack{w\in S_{n};\\wU\text{
is row-equivalent to }V}}w\ \ \ \ \ \ \ \ \ \ \text{and}\\
\nabla_{V,U}^{-\operatorname*{Col}}  &  :=\sum_{\substack{w\in S_{n}%
;\\wU\text{ is column-equivalent to }V}}\left(  -1\right)  ^{w}%
w\ \ \ \ \ \ \ \ \ \ \text{of }\mathcal{A}.
\end{align*}
An $n$-bitableau $\left(  \lambda,U,V\right)  $ is said to be \emph{standard}
if both tableaux $U$ and $V$ are standard. We let $\operatorname*{SBT}\left(
n\right)  $ be the set of all standard $n$-bitableaux, while
$\operatorname*{BT}\left(  n\right)  $ denotes the set of all $n$-bitableaux
(standard or not). Then, both families%
\[
\left(  \nabla_{V,U}^{\operatorname*{Row}}\right)  _{\left(  \lambda
,U,V\right)  \in\operatorname*{SBT}\left(  n\right)  }%
\ \ \ \ \ \ \ \ \ \ \text{and}\ \ \ \ \ \ \ \ \ \ \left(  \nabla
_{V,U}^{-\operatorname*{Col}}\right)  _{\left(  \lambda,U,V\right)
\in\operatorname*{SBT}\left(  n\right)  }%
\]
are bases of $\mathcal{A}$ (see \cite[Theorems 3.12, 3.14 and 6.13]{CanWil89}
or \cite[Theorem 6.8.14]{sga}). These are the so-called \emph{Murphy cellular
bases}.

These bases have been used to define ideals (left, right, two-sided) of
$\mathcal{A}$. In particular, for any $k\in\mathbb{N}$, we can define the four
spans%
\begin{align*}
\MurpF_{\operatorname*{std},\operatorname*{len}\leq k}^{\operatorname*{Row}}
&  :=\operatorname*{span}\left\{  \nabla_{V,U}^{\operatorname*{Row}}%
\ \mid\ \left(  \lambda,U,V\right)  \in\operatorname*{SBT}\left(  n\right)
\text{ and }\ell\left(  \lambda\right)  \leq k\right\}  ;\\
\MurpF_{\operatorname*{std},\operatorname*{len}>k}^{-\operatorname*{Col}}  &
:=\operatorname*{span}\left\{  \nabla_{V,U}^{-\operatorname*{Col}}%
\ \mid\ \left(  \lambda,U,V\right)  \in\operatorname*{SBT}\left(  n\right)
\text{ and }\ell\left(  \lambda\right)  >k\right\}  ;\\
\MurpF_{\operatorname*{all},\operatorname*{len}\leq k}^{\operatorname*{Row}}
&  :=\operatorname*{span}\left\{  \nabla_{V,U}^{\operatorname*{Row}}%
\ \mid\ \left(  \lambda,U,V\right)  \in\operatorname*{BT}\left(  n\right)
\text{ and }\ell\left(  \lambda\right)  \leq k\right\}  ;\\
\MurpF_{\operatorname*{all},\operatorname*{len}>k}^{-\operatorname*{Col}}  &
:=\operatorname*{span}\left\{  \nabla_{V,U}^{-\operatorname*{Col}}%
\ \mid\ \left(  \lambda,U,V\right)  \in\operatorname*{BT}\left(  n\right)
\text{ and }\ell\left(  \lambda\right)  >k\right\}  .
\end{align*}
According to \cite[Theorem 6.8.47]{sga}, these are actually just two spans: we
have%
\[
\MurpF_{\operatorname*{std},\operatorname*{len}\leq k}^{\operatorname*{Row}%
}=\MurpF_{\operatorname*{all},\operatorname*{len}\leq k}^{\operatorname*{Row}%
}=\left(  \MurpF_{\operatorname*{std},\operatorname*{len}>k}%
^{-\operatorname*{Col}}\right)  ^{\perp}%
\]
and
\[
\MurpF_{\operatorname*{std},\operatorname*{len}>k}^{-\operatorname*{Col}%
}=\MurpF_{\operatorname*{all},\operatorname*{len}>k}^{-\operatorname*{Col}%
}=\left(  \MurpF_{\operatorname*{std},\operatorname*{len}\leq k}%
^{\operatorname*{Row}}\right)  ^{\perp},
\]
and furthermore, both spans $\MurpF_{\operatorname*{std},\operatorname*{len}%
\leq k}^{\operatorname*{Row}}=\MurpF_{\operatorname*{all},\operatorname*{len}%
\leq k}^{\operatorname*{Row}}$ and $\MurpF_{\operatorname*{std}%
,\operatorname*{len}>k}^{-\operatorname*{Col}}=\MurpF_{\operatorname*{all}%
,\operatorname*{len}>k}^{-\operatorname*{Col}}$ are two-sided ideals of
$\mathcal{A}$.

Now, we claim that these ideals $\MurpF_{\operatorname*{std}%
,\operatorname*{len}\leq k}^{\operatorname*{Row}}$ and
$\MurpF_{\operatorname*{std},\operatorname*{len}>k}^{-\operatorname*{Col}}$
are precisely our $\mathcal{I}_{k}$ and $\mathcal{J}_{k}$, respectively. Indeed:

\begin{itemize}
\item Our ideal $\mathcal{I}_{k}$ is spanned by the row-to-row sums of the
form $\nabla_{\mathbf{B},\mathbf{A}}$, where $\mathbf{A}=\left(  A_{1}%
,A_{2},\ldots,A_{p}\right)  $ and $\mathbf{B}=\left(  B_{1},B_{2},\ldots
,B_{p}\right)  $ are set compositions of $\left[  n\right]  $ satisfying
$\ell\left(  \mathbf{A}\right)  =\ell\left(  \mathbf{B}\right)  \leq k$. By
Proposition \ref{prop.row.simple} \textbf{(a)}, we can restrict ourselves to
those pairs $\left(  \mathbf{A},\mathbf{B}\right)  $ of set compositions that
satisfy $\left\vert A_{i}\right\vert =\left\vert B_{i}\right\vert $ for all
$i$ (since all other pairs produce row-to-row sums equal to $0$). Such pairs
$\left(  \mathbf{A},\mathbf{B}\right)  $ can be encoded as pairs of
\textquotedblleft weird-shaped\textquotedblright\ tableaux $A,B$, both of
shape $\alpha$, where $\alpha=\left(  \left\vert A_{1}\right\vert ,\left\vert
A_{2}\right\vert ,\ldots,\left\vert A_{p}\right\vert \right)  $ is the list of
sizes of the blocks of $\mathbf{A}$ (or equivalently $\mathbf{B}$), in the
simplest possible way: Let%
\begin{align*}
\left(  i\text{-th row of }A\right)   &  =\left(  \text{list of elements of
}A_{i}\text{ (in increasing order)}\right)  \ \ \ \ \ \ \ \ \ \ \text{and}\\
\left(  i\text{-th row of }B\right)   &  =\left(  \text{list of elements of
}B_{i}\text{ (in increasing order)}\right)
\end{align*}
for each $i\in\left[  p\right]  $. We can furthermore find a permutation
$\sigma\in S_{p}$ such that $\left\vert A_{\sigma\left(  1\right)
}\right\vert \geq\left\vert A_{\sigma\left(  2\right)  }\right\vert \geq
\cdots\geq\left\vert A_{\sigma\left(  p\right)  }\right\vert $. By permuting
the blocks of $\mathbf{A}$ and of $\mathbf{B}$ using this permutation
$\sigma\in S_{p}$ (so that $\nabla_{\mathbf{B},\mathbf{A}}$ stays unchanged),
we can furthermore ensure that $\alpha$ is a partition (of length $\ell\left(
\alpha\right)  =p\leq k$). Then, $\left(  \alpha,A,B\right)  $ is an
$n$-bitableau (although usually not a standard one), and we have
$\nabla_{\mathbf{B},\mathbf{A}}=\nabla_{B,A}^{\operatorname*{Row}}$.

Hence, each nonzero row-to-row sum $\nabla_{\mathbf{B},\mathbf{A}}$ with
$\mathbf{A},\mathbf{B}\in\operatorname*{SC}\left(  n\right)  $ satisfying
$\ell\left(  \mathbf{A}\right)  =\ell\left(  \mathbf{B}\right)  \leq k$ can be
rewritten as $\nabla_{B,A}^{\operatorname*{Row}}$ for some $n$-bitableau
$\left(  \lambda,A,B\right)  \in\operatorname*{BT}\left(  n\right)  $ with
$\ell\left(  \lambda\right)  \leq k$. Conversely, any $\nabla_{B,A}%
^{\operatorname*{Row}}$ can be rewritten as $\nabla_{\mathbf{B},\mathbf{A}}$
by reversing the above construction (let $\mathbf{A}$ be the set composition
of $\left[  n\right]  $ whose $i$-th block is the set of all entries of the
$i$-th row of $A$, and likewise construct $\mathbf{B}$ from $B$). Thus,%
\begin{align*}
&  \left\{  \nabla_{\mathbf{B},\mathbf{A}}\ \mid\ \mathbf{A},\mathbf{B}%
\in\operatorname*{SC}\left(  n\right)  \text{ with }\ell\left(  \mathbf{A}%
\right)  =\ell\left(  \mathbf{B}\right)  \leq k\right\}  \setminus\left\{
0\right\} \\
&  =\left\{  \nabla_{B,A}^{\operatorname*{Row}}\ \mid\ \left(  \lambda
,A,B\right)  \in\operatorname*{BT}\left(  n\right)  \text{ with }\ell\left(
\lambda\right)  \leq k\right\}  \setminus\left\{  0\right\}  .
\end{align*}
Therefore,%
\begin{align}
\mathcal{I}_{k}  &  =\operatorname*{span}\left\{  \nabla_{\mathbf{B}%
,\mathbf{A}}\ \mid\ \mathbf{A},\mathbf{B}\in\operatorname*{SC}\left(
n\right)  \text{ with }\ell\left(  \mathbf{A}\right)  =\ell\left(
\mathbf{B}\right)  \leq k\right\} \nonumber\\
&  =\operatorname*{span}\left\{  \nabla_{B,A}^{\operatorname*{Row}}%
\ \mid\ \left(  \lambda,A,B\right)  \in\operatorname*{BT}\left(  n\right)
\text{ with }\ell\left(  \lambda\right)  \leq k\right\} \nonumber\\
&  =\operatorname*{span}\left\{  \nabla_{V,U}^{\operatorname*{Row}}%
\ \mid\ \left(  \lambda,U,V\right)  \in\operatorname*{BT}\left(  n\right)
\text{ and }\ell\left(  \lambda\right)  \leq k\right\} \nonumber\\
&  =\MurpF_{\operatorname*{all},\operatorname*{len}\leq k}%
^{\operatorname*{Row}}=\MurpF_{\operatorname*{std},\operatorname*{len}\leq
k}^{\operatorname*{Row}}. \label{eq.rmk.IJ=murphy.I}%
\end{align}

\item Define a $\mathbf{k}$-submodule $\mathcal{U}_{k}$ of $\mathcal{A}$ by%
\[
\mathcal{U}_{k}:=\operatorname*{span}\left\{  \nabla_{X}^{-}\ \mid\ X\text{ is
a subset of }\left[  n\right]  \text{ with size }\left\vert X\right\vert
>k\right\}  .
\]
Then, \cite[Proposition 6.8.53]{sga} says that
\[
\MurpF_{\operatorname*{std},\operatorname*{len}>k}^{-\operatorname*{Col}%
}=\mathcal{AU}_{k}=\mathcal{U}_{k}\mathcal{A}=\mathcal{AU}_{k}\mathcal{A}.
\]
But Proposition \ref{prop.IJ.1} \textbf{(e)} yields%
\begin{align*}
\mathcal{J}_{k}  &  =\mathcal{A}\cdot\operatorname*{span}\underbrace{\left\{
\nabla_{U}^{-}\ \mid\ U\text{ is a subset of }\left[  n\right]  \text{ having
size }>k\right\}  }_{=\left\{  \nabla_{X}^{-}\ \mid\ X\text{ is a subset of
}\left[  n\right]  \text{ with size }\left\vert X\right\vert >k\right\}
}\cdot\,\mathcal{A}\\
&  =\mathcal{A}\cdot\underbrace{\operatorname*{span}\left\{  \nabla_{X}%
^{-}\ \mid\ X\text{ is a subset of }\left[  n\right]  \text{ with size
}\left\vert X\right\vert >k\right\}  }_{=\mathcal{U}_{k}}\cdot\,\mathcal{A}\\
&  =\mathcal{AU}_{k}\mathcal{A}.
\end{align*}
Comparing these two equalities, we find%
\begin{equation}
\mathcal{J}_{k}=\MurpF_{\operatorname*{std},\operatorname*{len}>k}%
^{-\operatorname*{Col}}. \label{eq.rmk.IJ=murphy.J}%
\end{equation}

\end{itemize}

With the equalities (\ref{eq.rmk.IJ=murphy.I}) and (\ref{eq.rmk.IJ=murphy.J})
in hand, we can easily derive some parts of Theorem \ref{thm.row.main} from
known properties of the Murphy cellular bases. In particular,
(\ref{eq.rmk.IJ=murphy.I}) shows that%
\[
\mathcal{I}_{k}=\MurpF_{\operatorname*{std},\operatorname*{len}\leq
k}^{\operatorname*{Row}}=\operatorname*{span}\left\{  \nabla_{V,U}%
^{\operatorname*{Row}}\ \mid\ \left(  \lambda,U,V\right)  \in
\operatorname*{SBT}\left(  n\right)  \text{ and }\ell\left(  \lambda\right)
\leq k\right\}  ;
\]
thus, $\mathcal{I}_{k}$ is spanned by the family $\left(  \nabla
_{V,U}^{\operatorname*{Row}}\right)  _{\left(  \lambda,U,V\right)
\in\operatorname*{SBT}\left(  n\right)  \text{ and }\ell\left(  \lambda
\right)  \leq k}$, which is a subfamily of the Murphy cellular basis $\left(
\nabla_{V,U}^{\operatorname*{Row}}\right)  _{\left(  \lambda,U,V\right)
\in\operatorname*{SBT}\left(  n\right)  }$ of $\mathcal{A}$. Hence,
$\mathcal{I}_{k}$ is a direct addend of $\mathcal{A}$ as a $\mathbf{k}%
$-module, and both $\mathcal{I}_{k}$ and $\mathcal{A}/\mathcal{I}_{k}$ are
free $\mathbf{k}$-modules, with respective bases $\left(  \nabla
_{V,U}^{\operatorname*{Row}}\right)  _{\left(  \lambda,U,V\right)
\in\operatorname*{SBT}\left(  n\right)  \text{ and }\ell\left(  \lambda
\right)  \leq k}$ and $\left(  \overline{\nabla_{V,U}^{\operatorname*{Row}}%
}\right)  _{\left(  \lambda,U,V\right)  \in\operatorname*{SBT}\left(
n\right)  \text{ and }\ell\left(  \lambda\right)  >k}$. Likewise, we can get
analogous claims about $\mathcal{J}_{k}$ and $\mathcal{A}/\mathcal{J}_{k}$
from (\ref{eq.rmk.IJ=murphy.J}). Using (\ref{eq.rmk.IJ=murphy.I}) and
(\ref{eq.rmk.IJ=murphy.J}), we can furthermore rewrite the equality
$\MurpF_{\operatorname*{std},\operatorname*{len}\leq k}^{\operatorname*{Row}%
}=\left(  \MurpF_{\operatorname*{std},\operatorname*{len}>k}%
^{-\operatorname*{Col}}\right)  ^{\perp}$ (which is part of \cite[Theorem
6.8.47]{sga}) as $\mathcal{I}_{k}=\mathcal{J}_{k}^{\perp}$, and similarly we
obtain $\mathcal{J}_{k}=\mathcal{I}_{k}^{\perp}$.

However, not all of Theorem \ref{thm.row.main} follows this easily from this
approach. Anyway, we shall now take a more elementary point of view.
\end{remark}

\subsection{Lemmas for the proof of the main theorem}

First, however, we prepare for the proof of Theorem \ref{thm.row.main}. We
start with a litany of lemmas. Our first lemma is a collection of basic
properties of the bilinear form $\left\langle \cdot,\cdot\right\rangle $:

\begin{lemma}
\label{lem.coeff1-form}\ \ 

\begin{enumerate}
\item[\textbf{(a)}] Let $\operatorname*{coeff}\nolimits_{1}:\mathcal{A}%
\rightarrow\mathbf{k}$ be the map that sends each element $\mathbf{a}$ of
$\mathcal{A}=\mathbf{k}\left[  S_{n}\right]  $ to the coefficient of the
identity permutation $\operatorname*{id}\in S_{n}$ in $\mathbf{a}$. In other
words, $\operatorname*{coeff}\nolimits_{1}:\mathcal{A}\rightarrow\mathbf{k}$
is the $\mathbf{k}$-linear map that sends the permutation $\operatorname*{id}%
\in S_{n}$ to $1$ while sending any non-identity permutation $w\in S_{n}$ to
$0$.

Then, the bilinear form $\left\langle \cdot,\cdot\right\rangle $ can be
expressed as follows: For any $a,b\in\mathcal{A}$, we have
\begin{align}
\left\langle a,b\right\rangle  &  =\operatorname*{coeff}\nolimits_{1}\left(
S\left(  a\right)  b\right) \label{pf.lem.row.LAnn.scalp.Sab}\\
&  =\operatorname*{coeff}\nolimits_{1}\left(  bS\left(  a\right)  \right)
\label{pf.lem.row.LAnn.scalp.bSa}\\
&  =\operatorname*{coeff}\nolimits_{1}\left(  S\left(  b\right)  a\right)
\label{pf.lem.row.LAnn.scalp.Sba}\\
&  =\operatorname*{coeff}\nolimits_{1}\left(  aS\left(  b\right)  \right)  .
\label{pf.lem.row.LAnn.scalp.aSb}%
\end{align}

\item[\textbf{(b)}] The bilinear form $\left\langle \cdot,\cdot\right\rangle $
is $S$-invariant: That is, for all $a,b\in\mathcal{A}$, we have%
\begin{equation}
\left\langle a,b\right\rangle =\left\langle S\left(  a\right)  ,S\left(
b\right)  \right\rangle . \label{eq.lem.coeff1-form.b.1}%
\end{equation}

\item[\textbf{(c)}] Let $\mathcal{B}$ be any subset of $\mathcal{A}$. Then,
$\left(  S\left(  \mathcal{B}\right)  \right)  ^{\perp}=S\left(
\mathcal{B}^{\perp}\right)  $.
\end{enumerate}
\end{lemma}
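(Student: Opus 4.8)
The plan is to prove the three parts of Lemma~\ref{lem.coeff1-form} in sequence, since part~\textbf{(b)} rests on part~\textbf{(a)} and part~\textbf{(c)} rests on part~\textbf{(b)}.

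For part~\textbf{(a)}, the starting observation is that for two permutations $u,v\in S_n$, the product $u^{-1}v$ equals $\operatorname{id}$ if and only if $u=v$. Since $S$ sends each permutation to its inverse, this means $\operatorname{coeff}_1\left(S(u)v\right) = \operatorname{coeff}_1(u^{-1}v) = \left[u=v\right] = \left\langle u,v\right\rangle$ for all $u,v\in S_n$. Both sides of $\left\langle a,b\right\rangle = \operatorname{coeff}_1\left(S(a)b\right)$ are $\mathbf{k}$-bilinear in $(a,b)$, so this identity on the basis $S_n\times S_n$ extends to all of $\mathcal{A}\times\mathcal{A}$, giving \eqref{pf.lem.row.LAnn.scalp.Sab}. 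For the remaining three expressions, I would note that $\operatorname{coeff}_1$ is a trace-like functional: $\operatorname{coeff}_1(xy) = \operatorname{coeff}_1(yx)$ for all $x,y\in\mathcal{A}$ (again, check on permutations: $uv=\operatorname{id}\iff vu=\operatorname{id}$, then extend bilinearly), and also $\operatorname{coeff}_1\left(S(x)\right) = \operatorname{coeff}_1(x)$ (since $S$ fixes $\operatorname{id}$ and permutes the non-identity permutations among themselves). Then \eqref{pf.lem.row.LAnn.scalp.bSa} follows from \eqref{pf.lem.row.LAnn.scalp.Sab} by the trace property ($\operatorname{coeff}_1\left(S(a)b\right) = \operatorname{coeff}_1\left(bS(a)\right)$); \eqref{pf.lem.row.LAnn.scalp.Sba} follows by applying $S$ inside $\operatorname{coeff}_1$ and using that $S$ is an anti-automorphism and an involution: $\operatorname{coeff}_1\left(S(a)b\right) = \operatorname{coeff}_1\left(S\left(S(a)b\right)\right) = \operatorname{coeff}_1\left(S(b)a\right)$; and \eqref{pf.lem.row.LAnn.scalp.aSb} follows from \eqref{pf.lem.row.LAnn.scalp.Sba} by the trace property again.

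For part~\textbf{(b)}, I would simply compute using part~\textbf{(a)}: $\left\langle S(a),S(b)\right\rangle = \operatorname{coeff}_1\left(S\left(S(a)\right)S(b)\right) = \operatorname{coeff}_1\left(aS(b)\right)$ (using $S\circ S=\operatorname{id}$), and this equals $\left\langle a,b\right\rangle$ by \eqref{pf.lem.row.LAnn.scalp.aSb}. That closes part~\textbf{(b)} in one line.

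For part~\textbf{(c)}, the claim is $\left(S(\mathcal{B})\right)^\perp = S\left(\mathcal{B}^\perp\right)$. I would prove both inclusions (or a chain of equivalences) directly: an element $a\in\mathcal{A}$ lies in $\left(S(\mathcal{B})\right)^\perp$ iff $\left\langle a,S(b)\right\rangle=0$ for all $b\in\mathcal{B}$; writing $a=S(a')$ with $a'=S(a)$ (possible since $S$ is a bijection), this is iff $\left\langle S(a'),S(b)\right\rangle=0$ for all $b\in\mathcal{B}$, which by \eqref{eq.lem.coeff1-form.b.1} is iff $\left\langle a',b\right\rangle=0$ for all $b\in\mathcal{B}$, i.e. iff $a'\in\mathcal{B}^\perp$, i.e. iff $a=S(a')\in S\left(\mathcal{B}^\perp\right)$. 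This settles part~\textbf{(c)}. I do not anticipate a genuine obstacle here; the only mild care is making sure the ``extend bilinearly from permutations'' step and the trace property of $\operatorname{coeff}_1$ are stated cleanly, and that the bijectivity of $S$ is invoked correctly when reparametrizing in part~\textbf{(c)}. The trace property $\operatorname{coeff}_1(xy)=\operatorname{coeff}_1(yx)$ is the one place worth spelling out, since everything else is formal manipulation with an anti-automorphic involution.
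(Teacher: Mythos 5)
Your argument is correct and follows essentially the same route as the paper: for part \textbf{(a)} the paper simply cites \cite[Proposition 6.8.17]{sga}, and your basis-plus-bilinearity check of the first identity together with the derivation of the other three via the trace property of $\operatorname{coeff}_{1}$ and the anti-automorphic involution $S$ is a standard (and valid) way to prove that cited fact, while your proofs of parts \textbf{(b)} and \textbf{(c)} coincide with the paper's, using $S\circ S=\operatorname{id}$ and the bijectivity of $S$ respectively. No gaps.
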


\begin{proof}
\textbf{(a)} This is \cite[Proposition 6.8.17]{sga}. \medskip

\textbf{(b)} Let $a,b\in\mathcal{A}$. Then, $S\left(  S\left(  a\right)
\right)  =a$ (since $S\circ S=\operatorname*{id}$). But
(\ref{pf.lem.row.LAnn.scalp.Sab}) (applied to $S\left(  a\right)  $ and
$S\left(  b\right)  $ instead of $a$ and $b$) yields%
\[
\left\langle S\left(  a\right)  ,S\left(  b\right)  \right\rangle
=\operatorname*{coeff}\nolimits_{1}\left(  \underbrace{S\left(  S\left(
a\right)  \right)  }_{=a}S\left(  b\right)  \right)  =\operatorname*{coeff}%
\nolimits_{1}\left(  aS\left(  b\right)  \right)  =\left\langle
a,b\right\rangle
\]
(by (\ref{pf.lem.row.LAnn.scalp.aSb})). In other words, $\left\langle
a,b\right\rangle =\left\langle S\left(  a\right)  ,S\left(  b\right)
\right\rangle $. This proves (\ref{eq.lem.coeff1-form.b.1}) and thus Lemma
\ref{lem.coeff1-form} \textbf{(b)}. \medskip

\textbf{(c)} By the definition of $\left(  S\left(  \mathcal{B}\right)
\right)  ^{\perp}$, we have%
\begin{align}
\left(  S\left(  \mathcal{B}\right)  \right)  ^{\perp}  &  =\left\{
a\in\mathcal{A}\ \mid\ \left\langle a,b\right\rangle =0\text{ for all }b\in
S\left(  \mathcal{B}\right)  \right\} \nonumber\\
&  =\left\{  a\in\mathcal{A}\ \mid\ \left\langle a,c\right\rangle =0\text{ for
all }c\in S\left(  \mathcal{B}\right)  \right\} \nonumber\\
&  =\left\{  a\in\mathcal{A}\ \mid\ \left\langle a,S\left(  b\right)
\right\rangle =0\text{ for all }b\in\mathcal{B}\right\}
\label{pf.lem.coeff1-form.c.1}%
\end{align}
(since $S\left(  \mathcal{B}\right)  =\left\{  S\left(  b\right)  \ \mid
\ b\in\mathcal{B}\right\}  $). On the other hand,%
\begin{align*}
S\left(  \mathcal{B}^{\perp}\right)   &  =\left\{  S\left(  a\right)
\ \mid\ a\in\mathcal{B}^{\perp}\right\} \\
&  =\left\{  S\left(  a\right)  \ \mid\ a\in\mathcal{A}\text{ satisfying
}\left\langle a,b\right\rangle =0\text{ for all }b\in\mathcal{B}\right\} \\
&  \ \ \ \ \ \ \ \ \ \ \ \ \ \ \ \ \ \ \ \ \left(  \text{since }%
\mathcal{B}^{\perp}=\left\{  a\in\mathcal{A}\ \mid\ \left\langle
a,b\right\rangle =0\text{ for all }b\in\mathcal{B}\right\}  \right) \\
&  =\left\{  S\left(  a\right)  \ \mid\ a\in\mathcal{A}\text{ satisfying
}\left\langle S\left(  a\right)  ,S\left(  b\right)  \right\rangle =0\text{
for all }b\in\mathcal{B}\right\} \\
&  \ \ \ \ \ \ \ \ \ \ \ \ \ \ \ \ \ \ \ \ \left(  \text{since
(\ref{eq.lem.coeff1-form.b.1}) yields }\left\langle a,b\right\rangle
=\left\langle S\left(  a\right)  ,S\left(  b\right)  \right\rangle \right) \\
&  =\left\{  a\ \mid\ a\in\mathcal{A}\text{ satisfying }\left\langle
a,S\left(  b\right)  \right\rangle =0\text{ for all }b\in\mathcal{B}\right\}
\end{align*}
(here, we have substituted $a$ for $S\left(  a\right)  $ in our set, since
$S:\mathcal{A}\rightarrow\mathcal{A}$ is a bijection). In other words,%
\[
S\left(  \mathcal{B}^{\perp}\right)  =\left\{  a\in\mathcal{A}\ \mid
\ \left\langle a,S\left(  b\right)  \right\rangle =0\text{ for all }%
b\in\mathcal{B}\right\}  .
\]
Comparing this with (\ref{pf.lem.coeff1-form.c.1}), we obtain $\left(
S\left(  \mathcal{B}\right)  \right)  ^{\perp}=S\left(  \mathcal{B}^{\perp
}\right)  $. This proves Lemma \ref{lem.coeff1-form} \textbf{(c)}.
\end{proof}

\begin{verlong}

\begin{proof}

\item[Proof of Lemma \ref{lem.coeff1-form} \textbf{(a)}.] Let $a,b\in
\mathcal{A}$. We must then prove the four equalities
(\ref{pf.lem.row.LAnn.scalp.Sab}), (\ref{pf.lem.row.LAnn.scalp.bSa}),
(\ref{pf.lem.row.LAnn.scalp.Sba}) and (\ref{pf.lem.row.LAnn.scalp.aSb}). Each
of these four equalities is $\mathbf{k}$-linear in each of $a$ and $b$. Thus,
we WLOG assume that both $a$ and $b$ belong to the basis $S_{n}$ of
$\mathcal{A}$. In other words, $a$ and $b$ are permutations in $S_{n}$. Then,%
\[
S\left(  a\right)  =a^{-1}\ \ \ \ \ \ \ \ \ \ \text{and}%
\ \ \ \ \ \ \ \ \ \ S\left(  b\right)  =b^{-1}\ \ \ \ \ \ \ \ \ \ \text{and}%
\ \ \ \ \ \ \ \ \ \ \left\langle a,b\right\rangle =%
\begin{cases}
1, & \text{if }a=b;\\
0, & \text{if }a\neq b.
\end{cases}
\]
Hence,
\begin{align*}
\operatorname*{coeff}\nolimits_{1}\left(  S\left(  a\right)  b\right)   &
=\operatorname*{coeff}\nolimits_{1}\left(  a^{-1}b\right)  =%
\begin{cases}
1, & \text{if }a^{-1}b=\operatorname*{id};\\
0, & \text{if }a^{-1}b\neq\operatorname*{id}%
\end{cases}
\ \ \ \ \ \ \ \ \ \ \left(  \text{since }a^{-1}b\in S_{n}\right) \\
&  =%
\begin{cases}
1, & \text{if }a=b;\\
0, & \text{if }a\neq b
\end{cases}
\ \ \ \ \ \ \ \ \ \ \left(  \text{since }a^{-1}b=\operatorname*{id}\text{ is
equivalent to }a=b\right) \\
&  =\left\langle a,b\right\rangle .
\end{align*}
Thus, (\ref{pf.lem.row.LAnn.scalp.Sab}) is proved. The other three equalities
are proved similarly.
\end{proof}
\end{verlong}

The next lemma connects orthogonal spaces with left/right annihilators:

\begin{lemma}
\label{lem.row.LAnn}Let $\mathcal{B}$ be a left ideal of $\mathcal{A}$. Then:

\begin{enumerate}
\item[\textbf{(a)}] We have $\mathcal{B}^{\perp}=\operatorname*{LAnn}\left(
S\left(  \mathcal{B}\right)  \right)  $.

\item[\textbf{(b)}] If $S\left(  \mathcal{B}\right)  =\mathcal{B}$, then
$\mathcal{B}^{\perp}=\operatorname*{LAnn}\mathcal{B}=\operatorname*{RAnn}%
\mathcal{B}$.
\end{enumerate}
\end{lemma}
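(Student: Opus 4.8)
The plan is to derive both parts from Lemma \ref{lem.coeff1-form} \textbf{(a)} together with one elementary observation about group algebras: if $\mathcal{R}$ is a right ideal of $\mathcal{A}$ and $a\in\mathcal{A}$, then $ac=0$ for all $c\in\mathcal{R}$ if and only if $\operatorname{coeff}_{1}(ac)=0$ for all $c\in\mathcal{R}$ (and symmetrically, if $\mathcal{L}$ is a left ideal, then $da=0$ for all $d\in\mathcal{L}$ if and only if $\operatorname{coeff}_{1}(da)=0$ for all $d\in\mathcal{L}$). The point is that $\operatorname{coeff}_{1}(xw)$ equals the coefficient of $w^{-1}$ in $x$ for every $x\in\mathcal{A}$ and $w\in S_{n}$; so if $\operatorname{coeff}_{1}(ac)$ vanishes for every $c$ in a right ideal, then replacing $c$ by $cw$ (still in the right ideal) shows that every coefficient of $ac$ vanishes, whence $ac=0$. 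I would state and prove this observation first, and also recall that $S$ sends left ideals to right ideals and vice versa (since $S$ is an algebra anti-automorphism).

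For part \textbf{(a)}, I would start from $\mathcal{B}^{\perp}=\left\{a\in\mathcal{A}\mid\left\langle a,b\right\rangle=0\text{ for all }b\in\mathcal{B}\right\}$ and rewrite $\left\langle a,b\right\rangle$ using \eqref{pf.lem.row.LAnn.scalp.aSb}, i.e. $\left\langle a,b\right\rangle=\operatorname{coeff}_{1}\left(aS\left(b\right)\right)$. Substituting $c=S\left(b\right)$ and using $S\left(\mathcal{B}\right)=\left\{S\left(b\right)\mid b\in\mathcal{B}\right\}$, the condition $a\in\mathcal{B}^{\perp}$ becomes ``$\operatorname{coeff}_{1}\left(ac\right)=0$ for all $c\in S\left(\mathcal{B}\right)$''. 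Since $S\left(\mathcal{B}\right)$ is a right ideal of $\mathcal{A}$, the observation above converts this into ``$ac=0$ for all $c\in S\left(\mathcal{B}\right)$'', which is exactly $a\in\operatorname{LAnn}\left(S\left(\mathcal{B}\right)\right)$. This proves \textbf{(a)}.

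For part \textbf{(b)}, assume $S\left(\mathcal{B}\right)=\mathcal{B}$. Then part \textbf{(a)} immediately gives $\mathcal{B}^{\perp}=\operatorname{LAnn}\left(S\left(\mathcal{B}\right)\right)=\operatorname{LAnn}\mathcal{B}$. For the remaining equality $\mathcal{B}^{\perp}=\operatorname{RAnn}\mathcal{B}$ I would run the mirror-image argument: using the symmetry of $\left\langle\cdot,\cdot\right\rangle$ together with \eqref{pf.lem.row.LAnn.scalp.Sba} (which reads $\left\langle a,b\right\rangle=\operatorname{coeff}_{1}\left(S\left(b\right)a\right)$), membership $a\in\mathcal{B}^{\perp}$ becomes ``$\operatorname{coeff}_{1}\left(da\right)=0$ for all $d\in S\left(\mathcal{B}\right)=\mathcal{B}$''; and since $\mathcal{B}$ is a left ideal, the left-sided version of the observation turns this into ``$da=0$ for all $d\in\mathcal{B}$'', i.e. $a\in\operatorname{RAnn}\mathcal{B}$. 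Combining the two chains of equivalences gives $\mathcal{B}^{\perp}=\operatorname{LAnn}\mathcal{B}=\operatorname{RAnn}\mathcal{B}$. (It is worth noting that under the hypothesis $S\left(\mathcal{B}\right)=\mathcal{B}$ the left ideal $\mathcal{B}$ is in fact a two-sided ideal, which is the structural reason both annihilators coincide with $\mathcal{B}^{\perp}$.)

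All steps are routine manipulations, so I do not expect a genuine obstacle. The only thing requiring care is the bookkeeping: choosing the correct one of the four expressions for $\left\langle a,b\right\rangle$ in Lemma \ref{lem.coeff1-form} \textbf{(a)} so that the auxiliary permutation $w$ multiplies $ac$ (resp. $da$) on precisely the side along which $S\left(\mathcal{B}\right)$ (resp. $\mathcal{B}$) absorbs, and checking once and for all that $S$ interchanges left and right ideals.
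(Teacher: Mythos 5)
Your proposal is correct and follows essentially the same route as the paper's proof: both rest on the formulas of Lemma \ref{lem.coeff1-form} \textbf{(a)} and on the coefficient-extraction trick of pairing against $wb$ (equivalently, multiplying by $w$ inside the right ideal $S\left(\mathcal{B}\right)$), which you have merely packaged as a standalone observation about one-sided ideals. The only divergence is in part \textbf{(b)}, where the paper transports $\operatorname*{LAnn}$ to $\operatorname*{RAnn}$ by applying the antipode (via Lemma \ref{lem.coeff1-form} \textbf{(c)} and $\operatorname*{RAnn}\left(S\left(\mathcal{B}\right)\right)=S\left(\operatorname*{LAnn}\mathcal{B}\right)$), while you rerun the mirror-image direct computation using \eqref{pf.lem.row.LAnn.scalp.Sba}; both are equally valid and of comparable length.
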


\begin{proof}
\textbf{(a)} Define the $\mathbf{k}$-linear map $\operatorname*{coeff}%
\nolimits_{1}:\mathcal{A}\rightarrow\mathbf{k}$ as in Lemma
\ref{lem.coeff1-form} \textbf{(a)}. It is easy to see that each $a\in
\mathcal{A}$ and each $w\in S_{n}$ satisfy the equality%
\begin{equation}
\operatorname*{coeff}\nolimits_{1}\left(  aw^{-1}\right)  =\left(  \text{the
coefficient of }w\text{ in }a\right)  \label{pf.lem.row.LAnn.a.1}%
\end{equation}
(indeed, if we write $a$ as $\sum_{u\in S_{n}}\alpha_{u}u$ with $\alpha_{u}%
\in\mathbf{k}$, then both sides of this equality equal $\alpha_{w}$).

Let $a\in\operatorname*{LAnn}\left(  S\left(  \mathcal{B}\right)  \right)  $.
Then, $aS\left(  b\right)  =0$ for all $b\in\mathcal{B}$. Hence,
(\ref{pf.lem.row.LAnn.scalp.aSb}) yields $\left\langle a,b\right\rangle
=\operatorname*{coeff}\nolimits_{1}\left(  \underbrace{aS\left(  b\right)
}_{=0}\right)  =\operatorname*{coeff}\nolimits_{1}0=0$ for all $b\in
\mathcal{B}$. In other words, $a\in\mathcal{B}^{\perp}$. Thus, we have shown
that $\operatorname*{LAnn}\left(  S\left(  \mathcal{B}\right)  \right)
\subseteq\mathcal{B}^{\perp}$.

Conversely, let $c\in\mathcal{B}^{\perp}$. Then, $\left\langle
c,b\right\rangle =0$ for all $b\in\mathcal{B}$. Now, let $b\in\mathcal{B}$ be
arbitrary. Then, for every $w\in S_{n}$, we have $wb\in\mathcal{B}$ (since
$\mathcal{B}$ is a left ideal of $\mathcal{A}$) and therefore $\left\langle
c,wb\right\rangle =0$ (since $c\in\mathcal{B}^{\perp}$), so that%
\begin{align*}
0  &  =\left\langle c,wb\right\rangle =\operatorname*{coeff}\nolimits_{1}%
\left(  cS\left(  wb\right)  \right)  \ \ \ \ \ \ \ \ \ \ \left(  \text{by
(\ref{pf.lem.row.LAnn.scalp.aSb})}\right) \\
&  =\operatorname*{coeff}\nolimits_{1}\left(  cS\left(  b\right)
w^{-1}\right)  \ \ \ \ \ \ \ \ \ \ \left(  \text{since }S\left(  wb\right)
=S\left(  b\right)  \underbrace{S\left(  w\right)  }_{=w^{-1}}=S\left(
b\right)  w^{-1}\right) \\
&  =\left(  \text{the coefficient of }w\text{ in }cS\left(  b\right)  \right)
\ \ \ \ \ \ \ \ \ \ \left(  \text{by (\ref{pf.lem.row.LAnn.a.1}), applied to
}a=cS\left(  b\right)  \right)  .
\end{align*}
Since this holds for each $w\in S_{n}$, we thus have shown that all
coefficients of $cS\left(  b\right)  $ equal $0$. In other words, $cS\left(
b\right)  =0$. Since this holds for each $b\in\mathcal{B}$, we conclude that
$c\in\operatorname*{LAnn}\left(  S\left(  \mathcal{B}\right)  \right)  $.
Thus, we have shown that $\mathcal{B}^{\perp}\subseteq\operatorname*{LAnn}%
\left(  S\left(  \mathcal{B}\right)  \right)  $. Combining this with
$\operatorname*{LAnn}\left(  S\left(  \mathcal{B}\right)  \right)
\subseteq\mathcal{B}^{\perp}$, we obtain $\mathcal{B}^{\perp}%
=\operatorname*{LAnn}\left(  S\left(  \mathcal{B}\right)  \right)  $. Thus,
Lemma \ref{lem.row.LAnn} \textbf{(a)} is proved. \medskip

\textbf{(b)} Assume that $S\left(  \mathcal{B}\right)  =\mathcal{B}$. Then,
Lemma \ref{lem.row.LAnn} \textbf{(a)} yields $\mathcal{B}^{\perp
}=\operatorname*{LAnn}\left(  S\left(  \mathcal{B}\right)  \right)
=\operatorname*{LAnn}\mathcal{B}$ (since $S\left(  \mathcal{B}\right)
=\mathcal{B}$). Furthermore, Lemma \ref{lem.coeff1-form} \textbf{(c)} yields
$\left(  S\left(  \mathcal{B}\right)  \right)  ^{\perp}=S\left(
\mathcal{B}^{\perp}\right)  $. In view of $S\left(  \mathcal{B}\right)
=\mathcal{B}$, we can rewrite this as $\mathcal{B}^{\perp}=S\left(
\mathcal{B}^{\perp}\right)  $. However, $S$ is a $\mathbf{k}$-algebra
anti-automorphism. Thus, $\operatorname*{RAnn}\left(  S\left(  \mathcal{B}%
\right)  \right)  =S\left(  \operatorname*{LAnn}\mathcal{B}\right)  $. In view
of $S\left(  \mathcal{B}\right)  =\mathcal{B}$, we can rewrite this as
$\operatorname*{RAnn}\mathcal{B}=S\left(  \underbrace{\operatorname*{LAnn}%
\mathcal{B}}_{=\mathcal{B}^{\perp}}\right)  =S\left(  \mathcal{B}^{\perp
}\right)  =\mathcal{B}^{\perp}$. Thus, $\mathcal{B}^{\perp}%
=\operatorname*{RAnn}\mathcal{B}$. Combined with $\mathcal{B}^{\perp
}=\operatorname*{LAnn}\mathcal{B}$, this completes the proof of Lemma
\ref{lem.row.LAnn} \textbf{(b)}.
\end{proof}

\begin{lemma}
\label{lem.mod.sub-basis}Let $\mathcal{M}$ be a free $\mathbf{k}$-module with
a basis $\left(  m_{i}\right)  _{i\in I}$. Let $J$ and $K$ be two disjoint
subsets of $I$ such that $J\cup K=I$. Let $\mathcal{N}$ be a $\mathbf{k}%
$-submodule of $\mathcal{M}$ such that the quotient module $\mathcal{M}%
/\mathcal{N}$ has a basis $\left(  \overline{m_{i}}\right)  _{i\in J}$. (Here,
as usual, $\overline{m}$ denotes the projection of any vector $m\in
\mathcal{M}$ onto the quotient $\mathcal{M}/\mathcal{N}$.)

Then:

\begin{enumerate}
\item[\textbf{(a)}] The $\mathbf{k}$-module $\mathcal{N}$ is free of rank
$\left\vert K\right\vert $.

\item[\textbf{(b)}] There exists a $\mathbf{k}$-linear projection
$\pi:\mathcal{M}\rightarrow\mathcal{N}$ (that is, a $\mathbf{k}$-linear map
$\pi:\mathcal{M}\rightarrow\mathcal{N}$ such that $\left.  \pi\mid
_{\mathcal{N}}\right.  =\operatorname*{id}$).
\end{enumerate}
\end{lemma}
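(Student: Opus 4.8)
The plan is to produce an internal direct sum decomposition $\mathcal{M}=\mathcal{M}_{J}\oplus\mathcal{N}$, where $\mathcal{M}_{J}:=\operatorname*{span}\left\{  m_{i}\ \mid\ i\in J\right\}  $, and then read off both parts of the lemma from it. (Recall also $\mathcal{M}_{K}:=\operatorname*{span}\left\{  m_{i}\ \mid\ i\in K\right\}  $, so that $\mathcal{M}=\mathcal{M}_{J}\oplus\mathcal{M}_{K}$ and each of $\mathcal{M}_{J},\mathcal{M}_{K}$ is free on the corresponding sub-family of $\left(  m_{i}\right)  _{i\in I}$.)

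First I would establish $\mathcal{M}=\mathcal{M}_{J}\oplus\mathcal{N}$. For $\mathcal{M}_{J}\cap\mathcal{N}=0$: given $x\in\mathcal{M}_{J}\cap\mathcal{N}$, write $x=\sum_{i\in J}c_{i}m_{i}$; then $\overline{x}=0$ in $\mathcal{M}/\mathcal{N}$ yields $\sum_{i\in J}c_{i}\overline{m_{i}}=0$, and since $\left(  \overline{m_{i}}\right)  _{i\in J}$ is a basis of $\mathcal{M}/\mathcal{N}$, all $c_{i}$ vanish, so $x=0$. For $\mathcal{M}_{J}+\mathcal{N}=\mathcal{M}$: given $y\in\mathcal{M}$, expand $\overline{y}=\sum_{i\in J}c_{i}\overline{m_{i}}$ using the basis $\left(  \overline{m_{i}}\right)  _{i\in J}$, set $y^{\prime}:=\sum_{i\in J}c_{i}m_{i}\in\mathcal{M}_{J}$, and note that $\overline{y-y^{\prime}}=0$ forces $y-y^{\prime}\in\mathcal{N}$; hence $y=y^{\prime}+\left(  y-y^{\prime}\right)  \in\mathcal{M}_{J}+\mathcal{N}$.

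Part \textbf{(b)} then follows immediately: I would let $\pi:\mathcal{M}\rightarrow\mathcal{N}$ be the projection onto the second summand of the decomposition $\mathcal{M}=\mathcal{M}_{J}\oplus\mathcal{N}$, i.e. $\pi\left(  a+b\right)  =b$ for $a\in\mathcal{M}_{J}$ and $b\in\mathcal{N}$. This map is $\mathbf{k}$-linear and lands in $\mathcal{N}$; and for $x\in\mathcal{N}$ the decomposition $x=0+x$ (with $0\in\mathcal{M}_{J}$) gives $\pi\left(  x\right)  =x$, so $\left.  \pi\mid_{\mathcal{N}}\right.  =\operatorname*{id}$.

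For part \textbf{(a)}, I would show that the composite of the inclusion $\mathcal{M}_{K}\hookrightarrow\mathcal{M}$ with $\pi$ is a $\mathbf{k}$-module isomorphism $\mathcal{M}_{K}\rightarrow\mathcal{N}$. It is injective because its kernel lies in $\mathcal{M}_{K}\cap\ker\pi=\mathcal{M}_{K}\cap\mathcal{M}_{J}=0$; it is surjective because any $x\in\mathcal{N}$, written $x=a+b$ with $a\in\mathcal{M}_{J}$ and $b\in\mathcal{M}_{K}$, satisfies $\pi\left(  b\right)  =\pi\left(  x\right)  -\pi\left(  a\right)  =x-0=x$. Since $\mathcal{M}_{K}$ is free of rank $\left\vert K\right\vert $ with basis $\left(  m_{i}\right)  _{i\in K}$, and an isomorphism carries a basis to a basis, we conclude that $\mathcal{N}$ is free of rank $\left\vert K\right\vert $, with basis $\left(  \pi\left(  m_{i}\right)  \right)  _{i\in K}$. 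I do not expect a genuine obstacle here: this is elementary module theory, and the only points requiring minor care are that a subset of a basis of a free module is a basis of its span, and that $\mathbf{k}$-freeness (rather than mere projectivity) transfers along the isomorphism built above.
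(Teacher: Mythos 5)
Your proof is correct and amounts to essentially the same argument as the paper's: the splitting $\mathcal{M}=\mathcal{M}_{J}\oplus\mathcal{N}$ that you establish is exactly what the paper verifies implicitly by showing that the elements $v_{k}=m_{k}-\sum_{j\in J}c_{k,j}m_{j}$ (which are precisely your $\pi\left(  m_{k}\right)  $ for $k\in K$) form a basis of $\mathcal{N}$, and your projection onto the second summand coincides with the paper's map $\pi$. The only difference is presentational: you deduce both parts abstractly from the direct-sum decomposition and the isomorphism $\mathcal{M}_{K}\rightarrow\mathcal{N}$, whereas the paper works directly with the explicit complementary basis $\left(  v_{k}\right)  _{k\in K}$ and defines $\pi$ on the basis $\left(  m_{i}\right)  _{i\in I}$.
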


\begin{proof}
For each $k\in K$, the vector $\overline{m_{k}}\in\mathcal{M}/\mathcal{N}$ can
be written as a $\mathbf{k}$-linear combination of the family $\left(
\overline{m_{i}}\right)  _{i\in J}$ (since this family is a basis of
$\mathcal{M}/\mathcal{N}$). In other words, there exist coefficients $c_{k,j}$
for all $k\in K$ and $j\in J$ such that each $k\in K$ satisfies%
\begin{equation}
\overline{m_{k}}=\sum_{j\in J}c_{k,j}\overline{m_{j}}.
\label{pf.lem.mod.sub-basis.1}%
\end{equation}
Consider these $c_{k,j}$. Now, let us set%
\begin{equation}
v_{k}:=m_{k}-\sum_{j\in J}c_{k,j}m_{j} \label{pf.lem.mod.sub-basis.2}%
\end{equation}
for each $k\in K$. This element $v_{k}$ belongs to $\mathcal{N}$ (since
$\overline{v_{k}}=\overline{m_{k}-\sum_{j\in J}c_{k,j}m_{j}}=\overline{m_{k}%
}-\sum_{j\in J}c_{k,j}\overline{m_{j}}=\overline{0}$ by
(\ref{pf.lem.mod.sub-basis.1})). Thus, $\left(  v_{k}\right)  _{k\in K}$ is a
family of vectors in $\mathcal{N}$. This family is easily seen to be
$\mathbf{k}$-linearly independent\footnote{\textit{Proof.} Let $\left(
\lambda_{k}\right)  _{k\in K}\in\mathbf{k}^{K}$ be a family of scalars such
that $\sum_{k\in K}\lambda_{k}v_{k}=0$. We must show that all $\lambda_{k}$
are $0$.
\par
We have%
\begin{align}
0  &  =\sum_{k\in K}\lambda_{k}v_{k}=\sum_{k\in K}\lambda_{k}\left(
m_{k}-\sum_{j\in J}c_{k,j}m_{j}\right)  \ \ \ \ \ \ \ \ \ \ \left(  \text{by
(\ref{pf.lem.mod.sub-basis.2})}\right) \nonumber\\
&  =\sum_{k\in K}\lambda_{k}m_{k}-\sum_{k\in K}\lambda_{k}\sum_{j\in J}%
c_{k,j}m_{j}\nonumber\\
&  =\sum_{k\in K}\lambda_{k}m_{k}-\sum_{j\in J}\left(  \sum_{k\in K}%
\lambda_{k}c_{k,j}\right)  m_{j}. \label{pf.lem.mod.sub-basis.li.pf.1}%
\end{align}
Note that there is no overlap between the $m_{k}$'s in the first sum here and
the $m_{j}$'s in the second sum, since the sets $J$ and $K$ are disjoint.
Since the family $\left(  m_{i}\right)  _{i\in I}$ is $\mathbf{k}$-linearly
independent (being a basis of $\mathcal{M}$), we thus conclude from
(\ref{pf.lem.mod.sub-basis.li.pf.1}) that all the coefficients $\lambda_{k}$
and $\sum_{k\in K}\lambda_{k}c_{k,j}$ are $0$. Hence, in particular, all
$\lambda_{k}$ are $0$. Qed.}. Moreover, it spans the $\mathbf{k}$-module
$\mathcal{N}$ (this, too, is not hard to see\footnote{\textit{Proof.} Let
$w\in\mathcal{N}$. We must show that $w\in\operatorname*{span}\left\{
v_{k}\mid k\in K\right\}  $.
\par
First, we expand $w\in\mathcal{N}\subseteq\mathcal{M}$ in the basis $\left(
m_{i}\right)  _{i\in I}$ of $\mathcal{M}$ as follows:%
\[
w=\sum_{i\in I}w_{i}m_{i}\ \ \ \ \ \ \ \ \ \ \text{with }w_{i}\in\mathbf{k}.
\]
Since the set $I$ is the union of its disjoint subsets $J$ and $K$, we can
break this sum up as follows:%
\begin{align}
w  &  =\sum_{i\in J}w_{i}m_{i}+\sum_{i\in K}w_{i}m_{i}=\sum_{j\in J}w_{j}%
m_{j}+\sum_{k\in K}w_{k}\underbrace{m_{k}}_{\substack{=\sum_{j\in J}%
c_{k,j}m_{j}+v_{k}\\\text{(by (\ref{pf.lem.mod.sub-basis.2}))}}}\nonumber\\
&  =\sum_{j\in J}w_{j}m_{j}+\underbrace{\sum_{k\in K}w_{k}\left(  \sum_{j\in
J}c_{k,j}m_{j}+v_{k}\right)  }_{=\sum_{j\in J}\ \ \sum_{k\in K}w_{k}%
c_{k,j}m_{j}+\sum_{k\in K}w_{k}v_{k}}=\underbrace{\sum_{j\in J}w_{j}m_{j}%
+\sum_{j\in J}\ \ \sum_{k\in K}w_{k}c_{k,j}m_{j}}_{=\sum_{j\in J}\left(
w_{j}+\sum_{k\in K}w_{k}c_{k,j}\right)  m_{j}}+\sum_{k\in K}w_{k}%
v_{k}\nonumber\\
&  =\sum_{j\in J}\left(  w_{j}+\sum_{k\in K}w_{k}c_{k,j}\right)  m_{j}%
+\sum_{k\in K}w_{k}v_{k}. \label{pf.lem.mod.sub-basis.sp.pf.1}%
\end{align}
Projecting this onto $\mathcal{M}/\mathcal{N}$, we obtain%
\begin{align*}
\overline{w}  &  =\overline{\sum_{j\in J}\left(  w_{j}+\sum_{k\in K}%
w_{k}c_{k,j}\right)  m_{j}+\sum_{k\in K}w_{k}v_{k}}\\
&  =\sum_{j\in J}\left(  w_{j}+\sum_{k\in K}w_{k}c_{k,j}\right)
\overline{m_{j}}+\sum_{k\in K}w_{k}\underbrace{\overline{v_{k}}}%
_{\substack{=0\\\text{(since }v_{k}\in\mathcal{N}\text{)}}}=\sum_{j\in
J}\left(  w_{j}+\sum_{k\in K}w_{k}c_{k,j}\right)  \overline{m_{j}}.
\end{align*}
Hence,%
\[
\sum_{j\in J}\left(  w_{j}+\sum_{k\in K}w_{k}c_{k,j}\right)  \overline{m_{j}%
}=\overline{w}=0\ \ \ \ \ \ \ \ \ \ \left(  \text{since }w\in\mathcal{N}%
\right)  .
\]
Since the family $\left(  \overline{m_{i}}\right)  _{i\in J}$ is $\mathbf{k}%
$-linearly independent (being a basis of $\mathcal{N}$), we thus conclude that
all the coefficients $w_{j}+\sum_{k\in K}w_{k}c_{k,j}$ here are $0$. Thus,
(\ref{pf.lem.mod.sub-basis.sp.pf.1}) becomes
\[
w=\sum_{j\in J}\underbrace{\left(  w_{j}+\sum_{k\in K}w_{k}c_{k,j}\right)
}_{=0}m_{j}+\sum_{k\in K}w_{k}v_{k}=\sum_{k\in K}w_{k}v_{k}\in
\operatorname*{span}\left\{  v_{k}\mid k\in K\right\}  ,
\]
qed.}). Thus, the family $\left(  v_{k}\right)  _{k\in K}$ is a basis of
$\mathcal{N}$. Therefore, $\mathcal{N}$ is free of rank $\left\vert
K\right\vert $. This proves Lemma \ref{lem.mod.sub-basis} \textbf{(a)}.
\medskip

\textbf{(b)} Recall that $v_{k}\in\mathcal{N}$ for each $k\in K$. Let
$\pi:\mathcal{M}\rightarrow\mathcal{N}$ be the $\mathbf{k}$-linear map that
sends each basis element $m_{i}$ of $\mathcal{M}$ to
\[%
\begin{cases}
v_{i}, & \text{if }i\in K;\\
0, & \text{if }i\in J.
\end{cases}
\]
This is well-defined (since $\left(  m_{i}\right)  _{i\in I}$ is a basis of
$\mathcal{M}$, and since each $i\in I$ belongs to exactly one of the sets $K$
and $J$). It is easy to see that this map $\pi$ sends $v_{k}$ to $v_{k}$ for
each $k\in K$ (because applying $\pi$ to the right hand side of
(\ref{pf.lem.mod.sub-basis.2}) kills all $m_{j}$ with $j\in J$ but sends
$m_{k}$ to $v_{k}$). Thus, by linearity, we obtain $\left.  \pi\mid
_{\mathcal{N}}\right.  =\operatorname*{id}$ (since $\left(  v_{k}\right)
_{k\in K}$ is a basis of $\mathcal{N}$). Thus, $\pi$ is a projection. This
proves Lemma \ref{lem.mod.sub-basis} \textbf{(b)}.
\end{proof}

\begin{lemma}
\label{lem.row.IJ=0}Let $k\in\mathbb{N}$. Then, $\mathcal{I}_{k}%
\mathcal{J}_{k}=\mathcal{J}_{k}\mathcal{I}_{k}=0$.
\end{lemma}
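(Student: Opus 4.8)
The plan is to reduce everything to a vanishing statement about generators, and then exploit a pigeonhole argument together with the standard fact that an antisymmetrizer $\nabla_{U}^{-}$ is divisible (on either side) by a difference $1-t$ for any transposition $t\in S_{U}$.

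First I would set up the reduction. Since $\mathcal{I}_{k}$ and $\mathcal{J}_{k}$ are ideals (Proposition \ref{prop.IJ.1} \textbf{(a)}) and $\mathcal{J}_{k}=\operatorname*{span}\left\{ \nabla_{U}^{-}\ \mid\ \left\vert U\right\vert =k+1\right\} \cdot\mathcal{A}=\mathcal{A}\cdot\operatorname*{span}\left\{ \nabla_{U}^{-}\ \mid\ \left\vert U\right\vert =k+1\right\}$ (Proposition \ref{prop.IJ.1} \textbf{(b)}), it suffices to prove that $\nabla_{\mathbf{B},\mathbf{A}}\nabla_{U}^{-}=0$ and $\nabla_{U}^{-}\nabla_{\mathbf{B},\mathbf{A}}=0$ for all set compositions $\mathbf{A},\mathbf{B}\in\operatorname*{SC}\left( n\right)$ with $\ell\left( \mathbf{A}\right) =\ell\left( \mathbf{B}\right) \leq k$ and all $U\subseteq\left[ n\right]$ with $\left\vert U\right\vert =k+1$. (We may assume $k<n$, as otherwise no such $U$ exists and $\mathcal{J}_{k}=0$.) In fact one of the two vanishings follows from the other: applying the antipode $S$ — a $\mathbf{k}$-algebra anti-automorphism and involution fixing both ideals by Proposition \ref{prop.IJ.1} \textbf{(c)} — gives $S\left( \mathcal{I}_{k}\mathcal{J}_{k}\right) =S\left( \mathcal{J}_{k}\right) S\left( \mathcal{I}_{k}\right) =\mathcal{J}_{k}\mathcal{I}_{k}$, so $\mathcal{I}_{k}\mathcal{J}_{k}=0$ implies $\mathcal{J}_{k}\mathcal{I}_{k}=0$.

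Next comes the core of the argument. Fix $\mathbf{A}=\left( A_{1},A_{2},\ldots,A_{m}\right)$, $\mathbf{B}$, and $U$ as above, and note $m=\ell\left( \mathbf{A}\right) \leq k<k+1=\left\vert U\right\vert$. Since the blocks $A_{1},\ldots,A_{m}$ partition $\left[ n\right] \supseteq U$, the pigeonhole principle yields an index $j$ and two distinct elements $p,q\in U\cap A_{j}$. Let $t\in S_{n}$ be the transposition $t_{p,q}$. Then $t$ fixes every block of $\mathbf{A}$ setwise: it fixes $A_{j}$ because $p,q\in A_{j}$, and it fixes each $A_{i}$ with $i\neq j$ pointwise because $p,q\notin A_{i}$. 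Hence $t\mathbf{A}=\mathbf{A}$, so Proposition \ref{prop.uNabv} (with $u=\operatorname{id}$, $v=t$, and $t^{-1}=t$) gives $\nabla_{\mathbf{B},\mathbf{A}}t=\nabla_{\mathbf{B},t\mathbf{A}}=\nabla_{\mathbf{B},\mathbf{A}}$, i.e.
\[
\nabla_{\mathbf{B},\mathbf{A}}\left( 1-t\right) =0.
\]
Now $t\in S_{U}$, so choosing a set $R$ of representatives for the left cosets of $\left\{ \operatorname{id},t\right\}$ in $S_{U}$ we have $S_{U}=\bigsqcup_{w\in R}\left\{ w,tw\right\}$, and since $\left( -1\right) ^{tw}=-\left( -1\right) ^{w}$,
\[
\nabla_{U}^{-}=\sum_{w\in S_{U}}\left( -1\right) ^{w}w=\sum_{w\in R}\left( -1\right) ^{w}\left( w-tw\right) =\left( 1-t\right) \sum_{w\in R}\left( -1\right) ^{w}w .
\]
Therefore $\nabla_{\mathbf{B},\mathbf{A}}\nabla_{U}^{-}=\left( \nabla_{\mathbf{B},\mathbf{A}}\left( 1-t\right) \right) \sum_{w\in R}\left( -1\right) ^{w}w=0$.

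Finally, the other vanishing is symmetric (or, as noted, free via $S$): running the same pigeonhole argument on the blocks of $\mathbf{B}$ produces a transposition $t'\in S_{U}$ with $\left( 1-t'\right) \nabla_{\mathbf{B},\mathbf{A}}=0$, and decomposing $S_{U}$ into right cosets of $\left\{ \operatorname{id},t'\right\}$ writes $\nabla_{U}^{-}=\left( \sum_{w\in R'}\left( -1\right) ^{w}w\right) \left( 1-t'\right)$, whence $\nabla_{U}^{-}\nabla_{\mathbf{B},\mathbf{A}}=0$. This establishes $\mathcal{I}_{k}\mathcal{J}_{k}=\mathcal{J}_{k}\mathcal{I}_{k}=0$. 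I do not expect any serious obstacle here; the only points needing a little care are the reduction to generators and the coset bookkeeping for $\nabla_{U}^{-}$ — both routine.
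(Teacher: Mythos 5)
Your proof is correct and follows essentially the same route as the paper's: reduce to the generators via Proposition \ref{prop.IJ.1} \textbf{(b)}, use the pigeonhole principle to find a transposition in $S_{U}$ preserving the blocks of $\mathbf{A}$, factor $\nabla_{U}^{-}$ through $1-t$, and obtain the other product via the antipode (the paper cites a standard factorization lemma where you spell out the coset decomposition by hand). The only nit is terminological: the sets $\left\{ w,tw\right\}$ are right cosets of $\left\{ \operatorname{id},t\right\}$, not left cosets, but this does not affect the argument.
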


\begin{proof}
Let us first show that $\mathcal{I}_{k}\mathcal{J}_{k}=0$. Indeed, we have%
\[
\mathcal{I}_{k}=\operatorname*{span}\left\{  \nabla_{\mathbf{B},\mathbf{A}%
}\ \mid\ \mathbf{A},\mathbf{B}\in\operatorname*{SC}\left(  n\right)  \text{
with }\ell\left(  \mathbf{A}\right)  =\ell\left(  \mathbf{B}\right)  \leq
k\right\}
\]
and%
\[
\mathcal{J}_{k}=\operatorname*{span}\left\{  \nabla_{U}^{-}\ \mid\ U\text{ is
a subset of }\left[  n\right]  \text{ having size }k+1\right\}  \cdot
\mathcal{A}%
\]
(by Proposition \ref{prop.IJ.1} \textbf{(b)}). Thus, in order to prove that
$\mathcal{I}_{k}\mathcal{J}_{k}=0$, it suffices to show that $\nabla
_{\mathbf{B},\mathbf{A}}\nabla_{U}^{-}=0$ for all set compositions
$\mathbf{A},\mathbf{B}\in\operatorname*{SC}\left(  n\right)  $ satisfying
$\ell\left(  \mathbf{A}\right)  =\ell\left(  \mathbf{B}\right)  \leq k$ and
all subsets $U$ of $\left[  n\right]  $ having size $k+1$. So let us show
this. We fix two set compositions $\mathbf{A},\mathbf{B}\in\operatorname*{SC}%
\left(  n\right)  $ satisfying $\ell\left(  \mathbf{A}\right)  =\ell\left(
\mathbf{B}\right)  \leq k$ and a subset $U$ of $\left[  n\right]  $ having
size $k+1$. We must show that $\nabla_{\mathbf{B},\mathbf{A}}\nabla_{U}^{-}=0$.

\begin{noncompile}
Write the set compositions $\mathbf{A}$ and $\mathbf{B}$ as $\mathbf{A}%
=\left(  A_{1},A_{2},\ldots,A_{p}\right)  $ and $\mathbf{B}=\left(
B_{1},B_{2},\ldots,B_{p}\right)  $, where $p=\ell\left(  \mathbf{A}\right)
=\ell\left(  \mathbf{B}\right)  $. Then, $p=\ell\left(  \mathbf{A}\right)
=\ell\left(  \mathbf{B}\right)  \leq k<k+1=\left\vert U\right\vert $.
\end{noncompile}

We have $\ell\left(  \mathbf{A}\right)  =\ell\left(  \mathbf{B}\right)  \leq
k<k+1=\left\vert U\right\vert $ (since $U$ has size $k+1$). In other words,
$\mathbf{A}$ has fewer blocks than $U$ has elements. Hence, by the pigeonhole
principle, there exist two distinct elements $u$ and $v$ of $U$ that belong to
the same block of $\mathbf{A}$. Pick such $u$ and $v$. Let $\tau\in S_{U}$ be
the transposition that swaps $u$ and $v$. Then, $\nabla_{U}^{-}=\left(
1-\tau\right)  q$ for some $q\in\mathbf{k}\left[  S_{U}\right]  $ (since
$\left\langle \tau\right\rangle =\left\{  1,\tau\right\}  $ is a subgroup of
the group $S_{U}$ and since the permutation $\tau$ is odd\footnote{In more
details: We have $\nabla_{U}^{-}=\left(  1-\tau\right)  \nabla_{U}%
^{\operatorname*{even}}$, where $\nabla_{U}^{\operatorname*{even}}%
=\sum_{\substack{w\in S_{U};\\\left(  -1\right)  ^{w}=1}}w$. This follows from
\cite[Proposition 3.7.4 \textbf{(d)}]{sga}, applied to $X=U$ and $i=u$ and
$j=v$ (our $\tau$ is thus the $t_{i,j}$ of \cite[Proposition 3.7.4
\textbf{(d)}]{sga}).}). Consider this $q$. But (\ref{eq.uNabv}) (applied to
$u=\operatorname*{id}$ and $v=\tau$) yields%
\[
\nabla_{\mathbf{B},\mathbf{A}}\tau=\nabla_{\mathbf{B},\tau^{-1}\mathbf{A}%
}=\nabla_{\mathbf{B},\mathbf{A}}%
\]
(since $\tau^{-1}\mathbf{A}=\mathbf{A}$ (because $u$ and $v$ belong to the
same block of $\mathbf{A}$, and thus the transposition $\tau$ preserves each
block of $\mathbf{A}$)). Hence,%
\[
\nabla_{\mathbf{B},\mathbf{A}}\underbrace{\nabla_{U}^{-}}_{=\left(
1-\tau\right)  q}=\underbrace{\nabla_{\mathbf{B},\mathbf{A}}\left(
1-\tau\right)  }_{\substack{=\nabla_{\mathbf{B},\mathbf{A}}-\nabla
_{\mathbf{B},\mathbf{A}}\tau\\=0\\\text{(since }\nabla_{\mathbf{B},\mathbf{A}%
}\tau=\nabla_{\mathbf{B},\mathbf{A}}\text{)}}}q=0.
\]
This completes our proof of $\mathcal{I}_{k}\mathcal{J}_{k}=0$.

It remains to prove that $\mathcal{J}_{k}\mathcal{I}_{k}=0$. This can be done
similarly, but can also be derived from $\mathcal{I}_{k}\mathcal{J}_{k}=0$
easily: Since $S$ is an algebra anti-automorphism, we have%
\[
S\left(  \mathcal{I}_{k}\mathcal{J}_{k}\right)  =\underbrace{S\left(
\mathcal{J}_{k}\right)  }_{\substack{=\mathcal{J}_{k}\\\text{(by Proposition
\ref{prop.IJ.1} \textbf{(c)})}}}\underbrace{S\left(  \mathcal{I}_{k}\right)
}_{\substack{=\mathcal{I}_{k}\\\text{(by Proposition \ref{prop.IJ.1}
\textbf{(c)})}}}=\mathcal{J}_{k}\mathcal{I}_{k}.
\]
Thus, $\mathcal{J}_{k}\mathcal{I}_{k}=S\left(  \mathcal{I}_{k}\mathcal{J}%
_{k}\right)  =0$ (since $\mathcal{I}_{k}\mathcal{J}_{k}=0$). The proof of
Lemma \ref{lem.row.IJ=0} is thus complete.
\end{proof}

The following purely combinatorial lemma is a variant of the
Erd\"{o}s--Szekeres theorem:

\begin{lemma}
\label{lem.erdos-szek-var}Let $k\in\mathbb{N}$. Let $v\in\operatorname*{Av}%
\nolimits_{n}\left(  k+1\right)  $. Then, there exists a set decomposition
$\mathbf{A}=\left(  A_{1},A_{2},\ldots,A_{k}\right)  $ of $\left[  n\right]  $
such that all restrictions $v\mid_{A_{1}},\ v\mid_{A_{2}},\ \ldots
,\ v\mid_{A_{k}}$ are decreasing.
\end{lemma}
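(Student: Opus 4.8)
Here is the plan. For a permutation $v \in \operatorname*{Av}\nolimits_{n}\left(k+1\right)$, I would introduce, for each index $i \in \left[n\right]$, the quantity
\[
\ell_v\left(i\right) := \max\left\{ m \in \mathbb{N} \ \mid\ \text{there exist } i_1 < i_2 < \cdots < i_m = i \text{ in } \left[n\right] \text{ with } v\left(i_1\right) < v\left(i_2\right) < \cdots < v\left(i_m\right) \right\},
\]
i.e.\ the length of a longest increasing subsequence of the word $\left(v\left(1\right), v\left(2\right), \ldots, v\left(n\right)\right)$ that ends at position $i$. This is well-defined and $\geq 1$ (take the one-term subsequence $\left(i\right)$), and since $v$ avoids $12\cdots\left(k+1\right)$ there is no increasing subsequence of length $k+1$ at all, so $\ell_v\left(i\right) \in \left\{1, 2, \ldots, k\right\}$ for every $i \in \left[n\right]$.

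Next I would set $A_t := \left\{ i \in \left[n\right] \ \mid\ \ell_v\left(i\right) = t \right\}$ for each $t \in \left[k\right]$. These $k$ subsets are pairwise disjoint (an index has only one value of $\ell_v$) and their union is $\left[n\right]$ (every $i$ has $\ell_v\left(i\right) \in \left[k\right]$), so $\mathbf{A} := \left(A_1, A_2, \ldots, A_k\right)$ is a set decomposition of $\left[n\right]$ of length $k$ (empty blocks being permitted). It then remains to verify that each restriction $v\mid_{A_t}$ is decreasing.

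The verification is the crux, and it is a short monotonicity argument: suppose $i, i' \in A_t$ with $i < i'$. Since $v$ is a permutation, $v\left(i\right) \neq v\left(i'\right)$; assume for contradiction that $v\left(i\right) < v\left(i'\right)$. Pick a longest increasing subsequence $i_1 < \cdots < i_m = i$ of $v$ ending at $i$, so $m = \ell_v\left(i\right) = t$. Appending $i'$ to it yields the sequence $i_1 < \cdots < i_m = i < i'$ with $v\left(i_1\right) < \cdots < v\left(i\right) < v\left(i'\right)$, which is an increasing subsequence of $v$ ending at $i'$ of length $t+1$. Hence $\ell_v\left(i'\right) \geq t+1 > t$, contradicting $i' \in A_t$. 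Therefore $v\left(i\right) > v\left(i'\right)$, proving $v\mid_{A_t}$ decreasing; this establishes the lemma. (I expect no real obstacle here: the only thing to get right is the definition of $\ell_v$ and the bookkeeping that $\ell_v$ takes values in $\left[k\right]$, which is exactly the hypothesis $v \in \operatorname*{Av}\nolimits_{n}\left(k+1\right)$; this is essentially the patience-sorting/Dilworth argument behind Erd\H{o}s--Szekeres.)
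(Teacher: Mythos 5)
Your proposal is correct and follows essentially the same route as the paper's proof: partition $\left[  n\right]  $ according to the length of the longest increasing subsequence ending at each position (which lies in $\left[  k\right]  $ by the avoidance hypothesis), and show each block is decreasing under $v$ by the append-and-contradict argument. No gaps.
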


\begin{proof}
Let $\mathbf{v}$ be the sequence $\left(  v\left(  1\right)  ,v\left(
2\right)  ,\ldots,v\left(  n\right)  \right)  $. Then, $\mathbf{v}$ has no
increasing subsequence of length $k+1$ (because $v\in\operatorname*{Av}%
\nolimits_{n}\left(  k+1\right)  $). Thus, $\mathbf{v}$ has no increasing
subsequence of length $>k$. Hence, the length of any nonempty increasing
subsequence of $\mathbf{v}$ must be some $i\in\left[  k\right]  $.

For each $i\in\left[  k\right]  $, we define a set%
\begin{align*}
A_{i}  &  :=\left\{  j\in\left[  n\right]  \ \mid\ \text{the longest
increasing subsequence of }\mathbf{v}\right. \\
&  \ \ \ \ \ \ \ \ \ \ \ \ \ \ \ \ \ \ \ \ \ \ \ \ \ \ \ \ \ \ \left.
\text{ending with }v\left(  j\right)  \text{ has length }i\right\}  .
\end{align*}
These $k$ sets $A_{1},A_{2},\ldots,A_{k}$ are clearly disjoint, and their
union is $\left[  n\right]  $ (since the length of any nonempty increasing
subsequence of $\mathbf{v}$ must be some $i\in\left[  k\right]  $). In other
words, $\mathbf{A}:=\left(  A_{1},A_{2},\ldots,A_{k}\right)  $ is a set
decomposition of $\left[  n\right]  $. It remains to show that all
restrictions $v\mid_{A_{1}},\ v\mid_{A_{2}},\ \ldots,\ v\mid_{A_{k}}$ are decreasing.

To do this, we assume the contrary. Thus, there exists some $i\in\left[
k\right]  $ such that $v\mid_{A_{i}}$ is not decreasing. Consider this $i$.
Then, there exist two elements $p<q$ of $A_{i}$ such that $v\left(  p\right)
<v\left(  q\right)  $. Consider these $p$ and $q$. The longest increasing
subsequence of $\mathbf{v}$ ending with $v\left(  q\right)  $ has length $i$
(since $q\in A_{i}$). Now, consider the longest increasing subsequence of
$\mathbf{v}$ ending with $v\left(  p\right)  $. This subsequence, too, has
length $i$ (since $p\in A_{i}$), and thus can be written as $\left(  v\left(
p_{1}\right)  <v\left(  p_{2}\right)  <\cdots<v\left(  p_{i}\right)  \right)
$ with $p_{i}=p$ (since it ends with $v\left(  p\right)  $). By appending
$v\left(  q\right)  $ to it, we obtain an increasing subsequence $\left(
v\left(  p_{1}\right)  <v\left(  p_{2}\right)  <\cdots<v\left(  p_{i}\right)
<v\left(  q\right)  \right)  $ of $\mathbf{v}$ (since $p_{i}=p$ and thus
$v\left(  p_{i}\right)  =v\left(  p\right)  <v\left(  q\right)  $) that ends
with $v\left(  q\right)  $ and has length $i+1$. But this contradicts the fact
that the longest increasing subsequence of $\mathbf{v}$ ending with $v\left(
q\right)  $ has length $i$. This contradiction shows that our assumption was
false. Thus, all restrictions $v\mid_{A_{1}},\ v\mid_{A_{2}},\ \ldots
,\ v\mid_{A_{k}}$ are decreasing. This completes the proof of Lemma
\ref{lem.erdos-szek-var}.
\end{proof}

In the following few lemmas, we will use the lexicographic order on $S_{n}$.
This is a total order on $S_{n}$, defined by setting%
\begin{align*}
\left(  u<v\right)  \  &  \Longleftrightarrow\ \left(  \text{the smallest
}i\in\left[  n\right]  \text{ satisfying }u\left(  i\right)  \neq v\left(
i\right)  \right. \\
&  \ \ \ \ \ \ \ \ \ \ \ \ \ \ \ \ \ \ \ \ \left.  \text{satisfies }u\left(
i\right)  <v\left(  i\right)  \right)  \ \ \ \ \ \ \ \ \ \ \text{for all
}u,v\in S_{n}.
\end{align*}

\begin{lemma}
\label{lem.lex-less-1}Let $k\in\mathbb{N}$. Let $\mathbf{A}=\left(
A_{1},A_{2},\ldots,A_{k}\right)  $ be a set decomposition of $\left[
n\right]  $. Let $v\in S_{n}$ be a permutation such that all restrictions
$v\mid_{A_{1}},\ v\mid_{A_{2}},\ \ldots,\ v\mid_{A_{k}}$ are decreasing. Let
$w\in S_{n}$ be a further permutation such that all $i\in\left[  k\right]  $
satisfy $w\left(  A_{i}\right)  =v\left(  A_{i}\right)  $. Then, $w\leq v$ in
lexicographic order.
\end{lemma}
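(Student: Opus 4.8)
\textbf{Proof plan for Lemma \ref{lem.lex-less-1}.} The plan is to argue directly from the definition of the lexicographic order. If $w=v$, there is nothing to prove, so assume $w\neq v$ and let $i\in\left[n\right]$ be the smallest index with $w\left(i\right)\neq v\left(i\right)$. I must show $w\left(i\right)<v\left(i\right)$, which is exactly the condition for $w<v$ in lexicographic order.

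First I would record the crucial consequence of the minimality of $i$: since $w\left(j\right)=v\left(j\right)$ for all $j<i$ and $w$ is injective, the value $w\left(i\right)$ cannot equal any of $w\left(1\right),w\left(2\right),\ldots,w\left(i-1\right)=v\left(1\right),v\left(2\right),\ldots,v\left(i-1\right)$. So $w\left(i\right)\notin\left\{v\left(1\right),v\left(2\right),\ldots,v\left(i-1\right)\right\}$.

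Next, let $r$ be the index of the block of $\mathbf{A}$ containing $i$, so $i\in A_{r}$. Then $w\left(i\right)\in w\left(A_{r}\right)=v\left(A_{r}\right)$ by hypothesis, so $w\left(i\right)=v\left(j\right)$ for some $j\in A_{r}$. Now $j\neq i$ (else $w\left(i\right)=v\left(i\right)$, contradicting the choice of $i$), and $j\geq i$ (else $v\left(j\right)=w\left(i\right)$ would lie in $\left\{v\left(1\right),\ldots,v\left(i-1\right)\right\}$, which we just excluded); hence $j>i$. Since $i$ and $j$ both lie in $A_{r}$ with $i<j$, and the restriction $v\mid_{A_{r}}$ is decreasing, we get $v\left(i\right)>v\left(j\right)=w\left(i\right)$. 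This is precisely $w\left(i\right)<v\left(i\right)$, so $w<v\leq v$ in lexicographic order, as desired.

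The argument is elementary and I do not anticipate any real obstacle; the only point to be careful about is handling the case $w=v$ at the outset (so that "the smallest $i$ with $w\left(i\right)\neq v\left(i\right)$" is well-defined) and making sure the inequality $j>i$ is extracted correctly from both the injectivity of $w$ and the exclusion of small values.
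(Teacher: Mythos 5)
Your argument is correct and is essentially identical to the paper's own proof: both locate the first position $i$ where $w$ and $v$ disagree, use $w(A_r)=v(A_r)$ together with the injectivity of $w$ to find $j>i$ in the same block with $w(i)=v(j)$, and then invoke the decreasingness of $v\mid_{A_r}$ to conclude $w(i)<v(i)$. No gaps; the handling of the case $w=v$ and the extraction of $j>i$ are exactly as in the paper.
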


\begin{proof}
Intuitively, this is clear: The condition \textquotedblleft all $i\in\left[
k\right]  $ satisfy $w\left(  A_{i}\right)  =v\left(  A_{i}\right)
$\textquotedblright\ shows that $w$ can be obtained from $v$ by permuting the
values of $v$ on $A_{1}$, permuting the values of $v$ on $A_{2}$, and so on.
But any such permutation (unless it is the identity) decreases $v$ in
lexicographic order (because the restrictions $v\mid_{A_{1}},\ v\mid_{A_{2}%
},\ \ldots,\ v\mid_{A_{k}}$ are decreasing, and thus any nontrivial
permutation \textquotedblleft puts elements in a more natural
order\textquotedblright). Hence, $w\leq v$.

\begin{fineprint}
Here is a rigorous proof: If $w=v$, then the claim is obvious. So we WLOG
assume that $w\neq v$. Then, there exists some $p\in\left[  n\right]  $ such
that $w\left(  p\right)  \neq v\left(  p\right)  $. Consider the
\textbf{smallest} such $p$. Then,%
\begin{equation}
w\left(  r\right)  =v\left(  r\right)  \ \ \ \ \ \ \ \ \ \ \text{for each
}r<p. \label{pf.lem.lex-less-1.r}%
\end{equation}

Since $\mathbf{A}$ is a set decomposition of $\left[  n\right]  $, there
exists some $i\in\left[  k\right]  $ such that $p\in A_{i}$. Consider this $i$.

We have $p\in A_{i}$, thus $w\left(  p\right)  \in w\left(  A_{i}\right)
=v\left(  A_{i}\right)  $ (by assumption). In other words, $w\left(  p\right)
=v\left(  r\right)  $ for some $r\in A_{i}$. Consider this $r$.

If we had $r<p$, then (\ref{pf.lem.lex-less-1.r}) would yield $w\left(
r\right)  =v\left(  r\right)  =w\left(  p\right)  $, which would yield $r=p$
(since $w$ is injective), which would contradict $r<p$. Hence, $r<p$ is
impossible. Thus, $r\geq p$.

Moreover, $w\left(  p\right)  \neq v\left(  p\right)  $ and thus $v\left(
p\right)  \neq w\left(  p\right)  =v\left(  r\right)  $, so that $p\neq r$.
Combining this with $r\geq p$, we obtain $r>p$. Since the restriction
$v\mid_{A_{i}}$ is decreasing (by assumption), this entails $v\left(
r\right)  <v\left(  p\right)  $ (since $r$ and $p$ belong to $A_{i}$). Thus,
$w\left(  p\right)  =v\left(  r\right)  <v\left(  p\right)  $.

So we have shown that the smallest $p\in\left[  n\right]  $ that satisfies
$w\left(  p\right)  \neq v\left(  p\right)  $ must satisfy $w\left(  p\right)
<v\left(  p\right)  $. In other words, $w<v$ in lexicographic order. Hence,
$w\leq v$ follows, and Lemma \ref{lem.lex-less-1} is proved.
\end{fineprint}
\end{proof}

\begin{lemma}
\label{lem.row.A/I-span}Let $k\in\mathbb{N}$. Then, the quotient $\mathbf{k}%
$-module $\mathcal{A}/\mathcal{I}_{k}$ is spanned by the family $\left(
\overline{w}\right)  _{w\in S_{n}\setminus\operatorname*{Av}\nolimits_{n}%
\left(  k+1\right)  }$.
\end{lemma}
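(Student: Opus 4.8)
The plan is to argue by strong induction on $v\in S_{n}$ with respect to the lexicographic order, reducing everything to Lemma \ref{lem.erdos-szek-var} and Lemma \ref{lem.lex-less-1}. Since $\mathcal{A}=\operatorname*{span}S_{n}$, the quotient $\mathbf{k}$-module $\mathcal{A}/\mathcal{I}_{k}$ is spanned by $\left(  \overline{w}\right)  _{w\in S_{n}}$; hence it suffices to show that $\overline{v}\in\operatorname*{span}\left\{  \overline{w}\ \mid\ w\in S_{n}\setminus\operatorname*{Av}\nolimits_{n}\left(  k+1\right)  \right\}$ for each $v\in S_{n}$. If $v\notin\operatorname*{Av}\nolimits_{n}\left(  k+1\right)$, this is immediate. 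If $v\in\operatorname*{Av}\nolimits_{n}\left(  k+1\right)$, then I would instead show that $\overline{v}$ is a $\mathbf{k}$-linear combination of vectors $\overline{w}$ with $w<v$ in lexicographic order; the induction hypothesis then finishes the argument, since each such $\overline{w}$ already lies in the claimed span.

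To exhibit such a combination, first apply Lemma \ref{lem.erdos-szek-var} to obtain a set decomposition $\mathbf{A}=\left(  A_{1},A_{2},\ldots,A_{k}\right)$ of $\left[  n\right]$ such that all restrictions $v\mid_{A_{1}},\ v\mid_{A_{2}},\ \ldots,\ v\mid_{A_{k}}$ are decreasing. Next, set $\mathbf{B}:=v\mathbf{A}=\left(  v\left(  A_{1}\right)  ,v\left(  A_{2}\right)  ,\ldots,v\left(  A_{k}\right)  \right)$, which is again a set decomposition of $\left[  n\right]$ with $\ell\left(  \mathbf{B}\right)  =\ell\left(  \mathbf{A}\right)  =k$. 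Then Proposition \ref{prop.IJ.1} \textbf{(d)} gives $\nabla_{\mathbf{B},\mathbf{A}}\in\mathcal{I}_{k}$, so that $\overline{\nabla_{\mathbf{B},\mathbf{A}}}=0$ in $\mathcal{A}/\mathcal{I}_{k}$. Since $v$ itself appears with coefficient $1$ among the permutations $w$ satisfying $w\left(  A_{i}\right)  =v\left(  A_{i}\right)$ for all $i$, passing to the quotient yields
\[
\overline{v}=-\sum_{\substack{w\in S_{n};\ w\neq v;\\w\left(  A_{i}\right)  =v\left(  A_{i}\right)  \ \text{for all }i}}\overline{w}.
\]
By Lemma \ref{lem.lex-less-1} (whose hypotheses hold here: $\mathbf{A}$ is a set decomposition, the $v\mid_{A_{i}}$ are decreasing, and $w\left(  A_{i}\right)  =v\left(  A_{i}\right)$ for all $i$), each $w$ in this sum satisfies $w\leq v$; combined with $w\neq v$, this forces $w<v$. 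Hence $\overline{v}$ has the desired form. The minimal permutation $\operatorname*{id}$ needs no separate treatment: if $\operatorname*{id}\in\operatorname*{Av}\nolimits_{n}\left(  k+1\right)$, then the blocks $A_{i}$ above are singletons or empty, the sum is empty, and $\overline{\operatorname*{id}}=0$.

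I do not anticipate a real obstacle: the two combinatorial lemmas do the heavy lifting, and the rest is bookkeeping with the quotient map. The one point to be careful about is invoking the set-decomposition description of $\mathcal{I}_{k}$ from Proposition \ref{prop.IJ.1} \textbf{(d)} (rather than the set-composition definition), so that the length-$k$ decomposition $\mathbf{A}$ coming from Lemma \ref{lem.erdos-szek-var} — some of whose blocks may be empty — is a legitimate index for a generator of $\mathcal{I}_{k}$.
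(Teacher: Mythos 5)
Your proposal is correct and coincides with the paper's own proof: the same lexicographic induction, the same use of Lemma \ref{lem.erdos-szek-var} to build $\mathbf{A}$, the same choice $\mathbf{B}=v\mathbf{A}$ with Proposition \ref{prop.IJ.1} \textbf{(d)} giving $\nabla_{\mathbf{B},\mathbf{A}}\in\mathcal{I}_{k}$, and the same appeal to Lemma \ref{lem.lex-less-1} to see that all other addends are lexicographically smaller. Your closing remarks (the empty-block subtlety handled by part \textbf{(d)}, and the vacuous base case) are accurate bookkeeping and match the paper's treatment.
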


\begin{proof}
Clearly, $\mathcal{A}/\mathcal{I}_{k}$ is spanned by the $\overline{u}$ for
$u\in S_{n}$. Hence, it suffices to prove that
\begin{equation}
\overline{u}\in\operatorname*{span}\left(  \left(  \overline{w}\right)  _{w\in
S_{n}\setminus\operatorname*{Av}\nolimits_{n}\left(  k+1\right)  }\right)
\ \ \ \ \ \ \ \ \ \ \text{for each }u\in S_{n}.
\label{pf.lem.row.A/I-span.goal}%
\end{equation}

To prove this, we proceed by induction on $u$ in lexicographic order. Thus, we
fix a permutation $v\in S_{n}$, and we assume (as the induction hypothesis)
that (\ref{pf.lem.row.A/I-span.goal}) holds for every $u<v$ in lexicographic
order. We must now prove (\ref{pf.lem.row.A/I-span.goal}) for $u=v$.

If $v\in S_{n}\setminus\operatorname*{Av}\nolimits_{n}\left(  k+1\right)  $,
then this is trivial. Thus, we WLOG assume that $v\notin S_{n}\setminus
\operatorname*{Av}\nolimits_{n}\left(  k+1\right)  $. Hence, $v\in
\operatorname*{Av}\nolimits_{n}\left(  k+1\right)  $. Therefore, by Lemma
\ref{lem.erdos-szek-var}, there exists a set decomposition $\mathbf{A}=\left(
A_{1},A_{2},\ldots,A_{k}\right)  $ of $\left[  n\right]  $ such that all
restrictions $v\mid_{A_{1}},\ v\mid_{A_{2}},\ \ldots,\ v\mid_{A_{k}}$ are
decreasing. Consider this set decomposition $\mathbf{A}=\left(  A_{1}%
,A_{2},\ldots,A_{k}\right)  $. Define a further set decomposition
$\mathbf{B}=\left(  B_{1},B_{2},\ldots,B_{k}\right)  $ of $\left[  n\right]  $
by $\mathbf{B}=v\mathbf{A}$ (using the action of $S_{n}$ on
$\operatorname*{SD}\left(  n\right)  $), that is, by%
\[
B_{i}:=v\left(  A_{i}\right)  \ \ \ \ \ \ \ \ \ \ \text{for each }i\in\left[
k\right]  .
\]
Thus, $v\in S_{n}$ is a permutation satisfying $v\left(  A_{i}\right)  =B_{i}$
for all $i$. Hence, the row-to-row sum
\[
\nabla_{\mathbf{B},\mathbf{A}}=\sum_{\substack{w\in S_{n};\\w\left(
A_{i}\right)  =B_{i}\text{ for all }i}}w
\]
contains the permutation $v$ as one of its addends. All its remaining addends
are permutations $w$ that satisfy $w<v$ in lexicographic order (by Lemma
\ref{lem.lex-less-1}\footnote{In more detail: We must show that any
permutation $w\in S_{n}$ that satisfies $\left(  w\left(  A_{i}\right)
=B_{i}\text{ for all }i\right)  $ and is distinct from $v$ must satisfy $w<v$
in lexicographic order. So let $w$ be such a permutation. Then, $w\left(
A_{i}\right)  =B_{i}=v\left(  A_{i}\right)  $ for each $i\in\left[  k\right]
$. Hence, Lemma \ref{lem.lex-less-1} shows that $w\leq v$ is lexicographic
order. Since $w$ is distinct from $v$, we thus obtain $w<v$.}). Thus, all the
addends of $\nabla_{\mathbf{B},\mathbf{A}}$ except for $v$ are
lexicographically smaller than $v$. Hence,%
\[
\nabla_{\mathbf{B},\mathbf{A}}=v+\left(  \text{some permutations }w<v\right)
.
\]
Therefore,%
\begin{equation}
v=\nabla_{\mathbf{B},\mathbf{A}}-\left(  \text{some permutations }w<v\right)
. \label{pf.lem.row.A/I-span.4}%
\end{equation}
But the set decompositions $\mathbf{A},\mathbf{B}\in\operatorname*{SD}\left(
n\right)  $ satisfy $\ell\left(  \mathbf{A}\right)  =\ell\left(
\mathbf{B}\right)  =k$. Hence, Proposition \ref{prop.IJ.1} \textbf{(d)} yields
$\nabla_{\mathbf{B},\mathbf{A}}\in\mathcal{I}_{k}$. Thus, projecting the
equality (\ref{pf.lem.row.A/I-span.4}) onto the quotient $\mathcal{A}%
/\mathcal{I}_{k}$, we obtain%
\begin{align*}
\overline{v}  &  =\underbrace{\overline{\nabla_{\mathbf{B},\mathbf{A}}}%
}_{\substack{=0\\\text{(since }\nabla_{\mathbf{B},\mathbf{A}}\in
\mathcal{I}_{k}\text{)}}}-\,\overline{\left(  \text{some permutations
}w<v\right)  }\\
&  =-\underbrace{\overline{\left(  \text{some permutations }w<v\right)  }%
}_{\substack{\in\operatorname*{span}\left(  \left(  \overline{w}\right)
_{w\in S_{n}\setminus\operatorname*{Av}\nolimits_{n}\left(  k+1\right)
}\right)  \\\text{(by our induction hypothesis)}}}\in\operatorname*{span}%
\left(  \left(  \overline{w}\right)  _{w\in S_{n}\setminus\operatorname*{Av}%
\nolimits_{n}\left(  k+1\right)  }\right)  .
\end{align*}
In other words, (\ref{pf.lem.row.A/I-span.goal}) holds for $u=v$. This
completes the induction. Thus, Lemma \ref{lem.row.A/I-span} is proved.
\end{proof}

\begin{lemma}
\label{lem.lex-less-2}Let $v\in S_{n}$ be a permutation. Let $U$ be a subset
of $\left[  n\right]  $ such that the restriction $v\mid_{U}$ is increasing.
Let $w\in S_{n}$ be a permutation. Assume that $w$ agrees with $v$ on all
elements outside of $U$ (that is, we have $w\left(  i\right)  =v\left(
i\right)  $ for each $i\in\left[  n\right]  \setminus U$). Then, $w\geq v$ in
lexicographic order.
\end{lemma}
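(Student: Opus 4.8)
The plan is to run the same kind of argument as in the proof of Lemma~\ref{lem.lex-less-1}, only aiming for the reverse inequality. First I would dispose of the case $w = v$ (where $w \geq v$ is trivial) and then assume $w \neq v$, so that there is a well-defined smallest index $p \in [n]$ with $w(p) \neq v(p)$. The goal is to show $w(p) > v(p)$, which is exactly the statement $w > v$ in lexicographic order. Two immediate reductions: since $w$ and $v$ agree on $[n] \setminus U$ by hypothesis, the disagreement index $p$ must lie in $U$; and by minimality of $p$ we have $w(r) = v(r)$ for every $r < p$.

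The structural fact I would record next is that $w(U) = v(U)$ as subsets of $[n]$: indeed $w$ and $v$ are bijections of $[n]$ which agree on $[n] \setminus U$, so they have the same image on $[n] \setminus U$, hence the same image on $U$. Thus both $w\mid_{U}$ and $v\mid_{U}$ are bijections onto the common set $v(U)$, and $v\mid_{U}$ is increasing by hypothesis.

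The core of the proof is then an elementary order comparison inside $v(U)$. Writing $U_{<p} := \{\, r \in U : r < p \,\}$, the values $w(r) = v(r)$ for $r \in U_{<p}$ fill up exactly the set $v(U_{<p})$, which --- because $v\mid_{U}$ is increasing --- is precisely the set of the $\abs{U_{<p}}$ smallest elements of $v(U)$; consequently $v(p)$ is the minimum of $v(U) \setminus v(U_{<p})$. On the other hand $w(p) \in w(U) = v(U)$, while $w(p) \notin w(U_{<p}) = v(U_{<p})$ by injectivity of $w$ together with $p \notin U_{<p}$. Hence $w(p)$ lies in $v(U) \setminus v(U_{<p})$ and therefore $w(p) \geq v(p)$; since $w(p) \neq v(p)$ by the choice of $p$, we get $w(p) > v(p)$, which gives $w > v$ and in particular $w \geq v$.

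I expect the only point requiring a little care to be the bookkeeping around ``$v(p) = \min\bigl(v(U) \setminus v(U_{<p})\bigr)$'', i.e.\ verifying that $p$ is the least element of $U$ not in $U_{<p}$ and that an increasing restriction carries this least element to the least remaining value; everything else is immediate. An alternative that avoids even this is to deduce the lemma from Lemma~\ref{lem.lex-less-1} by reflecting: apply Lemma~\ref{lem.lex-less-1} to the permutations $w_{0}w$ and $w_{0}v$ (with $w_{0} \in S_{n}$ the order-reversing permutation $i \mapsto n+1-i$) and to the set decomposition of $[n]$ whose blocks are $U$ and the singletons $\{\, j \,\}$ for $j \in [n] \setminus U$, using that left multiplication by $w_{0}$ turns ``increasing restriction'' into ``decreasing restriction'' and reverses the lexicographic order. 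The direct argument above, however, is self-contained and no longer.
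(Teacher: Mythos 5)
Your proposal is correct and follows essentially the same route as the paper's proof: take the smallest index $p$ where $w$ and $v$ differ, note $p\in U$ and $w\left(U\right)=v\left(U\right)$, and use the increasingness of $v\mid_{U}$ together with injectivity and the minimality of $p$ to force $w\left(p\right)>v\left(p\right)$; your packaging via $\min\bigl(v\left(U\right)\setminus v\left(U_{<p}\right)\bigr)$ is just a rephrasing of the paper's choice of $r\in U$ with $w\left(p\right)=v\left(r\right)$ and $r>p$. The alternative reduction to Lemma \ref{lem.lex-less-1} via $w_{0}$ would also work, but it is not needed and not what the paper does.
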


\begin{proof}
Essentially, this is because $w$ can be obtained from $v$ by permuting the
values of $v$ on $U$ (since $w$ agrees with $v$ on all elements outside of
$U$), and any such permutation increases $v$ in lexicographic order (because
the restriction $v\mid_{U}$ is increasing).

\begin{fineprint}
Here is a rigorous proof: If $w=v$, then the claim is obvious. So we WLOG
assume that $w\neq v$. Then, there exists some $p\in\left[  n\right]  $ such
that $w\left(  p\right)  \neq v\left(  p\right)  $. Consider the
\textbf{smallest} such $p$. Then,%
\begin{equation}
w\left(  r\right)  =v\left(  r\right)  \ \ \ \ \ \ \ \ \ \ \text{for each
}r<p. \label{pf.lem.lex-less-2.r}%
\end{equation}

From $w\left(  p\right)  \neq v\left(  p\right)  $, we obtain $p\notin\left[
n\right]  \setminus U$ (since $w\left(  i\right)  =v\left(  i\right)  $ for
each $i\in\left[  n\right]  \setminus U$). Thus, $p\in U$.

But the permutations $w$ and $v$ agree on all elements outside of $U$. Hence,
in particular, $w\left(  \left[  n\right]  \setminus U\right)  =v\left(
\left[  n\right]  \setminus U\right)  $. However, $w$ and $v$ are permutations
of $\left[  n\right]  $, so we have $w\left(  \left[  n\right]  \setminus
U\right)  =\left[  n\right]  \setminus w\left(  U\right)  $ and $v\left(
\left[  n\right]  \setminus U\right)  =\left[  n\right]  \setminus v\left(
U\right)  $, and therefore%
\[
\left[  n\right]  \setminus w\left(  U\right)  =w\left(  \left[  n\right]
\setminus U\right)  =v\left(  \left[  n\right]  \setminus U\right)  =\left[
n\right]  \setminus v\left(  U\right)  .
\]
Since $w\left(  U\right)  $ and $v\left(  U\right)  $ are subsets of $\left[
n\right]  $, we can take complements in this equality, and conclude that
$w\left(  U\right)  =v\left(  U\right)  $.

From $p\in U$, we obtain $w\left(  p\right)  \in w\left(  U\right)  =v\left(
U\right)  $. In other words, $w\left(  p\right)  =v\left(  r\right)  $ for
some $r\in U$. Consider this $r$. If we had $r<p$, then
(\ref{pf.lem.lex-less-2.r}) would yield $w\left(  r\right)  =v\left(
r\right)  =w\left(  p\right)  $, which would entail $r=p$ (since $w$ is
injective), and this would contradict $r<p$. Thus we cannot have $r<p$. Hence,
$r\geq p$. But we also have $w\left(  p\right)  \neq v\left(  p\right)  $,
hence $v\left(  p\right)  \neq w\left(  p\right)  =v\left(  r\right)  $ and
thus $p\neq r$. Combined with $r\geq p$, this shows that $r>p$. Since the
restriction $v\mid_{U}$ is increasing, we thus conclude that $v\left(
r\right)  >v\left(  p\right)  $ (since $r$ and $p$ belong to $U$). Thus,
$w\left(  p\right)  =v\left(  r\right)  >v\left(  p\right)  $.

So we have shown that the smallest $p\in\left[  n\right]  $ that satisfies
$w\left(  p\right)  \neq v\left(  p\right)  $ must satisfy $w\left(  p\right)
>v\left(  p\right)  $. In other words, $w>v$ in lexicographic order. Hence,
$w\geq v$ follows, and Lemma \ref{lem.lex-less-2} is proved.
\end{fineprint}
\end{proof}

\begin{lemma}
\label{lem.row.A/J-span}Let $k\in\mathbb{N}$. Then, the quotient $\mathbf{k}%
$-module $\mathcal{A}/\mathcal{J}_{k}$ is spanned by the family $\left(
\overline{w}\right)  _{w\in\operatorname*{Av}\nolimits_{n}\left(  k+1\right)
}$.
\end{lemma}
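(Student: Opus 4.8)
The plan is to prove this by the same strategy as Lemma \ref{lem.row.A/I-span}, but running the lexicographic induction \emph{downward} (from the largest permutation to the smallest) and using the antisymmetrizers $\nabla_U^-$ in place of the row-to-row sums. Since $\mathcal{A}/\mathcal{J}_k$ is spanned by the residue classes $\overline{u}$ with $u\in S_n$, it suffices to show that
\[
\overline{v}\in\operatorname*{span}\left(\left(\overline{w}\right)_{w\in\operatorname*{Av}\nolimits_{n}\left(k+1\right)}\right)\qquad\text{for each }v\in S_n .
\]
I would prove this by strong induction on $v$ with respect to the lexicographic order on $S_n$, working from the largest permutation downward: the induction hypothesis is that the claim holds for every $u\in S_n$ with $u>v$.

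In the induction step, if $v\in\operatorname*{Av}\nolimits_{n}\left(k+1\right)$ there is nothing to do. Otherwise $v$ fails to avoid $12\cdots\left(k+1\right)$, so by Definition \ref{def.avoid.up} there is a $\left(k+1\right)$-element subset $U$ of $\left[n\right]$ such that $v\mid_U$ is increasing. (In contrast to Lemma \ref{lem.row.A/I-span}, no Erd\"{o}s--Szekeres input is needed here: the failure of the pattern-avoidance condition \emph{directly} hands us the set $U$.) Since $\nabla_U^-$ is one of the generators of $\mathcal{J}_k$, Proposition \ref{prop.IJ.1} \textbf{(b)} (which identifies $\mathcal{J}_k$ with $\mathcal{A}$ times the span of all $\nabla_X^-$ with $\left\vert X\right\vert=k+1$) gives $v\nabla_U^-\in\mathcal{J}_k$. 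Expanding $\nabla_U^-$ via (\ref{eq.Nab-U.2}) yields
\[
v\nabla_U^-=\sum_{\sigma\in S_U}\left(-1\right)^{\sigma}v\sigma=v+\sum_{\substack{\sigma\in S_U;\\\sigma\neq\operatorname*{id}}}\left(-1\right)^{\sigma}v\sigma .
\]
Each $v\sigma$ with $\sigma\in S_U$ agrees with $v$ on all of $\left[n\right]\setminus U$ (since $\sigma$ fixes $\left[n\right]\setminus U$), so Lemma \ref{lem.lex-less-2} gives $v\sigma\geq v$ in lexicographic order; moreover $v\sigma\neq v$ whenever $\sigma\neq\operatorname*{id}$ (because $v$ is injective), hence $v\sigma>v$ for all such $\sigma$. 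Rearranging the display as $v=v\nabla_U^--\sum_{\sigma\neq\operatorname*{id}}\left(-1\right)^{\sigma}v\sigma$ and projecting onto $\mathcal{A}/\mathcal{J}_k$ (which kills $v\nabla_U^-\in\mathcal{J}_k$) gives $\overline{v}=-\sum_{\sigma\neq\operatorname*{id}}\left(-1\right)^{\sigma}\overline{v\sigma}$. Each term $\overline{v\sigma}$ on the right lies in $\operatorname*{span}\left(\left(\overline{w}\right)_{w\in\operatorname*{Av}\nolimits_{n}\left(k+1\right)}\right)$ by the induction hypothesis (as $v\sigma>v$), and therefore so does $\overline{v}$. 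This completes the induction, hence the proof.

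I do not expect a genuine obstacle: once Lemma \ref{lem.lex-less-2} is in hand, the argument is essentially dual to Lemma \ref{lem.row.A/I-span} and somewhat simpler. The only points needing care are getting the direction of the induction and of the lexicographic comparisons right --- here $\nabla_U^-$ produces permutations lexicographically \emph{above} $v$, whereas in Lemma \ref{lem.row.A/I-span} the row-to-row sum $\nabla_{\mathbf{B},\mathbf{A}}$ produced permutations below $v$ --- and checking the harmless degenerate behaviour at the start of the induction: the lexicographically largest permutation is the decreasing one, which lies in $\operatorname*{Av}\nolimits_{n}\left(k+1\right)$ as soon as $k\geq1$, while for $k=0$ one has $\nabla_U^-=1$ (since $\left\vert U\right\vert=1$), the display degenerates to $v=v\nabla_U^-\in\mathcal{J}_0$, and indeed $\mathcal{A}/\mathcal{J}_0=0$, so the empty family spans it.
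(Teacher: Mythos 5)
Your proof is correct and follows essentially the same route as the paper's own argument: downward lexicographic induction, using the subset $U$ on which $v\mid_U$ is increasing, the expansion of $v\nabla_U^-$ over $S_U$, Lemma \ref{lem.lex-less-2} to see that all other addends are lexicographically larger than $v$, and projection modulo $\mathcal{J}_k$. The extra remarks on the base case and on $k=0$ are harmless and consistent with the paper.
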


\begin{proof}
Clearly, $\mathcal{A}/\mathcal{J}_{k}$ is spanned by the $\overline{u}$ for
$u\in S_{n}$. Hence, it suffices to prove that
\begin{equation}
\overline{u}\in\operatorname*{span}\left(  \left(  \overline{w}\right)
_{w\in\operatorname*{Av}\nolimits_{n}\left(  k+1\right)  }\right)
\ \ \ \ \ \ \ \ \ \ \text{for each }u\in S_{n}.
\label{pf.lem.row.A/J-span.goal}%
\end{equation}

To prove this, we proceed by induction on $u$ in reverse lexicographic order.
Thus, we fix a permutation $v\in S_{n}$, and we assume (as the induction
hypothesis) that (\ref{pf.lem.row.A/J-span.goal}) holds for every $u>v$ in
lexicographic order. We must now prove (\ref{pf.lem.row.A/J-span.goal}) for
$u=v$.

If $v\in\operatorname*{Av}\nolimits_{n}\left(  k+1\right)  $, then this is
trivial. Thus, we WLOG assume that $v\notin\operatorname*{Av}\nolimits_{n}%
\left(  k+1\right)  $. Hence, $v\in S_{n}\setminus\operatorname*{Av}%
\nolimits_{n}\left(  k+1\right)  $. Therefore, there exists a $\left(
k+1\right)  $-element subset $U$ of $\left[  n\right]  $ such that the
restriction $v\mid_{U}$ is increasing. Consider this $U$. Thus, the sum
\[
v\nabla_{U}^{-}=\sum_{\substack{w\in S_{n}\text{ agrees with }v\text{
on}\\\text{all elements outside of }U}}\pm w
\]
contains the permutation $v$ as one of its addends. All its remaining addends
have the form $\pm w$ where the permutation $w\in S_{n}$ satisfies $w>v$ in
lexicographic order (because if $w\in S_{n}$ agrees with $v$ on all elements
outside of $U$ but is distinct from $v$, then Lemma \ref{lem.lex-less-2}
yields $w\geq v$ and therefore $w>v$). Hence,%
\[
v\nabla_{U}^{-}=v\pm\left(  \text{some permutations }w>v\right)  .
\]
Therefore,%
\begin{equation}
v=v\nabla_{U}^{-}\pm\left(  \text{some permutations }w>v\right)  .
\label{pf.lem.row.A/J-span.4}%
\end{equation}
But the definition of $\mathcal{J}_{k}$ yields $v\nabla_{U}^{-}\in
\mathcal{J}_{k}$. Thus, projecting the equality (\ref{pf.lem.row.A/J-span.4})
onto the quotient $\mathcal{A}/\mathcal{J}_{k}$, we obtain%
\begin{align*}
\overline{v}  &  =\underbrace{\overline{v\nabla_{U}^{-}}}%
_{\substack{=0\\\text{(since }v\nabla_{U}^{-}\in\mathcal{J}_{k}\text{)}}%
}\pm\,\overline{\left(  \text{some permutations }w>v\right)  }\\
&  =-\underbrace{\overline{\left(  \text{some permutations }w>v\right)  }%
}_{\substack{\in\operatorname*{span}\left(  \left(  \overline{w}\right)
_{w\in\operatorname*{Av}\nolimits_{n}\left(  k+1\right)  }\right)  \\\text{(by
our induction hypothesis)}}}\in\operatorname*{span}\left(  \left(
\overline{w}\right)  _{w\in\operatorname*{Av}\nolimits_{n}\left(  k+1\right)
}\right)  .
\end{align*}
In other words, (\ref{pf.lem.row.A/J-span.goal}) holds for $u=v$. This
completes the induction. Thus, Lemma \ref{lem.row.A/J-span} is proved.
\end{proof}

\begin{lemma}
\label{lem.row.I-c1}Let $k\in\mathbb{N}$. Let $\left(  \alpha_{w}\right)
_{w\in\operatorname*{Av}\nolimits_{n}\left(  k+1\right)  }\in\mathbf{k}%
^{\operatorname*{Av}\nolimits_{n}\left(  k+1\right)  }$ be a family of scalars
satisfying%
\[
\sum_{w\in\operatorname*{Av}\nolimits_{n}\left(  k+1\right)  }\alpha_{w}%
w\in\mathcal{I}_{k}^{\perp}.
\]
Then, $\alpha_{w}=0$ for all $w\in\operatorname*{Av}\nolimits_{n}\left(
k+1\right)  $.
\end{lemma}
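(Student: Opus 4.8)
The plan is to argue by contradiction, using the lexicographic order on $S_{n}$ together with the Erd\"{o}s--Szekeres-type Lemma \ref{lem.erdos-szek-var}. Write $\mathbf{a}:=\sum_{w\in\operatorname*{Av}\nolimits_{n}\left(  k+1\right)  }\alpha_{w}w$, so that $\mathbf{a}\in\mathcal{I}_{k}^{\perp}$ by hypothesis. Assume, for contradiction, that not all $\alpha_{w}$ vanish, and let $v$ be the lexicographically \emph{smallest} permutation in $\operatorname*{Av}\nolimits_{n}\left(  k+1\right)  $ with $\alpha_{v}\neq0$.

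First I would apply Lemma \ref{lem.erdos-szek-var} to $v$ to obtain a set decomposition $\mathbf{A}=\left(  A_{1},A_{2},\ldots,A_{k}\right)  $ of $\left[  n\right]  $ such that all restrictions $v\mid_{A_{1}},\ v\mid_{A_{2}},\ \ldots,\ v\mid_{A_{k}}$ are decreasing. Set $\mathbf{B}:=v\mathbf{A}$, that is, $\mathbf{B}=\left(  v\left(  A_{1}\right)  ,v\left(  A_{2}\right)  ,\ldots,v\left(  A_{k}\right)  \right)  $. Then $v$ is one of the addends of the row-to-row sum $\nabla_{\mathbf{B},\mathbf{A}}=\sum_{w\in S_{n};\ w\left(  A_{i}\right)  =B_{i}\text{ for all }i}w$ (with coefficient $1$, since all addends are distinct permutations), and by Lemma \ref{lem.lex-less-1} every \emph{other} addend $w$ satisfies $w<v$ in lexicographic order. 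Hence $\nabla_{\mathbf{B},\mathbf{A}}=v+\left(  \text{a sum of permutations }w<v\right)  $. Moreover $\ell\left(  \mathbf{A}\right)  =\ell\left(  \mathbf{B}\right)  =k$, so Proposition \ref{prop.IJ.1} \textbf{(d)} gives $\nabla_{\mathbf{B},\mathbf{A}}\in\mathcal{I}_{k}$.

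Next I would compute $\left\langle \mathbf{a},\nabla_{\mathbf{B},\mathbf{A}}\right\rangle $ in two ways. Since $\mathbf{a}\in\mathcal{I}_{k}^{\perp}$ and $\nabla_{\mathbf{B},\mathbf{A}}\in\mathcal{I}_{k}$, this dot product is $0$. On the other hand, expanding $\nabla_{\mathbf{B},\mathbf{A}}=v+\sum_{w<v}w$ and recalling that $\left\langle \mathbf{a},u\right\rangle $ is the coefficient of $u$ in $\mathbf{a}$ (which equals $\alpha_{u}$ if $u\in\operatorname*{Av}\nolimits_{n}\left(  k+1\right)  $ and $0$ otherwise), each term $\left\langle \mathbf{a},w\right\rangle $ with $w<v$ vanishes: either $w\notin\operatorname*{Av}\nolimits_{n}\left(  k+1\right)  $, in which case the coefficient is $0$, or $w\in\operatorname*{Av}\nolimits_{n}\left(  k+1\right)  $, in which case $\alpha_{w}=0$ by the minimality of $v$. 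Therefore $0=\left\langle \mathbf{a},\nabla_{\mathbf{B},\mathbf{A}}\right\rangle =\left\langle \mathbf{a},v\right\rangle =\alpha_{v}$, contradicting $\alpha_{v}\neq0$. This contradiction shows that all $\alpha_{w}$ vanish.

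I do not expect any serious obstacle here; the argument is essentially the ``transpose'' of the proof of Lemma \ref{lem.row.A/I-span}, where the same lexicographic triangularity of the $\nabla_{\mathbf{B},\mathbf{A}}$ was used to show they span $\mathcal{A}$ modulo $\mathcal{I}_{k}$. The only point requiring a little care is the bookkeeping of which permutations appear in $\nabla_{\mathbf{B},\mathbf{A}}$ and the observation that all of them except $v$ are strictly smaller in lexicographic order --- which is exactly what Lemma \ref{lem.lex-less-1} delivers, applied to $\mathbf{A}$, $v$, and an arbitrary competing $w$.
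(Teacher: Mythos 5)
Your proposal is correct and is essentially the paper's own argument: the paper also takes the lexicographically smallest $v$ with $\alpha_{v}\neq0$, builds $\nabla_{\mathbf{B},\mathbf{A}}=v+(\text{permutations }w<v)$ via Lemma \ref{lem.erdos-szek-var}, Lemma \ref{lem.lex-less-1} and Proposition \ref{prop.IJ.1} \textbf{(d)}, and derives $0=\alpha_{v}$ by pairing against $\nabla_{\mathbf{B},\mathbf{A}}\in\mathcal{I}_{k}$ with the dot product. The only cosmetic difference is bookkeeping: the paper splits the sum $\sum_{w}\alpha_{w}\left\langle w,\nabla_{\mathbf{B},\mathbf{A}}\right\rangle$ by $w<v$ versus $w>v$, while you expand $\nabla_{\mathbf{B},\mathbf{A}}$ into its addends; both computations are the same.
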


\begin{proof}
Assume the contrary. Thus, there exist some $w\in\operatorname*{Av}%
\nolimits_{n}\left(  k+1\right)  $ such that $\alpha_{w}\neq0$. Let $v$ be the
lexicographically smallest such $w$. Thus, $\alpha_{v}\neq0$, but%
\begin{equation}
\alpha_{w}=0\ \ \ \ \ \ \ \ \ \ \text{for every }w\in\operatorname*{Av}%
\nolimits_{n}\left(  k+1\right)  \text{ satisfying }w<v.
\label{pf.lem.row.I-c1.1}%
\end{equation}

As in the proof of Lemma \ref{lem.row.A/I-span}, we can construct set
decompositions $\mathbf{A},\mathbf{B}\in\operatorname*{SD}\left(  n\right)  $
such that $\ell\left(  \mathbf{A}\right)  =\ell\left(  \mathbf{B}\right)  =k$
and $\nabla_{\mathbf{B},\mathbf{A}}\in\mathcal{I}_{k}$ and%
\begin{equation}
\nabla_{\mathbf{B},\mathbf{A}}=v+\left(  \text{some permutations }w<v\right)
\label{pf.lem.row.I-c1.2}%
\end{equation}
hold. Consider these $\mathbf{A}$ and $\mathbf{B}$. The $\mathbf{k}$-bilinear
form $\left\langle \cdot,\cdot\right\rangle $ has the property that for every
$w\in S_{n}$ and every $\mathbf{a}\in\mathcal{A}$, the coefficient of $w$ in
$\mathbf{a}$ is $\left\langle w,\ \mathbf{a}\right\rangle $. Hence, the
equality (\ref{pf.lem.row.I-c1.2}) shows that%
\begin{equation}
\left\langle v,\ \nabla_{\mathbf{B},\mathbf{A}}\right\rangle =1
\label{pf.lem.row.I-c1.2a}%
\end{equation}
and%
\begin{equation}
\left\langle w,\ \nabla_{\mathbf{B},\mathbf{A}}\right\rangle
=0\ \ \ \ \ \ \ \ \ \ \text{for each permutation }w>v.
\label{pf.lem.row.I-c1.2b}%
\end{equation}

From $\sum_{w\in\operatorname*{Av}\nolimits_{n}\left(  k+1\right)  }\alpha
_{w}w\in\mathcal{I}_{k}^{\perp}$ and $\nabla_{\mathbf{B},\mathbf{A}}%
\in\mathcal{I}_{k}$, we conclude that
\[
\left\langle \sum_{w\in\operatorname*{Av}\nolimits_{n}\left(  k+1\right)
}\alpha_{w}w,\ \nabla_{\mathbf{B},\mathbf{A}}\right\rangle =0.
\]
Thus,%
\begin{align*}
0  &  =\left\langle \sum_{w\in\operatorname*{Av}\nolimits_{n}\left(
k+1\right)  }\alpha_{w}w,\ \nabla_{\mathbf{B},\mathbf{A}}\right\rangle \\
&  =\sum_{w\in\operatorname*{Av}\nolimits_{n}\left(  k+1\right)
}\ \ \underbrace{\underbrace{\alpha_{w}}_{\substack{=0\text{ if }%
w<v\\\text{(by (\ref{pf.lem.row.I-c1.1}))}}}\ \ \underbrace{\left\langle
w,\ \nabla_{\mathbf{B},\mathbf{A}}\right\rangle }_{\substack{=0\text{ if
}w>v\\\text{(by (\ref{pf.lem.row.I-c1.2b}))}}}}_{\substack{=0\text{ if }w\neq
v\\\text{(since }w\neq v\text{ entails }w<v\text{ or }w>v\text{)}}}\\
&  =\alpha_{v}\underbrace{\left\langle v,\ \nabla_{\mathbf{B},\mathbf{A}%
}\right\rangle }_{\substack{=1\\\text{(by (\ref{pf.lem.row.I-c1.2a}))}%
}}=\alpha_{v}\neq0,
\end{align*}
which is absurd. This completes the proof of Lemma \ref{lem.row.I-c1}.
\end{proof}

\begin{lemma}
\label{lem.row.J-c1}Let $k\in\mathbb{N}$. Let $\left(  \alpha_{w}\right)
_{w\in S_{n}\setminus\operatorname*{Av}\nolimits_{n}\left(  k+1\right)  }%
\in\mathbf{k}^{S_{n}\setminus\operatorname*{Av}\nolimits_{n}\left(
k+1\right)  }$ be a family of scalars satisfying%
\[
\sum_{w\in S_{n}\setminus\operatorname*{Av}\nolimits_{n}\left(  k+1\right)
}\alpha_{w}w\in\mathcal{J}_{k}^{\perp}.
\]
Then, $\alpha_{w}=0$ for all $w\in S_{n}\setminus\operatorname*{Av}%
\nolimits_{n}\left(  k+1\right)  $.
\end{lemma}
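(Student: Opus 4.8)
The plan is to mirror the proof of Lemma \ref{lem.row.I-c1}, replacing the row-to-row sums $\nabla_{\mathbf{B},\mathbf{A}}$ by the antisymmetrizer products $v\nabla_U^-$ and reversing the direction of the lexicographic induction. Recall that in Lemma \ref{lem.row.I-c1} the relevant triangularity was: for $v\in\operatorname*{Av}\nolimits_{n}(k+1)$ one produces $\nabla_{\mathbf{B},\mathbf{A}}\in\mathcal{I}_{k}$ of the form $v+(\text{lexicographically smaller permutations})$. Here the dual fact — already proved inside Lemma \ref{lem.row.A/J-span} — is: for $v\in S_{n}\setminus\operatorname*{Av}\nolimits_{n}(k+1)$ and a $(k+1)$-element subset $U\subseteq[n]$ with $v\mid_{U}$ increasing, one has $v\nabla_{U}^{-}\in\mathcal{J}_{k}$ and $v\nabla_{U}^{-}=v\pm(\text{lexicographically larger permutations})$ (the larger-ness coming from Lemma \ref{lem.lex-less-2}).

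First I would argue by contradiction: suppose some $\alpha_{w}\neq0$, and let $v$ be the lexicographically \emph{largest} $w\in S_{n}\setminus\operatorname*{Av}\nolimits_{n}(k+1)$ with $\alpha_{w}\neq0$. Since $v\notin\operatorname*{Av}\nolimits_{n}(k+1)$, choose a $(k+1)$-element subset $U$ of $[n]$ with $v\mid_{U}$ increasing, and form $v\nabla_{U}^{-}\in\mathcal{J}_{k}$ as above, so that $\langle v,\ v\nabla_{U}^{-}\rangle=1$ while $\langle w,\ v\nabla_{U}^{-}\rangle=0$ for every $w<v$ (using the standard fact that $\langle w,\mathbf{a}\rangle$ is the coefficient of $w$ in $\mathbf{a}$).

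Then I would pair: since $\sum_{w}\alpha_{w}w\in\mathcal{J}_{k}^{\perp}$ and $v\nabla_{U}^{-}\in\mathcal{J}_{k}$, we get $\big\langle\sum_{w}\alpha_{w}w,\ v\nabla_{U}^{-}\big\rangle=0$. Expanding this sum over $w\in S_{n}\setminus\operatorname*{Av}\nolimits_{n}(k+1)$, each term with $w<v$ vanishes because $\langle w,\ v\nabla_{U}^{-}\rangle=0$, each term with $w>v$ vanishes because $\alpha_{w}=0$ by maximality of $v$, and the lone surviving term is $\alpha_{v}\langle v,\ v\nabla_{U}^{-}\rangle=\alpha_{v}\neq0$ — a contradiction.

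There is essentially no serious obstacle; the only point worth a sentence of care is that the lexicographically larger permutations occurring in $v\nabla_{U}^{-}$ need not themselves lie in $S_{n}\setminus\operatorname*{Av}\nolimits_{n}(k+1)$, but this is harmless because the outer sum already ranges only over $S_{n}\setminus\operatorname*{Av}\nolimits_{n}(k+1)$ and maximality of $v$ is invoked only within that set. So the entire argument is a line-by-line transpose (via the antipode-flavored duality) of the proof of Lemma \ref{lem.row.I-c1}, and I would present it in exactly that parallel form.
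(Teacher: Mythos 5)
Your proposal is correct and coincides with the paper's own proof: the same contradiction setup with the lexicographically largest $v$ having $\alpha_{v}\neq 0$, the same element $v\nabla_{U}^{-}\in\mathcal{J}_{k}$ built as in Lemma \ref{lem.row.A/J-span} with triangularity from Lemma \ref{lem.lex-less-2}, and the same pairing argument splitting the sum into $w<v$ and $w>v$ terms. Your remark that the lexicographically larger addends of $v\nabla_{U}^{-}$ need not avoid $12\cdots(k+1)$ is a valid (and harmless) observation, exactly as the argument requires.
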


\begin{proof}
Assume the contrary. Thus, there exist some $w\in S_{n}\setminus
\operatorname*{Av}\nolimits_{n}\left(  k+1\right)  $ such that $\alpha_{w}%
\neq0$. Let $v$ be the lexicographically largest such $w$. Thus, $\alpha
_{v}\neq0$, but%
\begin{equation}
\alpha_{w}=0\ \ \ \ \ \ \ \ \ \ \text{for every }w\in S_{n}\setminus
\operatorname*{Av}\nolimits_{n}\left(  k+1\right)  \text{ satisfying }w>v.
\label{pf.lem.row.J-c1.1}%
\end{equation}

As in the proof of Lemma \ref{lem.row.A/J-span}, we can construct a $\left(
k+1\right)  $-element subset $U$ of $\left[  n\right]  $ such that
$v\nabla_{U}^{-}\in\mathcal{J}_{k}$ and%
\begin{equation}
v\nabla_{U}^{-}=v\pm\left(  \text{some permutations }w>v\right)  .
\label{pf.lem.row.J-c1.2}%
\end{equation}
Consider this $U$. The $\mathbf{k}$-bilinear form $\left\langle \cdot
,\cdot\right\rangle $ has the property that for every $w\in S_{n}$ and every
$\mathbf{a}\in\mathcal{A}$, the coefficient of $w$ in $\mathbf{a}$ is
$\left\langle w,\ \mathbf{a}\right\rangle $. Hence, the equality
(\ref{pf.lem.row.J-c1.2}) shows that%
\begin{equation}
\left\langle v,\ v\nabla_{U}^{-}\right\rangle =1 \label{pf.lem.row.J-c1.2a}%
\end{equation}
and%
\begin{equation}
\left\langle w,\ v\nabla_{U}^{-}\right\rangle =0\ \ \ \ \ \ \ \ \ \ \text{for
each permutation }w<v. \label{pf.lem.row.J-c1.2b}%
\end{equation}

From $\sum_{w\in S_{n}\setminus\operatorname*{Av}\nolimits_{n}\left(
k+1\right)  }\alpha_{w}w\in\mathcal{J}_{k}^{\perp}$ and $v\nabla_{U}^{-}%
\in\mathcal{J}_{k}$, we conclude that
\[
\left\langle \sum_{w\in S_{n}\setminus\operatorname*{Av}\nolimits_{n}\left(
k+1\right)  }\alpha_{w}w,\ v\nabla_{U}^{-}\right\rangle =0.
\]
Thus,%
\begin{align*}
0  &  =\left\langle \sum_{w\in S_{n}\setminus\operatorname*{Av}\nolimits_{n}%
\left(  k+1\right)  }\alpha_{w}w,\ v\nabla_{U}^{-}\right\rangle \\
&  =\sum_{w\in S_{n}\setminus\operatorname*{Av}\nolimits_{n}\left(
k+1\right)  }\ \ \underbrace{\underbrace{\alpha_{w}}_{\substack{=0\text{ if
}w>v\\\text{(by (\ref{pf.lem.row.J-c1.1}))}}}\ \ \underbrace{\left\langle
w,\ v\nabla_{U}^{-}\right\rangle }_{\substack{=0\text{ if }w<v\\\text{(by
(\ref{pf.lem.row.J-c1.2b}))}}}}_{\substack{=0\text{ if }w\neq v\\\text{(since
}w\neq v\text{ entails }w<v\text{ or }w>v\text{)}}}\\
&  =\alpha_{v}\underbrace{\left\langle v,\ v\nabla_{U}^{-}\right\rangle
}_{\substack{=1\\\text{(by (\ref{pf.lem.row.J-c1.2a}))}}}=\alpha_{v}\neq0,
\end{align*}
which is absurd. This completes the proof of Lemma \ref{lem.row.J-c1}.
\end{proof}

\begin{lemma}
\label{lem.row.I-basis}Let $k\in\mathbb{N}$. Then, the $\mathbf{k}$-module
$\mathcal{A}/\mathcal{I}_{k}$ is free with basis $\left(  \overline{w}\right)
_{w\in S_{n}\setminus\operatorname*{Av}\nolimits_{n}\left(  k+1\right)  }$.
\end{lemma}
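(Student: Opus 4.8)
The plan is to combine the spanning result from Lemma \ref{lem.row.A/I-span} with a linear-independence result that follows from Lemma \ref{lem.row.I-c1}. Concretely, Lemma \ref{lem.row.A/I-span} already tells us that the family $\left( \overline{w}\right) _{w\in S_{n}\setminus\operatorname*{Av}\nolimits_{n}\left( k+1\right) }$ spans $\mathcal{A}/\mathcal{I}_{k}$, so the only thing left is to show that this family is $\mathbf{k}$-linearly independent. The cleanest route is to produce, for each such $w$, a linear functional on $\mathcal{A}/\mathcal{I}_{k}$ that detects the coefficient of $w$; this is exactly what pairing against $\mathcal{I}_{k}^{\perp}$ would give us, but a more economical phrasing uses Lemma \ref{lem.row.I-c1} directly via a dimension/duality count.

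Here is the order of steps I would carry out. First, invoke Lemma \ref{lem.row.A/I-span} to get that $\left( \overline{w}\right) _{w\in S_{n}\setminus\operatorname*{Av}\nolimits_{n}\left( k+1\right) }$ spans $\mathcal{A}/\mathcal{I}_{k}$. Second, to prove linear independence, suppose $\sum_{w\in S_{n}\setminus\operatorname*{Av}\nolimits_{n}\left( k+1\right) }\beta_{w}\,\overline{w}=0$ in $\mathcal{A}/\mathcal{I}_{k}$; this means $\mathbf{b}:=\sum_{w\in S_{n}\setminus\operatorname*{Av}\nolimits_{n}\left( k+1\right) }\beta_{w}\,w\in\mathcal{I}_{k}$. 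Third — and this is the key maneuver — I want to reduce to a statement about $\mathcal{I}_{k}^{\perp}$ or $\mathcal{J}_{k}^{\perp}$ so that Lemma \ref{lem.row.I-c1} or Lemma \ref{lem.row.J-c1} applies. The slick way: Lemma \ref{lem.row.A/I-span} applied with the roles reversed (i.e. its analogue Lemma \ref{lem.row.A/J-span} for $\mathcal{J}_{k}$) shows $\mathcal{A}/\mathcal{J}_{k}$ is spanned by the $\operatorname*{Av}\nolimits_{n}\left( k+1\right) $-indexed residues, so $\mathcal{J}_{k}^{\perp}$, being the space of functionals vanishing on $\mathcal{J}_{k}$, has a description by Lemma \ref{lem.row.J-c1} forcing any element of $\mathcal{J}_{k}^{\perp}$ supported on $S_{n}\setminus\operatorname*{Av}\nolimits_{n}\left( k+1\right) $ to vanish. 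But we do not yet know $\mathcal{I}_{k}=\mathcal{J}_{k}^{\perp}$ at this point in the logical development (that is Theorem \ref{thm.row.main}(a), proved later), so I should avoid circularity. Instead I would argue directly: the element $\mathbf{b}\in\mathcal{I}_{k}$ is orthogonal (under $\left\langle \cdot,\cdot\right\rangle $) to everything in $\mathcal{J}_{k}$, because Lemma \ref{lem.row.IJ=0} gives $\mathcal{I}_{k}\mathcal{J}_{k}=0$ and hence, via the expression $\left\langle a,b\right\rangle =\operatorname*{coeff}\nolimits_{1}\left( S\left( a\right) b\right) $ from Lemma \ref{lem.coeff1-form}, any $a\in\mathcal{I}_{k}$ pairs to zero with any $b\in\mathcal{J}_{k}$ (here one uses $S\left( \mathcal{I}_{k}\right) =\mathcal{I}_{k}$ from Proposition \ref{prop.IJ.1}(c) so that $S\left( \mathbf{b}\right) \in\mathcal{I}_{k}$ and $S\left( \mathbf{b}\right) b\in\mathcal{I}_{k}\mathcal{J}_{k}=0$). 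Therefore $\mathbf{b}\in\mathcal{J}_{k}^{\perp}$.

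Fourth, now that $\mathbf{b}=\sum_{w\in S_{n}\setminus\operatorname*{Av}\nolimits_{n}\left( k+1\right) }\beta_{w}\,w\in\mathcal{J}_{k}^{\perp}$ is supported entirely on $S_{n}\setminus\operatorname*{Av}\nolimits_{n}\left( k+1\right) $, Lemma \ref{lem.row.J-c1} applies verbatim and yields $\beta_{w}=0$ for all $w\in S_{n}\setminus\operatorname*{Av}\nolimits_{n}\left( k+1\right) $. This establishes linear independence, and combined with the spanning statement of Lemma \ref{lem.row.A/I-span}, it shows that $\left( \overline{w}\right) _{w\in S_{n}\setminus\operatorname*{Av}\nolimits_{n}\left( k+1\right) }$ is a basis of $\mathcal{A}/\mathcal{I}_{k}$, which is exactly the claim of Lemma \ref{lem.row.I-basis}. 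The main obstacle is the bookkeeping in step three: I must be careful to use only facts established earlier in the excerpt (Lemma \ref{lem.row.IJ=0}, Proposition \ref{prop.IJ.1}(c), Lemma \ref{lem.coeff1-form}) and not anticipate the not-yet-proved equality $\mathcal{I}_{k}=\mathcal{J}_{k}^{\perp}$; routing the argument through $\mathcal{I}_{k}\mathcal{J}_{k}=0$ and the $S$-invariance of $\left\langle \cdot,\cdot\right\rangle $ is what keeps it non-circular.
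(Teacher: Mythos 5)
Your proposal is correct and follows essentially the same route as the paper: spanning from Lemma \ref{lem.row.A/I-span}, then placing the relation $\mathbf{b}\in\mathcal{I}_{k}$ inside $\mathcal{J}_{k}^{\perp}$ via $\mathcal{I}_{k}\mathcal{J}_{k}=0$ (Lemma \ref{lem.row.IJ=0}), and finishing with Lemma \ref{lem.row.J-c1}, carefully avoiding the not-yet-proved equality $\mathcal{I}_{k}=\mathcal{J}_{k}^{\perp}$. The only cosmetic difference is that you verify the inclusion $\mathcal{I}_{k}\subseteq\mathcal{J}_{k}^{\perp}$ directly from $S\left(\mathcal{I}_{k}\right)=\mathcal{I}_{k}$ and the formula $\left\langle a,b\right\rangle=\operatorname*{coeff}\nolimits_{1}\left(S\left(a\right)b\right)$, whereas the paper gets it by citing Lemma \ref{lem.row.LAnn} \textbf{(b)} (using $S\left(\mathcal{J}_{k}\right)=\mathcal{J}_{k}$) to identify $\operatorname*{LAnn}\mathcal{J}_{k}$ with $\mathcal{J}_{k}^{\perp}$; both are valid and equally non-circular.
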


\begin{proof}
The family $\left(  \overline{w}\right)  _{w\in S_{n}\setminus
\operatorname*{Av}\nolimits_{n}\left(  k+1\right)  }$ spans this $\mathbf{k}%
$-module $\mathcal{A}/\mathcal{I}_{k}$, as we know from Lemma
\ref{lem.row.A/I-span}. It remains to prove that it is $\mathbf{k}$-linearly independent.

Let $\left(  \alpha_{w}\right)  _{w\in S_{n}\setminus\operatorname*{Av}%
\nolimits_{n}\left(  k+1\right)  }\in\mathbf{k}^{S_{n}\setminus
\operatorname*{Av}\nolimits_{n}\left(  k+1\right)  }$ be a family of scalars
satisfying%
\begin{equation}
\sum_{w\in S_{n}\setminus\operatorname*{Av}\nolimits_{n}\left(  k+1\right)
}\alpha_{w}\overline{w}=0. \label{pf.lem.row.I-basis.1}%
\end{equation}
We thus need to show that $\alpha_{w}=0$ for all $w\in S_{n}\setminus
\operatorname*{Av}\nolimits_{n}\left(  k+1\right)  $.

However, (\ref{pf.lem.row.I-basis.1}) means that $\sum_{w\in S_{n}%
\setminus\operatorname*{Av}\nolimits_{n}\left(  k+1\right)  }\alpha_{w}%
w\in\mathcal{I}_{k}$. But Lemma \ref{lem.row.IJ=0} yields $\mathcal{I}%
_{k}\mathcal{J}_{k}=0$. Thus, $\mathcal{I}_{k}\subseteq\operatorname*{LAnn}%
\mathcal{J}_{k}$. However, Proposition \ref{prop.IJ.1} \textbf{(c)} yields
$S\left(  \mathcal{J}_{k}\right)  =\mathcal{J}_{k}$. Furthermore,
$\mathcal{J}_{k}$ is an ideal of $\mathcal{A}$ (by Proposition \ref{prop.IJ.1}
\textbf{(a)}), hence a left ideal of $\mathcal{A}$. Thus, Lemma
\ref{lem.row.LAnn} \textbf{(b)} (applied to $\mathcal{B}=\mathcal{J}_{k}$)
yields $\mathcal{J}_{k}^{\perp}=\operatorname*{LAnn}\left(  \mathcal{J}%
_{k}\right)  =\operatorname*{RAnn}\left(  \mathcal{J}_{k}\right)  $. Thus,
\[
\sum_{w\in S_{n}\setminus\operatorname*{Av}\nolimits_{n}\left(  k+1\right)
}\alpha_{w}w\in\mathcal{I}_{k}\subseteq\operatorname*{LAnn}\mathcal{J}%
_{k}=\mathcal{J}_{k}^{\perp}.
\]
Lemma \ref{lem.row.J-c1} thus yields that $\alpha_{w}=0$ for all $w\in
S_{n}\setminus\operatorname*{Av}\nolimits_{n}\left(  k+1\right)  $. This
completes the proof of Lemma \ref{lem.row.I-basis}.
\end{proof}

\begin{lemma}
\label{lem.row.J-basis}Let $k\in\mathbb{N}$. Then, the $\mathbf{k}$-module
$\mathcal{A}/\mathcal{J}_{k}$ is free with basis $\left(  \overline{w}\right)
_{w\in\operatorname*{Av}\nolimits_{n}\left(  k+1\right)  }$.
\end{lemma}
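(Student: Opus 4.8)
The plan is to mimic the proof of Lemma \ref{lem.row.I-basis}, with the roles of $\mathcal{I}_{k}$ and $\mathcal{J}_{k}$ interchanged. First, Lemma \ref{lem.row.A/J-span} already tells us that the family $\left(  \overline{w}\right)  _{w\in\operatorname*{Av}\nolimits_{n}\left(  k+1\right)  }$ spans the $\mathbf{k}$-module $\mathcal{A}/\mathcal{J}_{k}$. So it remains only to prove that this family is $\mathbf{k}$-linearly independent.

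To that end, I would take a family $\left(  \alpha_{w}\right)  _{w\in\operatorname*{Av}\nolimits_{n}\left(  k+1\right)  }\in\mathbf{k}^{\operatorname*{Av}\nolimits_{n}\left(  k+1\right)  }$ with $\sum_{w\in\operatorname*{Av}\nolimits_{n}\left(  k+1\right)  }\alpha_{w}\overline{w}=0$, which means $\sum_{w\in\operatorname*{Av}\nolimits_{n}\left(  k+1\right)  }\alpha_{w}w\in\mathcal{J}_{k}$, and aim to conclude $\alpha_{w}=0$ for all $w$. The key is to show $\mathcal{J}_{k}\subseteq\mathcal{I}_{k}^{\perp}$, after which Lemma \ref{lem.row.I-c1} finishes the job immediately. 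Now, Lemma \ref{lem.row.IJ=0} gives $\mathcal{J}_{k}\mathcal{I}_{k}=0$, hence $\mathcal{J}_{k}\subseteq\operatorname*{LAnn}\mathcal{I}_{k}$. On the other hand, Proposition \ref{prop.IJ.1} \textbf{(a)} says $\mathcal{I}_{k}$ is an ideal (in particular a left ideal) of $\mathcal{A}$, and Proposition \ref{prop.IJ.1} \textbf{(c)} says $S\left(  \mathcal{I}_{k}\right)  =\mathcal{I}_{k}$. Therefore Lemma \ref{lem.row.LAnn} \textbf{(b)} (applied to $\mathcal{B}=\mathcal{I}_{k}$) yields $\mathcal{I}_{k}^{\perp}=\operatorname*{LAnn}\mathcal{I}_{k}=\operatorname*{RAnn}\mathcal{I}_{k}$. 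Combining, $\sum_{w\in\operatorname*{Av}\nolimits_{n}\left(  k+1\right)  }\alpha_{w}w\in\mathcal{J}_{k}\subseteq\operatorname*{LAnn}\mathcal{I}_{k}=\mathcal{I}_{k}^{\perp}$, so Lemma \ref{lem.row.I-c1} gives $\alpha_{w}=0$ for all $w\in\operatorname*{Av}\nolimits_{n}\left(  k+1\right)  $, as desired.

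There is no real obstacle here: the argument is a verbatim dualization of Lemma \ref{lem.row.I-basis}, and every ingredient (Lemma \ref{lem.row.A/J-span}, Lemma \ref{lem.row.IJ=0}, Proposition \ref{prop.IJ.1} \textbf{(a)}, \textbf{(c)}, Lemma \ref{lem.row.LAnn} \textbf{(b)}, Lemma \ref{lem.row.I-c1}) is already available. The only point worth double-checking is that the $\perp$/annihilator dictionary is being used in the correct direction — that is, passing from $\mathcal{J}_{k}\mathcal{I}_{k}=0$ (rather than $\mathcal{I}_{k}\mathcal{J}_{k}=0$) to the containment $\mathcal{J}_{k}\subseteq\mathcal{I}_{k}^{\perp}$, which is why we invoke Lemma \ref{lem.row.LAnn} \textbf{(b)} with $\mathcal{B}=\mathcal{I}_{k}$ and use the left-annihilator characterization.
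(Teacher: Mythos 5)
Your proof is correct and is exactly the argument the paper intends: the paper's proof is simply "analogous to Lemma \ref{lem.row.I-basis}, using Lemma \ref{lem.row.A/J-span} and Lemma \ref{lem.row.I-c1}", and your dualization (passing from $\mathcal{J}_{k}\mathcal{I}_{k}=0$ to $\mathcal{J}_{k}\subseteq\operatorname*{LAnn}\mathcal{I}_{k}=\mathcal{I}_{k}^{\perp}$ via Lemma \ref{lem.row.LAnn} \textbf{(b)} applied to $\mathcal{B}=\mathcal{I}_{k}$) fills in those details correctly, including the direction of the annihilator dictionary.
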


\begin{proof}
Analogous to the proof of Lemma \ref{lem.row.I-basis}. (Of course, use Lemma
\ref{lem.row.A/J-span} and Lemma \ref{lem.row.I-c1} instead of Lemma
\ref{lem.row.A/I-span} and Lemma \ref{lem.row.J-c1} now.)
\end{proof}

\begin{lemma}
\label{lem.row.Ikperp}Let $k\in\mathbb{N}$. Then,
\[
\mathcal{I}_{k}=\mathcal{J}_{k}^{\perp}=\operatorname*{LAnn}\mathcal{J}%
_{k}=\operatorname*{RAnn}\mathcal{J}_{k}.
\]

\end{lemma}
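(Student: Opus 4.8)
The plan is to assemble the statement from the lemmas already established, so there is essentially no new work. First I would record the easy half: $\mathcal{J}_{k}$ is an ideal of $\mathcal{A}$ (Proposition \ref{prop.IJ.1} \textbf{(a)}), hence in particular a left ideal, and it satisfies $S\left(  \mathcal{J}_{k}\right)  =\mathcal{J}_{k}$ (Proposition \ref{prop.IJ.1} \textbf{(c)}). Therefore Lemma \ref{lem.row.LAnn} \textbf{(b)}, applied to $\mathcal{B}=\mathcal{J}_{k}$, yields $\mathcal{J}_{k}^{\perp}=\operatorname*{LAnn}\mathcal{J}_{k}=\operatorname*{RAnn}\mathcal{J}_{k}$ at once. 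So it only remains to prove $\mathcal{I}_{k}=\mathcal{J}_{k}^{\perp}$.

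For the inclusion $\mathcal{I}_{k}\subseteq\mathcal{J}_{k}^{\perp}$, I would invoke Lemma \ref{lem.row.IJ=0}, which gives $\mathcal{I}_{k}\mathcal{J}_{k}=0$; by the very definition of the left annihilator this means $\mathcal{I}_{k}\subseteq\operatorname*{LAnn}\mathcal{J}_{k}=\mathcal{J}_{k}^{\perp}$.

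For the reverse inclusion $\mathcal{J}_{k}^{\perp}\subseteq\mathcal{I}_{k}$, I would take an arbitrary element $a\in\mathcal{J}_{k}^{\perp}$ and reduce it modulo $\mathcal{I}_{k}$. By Lemma \ref{lem.row.A/I-span}, the class $\overline{a}\in\mathcal{A}/\mathcal{I}_{k}$ can be written as a $\mathbf{k}$-linear combination $\sum_{w\in S_{n}\setminus\operatorname*{Av}\nolimits_{n}\left(  k+1\right)  }\alpha_{w}\overline{w}$; setting $b:=\sum_{w\in S_{n}\setminus\operatorname*{Av}\nolimits_{n}\left(  k+1\right)  }\alpha_{w}w$, this says exactly that $a-b\in\mathcal{I}_{k}$. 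Since $\mathcal{I}_{k}\subseteq\mathcal{J}_{k}^{\perp}$ (just proved) and $a\in\mathcal{J}_{k}^{\perp}$, and since $\mathcal{J}_{k}^{\perp}$ is a $\mathbf{k}$-submodule, we get $b=a-\left(  a-b\right)  \in\mathcal{J}_{k}^{\perp}$. Now Lemma \ref{lem.row.J-c1} forces every coefficient $\alpha_{w}$ to vanish, so $b=0$ and hence $a=a-b\in\mathcal{I}_{k}$, which is what we wanted.

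The genuine content was already packaged into Lemma \ref{lem.row.J-c1} (and, upstream of it, into the Erd\"{o}s--Szekeres-type argument of Lemma \ref{lem.erdos-szek-var} together with the lexicographic-order lemmas), so none of the steps above presents any obstacle; the only thing to be careful about is keeping straight which direction uses $\mathcal{I}_{k}\mathcal{J}_{k}=0$ versus the spanning statement. One could alternatively deduce the lemma from the explicit bases in Lemma \ref{lem.row.I-basis} and Lemma \ref{lem.row.J-basis} via a rank count, but the reduction above is shorter and avoids any freeness bookkeeping.
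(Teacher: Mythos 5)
Your proposal is correct and follows essentially the same route as the paper's own proof: Lemma \ref{lem.row.LAnn} \textbf{(b)} for the three-way equality $\mathcal{J}_{k}^{\perp}=\operatorname*{LAnn}\mathcal{J}_{k}=\operatorname*{RAnn}\mathcal{J}_{k}$, Lemma \ref{lem.row.IJ=0} for $\mathcal{I}_{k}\subseteq\mathcal{J}_{k}^{\perp}$, and the reduction modulo $\mathcal{I}_{k}$ via Lemma \ref{lem.row.A/I-span} combined with Lemma \ref{lem.row.J-c1} for the reverse inclusion. No gaps.
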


\begin{proof}
Proposition \ref{prop.IJ.1} \textbf{(c)} yields $S\left(  \mathcal{J}%
_{k}\right)  =\mathcal{J}_{k}$. Furthermore, $\mathcal{J}_{k}$ is an ideal of
$\mathcal{A}$ (by Proposition \ref{prop.IJ.1} \textbf{(a)}). Thus, Lemma
\ref{lem.row.LAnn} \textbf{(b)} (applied to $\mathcal{B}=\mathcal{J}_{k}$)
yields $\mathcal{J}_{k}^{\perp}=\operatorname*{LAnn}\mathcal{J}_{k}%
=\operatorname*{RAnn}\mathcal{J}_{k}$. Thus, it remains to prove that
$\mathcal{I}_{k}=\mathcal{J}_{k}^{\perp}$.

Lemma \ref{lem.row.IJ=0} yields $\mathcal{I}_{k}\mathcal{J}_{k}=0$. Thus,
$\mathcal{I}_{k}\subseteq\operatorname*{LAnn}\mathcal{J}_{k}=\mathcal{J}%
_{k}^{\perp}$. Thus, we only need to show that $\mathcal{J}_{k}^{\perp
}\subseteq\mathcal{I}_{k}$.

Let $a\in\mathcal{J}_{k}^{\perp}$. We must prove that $a\in\mathcal{I}_{k}$.

Lemma \ref{lem.row.A/I-span} shows that the quotient module $\mathcal{A}%
/\mathcal{I}_{k}$ is spanned by the family $\left(  \overline{w}\right)
_{w\in S_{n}\setminus\operatorname*{Av}\nolimits_{n}\left(  k+1\right)  }$.
Hence, the projection $\overline{a}\in\mathcal{A}/\mathcal{I}_{k}$ can be
written as a $\mathbf{k}$-linear combination of this family. In other words,
we can write $\overline{a}$ as%
\begin{equation}
\overline{a}=\sum_{w\in S_{n}\setminus\operatorname*{Av}\nolimits_{n}\left(
k+1\right)  }\alpha_{w}\overline{w} \label{pf.lem.row.Ikperp.4}%
\end{equation}
for some family $\left(  \alpha_{w}\right)  _{w\in S_{n}\setminus
\operatorname*{Av}\nolimits_{n}\left(  k+1\right)  }\in\mathbf{k}%
^{S_{n}\setminus\operatorname*{Av}\nolimits_{n}\left(  k+1\right)  }$ of
scalars. Consider this family. We can rewrite (\ref{pf.lem.row.Ikperp.4}) as%
\[
a-\sum_{w\in S_{n}\setminus\operatorname*{Av}\nolimits_{n}\left(  k+1\right)
}\alpha_{w}w\in\mathcal{I}_{k}\subseteq\mathcal{J}_{k}^{\perp}.
\]
Since $a\in\mathcal{J}_{k}^{\perp}$, this yields $\sum_{w\in S_{n}%
\setminus\operatorname*{Av}\nolimits_{n}\left(  k+1\right)  }\alpha_{w}%
w\in\mathcal{J}_{k}^{\perp}$. By Lemma \ref{lem.row.J-c1}, we thus conclude
that $\alpha_{w}=0$ for all $w\in S_{n}\setminus\operatorname*{Av}%
\nolimits_{n}\left(  k+1\right)  $. Thus, (\ref{pf.lem.row.Ikperp.4}) rewrites
as $\overline{a}=\sum_{w\in S_{n}\setminus\operatorname*{Av}\nolimits_{n}%
\left(  k+1\right)  }0\overline{w}=0$, so that $a\in\mathcal{I}_{k}$. This
completes our proof of Lemma \ref{lem.row.Ikperp}.
\end{proof}

\begin{lemma}
\label{lem.row.Jkperp}Let $k\in\mathbb{N}$. Then,
\[
\mathcal{J}_{k}=\mathcal{I}_{k}^{\perp}=\operatorname*{LAnn}\mathcal{I}%
_{k}=\operatorname*{RAnn}\mathcal{I}_{k}.
\]

\end{lemma}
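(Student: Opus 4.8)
The plan is to mirror the proof of Lemma~\ref{lem.row.Ikperp}, with the roles of $\mathcal{I}_{k}$ and $\mathcal{J}_{k}$ interchanged. First I would dispose of the three equalities on the right-hand side. Since $\mathcal{I}_{k}$ is an ideal of $\mathcal{A}$ (Proposition~\ref{prop.IJ.1}~\textbf{(a)}) and satisfies $S\left(\mathcal{I}_{k}\right)=\mathcal{I}_{k}$ (Proposition~\ref{prop.IJ.1}~\textbf{(c)}), Lemma~\ref{lem.row.LAnn}~\textbf{(b)}, applied to $\mathcal{B}=\mathcal{I}_{k}$, immediately yields $\mathcal{I}_{k}^{\perp}=\operatorname*{LAnn}\mathcal{I}_{k}=\operatorname*{RAnn}\mathcal{I}_{k}$. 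So everything reduces to proving the single equality $\mathcal{J}_{k}=\mathcal{I}_{k}^{\perp}$.

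For the inclusion $\mathcal{J}_{k}\subseteq\mathcal{I}_{k}^{\perp}$, I would invoke Lemma~\ref{lem.row.IJ=0}, which gives $\mathcal{I}_{k}\mathcal{J}_{k}=0$; since $\operatorname*{RAnn}\mathcal{I}_{k}=\mathcal{I}_{k}^{\perp}$ by the previous paragraph, this says exactly $\mathcal{J}_{k}\subseteq\mathcal{I}_{k}^{\perp}$. (One may equally well use $\mathcal{J}_{k}\mathcal{I}_{k}=0$ together with $\operatorname*{LAnn}\mathcal{I}_{k}=\mathcal{I}_{k}^{\perp}$.)

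For the reverse inclusion $\mathcal{I}_{k}^{\perp}\subseteq\mathcal{J}_{k}$, let $a\in\mathcal{I}_{k}^{\perp}$. By Lemma~\ref{lem.row.A/J-span}, the quotient $\mathbf{k}$-module $\mathcal{A}/\mathcal{J}_{k}$ is spanned by $\left(\overline{w}\right)_{w\in\operatorname*{Av}\nolimits_{n}\left(k+1\right)}$, so I can write $\overline{a}=\sum_{w\in\operatorname*{Av}\nolimits_{n}\left(k+1\right)}\alpha_{w}\overline{w}$ for suitable scalars $\alpha_{w}\in\mathbf{k}$. Then $a-\sum_{w\in\operatorname*{Av}\nolimits_{n}\left(k+1\right)}\alpha_{w}w\in\mathcal{J}_{k}\subseteq\mathcal{I}_{k}^{\perp}$ (using the first inclusion), and since $a\in\mathcal{I}_{k}^{\perp}$ this forces $\sum_{w\in\operatorname*{Av}\nolimits_{n}\left(k+1\right)}\alpha_{w}w\in\mathcal{I}_{k}^{\perp}$. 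Lemma~\ref{lem.row.I-c1} now gives $\alpha_{w}=0$ for all $w\in\operatorname*{Av}\nolimits_{n}\left(k+1\right)$, whence $\overline{a}=0$, i.e.\ $a\in\mathcal{J}_{k}$.

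There is no genuinely hard step: once Lemmas~\ref{lem.row.LAnn}, \ref{lem.row.IJ=0}, \ref{lem.row.A/J-span} and \ref{lem.row.I-c1} are in place, everything is formal. The only points needing care are (i) not invoking Lemma~\ref{lem.row.Ikperp} itself, to avoid circularity (the argument above uses only those four cited lemmas, exactly paralleling how the proof of Lemma~\ref{lem.row.Ikperp} used their $\mathcal{I}\!/\!\mathcal{J}$-swapped versions), and (ii) keeping the left/right annihilator bookkeeping straight, which is harmless here because $\operatorname*{LAnn}\mathcal{I}_{k}$ and $\operatorname*{RAnn}\mathcal{I}_{k}$ coincide with $\mathcal{I}_{k}^{\perp}$.
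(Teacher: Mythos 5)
Your proposal is correct and is precisely the argument the paper intends: its proof of this lemma is stated as "analogous to Lemma \ref{lem.row.Ikperp}, using Lemma \ref{lem.row.A/J-span} and Lemma \ref{lem.row.I-c1} instead", which is exactly the role-swapped argument you have written out, including the use of Lemma \ref{lem.row.LAnn} \textbf{(b)} and Lemma \ref{lem.row.IJ=0}. No gaps.
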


\begin{proof}
Analogous to the proof of Lemma \ref{lem.row.Ikperp}. (Of course, use Lemma
\ref{lem.row.A/J-span} and Lemma \ref{lem.row.I-c1} instead of Lemma
\ref{lem.row.A/I-span} and Lemma \ref{lem.row.J-c1} now.)
\end{proof}

\begin{lemma}
\label{lem.maschke.unital}Assume that $n!$ is invertible in $\mathbf{k}$. Let
$\mathcal{I}$ be an ideal of $\mathcal{A}$. Assume that there exists a
$\mathbf{k}$-linear projection $\pi:\mathcal{A}\rightarrow\mathcal{I}$. Then,
$\mathcal{I}$ is a nonunital subalgebra of $\mathcal{A}$ that has a unity.
\end{lemma}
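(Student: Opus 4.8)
The plan is to run a standard Maschke-type averaging argument twice — once over left translations, once over right translations — in order to upgrade the merely $\mathbf{k}$-linear projection $\pi$ to an $\mathcal{A}$-bimodule projection of $\mathcal{A}$ onto $\mathcal{I}$. The value of such a bimodule projection at $1$ will be a central idempotent whose principal ideal is $\mathcal{I}$, and this immediately yields the claim.

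Concretely, I would first set $\pi_{1}(a):=\frac{1}{n!}\sum_{g\in S_{n}}g\,\pi(g^{-1}a)$ (this makes sense because $n!$ is invertible in $\mathbf{k}$). Since $\pi$ takes values in $\mathcal{I}$ and $\mathcal{I}$ is a left ideal, each summand lies in $\mathcal{I}$, so $\pi_{1}(\mathcal{A})\subseteq\mathcal{I}$; and for $a\in\mathcal{I}$ we have $g^{-1}a\in\mathcal{I}$, hence $\pi(g^{-1}a)=g^{-1}a$, so each summand equals $a$ and $\pi_{1}$ restricts to the identity on $\mathcal{I}$. The substitution $g\mapsto ug$ shows $\pi_{1}(ua)=u\,\pi_{1}(a)$ for $u\in S_{n}$, whence $\pi_{1}$ is a left $\mathcal{A}$-module map by $\mathbf{k}$-linearity. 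Next I would set $\pi_{2}(a):=\frac{1}{n!}\sum_{g\in S_{n}}\pi_{1}(ag^{-1})\,g$. The analogous computations (now using that $\mathcal{I}$ is a right ideal) give $\pi_{2}(\mathcal{A})\subseteq\mathcal{I}$ and $\pi_{2}\mid_{\mathcal{I}}=\id$, and the substitution $g\mapsto gh$ shows $\pi_{2}(ah)=\pi_{2}(a)\,h$ for $h\in S_{n}$, so $\pi_{2}$ is a right $\mathcal{A}$-module map. It remains a left $\mathcal{A}$-module map as well, because left multiplication by a fixed element passes through the defining sum and $\pi_{1}$ is already left-linear. Thus $\pi_{2}$ is an $\mathcal{A}$-bimodule projection onto $\mathcal{I}$.

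Finally, put $e:=\pi_{2}(1)\in\mathcal{I}$. For every $a\in\mathcal{A}$ the bimodule property gives $\pi_{2}(a)=\pi_{2}(a\cdot 1)=a\,e$ and $\pi_{2}(a)=\pi_{2}(1\cdot a)=e\,a$, so $ae=ea$ for all $a$ (i.e. $e$ is central) and $\mathcal{I}=\pi_{2}(\mathcal{A})=\mathcal{A}e=e\mathcal{A}$. Applying $\pi_{2}$ to $e\in\mathcal{I}$ yields $e=\pi_{2}(e)=e\cdot e$, so $e$ is idempotent; then for any $x\in\mathcal{I}=e\mathcal{A}$ we get $ex=x=xe$, so $e$ is a two-sided unity of $\mathcal{I}$. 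Since $\mathcal{I}$ is a $\mathbf{k}$-submodule of $\mathcal{A}$ closed under multiplication (being an ideal), it is a nonunital subalgebra of $\mathcal{A}$, and it has the unity $e$, as desired.

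I do not expect a genuine obstacle here: the only care needed is the bookkeeping in the two index substitutions and checking that the second (right-hand) averaging does not destroy the left-module property established by the first — which is clear, since left multiplication commutes with summation over right translates. (If one prefers, a single conjugation-averaging step applied to $\pi_{1}$ works equally well, using that $\mathcal{I}$ is two-sided.)
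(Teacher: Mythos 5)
Your proof is correct, and it rests on the same engine as the paper's: Maschke averaging over $S_{n}$, made possible by the invertibility of $n!$. The difference is in how the unity is extracted. The paper averages once on the left to get a left $\mathcal{A}$-linear projection $\pi'$, sets $e=\pi'(1)$ and deduces that $e$ is a \emph{right} unity of $\mathcal{I}$; it then repeats the argument on the other side to get a \emph{left} unity, and finishes with the standard fact that an element with a left neutral and a right neutral element has a neutral element. You instead compose the two averagings into a single $\mathcal{A}$-bimodule projection $\pi_{2}$ (the only extra check being that the right-hand averaging preserves the left-linearity of $\pi_{1}$, which you verify correctly since left multiplication passes through the sum over right translates), and then $e=\pi_{2}(1)$ is at once a central idempotent with $\mathcal{I}=\mathcal{A}e=e\mathcal{A}$ and hence a two-sided unity. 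Your route costs one more verification but yields a slightly stronger conclusion as a byproduct (centrality of $e$ and the description of $\mathcal{I}$ as the principal ideal it generates), which the paper's two-sided-gluing argument does not record. Both arguments are complete; there is no gap in yours.
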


\begin{proof}
Note that $\mathcal{I}$ is an ideal of $\mathcal{A}$, thus a left ideal of
$\mathcal{A}$, hence a left $\mathcal{A}$-submodule of $\mathcal{A}$.
Moreover, $\left\vert S_{n}\right\vert =n!$ is invertible in $\mathbf{k}$.
Hence, the standard proof of the Maschke theorem (via averaging the projection
$\pi$ over $S_{n}$) yields that there exists a $\mathbf{k}$-linear projection
$\pi^{\prime}:\mathcal{A}\rightarrow\mathcal{I}$ that is a left $\mathcal{A}%
$-module homomorphism\footnote{Explicitly, $\pi^{\prime}$ can be constructed
as follows:%
\[
\pi^{\prime}\left(  a\right)  =\dfrac{1}{\left\vert S_{n}\right\vert }%
\sum_{\sigma\in S_{n}}\sigma\pi\left(  \sigma^{-1}a\right)
\ \ \ \ \ \ \ \ \ \ \text{for each }a\in\mathcal{A}.
\]
\par
For a concrete reference, see \cite[Theorem 4.4.14]{sga}. Note that the
existence of a $\mathbf{k}$-linear projection from $\mathcal{A}$ onto
$\mathcal{I}$ is equivalent to saying that $\mathcal{I}$ is a direct addend of
$\mathcal{A}$ as a $\mathbf{k}$-module; furthermore, the same holds when each
appearance of \textquotedblleft$\mathbf{k}$-\textquotedblright\ is replaced by
\textquotedblleft left $\mathcal{A}$-\textquotedblright.}. Consider this
$\pi^{\prime}$.

Let $e:=\pi^{\prime}\left(  1\right)  \in\mathcal{I}$. Then, we claim that
\begin{equation}
ue=u\ \ \ \ \ \ \ \ \ \ \text{for each }u\in\mathcal{I}.
\label{pf.lem.maschke.unital.1}%
\end{equation}

[\textit{Proof of (\ref{pf.lem.maschke.unital.1}):} Let $u\in\mathcal{I}$.
Then, $\pi^{\prime}\left(  u\right)  =u$ (since $\pi^{\prime}$ is a
projection). However, $\pi^{\prime}$ is a left $\mathcal{A}$-module
homomorphism. Thus, $\pi^{\prime}\left(  u1\right)  =u\underbrace{\pi^{\prime
}\left(  1\right)  }_{=e}=ue$. Since $u1=u$, we can rewrite this as
$\pi^{\prime}\left(  u\right)  =ue$. Hence, $ue=\pi^{\prime}\left(  u\right)
=u$. This proves (\ref{pf.lem.maschke.unital.1}).] \medskip

Clearly, $\mathcal{I}$ is a nonunital subalgebra of $\mathcal{A}$ (since
$\mathcal{I}$ is an ideal of $\mathcal{A}$). From
(\ref{pf.lem.maschke.unital.1}), we see that this algebra $\mathcal{I}$ has a
right unity (namely, $e$). A similar argument (using right instead of left
$\mathcal{A}$-modules) yields that $\mathcal{I}$ has a left unity. Thus, a
standard argument shows that $\mathcal{I}$ has a unity (since any binary
operation that has a left neutral element and a right neutral element has a
neutral element). Thus, Lemma \ref{lem.maschke.unital} is proved.
\end{proof}

\begin{lemma}
\label{lem.IxJ.gen}Let $\mathcal{I}$ and $\mathcal{J}$ be two ideals of
$\mathcal{A}$ such that $\mathcal{I}=\operatorname*{LAnn}\mathcal{J}$ and
$\mathcal{J}=\operatorname*{LAnn}\mathcal{I}$. Assume that $\mathcal{I}$ is a
nonunital subalgebra of $\mathcal{A}$ that has a unity. Then, $\mathcal{A}%
=\mathcal{I}\oplus\mathcal{J}$ (internal direct sum) as $\mathbf{k}$-module.
Moreover, $\mathcal{I}$ and $\mathcal{J}$ are nonunital subalgebras of
$\mathcal{A}$ that have unities and satisfy $\mathcal{A}\cong\mathcal{I}%
\times\mathcal{J}$ as $\mathbf{k}$-algebras.
\end{lemma}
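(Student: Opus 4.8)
The plan is to extract a central idempotent from the data and then invoke the standard decomposition of an algebra by a pair of complementary central idempotents. First I would let $e$ denote the unity of the nonunital subalgebra $\mathcal{I}$, so that $xe = x = ex$ for every $x \in \mathcal{I}$. The key first step is to prove that $e$ is a \emph{central} idempotent of $\mathcal{A}$. Idempotency is immediate: $e \in \mathcal{I}$, so applying the unity property to $x = e$ gives $e^{2} = e$. For centrality, fix $a \in \mathcal{A}$; since $\mathcal{I}$ is a two-sided ideal, both $ea$ and $ae$ lie in $\mathcal{I}$, and hence $eae = (ea)e = ea$ (using that $e$ is a right unity on $\mathcal{I}$) as well as $eae = e(ae) = ae$ (using that $e$ is a left unity on $\mathcal{I}$); comparing gives $ea = ae$. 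Now set $f := 1 - e$; this is the complementary central idempotent, satisfying $e + f = 1$ and $ef = fe = 0$.

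Next I would identify $\mathcal{I} = e\mathcal{A}$ and $\mathcal{J} = f\mathcal{A}$. For the first: $\mathcal{I} \subseteq e\mathcal{A}$ because $x = ex$ for $x \in \mathcal{I}$, while $e\mathcal{A} \subseteq \mathcal{I}$ because $\mathcal{I}$ is a right ideal containing $e$. For the second I use the hypothesis $\mathcal{J} = \operatorname*{LAnn}\mathcal{I}$: since $e$ is central, $f\mathcal{A} \cdot \mathcal{I} = f\mathcal{A} \cdot e\mathcal{A} = (fe)\mathcal{A}\mathcal{A} = 0$, so $f\mathcal{A} \subseteq \operatorname*{LAnn}\mathcal{I} = \mathcal{J}$; conversely, any $y \in \mathcal{J} = \operatorname*{LAnn}\mathcal{I}$ satisfies $ye = 0$ (because $e \in \mathcal{I}$), whence $y = y(e + f) = yf = fy \in f\mathcal{A}$, using centrality of $f$. (The other hypothesis $\mathcal{I} = \operatorname*{LAnn}\mathcal{J}$ turns out not to be needed, but is consistent with the conclusion.)

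Finally I would assemble the conclusion from the theory of complementary central idempotents. From $e + f = 1$, $ef = fe = 0$, $e^{2} = e$, $f^{2} = f$ we get the internal direct-sum decomposition $\mathcal{A} = e\mathcal{A} \oplus f\mathcal{A} = \mathcal{I} \oplus \mathcal{J}$ as $\mathbf{k}$-modules. The ideal $\mathcal{J} = f\mathcal{A}$ is in particular a nonunital subalgebra of $\mathcal{A}$, and $f$ is a unity for it (since $fy = y = yf$ for $y \in f\mathcal{A}$, using centrality of $f$), while $\mathcal{I}$ was assumed to be a nonunital subalgebra with unity $e$. Lastly, the $\mathbf{k}$-linear map $\Phi \colon \mathcal{A} \to \mathcal{I} \times \mathcal{J}$, $a \mapsto (ea, fa)$, is a $\mathbf{k}$-algebra homomorphism: it sends $1$ to $(e, f)$, and $\Phi(ab) = (eab, fab) = (ea \cdot eb,\ fa \cdot fb) = \Phi(a)\Phi(b)$ by centrality and idempotency of $e$ and $f$; its two-sided inverse is $(x, y) \mapsto x + y$, as one checks using $ex = x$, $fx = 0$ for $x \in \mathcal{I}$ and $ey = 0$, $fy = y$ for $y \in \mathcal{J}$. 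I do not expect a genuine obstacle: the lemma is essentially bookkeeping with central idempotents. The one point that must not be skipped is the verification that $e$ is central rather than merely idempotent, since this is exactly what upgrades the $\mathbf{k}$-module splitting to an algebra isomorphism.
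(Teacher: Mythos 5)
Your proof is correct, and it takes a somewhat different route from the paper's. The paper never establishes that the unity $1_{\mathcal{I}}$ of $\mathcal{I}$ is central in $\mathcal{A}$: it sets $g=1-1_{\mathcal{I}}$, checks directly that $g\in\operatorname*{LAnn}\mathcal{I}=\mathcal{J}$ and that $g$ is a two-sided unity of $\mathcal{J}$, then proves $\mathcal{I}\cap\mathcal{J}=0$ and $\mathcal{I}+\mathcal{J}=\mathcal{A}$, and finally checks that $\left(i,j\right)\mapsto i+j$ is multiplicative using $\mathcal{I}\mathcal{J}=\mathcal{J}\mathcal{I}=0$ (so both annihilator hypotheses get used, one for each of these products). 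You instead prove the classical fact that the unity $e$ of a two-sided ideal is a \emph{central} idempotent of the ambient algebra (your computation $ea=eae=ae$ is exactly right), identify $\mathcal{I}=e\mathcal{A}$ and $\mathcal{J}=\left(1-e\right)\mathcal{A}$, and read everything off the decomposition by the complementary central idempotents $e$ and $f=1-e$, with the isomorphism $a\mapsto\left(ea,fa\right)$ inverse to $\left(x,y\right)\mapsto x+y$. This buys a cleaner structural picture and, as you observe, shows that the hypothesis $\mathcal{I}=\operatorname*{LAnn}\mathcal{J}$ is redundant: once $\mathcal{J}=\left(1-e\right)\mathcal{A}$ with $e$ central, one gets $\operatorname*{LAnn}\mathcal{J}=\left\{  a\in\mathcal{A}\ \mid\ a\left(1-e\right)=0\right\}  =\mathcal{A}e=\mathcal{I}$ automatically, so your weaker hypotheses imply the paper's. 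The paper's argument, by contrast, skips the centrality step and treats $\mathcal{I}$ and $\mathcal{J}$ symmetrically, at the cost of invoking the full hypothesis; both arguments are elementary and complete.
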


\begin{proof}
Clearly, $\mathcal{I}$ and $\mathcal{J}$ are nonunital subalgebras of
$\mathcal{A}$ (since any ideal of $\mathcal{A}$ is a nonunital subalgebra).

From $\mathcal{I}=\operatorname*{LAnn}\mathcal{J}$, we obtain $\mathcal{IJ}%
=0$. Similarly, $\mathcal{JI}=0$.

We have assumed that $\mathcal{I}$ is a nonunital subalgebra of $\mathcal{A}$
that has a unity. Let $1_{\mathcal{I}}$ denote its unity.

Set $g:=1-1_{\mathcal{I}}$. Then, each $u\in\mathcal{I}$ satisfies%
\[
gu=\left(  1-1_{\mathcal{I}}\right)  u=u-\underbrace{1_{\mathcal{I}}%
u}_{\substack{=u\\\text{(since }1_{\mathcal{I}}\text{ is the}\\\text{unity of
}\mathcal{I}\text{)}}}=u-u=0.
\]
In other words, $g\in\operatorname*{LAnn}\mathcal{I}=\mathcal{J}$. Moreover,
each $v\in\mathcal{J}$ satisfies $1_{\mathcal{I}}v=0$ (since
$\underbrace{1_{\mathcal{I}}}_{\in\mathcal{I}}\underbrace{v}_{\in\mathcal{J}%
}\in\mathcal{IJ}=0$) and thus%
\[
\underbrace{g}_{=1-1_{\mathcal{I}}}v=\left(  1-1_{\mathcal{I}}\right)
v=v-\underbrace{1_{\mathcal{I}}v}_{=0}=v.
\]
Hence, $g$ is a left unity of the algebra $\mathcal{J}$ (since $g\in
\mathcal{J}$). A similar computation shows that $g$ is a right unity of
$\mathcal{J}$. Hence, $g$ is a unity of $\mathcal{J}$. We shall thus rename
$g$ as $1_{\mathcal{J}}$ now. Of course, this shows that the nonunital algebra
$\mathcal{J}$ has a unity.

Moreover, each $u\in\mathcal{I}\cap\mathcal{J}$ satisfies $u\in\mathcal{I}%
\cap\mathcal{J}\subseteq\mathcal{J}$ and therefore%
\begin{align*}
u  &  =\underbrace{u}_{\in\mathcal{I}\cap\mathcal{J}\subseteq\mathcal{I}%
}\underbrace{1_{\mathcal{J}}}_{\in\mathcal{J}}\ \ \ \ \ \ \ \ \ \ \left(
\text{since }1_{\mathcal{J}}\text{ is the unity of }\mathcal{J}\right) \\
&  \in\mathcal{IJ}=0
\end{align*}
and thus $u=0$. In other words, $\mathcal{I}\cap\mathcal{J}=0$. Furthermore,
each $a\in\mathcal{A}$ satisfies%
\[
a=1_{\mathcal{I}}a+\underbrace{\left(  1-1_{\mathcal{I}}\right)
}_{=g=1_{\mathcal{J}}}a=\underbrace{1_{\mathcal{I}}a}_{\substack{\in
\mathcal{I}\\\text{(since }\mathcal{I}\text{ is an ideal}\\\text{and
}1_{\mathcal{I}}\in\mathcal{I}\text{)}}}+\underbrace{1_{\mathcal{J}}%
a}_{\substack{\in\mathcal{J}\\\text{(since }\mathcal{J}\text{ is an
ideal}\\\text{and }1_{\mathcal{J}}\in\mathcal{J}\text{)}}}\in\mathcal{I}%
+\mathcal{J}.
\]
This shows that $\mathcal{I}+\mathcal{J}=\mathcal{A}$. Combining this with
$\mathcal{I}\cap\mathcal{J}=0$, we conclude that $\mathcal{A}=\mathcal{I}%
\oplus\mathcal{J}$ (internal direct sum) as $\mathbf{k}$-module. Hence, the
$\mathbf{k}$-linear map%
\begin{align*}
\mathcal{I}\times\mathcal{J}  &  \rightarrow\mathcal{A},\\
\left(  i,j\right)   &  \mapsto i+j
\end{align*}
is a $\mathbf{k}$-module isomorphism. This isomorphism furthermore respects
the multiplication (since $\mathcal{IJ}=\mathcal{JI}=0$, and thus every
$\left(  i,j\right)  ,\left(  i^{\prime},j^{\prime}\right)  \in\mathcal{I}%
\times\mathcal{J}$ satisfy $\left(  i+j\right)  \left(  i^{\prime}+j^{\prime
}\right)  =ii^{\prime}+\underbrace{ij^{\prime}}_{\substack{=0\\\text{(since
}\mathcal{IJ}=0\text{)}}}+\underbrace{ji^{\prime}}_{\substack{=0\\\text{(since
}\mathcal{JI}=0\text{)}}}+\,jj^{\prime}=ii^{\prime}+jj^{\prime}$), and thus is
a nonunital $\mathbf{k}$-algebra isomorphism. Hence, it must also respect the
unity (since it is an isomorphism), and thus is a $\mathbf{k}$-algebra
isomorphism. We thus conclude that $\mathcal{A}\cong\mathcal{I}\times
\mathcal{J}$ as $\mathbf{k}$-algebras. This completes the proof of Lemma
\ref{lem.IxJ.gen}.
\end{proof}

\begin{lemma}
\label{lem.maschke.dirprod}Assume that $n!$ is invertible in $\mathbf{k}$. Let
$J$ be a subset of $S_{n}$. Let $\mathcal{I}$ and $\mathcal{J}$ be two ideals
of $\mathcal{A}$ such that $\mathcal{I}=\operatorname*{LAnn}\mathcal{J}$ and
$\mathcal{J}=\operatorname*{LAnn}\mathcal{I}$. Assume that the family $\left(
\overline{w}\right)  _{w\in J}$ is a basis of the $\mathbf{k}$-module
$\mathcal{A}/\mathcal{I}$. Then, $\mathcal{A}=\mathcal{I}\oplus\mathcal{J}$
(internal direct sum) as $\mathbf{k}$-module. Moreover, $\mathcal{I}$ and
$\mathcal{J}$ are nonunital subalgebras of $\mathcal{A}$ that have unities and
satisfy $\mathcal{A}\cong\mathcal{I}\times\mathcal{J}$ as $\mathbf{k}$-algebras.
\end{lemma}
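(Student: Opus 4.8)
The plan is to obtain the statement by chaining together three results already proved in the excerpt: Lemma \ref{lem.mod.sub-basis} \textbf{(b)}, Lemma \ref{lem.maschke.unital}, and Lemma \ref{lem.IxJ.gen}. In effect, the only work is to check that the hypotheses of these lemmas line up with the present situation.

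First I would apply Lemma \ref{lem.mod.sub-basis} to the free $\mathbf{k}$-module $\mathcal{M}:=\mathcal{A}$ with its standard permutation basis $\left(w\right)_{w\in S_{n}}$, taking the index set $I:=S_{n}$, the given subset $J\subseteq S_{n}$, its complement $K:=S_{n}\setminus J$, and the submodule $\mathcal{N}:=\mathcal{I}$. The sets $J$ and $K$ are disjoint with $J\cup K=I$, and the hypothesis that $\left(\overline{w}\right)_{w\in J}$ is a basis of $\mathcal{A}/\mathcal{I}$ is precisely what Lemma \ref{lem.mod.sub-basis} demands. Hence part \textbf{(b)} of that lemma yields a $\mathbf{k}$-linear projection $\pi:\mathcal{A}\rightarrow\mathcal{I}$ (i.e., a $\mathbf{k}$-linear map with $\left.\pi\mid_{\mathcal{I}}\right.=\operatorname*{id}$).

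Next, since $n!$ is invertible in $\mathbf{k}$, since $\mathcal{I}$ is an ideal of $\mathcal{A}$, and since we now have a $\mathbf{k}$-linear projection $\pi:\mathcal{A}\rightarrow\mathcal{I}$, Lemma \ref{lem.maschke.unital} applies and shows that $\mathcal{I}$ is a nonunital subalgebra of $\mathcal{A}$ that has a unity. Finally, we invoke Lemma \ref{lem.IxJ.gen}: its hypotheses are exactly that $\mathcal{I}$ and $\mathcal{J}$ are ideals of $\mathcal{A}$ with $\mathcal{I}=\operatorname*{LAnn}\mathcal{J}$ and $\mathcal{J}=\operatorname*{LAnn}\mathcal{I}$ (given) and that $\mathcal{I}$ is a nonunital subalgebra of $\mathcal{A}$ that has a unity (just established). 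Its conclusion is verbatim the assertion of the present lemma: $\mathcal{A}=\mathcal{I}\oplus\mathcal{J}$ as $\mathbf{k}$-modules, both $\mathcal{I}$ and $\mathcal{J}$ are nonunital subalgebras of $\mathcal{A}$ having unities, and $\mathcal{A}\cong\mathcal{I}\times\mathcal{J}$ as $\mathbf{k}$-algebras.

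There is no genuine obstacle here — the lemma is purely an assembly step. The one point to handle with a little care is the bookkeeping in the first paragraph: one must note that $\mathcal{A}=\mathbf{k}\left[S_{n}\right]$ is indeed free over $\mathbf{k}$ with basis indexed by $I=S_{n}$, so that Lemma \ref{lem.mod.sub-basis} is applicable, and that $J\cap K=\varnothing$ and $J\cup K=I$ hold by the very choice $K:=S_{n}\setminus J$.
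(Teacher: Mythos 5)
Your proposal is correct and coincides with the paper's own proof: it applies Lemma \ref{lem.mod.sub-basis} \textbf{(b)} (with $\mathcal{M}=\mathcal{A}$, $\mathcal{N}=\mathcal{I}$, $K=S_{n}\setminus J$) to obtain a projection onto $\mathcal{I}$, then Lemma \ref{lem.maschke.unital}, then Lemma \ref{lem.IxJ.gen}, exactly as you do. No gaps; the hypothesis checks you mention are precisely the ones the paper performs.
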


\begin{proof}
Clearly, $\mathcal{I}$ and $\mathcal{J}$ are nonunital subalgebras of
$\mathcal{A}$ (since any ideal of $\mathcal{A}$ is a nonunital subalgebra).

Let $K:=S_{n}\setminus J$\text{. Thus, }$J$ and $K$ are two disjoint subsets
of $S_{n}$ such that $J\cup K=S_{n}$. Moreover, $\left(  w\right)  _{w\in
S_{n}}$ is a basis of the $\mathbf{k}$-module $\mathcal{A}$, whereas $\left(
\overline{w}\right)  _{w\in J}$ is a basis of the $\mathbf{k}$-module
$\mathcal{A}/\mathcal{I}$. Hence, Lemma \ref{lem.mod.sub-basis} \textbf{(b)}
(applied to $\mathcal{M}=\mathcal{A}$ and $\mathcal{N}=\mathcal{I}$ and
$I=S_{n}$ and $\left(  m_{i}\right)  _{i\in I}=\left(  w\right)  _{w\in S_{n}%
}$) yields that there exists a $\mathbf{k}$-linear projection $\pi
:\mathcal{A}\rightarrow\mathcal{I}$ (that is, a $\mathbf{k}$-linear map
$\pi:\mathcal{A}\rightarrow\mathcal{I}$ such that $\left.  \pi\mid
_{\mathcal{I}}\right.  =\operatorname*{id}$). Hence, Lemma
\ref{lem.maschke.unital} shows that $\mathcal{I}$ is a nonunital subalgebra of
$\mathcal{A}$ that has a unity.

Thus, Lemma \ref{lem.IxJ.gen} shows that $\mathcal{A}=\mathcal{I}%
\oplus\mathcal{J}$ (internal direct sum) as $\mathbf{k}$-module, and
furthermore, it shows that $\mathcal{I}$ and $\mathcal{J}$ are nonunital
subalgebras of $\mathcal{A}$ that have unities and satisfy $\mathcal{A}%
\cong\mathcal{I}\times\mathcal{J}$ as $\mathbf{k}$-algebras. This proves Lemma
\ref{lem.maschke.dirprod}.
\end{proof}

\subsection{\label{sec.row.main-proof}Proof of the main theorem}

We can now prove Theorem \ref{thm.row.main} and Corollary \ref{cor.Nabla.span}
by combining what we have shown so far:

\begin{proof}
[Proof of Theorem \ref{thm.row.main}.]\textbf{(a)} This is just Lemma
\ref{lem.row.Ikperp}. \medskip

\textbf{(b)} This is just Lemma \ref{lem.row.Jkperp}. \medskip

\textbf{(c)} Lemma \ref{lem.row.I-basis} yields that the $\mathbf{k}$-module
$\mathcal{A}/\mathcal{I}_{k}$ is free with basis $\left(  \overline{w}\right)
_{w\in S_{n}\setminus\operatorname*{Av}\nolimits_{n}\left(  k+1\right)  }$.
Hence, Lemma \ref{lem.mod.sub-basis} \textbf{(a)} (applied to $\mathcal{M}%
=\mathcal{A}$ and $\mathcal{N}=\mathcal{I}_{k}$ and $I=S_{n}$ and
$J=S_{n}\setminus\operatorname*{Av}\nolimits_{n}\left(  k+1\right)  $ and
$K=\operatorname*{Av}\nolimits_{n}\left(  k+1\right)  $ and $\left(
m_{i}\right)  _{i\in I}=\left(  w\right)  _{w\in S_{n}}$) yields that the
$\mathbf{k}$-module $\mathcal{I}_{k}$ is free of rank $\left\vert
\operatorname*{Av}\nolimits_{n}\left(  k+1\right)  \right\vert $. This proves
Theorem \ref{thm.row.main} \textbf{(c)}. \medskip

\textbf{(d)} This is proved similarly to part \textbf{(c)}, but using Lemma
\ref{lem.row.J-basis} instead of Lemma \ref{lem.row.I-basis}. (This time,
Lemma \ref{lem.mod.sub-basis} \textbf{(a)} must be applied to
$J=\operatorname*{Av}\nolimits_{n}\left(  k+1\right)  $ and $K=S_{n}%
\setminus\operatorname*{Av}\nolimits_{n}\left(  k+1\right)  $.) \medskip

\textbf{(e)} This is just Lemma \ref{lem.row.I-basis}. \medskip

\textbf{(f)} This is just Lemma \ref{lem.row.J-basis}. \medskip

\textbf{(g)} Proposition \ref{prop.IJ.1} \textbf{(a)} yields that both
$\mathcal{I}_{k}$ and $\mathcal{J}_{k}$ are ideals of $\mathcal{A}$. Theorem
\ref{thm.row.main} \textbf{(a)} yields $\mathcal{I}_{k}=\operatorname*{LAnn}%
\mathcal{J}_{k}$. Theorem \ref{thm.row.main} \textbf{(b)} yields
$\mathcal{J}_{k}=\operatorname*{LAnn}\mathcal{I}_{k}$. Clearly, $S_{n}%
\setminus\operatorname*{Av}\nolimits_{n}\left(  k+1\right)  $ and
$\operatorname*{Av}\nolimits_{n}\left(  k+1\right)  $ are two disjoint subsets
of $S_{n}$ such that $\left(  S_{n}\setminus\operatorname*{Av}\nolimits_{n}%
\left(  k+1\right)  \right)  \cup\operatorname*{Av}\nolimits_{n}\left(
k+1\right)  =S_{n}$. Theorem \ref{thm.row.main} \textbf{(e)} says that the
$\mathbf{k}$-module $\mathcal{A}/\mathcal{I}_{k}$ is free with basis $\left(
\overline{w}\right)  _{w\in S_{n}\setminus\operatorname*{Av}\nolimits_{n}%
\left(  k+1\right)  }$. Hence, Lemma \ref{lem.maschke.dirprod} (applied to
$J=S_{n}\setminus\operatorname*{Av}\nolimits_{n}\left(  k+1\right)  $ and
$\mathcal{I}=\mathcal{I}_{k}$ and $\mathcal{J}=\mathcal{J}_{k}$) yields that
$\mathcal{A}=\mathcal{I}_{k}\oplus\mathcal{J}_{k}$ (internal direct sum) as
$\mathbf{k}$-module, and moreover, $\mathcal{I}_{k}$ and $\mathcal{J}_{k}$ are
nonunital subalgebras of $\mathcal{A}$ that have unities and satisfy
$\mathcal{A}\cong\mathcal{I}_{k}\times\mathcal{J}_{k}$ as $\mathbf{k}%
$-algebras. This proves Theorem \ref{thm.row.main} \textbf{(g)}.
\end{proof}

\begin{proof}
[Proof of Corollary \ref{cor.Nabla.span}.]\textbf{(a)} Proposition
\ref{prop.IJ.1} \textbf{(g)} (applied to $k=2$) yields%
\begin{align}
\mathcal{I}_{2}  &  =\operatorname*{span}\left\{  \nabla_{\mathbf{B}%
,\mathbf{A}}\ \mid\ \mathbf{A},\mathbf{B}\in\operatorname*{SD}\left(
n\right)  \text{ with }\ell\left(  \mathbf{A}\right)  =\ell\left(
\mathbf{B}\right)  =2\right\} \nonumber\\
&  =\operatorname*{span}\left\{  \nabla_{\left(  B,\left[  n\right]  \setminus
B\right)  ,\left(  A,\left[  n\right]  \setminus A\right)  }\ \mid
\ A,B\subseteq\left[  n\right]  \right\}  \label{pf.cor.Nabla.span.1}%
\end{align}
(since the set decompositions $\mathbf{A}\in\operatorname*{SD}\left(
n\right)  $ with $\ell\left(  \mathbf{A}\right)  =2$ are precisely the pairs
of the form $\left(  A,\left[  n\right]  \setminus A\right)  $ for
$A\subseteq\left[  n\right]  $). But Proposition \ref{prop.row.rook} allows us
to rewrite the $\nabla_{\left(  B,\left[  n\right]  \setminus B\right)
,\left(  A,\left[  n\right]  \setminus A\right)  }$ on the right hand side
here as $\nabla_{B,A}$. Thus, (\ref{pf.cor.Nabla.span.1}) rewrites as
$\mathcal{I}_{2}=\operatorname*{span}\left\{  \nabla_{B,A}\ \mid
\ A,B\subseteq\left[  n\right]  \right\}  $. This proves part \textbf{(a)}.
\medskip

\textbf{(b)} Theorem \ref{thm.row.main} \textbf{(c)} (applied to $k=2$) shows
that the $\mathbf{k}$-module $\mathcal{I}_{2}$ is free of rank $\left\vert
\operatorname*{Av}\nolimits_{n}\left(  3\right)  \right\vert $. But it is
known (see, e.g., \cite[Corollary 4.8]{Bona22}) that $\left\vert
\operatorname*{Av}\nolimits_{n}\left(  3\right)  \right\vert =C_{n}$.
Corollary \ref{cor.Nabla.span} \textbf{(b)} follows from these two observations.
\end{proof}

\subsection{Opposite avoidance}

Parts of Theorem \ref{thm.row.main} can be transformed into \textquotedblleft
twin\textquotedblright\ forms by replacing the relevant permutations with
their complements (i.e., multiplying them with the permutation with one-line
notation $\left(  n,n-1,\ldots,1\right)  $). To state this, we need an
analogue of Definition \ref{def.avoid.up}:

\begin{definition}
\label{def.avoid.down}Let $k\in\mathbb{N}$.

\begin{enumerate}
\item[\textbf{(a)}] Let $w\in S_{n}$ be a permutation. We say that $w$
\emph{avoids }$\left(  k+1\right)  k\cdots1$ if there exists no $\left(
k+1\right)  $-element subset $U$ of $\left[  n\right]  $ such that the
restriction $w\mid_{U}$ is decreasing (i.e., if there exist no $k+1$ elements
$i_{1}<i_{2}<\cdots<i_{k+1}$ of $\left[  n\right]  $ such that $w\left(
i_{1}\right)  >w\left(  i_{2}\right)  >\cdots>w\left(  i_{k+1}\right)  $).

\item[\textbf{(b)}] We let $\operatorname*{Av}\nolimits_{n}^{\prime}\left(
k+1\right)  $ denote the set of all permutations $w\in S_{n}$ that avoid
$\left(  k+1\right)  k\cdots1$.
\end{enumerate}
\end{definition}

\begin{proposition}
\label{prop.avoid.up-down}Let $w_{0}\in S_{n}$ be the permutation that sends
each $i\in\left[  n\right]  $ to $n+1-i$. Let $k\in\mathbb{N}$ and $w\in
S_{n}$. Then, $w\in\operatorname*{Av}\nolimits_{n}^{\prime}\left(  k+1\right)
$ if and only if $w_{0}w\in\operatorname*{Av}\nolimits_{n}\left(  k+1\right)
$.
\end{proposition}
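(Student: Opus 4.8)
The statement is a straightforward bijective reformulation, so the plan is to translate "$w$ avoids $(k+1)k\cdots1$" directly into a statement about $w_{0}w$ and check that it is exactly "$w_{0}w$ avoids $12\cdots(k+1)$". First I would recall that $w_{0}$ is the order-reversing permutation sending $i$ to $n+1-i$, so that $w_{0}$ is strictly decreasing, i.e. $a<b$ implies $w_{0}(a)>w_{0}(b)$. Consequently, for any indices $i_{1}<i_{2}<\cdots<i_{k+1}$ of $[n]$ and any permutation $w\in S_{n}$, the chain of inequalities $w(i_{1})>w(i_{2})>\cdots>w(i_{k+1})$ holds if and only if $(w_{0}w)(i_{1})<(w_{0}w)(i_{2})<\cdots<(w_{0}w)(i_{k+1})$, since $w_{0}$ reverses all of these strict inequalities simultaneously.

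The key step is then purely logical. By Definition \ref{def.avoid.down} \textbf{(a)}, we have $w\in\operatorname*{Av}\nolimits_{n}^{\prime}\left(k+1\right)$ if and only if there exist no $k+1$ elements $i_{1}<i_{2}<\cdots<i_{k+1}$ of $[n]$ with $w\left(i_{1}\right)>w\left(i_{2}\right)>\cdots>w\left(i_{k+1}\right)$. By the equivalence established in the previous paragraph, this condition is equivalent to: there exist no $k+1$ elements $i_{1}<i_{2}<\cdots<i_{k+1}$ of $[n]$ with $(w_{0}w)\left(i_{1}\right)<(w_{0}w)\left(i_{2}\right)<\cdots<(w_{0}w)\left(i_{k+1}\right)$. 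By Definition \ref{def.avoid.up} \textbf{(a)}, the latter is precisely the statement that $w_{0}w\in\operatorname*{Av}\nolimits_{n}\left(k+1\right)$. Chaining these equivalences gives the claim.

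There is essentially no obstacle here; the only thing to be careful about is not confusing $w_{0}w$ (which modifies the \emph{output} values $w(i)$ by $i_{\text{val}}\mapsto n+1-i_{\text{val}}$) with $ww_{0}$ (which would instead reverse the \emph{positions}). Since Definition \ref{def.avoid.down} phrases decreasing-ness in terms of the values $w(i_{j})$ at increasing positions $i_{j}$, it is precisely the values that must be reversed, so $w_{0}w$ is the correct composite. I would also note in passing that the index set $\{i_{1}<\cdots<i_{k+1}\}$ is left untouched by the passage from $w$ to $w_{0}w$, which is exactly why the quantifier "there exists no $(k+1)$-element subset $U$" transfers verbatim. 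This makes the whole argument a two-line unwinding of definitions, and I would write it as such rather than invoking any of the heavier machinery from earlier in the paper.
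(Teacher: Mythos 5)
Your proof is correct and matches the paper's own argument: both rest on the single observation that $w_{0}$ is strictly decreasing, so for indices $i_{1}<\cdots<i_{k+1}$ the values of $w$ decrease exactly when the values of $w_{0}w$ increase, and the quantifier over $(k+1)$-element index sets transfers verbatim. The paper phrases this as "$(w_{0}w)\mid_{U}$ is increasing iff $w\mid_{U}$ is decreasing," which is the same two-line unwinding of the definitions that you give.
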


\begin{proof}
Given a subset $U$ of $\left[  n\right]  $, it is clear that the restriction
$\left(  w_{0}w\right)  \mid_{U}$ is increasing if and only if the
corresponding restriction $w\mid_{U}$ is decreasing (since the permutation
$w_{0}$ is strictly decreasing). Hence, $w_{0}w$ avoids $12\cdots\left(
k+1\right)  $ if and only if $w$ avoids $\left(  k+1\right)  k\cdots1$. Thus,
Proposition \ref{prop.avoid.up-down} follows.
\end{proof}

The set $\operatorname*{Av}\nolimits_{n}^{\prime}\left(  k+1\right)  $ allows
us to easily obtain a \textquotedblleft twin\textquotedblright\ to parts
\textbf{(c)}--\textbf{(f)} of Theorem \ref{thm.row.main}:

\begin{corollary}
\label{cor.row.twin}Let $k\in\mathbb{N}$. Then:

\begin{enumerate}
\item[\textbf{(a)}] The $\mathbf{k}$-module $\mathcal{I}_{k}$ is free of rank
$\left\vert \operatorname*{Av}\nolimits_{n}^{\prime}\left(  k+1\right)
\right\vert $.

\item[\textbf{(b)}] The $\mathbf{k}$-module $\mathcal{J}_{k}$ is free of rank
$\left\vert S_{n}\setminus\operatorname*{Av}\nolimits_{n}^{\prime}\left(
k+1\right)  \right\vert $.

\item[\textbf{(c)}] The $\mathbf{k}$-module $\mathcal{A}/\mathcal{I}_{k}$ is
free with basis $\left(  \overline{w}\right)  _{w\in S_{n}\setminus
\operatorname*{Av}\nolimits_{n}^{\prime}\left(  k+1\right)  }$. (Here,
$\overline{w}$ denotes the projection of $w\in\mathcal{A}$ onto the quotient
$\mathcal{A}/\mathcal{I}_{k}$.)

\item[\textbf{(d)}] The $\mathbf{k}$-module $\mathcal{A}/\mathcal{J}_{k}$ is
free with basis $\left(  \overline{w}\right)  _{w\in\operatorname*{Av}%
\nolimits_{n}^{\prime}\left(  k+1\right)  }$. (Here, $\overline{w}$ denotes
the projection of $w\in\mathcal{A}$ onto the quotient $\mathcal{A}%
/\mathcal{J}_{k}$.)
\end{enumerate}
\end{corollary}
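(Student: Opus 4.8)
The plan is to deduce Corollary~\ref{cor.row.twin} from the corresponding parts of Theorem~\ref{thm.row.main} by conjugating (more precisely, left-multiplying) everything by the longest element $w_0\in S_n$. The key observation is that, since the map $w\mapsto w_0 w$ is a bijection $S_n\to S_n$, left multiplication by $w_0$ is a $\mathbf{k}$-linear automorphism $L_{w_0}:\mathcal{A}\to\mathcal{A}$ of the underlying $\mathbf{k}$-module $\mathcal{A}$ (it is \emph{not} an algebra map, but that does not matter here). Proposition~\ref{prop.avoid.up-down} tells us precisely that $L_{w_0}$ sends the set $\operatorname*{Av}\nolimits_{n}^{\prime}(k+1)$ bijectively onto $\operatorname*{Av}\nolimits_{n}(k+1)$, hence also $S_n\setminus\operatorname*{Av}\nolimits_{n}^{\prime}(k+1)$ bijectively onto $S_n\setminus\operatorname*{Av}\nolimits_{n}(k+1)$.

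First I would record that $L_{w_0}(\mathcal{I}_k)=\mathcal{I}_k$ and $L_{w_0}(\mathcal{J}_k)=\mathcal{J}_k$. Both of these follow because $\mathcal{I}_k$ and $\mathcal{J}_k$ are (two-sided, hence in particular left) ideals of $\mathcal{A}$ by Proposition~\ref{prop.IJ.1}~\textbf{(a)}, so $w_0\mathcal{I}_k\subseteq\mathcal{I}_k$ and $w_0\mathcal{J}_k\subseteq\mathcal{J}_k$; applying the same with $w_0^{-1}=w_0$ gives the reverse inclusions. In particular parts \textbf{(a)} and \textbf{(b)} of the corollary are immediate: the rank of a free $\mathbf{k}$-module is unchanged, so Theorem~\ref{thm.row.main}~\textbf{(c)} gives that $\mathcal{I}_k$ is free of rank $\lvert\operatorname*{Av}\nolimits_{n}(k+1)\rvert=\lvert\operatorname*{Av}\nolimits_{n}^{\prime}(k+1)\rvert$ (the last equality by the bijection above), and similarly for $\mathcal{J}_k$ using Theorem~\ref{thm.row.main}~\textbf{(d)}.

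For parts \textbf{(c)} and \textbf{(d)}, since $L_{w_0}$ preserves $\mathcal{I}_k$, it descends to a $\mathbf{k}$-module automorphism $\overline{L_{w_0}}$ of the quotient $\mathcal{A}/\mathcal{I}_k$ satisfying $\overline{L_{w_0}}(\overline w)=\overline{w_0 w}$. By Theorem~\ref{thm.row.main}~\textbf{(e)}, the family $(\overline w)_{w\in S_n\setminus\operatorname*{Av}\nolimits_{n}(k+1)}$ is a basis of $\mathcal{A}/\mathcal{I}_k$; pulling it back through the automorphism $\overline{L_{w_0}}^{-1}$ (which sends $\overline{w}$ to $\overline{w_0 w}$, as $w_0^{-1}=w_0$) and reindexing via the bijection $w\mapsto w_0 w$ shows that $(\overline w)_{w\in S_n\setminus\operatorname*{Av}\nolimits_{n}^{\prime}(k+1)}$ is a basis as well. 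This proves \textbf{(c)}; part \textbf{(d)} follows identically with $\mathcal{J}_k$ in place of $\mathcal{I}_k$ and Theorem~\ref{thm.row.main}~\textbf{(f)} in place of \textbf{(e)}.

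I do not expect any real obstacle here; the only thing to be careful about is the bookkeeping of which direction the bijection $w\mapsto w_0 w$ goes when reindexing the basis, so that the index set comes out as $S_n\setminus\operatorname*{Av}\nolimits_{n}^{\prime}(k+1)$ (resp.\ $\operatorname*{Av}\nolimits_{n}^{\prime}(k+1)$) rather than its image. Since $w_0$ is an involution, $w\mapsto w_0 w$ is its own inverse, which keeps this painless. An alternative phrasing of the same argument would be purely set-theoretic: an automorphism of a free module carrying one indexed basis to another establishes that the reindexed family is a basis, and here the automorphism is the descent of $L_{w_0}$.
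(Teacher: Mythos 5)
Your proposal is correct and follows essentially the same route as the paper: both use Proposition \ref{prop.avoid.up-down} to get the bijection $w\mapsto w_0w$ between the two avoidance sets (giving parts \textbf{(a)} and \textbf{(b)} via Theorem \ref{thm.row.main} \textbf{(c)},\textbf{(d)}), and both use that $\mathcal{I}_k$ (resp. $\mathcal{J}_k$) is a left ideal so that left multiplication by $w_0$ descends to a $\mathbf{k}$-module automorphism of the quotient, carrying the basis of Theorem \ref{thm.row.main} \textbf{(e)},\textbf{(f)} to the reindexed basis. No gaps.
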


\begin{proof}
\textbf{(a)} Proposition \ref{prop.avoid.up-down} shows that the map%
\begin{align*}
S_{n}  &  \rightarrow S_{n},\\
w  &  \mapsto w_{0}w
\end{align*}
restricts to a bijection from $\operatorname*{Av}\nolimits_{n}^{\prime}\left(
k+1\right)  $ to $\operatorname*{Av}\nolimits_{n}\left(  k+1\right)  $.
Hence,
\begin{equation}
\left\vert \operatorname*{Av}\nolimits_{n}\left(  k+1\right)  \right\vert
=\left\vert \operatorname*{Av}\nolimits_{n}^{\prime}\left(  k+1\right)
\right\vert . \label{pf.cor.row.twin.a.=}%
\end{equation}
But Theorem \ref{thm.row.main} \textbf{(c)} shows that the $\mathbf{k}$-module
$\mathcal{I}_{k}$ is free of rank $\left\vert \operatorname*{Av}%
\nolimits_{n}\left(  k+1\right)  \right\vert =\left\vert \operatorname*{Av}%
\nolimits_{n}^{\prime}\left(  k+1\right)  \right\vert $. This proves Corollary
\ref{cor.row.twin} \textbf{(a)}. \medskip

\textbf{(b)} Analogous to Corollary \ref{cor.row.twin} \textbf{(a)}, but using
Theorem \ref{thm.row.main} \textbf{(d)} instead of Theorem \ref{thm.row.main}
\textbf{(c)}. \medskip

\textbf{(c)} Proposition \ref{prop.avoid.up-down} shows that a permutation
$w\in S_{n}$ satisfies $w\in\operatorname*{Av}\nolimits_{n}^{\prime}\left(
k+1\right)  $ if and only if $w_{0}w\in\operatorname*{Av}\nolimits_{n}\left(
k+1\right)  $. Thus, the contrapositive also holds: A permutation $w\in S_{n}$
satisfies $w\notin\operatorname*{Av}\nolimits_{n}^{\prime}\left(  k+1\right)
$ if and only if $w_{0}w\notin\operatorname*{Av}\nolimits_{n}\left(
k+1\right)  $. Therefore, the map%
\begin{align}
S_{n}\setminus\operatorname*{Av}\nolimits_{n}^{\prime}\left(  k+1\right)   &
\rightarrow S_{n}\setminus\operatorname*{Av}\nolimits_{n}\left(  k+1\right)
,\nonumber\\
w  &  \mapsto w_{0}w \label{pf.cor.row.twin.c.b1}%
\end{align}
is a bijection. The inverse of this map must also send each $w$ to $w_{0}w$
(because $w_{0}$ is an involution, i.e., we have $w_{0}w_{0}%
=\operatorname*{id}$). Thus, this inverse is the map%
\begin{align}
S_{n}\setminus\operatorname*{Av}\nolimits_{n}\left(  k+1\right)   &
\rightarrow S_{n}\setminus\operatorname*{Av}\nolimits_{n}^{\prime}\left(
k+1\right)  ,\nonumber\\
w  &  \mapsto w_{0}w. \label{pf.cor.row.twin.c.b2}%
\end{align}
As a consequence, this latter map is a bijection.

Theorem \ref{thm.row.main} \textbf{(e)} shows that the $\mathbf{k}$-module
$\mathcal{A}/\mathcal{I}_{k}$ is free with basis $\left(  \overline{w}\right)
_{w\in S_{n}\setminus\operatorname*{Av}\nolimits_{n}\left(  k+1\right)  }$.
But $\mathcal{I}_{k}$ is a left ideal of $\mathcal{A}$ and thus fixed under
left multiplication by $w_{0}$. Hence, the map%
\begin{align*}
\mathcal{A}/\mathcal{I}_{k}  &  \rightarrow\mathcal{A}/\mathcal{I}_{k},\\
\overline{\mathbf{a}}  &  \mapsto\overline{w_{0}\mathbf{a}}%
\end{align*}
is well-defined and is an automorphism of the $\mathbf{k}$-module
$\mathcal{A}/\mathcal{I}_{k}$ (being invertible because $w_{0}^{2}%
=\operatorname*{id}$). Applying this map to the basis $\left(  \overline
{w}\right)  _{w\in S_{n}\setminus\operatorname*{Av}\nolimits_{n}\left(
k+1\right)  }$ of $\mathcal{A}/\mathcal{I}_{k}$, we thus obtain a new basis
$\left(  \overline{w_{0}w}\right)  _{w\in S_{n}\setminus\operatorname*{Av}%
\nolimits_{n}\left(  k+1\right)  }$ of $\mathcal{A}/\mathcal{I}_{k}$ (since
the image of a basis under a $\mathbf{k}$-module isomorphism is again a
basis). But the latter basis $\left(  \overline{w_{0}w}\right)  _{w\in
S_{n}\setminus\operatorname*{Av}\nolimits_{n}\left(  k+1\right)  }$ can be
reindexed as $\left(  \overline{w}\right)  _{w\in S_{n}\setminus
\operatorname*{Av}\nolimits_{n}^{\prime}\left(  k+1\right)  }$, since the map
(\ref{pf.cor.row.twin.c.b2}) is a bijection. Hence, we have shown that
$\left(  \overline{w}\right)  _{w\in S_{n}\setminus\operatorname*{Av}%
\nolimits_{n}^{\prime}\left(  k+1\right)  }$ is a basis of the $\mathbf{k}%
$-module $\mathcal{A}/\mathcal{I}_{k}$. This proves part \textbf{(c)}.
\medskip

\textbf{(d)} Analogous to Corollary \ref{cor.row.twin} \textbf{(c)}, but using
Theorem \ref{thm.row.main} \textbf{(f)} instead of Theorem \ref{thm.row.main}
\textbf{(e)}.
\end{proof}

\subsection{\label{sec.row.ann}$\mathcal{J}_{k}$ and $\mathcal{I}_{n-k-1}$ as
annihilators of tensor modules}

We shall now discuss how the ideals $\mathcal{J}_{k}$ and $\mathcal{I}_{k}$
(more precisely, $\mathcal{I}_{n-k-1}$, but this is just a matter of indexing)
can be interpreted as annihilators of certain left $\mathcal{A}$-modules. This
breaks no new ground, but rather recovers results by de Concini and Procesi
\cite[Theorem 4.2]{deCPro76} and Bowman, Doty and Martin \cite{BoDoMa22}; we
hope that our elementary approach makes these results more accessible.

\subsubsection{$\mathcal{J}_{k}$ as annihilator of $V_{k}^{\otimes n}$ (action
on places)}

Recall that $\mathcal{A}=\mathbf{k}\left[  S_{n}\right]  $. Thus, the
representations of $S_{n}$ over $\mathbf{k}$ are precisely the left
$\mathcal{A}$-modules. If $M$ is any left $\mathcal{A}$-module, then
$\operatorname*{Ann}M$ shall denote its annihilator, i.e., the ideal
\[
\left\{  a\in\mathcal{A}\ \mid\ aM=0\right\}  =\left\{  a\in\mathcal{A}%
\ \mid\ am=0\text{ for all }m\in M\right\}
\]
of $\mathcal{A}$. Note that if $M$ is a left ideal of $\mathcal{A}$, then this
annihilator $\operatorname*{Ann}M$ is just its left annihilator
$\operatorname*{LAnn}M$.

For each $k\in\mathbb{N}$, we consider the free $\mathbf{k}$-module $V_{k}$
with basis $\left(  e_{1},e_{2},\ldots,e_{k}\right)  $. On its $n$-th tensor
power $V_{k}^{\otimes n}$, the symmetric group $S_{n}$ acts by permuting the
tensor factors:%
\begin{align*}
\sigma\cdot\left(  v_{1}\otimes v_{2}\otimes\cdots\otimes v_{n}\right)   &
=v_{\sigma^{-1}\left(  1\right)  }\otimes v_{\sigma^{-1}\left(  2\right)
}\otimes\cdots\otimes v_{\sigma^{-1}\left(  n\right)  }\\
&  \ \ \ \ \ \ \ \ \ \ \text{for all }\sigma\in S_{n}\text{ and }v_{1}%
,v_{2},\ldots,v_{n}\in V_{k}.
\end{align*}
Thus, $V_{k}^{\otimes n}$ is a left $\mathcal{A}$-module for each
$k\in\mathbb{N}$. This action of $\mathcal{A}$ (or of $S_{n}$) is called
\emph{action on places} or \emph{action by place permutation}.

The $\mathbf{k}$-module $V_{k}^{\otimes n}$ can also be identified with the
$\mathbf{k}$-module of homogeneous polynomials of degree $n$ in $k$
noncommutative indeterminates $x_{1},x_{2},\ldots,x_{k}$ (via the isomorphism
that sends each pure tensor $e_{i_{1}}\otimes e_{i_{2}}\otimes\cdots\otimes
e_{i_{n}}$ to the noncommutative monomial $x_{i_{1}}x_{i_{2}}\cdots x_{i_{n}}%
$). Under this identification, the action of $S_{n}$ permutes the order of factors.

Now we shall prove the following result of de Concini and Procesi
\cite[Theorem 4.2]{deCPro76}:

\begin{theorem}
\label{thm.AnnVkn}Let $k\in\mathbb{N}$. Then,%
\[
\mathcal{J}_{k}=\operatorname*{Ann}\left(  V_{k}^{\otimes n}\right)  .
\]

\end{theorem}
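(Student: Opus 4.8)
The plan is to establish the two inclusions $\mathcal{J}_{k}\subseteq\operatorname*{Ann}\left(V_{k}^{\otimes n}\right)$ and $\operatorname*{Ann}\left(V_{k}^{\otimes n}\right)\subseteq\mathcal{J}_{k}$ separately. For the first inclusion, recall from Proposition \ref{prop.IJ.1} \textbf{(e)} that $\mathcal{J}_{k}$ is the two-sided ideal generated by the antisymmetrizers $\nabla_{U}^{-}$ for subsets $U\subseteq\left[n\right]$ of size $>k$. Since $\operatorname*{Ann}\left(V_{k}^{\otimes n}\right)$ is a two-sided ideal, it suffices to check that each such $\nabla_{U}^{-}$ annihilates $V_{k}^{\otimes n}$. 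By Proposition \ref{prop.wNab-U} (conjugating by a suitable permutation) we may assume $U=\left\{1,2,\ldots,k+1\right\}$, and then $\nabla_{U}^{-}=\nabla_{\left\{1,\ldots,k+1\right\}}^{-}$ acts on a pure tensor $e_{i_{1}}\otimes\cdots\otimes e_{i_{n}}$ by antisymmetrizing the first $k+1$ tensor slots. Since those slots carry labels $i_{1},\ldots,i_{k+1}$ drawn from the $k$-element set $\left[k\right]$, the pigeonhole principle forces two equal labels, and antisymmetrizing a tensor with a repeated factor (in positions swapped by a transposition fixed on the rest) gives $0$. Hence $\nabla_{U}^{-}\cdot\left(V_{k}^{\otimes n}\right)=0$, giving $\mathcal{J}_{k}\subseteq\operatorname*{Ann}\left(V_{k}^{\otimes n}\right)$.

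For the reverse inclusion, the cleanest route is via a rank/dimension count using Theorem \ref{thm.row.main}. Since $V_{k}^{\otimes n}$ is a left $\mathcal{A}$-module and $\mathcal{J}_{k}\subseteq\operatorname*{Ann}\left(V_{k}^{\otimes n}\right)$, the action of $\mathcal{A}$ on $V_{k}^{\otimes n}$ factors through the quotient $\mathcal{A}/\mathcal{J}_{k}$, which by Theorem \ref{thm.row.main} \textbf{(f)} is free with basis $\left(\overline{w}\right)_{w\in\operatorname*{Av}_{n}\left(k+1\right)}$, hence of rank $\left\vert\operatorname*{Av}_{n}\left(k+1\right)\right\vert$. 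It therefore suffices to show that the images $\overline{w}$ for $w\in\operatorname*{Av}_{n}\left(k+1\right)$ act by $\mathbf{k}$-linearly independent operators on $V_{k}^{\otimes n}$; this forces $\operatorname*{Ann}\left(V_{k}^{\otimes n}\right)\cap\left(\mathcal{A}/\mathcal{J}_{k}\right)$ — more precisely the image of $\operatorname*{Ann}\left(V_{k}^{\otimes n}\right)$ in $\mathcal{A}/\mathcal{J}_{k}$ — to be zero, whence $\operatorname*{Ann}\left(V_{k}^{\otimes n}\right)\subseteq\mathcal{J}_{k}$.

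To prove this linear independence, I would exhibit, for each $k+1$-increasing-subsequence-avoiding permutation, a pure tensor on which the operators can be distinguished. By Lemma \ref{lem.erdos-szek-var}, any $v\in\operatorname*{Av}_{n}\left(k+1\right)$ admits a set decomposition $\left(A_{1},\ldots,A_{k}\right)$ of $\left[n\right]$ with all restrictions $v\mid_{A_{i}}$ decreasing; define the "coloring" tensor $t_{v}:=e_{c(1)}\otimes e_{c(2)}\otimes\cdots\otimes e_{c(n)}$ where $c(j)=i$ whenever $j\in A_{i}$. One checks that $w\cdot t_{v}=t_{v}$ iff $w$ preserves each block $A_{i}$, and more usefully that the coefficient of a fixed reference tensor in $w\cdot t_{v}$ detects membership of $w$ in the double coset structure; combining this with the lexicographic-order argument already used in Lemma \ref{lem.lex-less-1} and Lemma \ref{lem.row.I-c1}, one shows that the matrix expressing $\left(w\cdot t_{v}\right)_{w,v}$ in the pure-tensor basis is "triangular with units on the diagonal" in lexicographic order, hence invertible. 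This yields the desired independence. The main obstacle I anticipate is making the triangularity bookkeeping precise: one must choose, for each $v$, the right target pure tensor and verify that no lexicographically earlier permutation $w$ contributes to that target coordinate of $v\cdot t_{v}$ — essentially the same combinatorial lemma as before, but I would want to double-check that working with general set decompositions (not necessarily partition-shaped) does not break the monotonicity argument. An alternative that sidesteps some of this: invoke Remark \ref{rmk.IJ=murphy} to identify $\mathcal{J}_{k}$ with $\MurpF_{\operatorname*{std},\operatorname*{len}>k}^{-\operatorname*{Col}}$ and cite the classical fact that $V_{k}^{\otimes n}$ is, up to the Schur--Weyl setup, annihilated exactly by the span of column-antisymmetrizers of shapes with more than $k$ rows — but since the paper aims for self-contained elementary proofs, I would present the rank-counting argument above as the primary one.
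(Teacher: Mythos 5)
Your proof is correct, but it follows a genuinely different route from the paper's. The paper does not split the statement into two inclusions: it introduces, for each set decomposition $\mathbf{A}$ with $\ell\left(\mathbf{A}\right)\leq k$, the left ideal $\mathcal{I}_{\mathbf{A}}=\operatorname*{span}\left\{\nabla_{\mathbf{B},\mathbf{A}}\right\}$, shows via tabloid (Young) modules $\mathcal{M}^{D\left(\mathbf{A}\right)}$ that each $\mathcal{I}_{\mathbf{A}}$ embeds into $V_{k}^{\otimes n}$ as a left $\mathcal{A}$-module and that these images sum to all of $V_{k}^{\otimes n}$, and concludes $\operatorname*{Ann}\left(V_{k}^{\otimes n}\right)=\operatorname*{Ann}\left(\mathcal{I}_{k}\right)=\operatorname*{LAnn}\left(\mathcal{I}_{k}\right)=\mathcal{J}_{k}$ by Theorem \ref{thm.row.main} \textbf{(b)}. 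You instead prove $\mathcal{J}_{k}\subseteq\operatorname*{Ann}\left(V_{k}^{\otimes n}\right)$ directly from the generators $\nabla_{U}^{-}$ (pigeonhole on $k+1$ tensor slots colored by only $k$ colors, then factor $\nabla_{U}^{-}$ through $1-t_{p,q}$ -- this is sound, and the conjugation reduction to $U=\left[k+1\right]$ is not even needed), and you get the reverse inclusion from Theorem \ref{thm.row.main} \textbf{(f)} plus linear independence of the operators $\rho_{k}\left(w\right)$, $w\in\operatorname*{Av}\nolimits_{n}\left(k+1\right)$. That independence claim is true and your triangularity sketch is the right proof: with $t_{\mathbf{A}}$ the pure tensor attached to a coloring $\mathbf{A}$, one has $w\cdot t_{\mathbf{A}}=t_{w\mathbf{A}}$, so the coefficient of $t_{v\mathbf{A}_{v}}$ in $\left(\sum_{w}\alpha_{w}w\right)\cdot t_{\mathbf{A}_{v}}$ is exactly $\left\langle \sum_{w}\alpha_{w}w,\ \nabla_{v\mathbf{A}_{v},\mathbf{A}_{v}}\right\rangle$, and Lemmas \ref{lem.erdos-szek-var} and \ref{lem.lex-less-1} make this vanish for all $w>v$ while equaling $1$ for $w=v$; choosing $v$ lexicographically \emph{minimal} with $\alpha_{v}\neq0$ then kills $\alpha_{v}$, exactly as in Lemma \ref{lem.row.I-c1}. (Your stated worry has the direction reversed: what you need is that no lexicographically \emph{later} permutation hits the target coordinate, and general set decompositions cause no trouble there.) In effect, your second half re-proves Lemma \ref{lem.row.I-c1} transported into $V_{k}^{\otimes n}$, so it uses part \textbf{(f)} of the main theorem where the paper uses part \textbf{(b)}; what the paper's route buys in exchange is the structural identification of $V_{k}^{\otimes n}$ as a sum of copies of the ideals $\mathcal{I}_{\mathbf{A}}$, while yours avoids tabloids entirely and is self-contained at the point of use. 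The alternative you mention via Remark \ref{rmk.IJ=murphy} is too vague to count as a proof, but you rightly make the elementary argument primary, and there is no circularity in your citations (Theorem \ref{thm.row.main} and the lexicographic lemmas are all proved before and independently of Theorem \ref{thm.AnnVkn}).
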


\begin{proof}
We say that two set decompositions $\mathbf{A}$ and $\mathbf{B}$ of $\left[
n\right]  $ are \emph{equal-shaped} if they satisfy $\ell\left(
\mathbf{A}\right)  =\ell\left(  \mathbf{B}\right)  $ and if each block of
$\mathbf{B}$ has the same size as the corresponding block of $\mathbf{A}$. In
other words, two set decompositions $\left(  A_{1},A_{2},\ldots,A_{p}\right)
$ and $\left(  B_{1},B_{2},\ldots,B_{q}\right)  $ of $\left[  n\right]  $ are
\emph{equal-shaped} if and only if $p=q$ and $\left\vert A_{i}\right\vert
=\left\vert B_{i}\right\vert $ for each $i\in\left[  p\right]  =\left[
q\right]  $.

An equivalent characterization of equal-shapedness is the following: Two set
decompositions $\mathbf{A}$ and $\mathbf{B}$ of $\left[  n\right]  $ are
equal-shaped if and only if there exists some $w\in S_{n}$ such that
$w\mathbf{A}=\mathbf{B}$. (Indeed, the \textquotedblleft if\textquotedblright%
\ part is obvious, whereas the \textquotedblleft only if\textquotedblright%
\ part is easily shown by choosing a permutation $w\in S_{n}$ that sends each
block of $\mathbf{A}$ to the respective block of $\mathbf{B}$.)

Recall that $\operatorname*{SD}\left(  n\right)  $ is the set of all set
decompositions of $\left[  n\right]  $. For each $\mathbf{A}\in
\operatorname*{SD}\left(  n\right)  $, we define a $\mathbf{k}$-submodule%
\[
\mathcal{I}_{\mathbf{A}}:=\operatorname*{span}\left\{  \nabla_{\mathbf{B}%
,\mathbf{A}}\ \mid\ \mathbf{B}\in\operatorname*{SD}\left(  n\right)  \text{
with }\ell\left(  \mathbf{A}\right)  =\ell\left(  \mathbf{B}\right)  \right\}
\]
of $\mathcal{A}$. This $\mathbf{k}$-submodule $\mathcal{I}_{\mathbf{A}}$ is
actually a left ideal of $\mathcal{A}$, since (\ref{eq.uNabv}) yields%
\begin{equation}
w\nabla_{\mathbf{B},\mathbf{A}}=\nabla_{w\mathbf{B},\mathbf{A}}
\label{pf.thm.AnnVkn.wNab}%
\end{equation}
for all $w\in S_{n}$ and $\mathbf{B}\in\operatorname*{SD}\left(  n\right)  $
with $\ell\left(  \mathbf{A}\right)  =\ell\left(  \mathbf{B}\right)  $. It is
easy to see that as a $\mathbf{k}$-module, $\mathcal{I}_{\mathbf{A}}$ has a
basis consisting of the elements $\nabla_{\mathbf{B},\mathbf{A}}$, where
$\mathbf{B}$ ranges over all those set decompositions of $\left[  n\right]  $
that are equal-shaped to $\mathbf{A}$. (Indeed, these elements span
$\mathcal{I}_{\mathbf{A}}$ because Proposition \ref{prop.row.simple}
\textbf{(a)} shows that all other $\nabla_{\mathbf{B},\mathbf{A}}$'s are $0$.
And they are linearly independent because they are nonzero and have disjoint
supports, i.e., because they don't have any addends in common.)

It is furthermore clear from Proposition \ref{prop.IJ.1} \textbf{(d)} that%
\begin{equation}
\mathcal{I}_{k}=\sum_{\substack{\mathbf{A}\in\operatorname*{SD}\left(
n\right)  ;\\\ell\left(  \mathbf{A}\right)  \leq k}}\mathcal{I}_{\mathbf{A}}.
\label{pf.thm.AnnVkn.Ik=sum}%
\end{equation}
Hence,%
\begin{equation}
\operatorname*{Ann}\left(  \mathcal{I}_{k}\right)  =\bigcap
_{\substack{\mathbf{A}\in\operatorname*{SD}\left(  n\right)  ;\\\ell\left(
\mathbf{A}\right)  \leq k}}\operatorname*{Ann}\left(  \mathcal{I}_{\mathbf{A}%
}\right)  \label{pf.thm.AnnVkn.Ik=sum2}%
\end{equation}
(since the annihilator of a sum of $\mathcal{A}$-submodules is the
intersection of their individual annihilators).

On the other hand, let $\mathbf{A}=\left(  A_{1},A_{2},\ldots,A_{m}\right)
\in\operatorname*{SD}\left(  n\right)  $ be a set decomposition. Then, we can
define a diagram $D\left(  \mathbf{A}\right)  $ of size $n$ (see
\cite[Definition 5.1.2]{sga} for the notion of a diagram) by setting%
\[
D\left(  \mathbf{A}\right)  :=\bigcup_{i=1}^{m}\left\{  \left(  i,1\right)
,\ \left(  i,2\right)  ,\ \ldots,\ \left(  i,\left\vert A_{i}\right\vert
\right)  \right\}  .
\]
This is almost a Young diagram, except that its shape is the weak composition
$\left(  \left\vert A_{1}\right\vert ,\left\vert A_{2}\right\vert
,\ldots,\left\vert A_{m}\right\vert \right)  $ instead of a partition. We
consider the Young module $\mathcal{M}^{D\left(  \mathbf{A}\right)  }$, which
is a left $\mathcal{A}$-module (i.e., an $S_{n}$-representation) that (as a
$\mathbf{k}$-module) has basis $\left\{  n\text{-tabloids of shape }D\left(
\mathbf{A}\right)  \right\}  $ (see \cite[\S 5.3]{sga} for the relevant
definitions). If $\overline{T}$ is an $n$-tabloid of shape $D\left(
\mathbf{A}\right)  $, and if $i\in\left[  m\right]  $, then
$\operatorname*{Row}\left(  i,\overline{T}\right)  $ shall denote the set of
all entries in the $i$-th row of $\overline{T}$. Conversely, if $\overline{T}$
is an $n$-tabloid of shape $D\left(  \mathbf{A}\right)  $, and if $k\in\left[
n\right]  $, then $r_{\overline{T}}\left(  k\right)  $ shall denote the number
of the row of $\overline{T}$ that contains the entry $k$.

It is easy to see that if $\ell\left(  \mathbf{A}\right)  \leq k$, then the
$\mathbf{k}$-linear map%
\begin{align*}
\omega_{\mathbf{A}}:\mathcal{M}^{D\left(  \mathbf{A}\right)  }  &  \rightarrow
V_{k}^{\otimes n},\\
\overline{T}  &  \mapsto e_{r_{\overline{T}}\left(  1\right)  }\otimes
e_{r_{\overline{T}}\left(  2\right)  }\otimes\cdots\otimes e_{r_{\overline{T}%
}\left(  n\right)  }%
\end{align*}
is an injective left $\mathcal{A}$-module morphism\footnote{Indeed,
\par
\begin{itemize}
\item this map is a left $\mathcal{A}$-module morphism because any $n$-tabloid
$\overline{T}$ and any permutation $w\in S_{n}$ satisfy $r_{\overline{wT}%
}\left(  i\right)  =r_{\overline{T}}\left(  w^{-1}\left(  i\right)  \right)  $
for all $i\in\left[  n\right]  $;
\par
\item this map is injective because an $n$-tabloid $\overline{T}$ of a given
shape (in our case, $D\left(  \mathbf{A}\right)  $) is uniquely determined by
the $n$-tuple $\left(  r_{\overline{T}}\left(  1\right)  ,r_{\overline{T}%
}\left(  2\right)  ,\ldots,r_{\overline{T}}\left(  n\right)  \right)  $.
\end{itemize}
}.

On the other hand, the $\mathbf{k}$-linear map%
\begin{align*}
\gamma_{\mathbf{A}}:\mathcal{M}^{D\left(  \mathbf{A}\right)  }  &
\rightarrow\mathcal{I}_{\mathbf{A}},\\
\overline{T}  &  \mapsto\nabla_{\left(  \operatorname*{Row}\left(
1,\overline{T}\right)  ,\ \operatorname*{Row}\left(  2,\overline{T}\right)
,\ \ldots,\ \operatorname*{Row}\left(  m,\overline{T}\right)  \right)
,\ \mathbf{A}}%
\end{align*}
is a left $\mathcal{A}$-module morphism as well\footnote{Indeed, for any
$n$-tabloid $\overline{T}$ of shape $D\left(  \mathbf{A}\right)  $ and any
permutation $w\in S_{n}$, we have
\begin{align*}
\gamma_{\mathbf{A}}\left(  w\overline{T}\right)   &  =\gamma_{\mathbf{A}%
}\left(  \overline{wT}\right)  =\nabla_{\left(  \operatorname*{Row}\left(
1,\overline{wT}\right)  ,\ \operatorname*{Row}\left(  2,\overline{wT}\right)
,\ \ldots,\ \operatorname*{Row}\left(  m,\overline{wT}\right)  \right)
,\ \mathbf{A}}\\
&  =\nabla_{w\left(  \operatorname*{Row}\left(  1,\overline{T}\right)
,\ \operatorname*{Row}\left(  2,\overline{T}\right)  ,\ \ldots
,\ \operatorname*{Row}\left(  m,\overline{T}\right)  \right)  ,\ \mathbf{A}}\\
&  \ \ \ \ \ \ \ \ \ \ \ \ \ \ \ \ \ \ \ \ \left(
\begin{array}
[c]{c}%
\text{since }\operatorname*{Row}\left(  i,\overline{wT}\right)  =w\left(
\operatorname*{Row}\left(  i,\overline{T}\right)  \right)  \text{ for each
}i\in\left[  m\right]  \text{,}\\
\text{and thus }\left(  \operatorname*{Row}\left(  1,\overline{wT}\right)
,\ \operatorname*{Row}\left(  2,\overline{wT}\right)  ,\ \ldots
,\ \operatorname*{Row}\left(  m,\overline{wT}\right)  \right) \\
=w\left(  \operatorname*{Row}\left(  1,\overline{T}\right)
,\ \operatorname*{Row}\left(  2,\overline{T}\right)  ,\ \ldots
,\ \operatorname*{Row}\left(  m,\overline{T}\right)  \right)
\end{array}
\right) \\
&  =w\underbrace{\nabla_{\left(  \operatorname*{Row}\left(  1,\overline
{T}\right)  ,\ \operatorname*{Row}\left(  2,\overline{T}\right)
,\ \ldots,\ \operatorname*{Row}\left(  m,\overline{T}\right)  \right)
,\ \mathbf{A}}}_{=\gamma_{\mathbf{A}}\left(  \overline{T}\right)
}\ \ \ \ \ \ \ \ \ \ \left(  \text{by (\ref{pf.thm.AnnVkn.wNab})}\right) \\
&  =w\gamma_{\mathbf{A}}\left(  \overline{T}\right)  .
\end{align*}
}. Next we shall show that this morphism $\gamma_{\mathbf{A}}$ is invertible.
Indeed, recall that the $\mathbf{k}$-module $\mathcal{I}_{\mathbf{A}}$ has a
basis consisting of the elements $\nabla_{\mathbf{B},\mathbf{A}}$, where
$\mathbf{B}$ ranges over all those set decompositions of $\left[  n\right]  $
that are equal-shaped to $\mathbf{A}$. In other words,%
\begin{equation}
\left(  \nabla_{\mathbf{B},\mathbf{A}}\right)  _{\mathbf{B}\text{ is a set
decomposition of }\left[  n\right]  \text{ that is equal-shaped to }%
\mathbf{A}} \label{pf.thm.AnnVkn.bas1}%
\end{equation}
is a basis of the $\mathbf{k}$-module $\mathcal{I}_{\mathbf{A}}$. But there is
a bijection%
\begin{align*}
&  \left\{  n\text{-tabloids of shape }D\left(  \mathbf{A}\right)  \right\} \\
&  \rightarrow\left\{  \text{set decompositions }\mathbf{B}\text{ of }\left[
n\right]  \text{ that are equal-shaped to }\mathbf{A}\right\}
\end{align*}
that sends each $n$-tabloid $\overline{T}$ to $\left(  \operatorname*{Row}%
\left(  1,\overline{T}\right)  ,\ \operatorname*{Row}\left(  2,\overline
{T}\right)  ,\ \ldots,\ \operatorname*{Row}\left(  m,\overline{T}\right)
\right)  $ (indeed, the lengths of the rows of $D\left(  \mathbf{A}\right)  $
are the sizes of the respective blocks of $\mathbf{A}$, and thus are exactly
the right size to fit the blocks of a set decomposition $\mathbf{B}$ that is
equal-shaped to $\mathbf{A}$). Hence, we can reindex our basis
(\ref{pf.thm.AnnVkn.bas1}) of $\mathcal{I}_{\mathbf{A}}$ as
\begin{equation}
\left(  \nabla_{\left(  \operatorname*{Row}\left(  1,\overline{T}\right)
,\ \operatorname*{Row}\left(  2,\overline{T}\right)  ,\ \ldots
,\ \operatorname*{Row}\left(  m,\overline{T}\right)  \right)  ,\ \mathbf{A}%
}\right)  _{\overline{T}\text{ is an }n\text{-tabloid of shape }D\left(
\mathbf{A}\right)  }. \label{pf.thm.AnnVkn.bas2}%
\end{equation}
Now, the $\mathbf{k}$-linear map $\gamma_{\mathbf{A}}$ sends the basis
$\left(  \overline{T}\right)  _{\overline{T}\text{ is an }n\text{-tabloid of
shape }D\left(  \mathbf{A}\right)  }$ of $\mathcal{M}^{D\left(  \mathbf{A}%
\right)  }$ to the basis (\ref{pf.thm.AnnVkn.bas2}) of $\mathcal{I}%
_{\mathbf{A}}$ (by the definition of $\gamma_{\mathbf{A}}$). Thus, it is
invertible (since any $\mathbf{k}$-linear map that sends a basis of its domain
to a basis of its target must be invertible). Therefore, it has an inverse
$\gamma_{\mathbf{A}}^{-1}:\mathcal{I}_{\mathbf{A}}\rightarrow\mathcal{M}%
^{D\left(  \mathbf{A}\right)  }$, which is also a left $\mathcal{A}$-module
isomorphism (since $\gamma_{\mathbf{A}}$ is a left $\mathcal{A}$-module morphism).

Composing this inverse with the injective left $\mathcal{A}$-module morphism
$\omega_{\mathbf{A}}:\mathcal{M}^{D\left(  \mathbf{A}\right)  }\rightarrow
V_{k}^{\otimes n}$ (when $\ell\left(  \mathbf{A}\right)  \leq k$), we obtain
the injective left $\mathcal{A}$-module morphism%
\[
\psi_{\mathbf{A}}:=\omega_{\mathbf{A}}\circ\gamma_{\mathbf{A}}^{-1}%
:\mathcal{I}_{\mathbf{A}}\rightarrow V_{k}^{\otimes n}.
\]

Forget that we fixed $\mathbf{A}$. We thus have found an injective left
$\mathcal{A}$-module morphism
\[
\psi_{\mathbf{A}}:\mathcal{I}_{\mathbf{A}}\rightarrow V_{k}^{\otimes n}%
\]
for each set decomposition $\mathbf{A}\in\operatorname*{SD}\left(  n\right)  $
with $\ell\left(  \mathbf{A}\right)  \leq k$.

It is easy to see that%
\begin{equation}
V_{k}^{\otimes n}=\sum_{\substack{\mathbf{A}\in\operatorname*{SD}\left(
n\right)  ;\\\ell\left(  \mathbf{A}\right)  \leq k}}\psi_{\mathbf{A}}\left(
\mathcal{I}_{\mathbf{A}}\right)  \label{pf.thm.AnnVkn.Vkn=sum}%
\end{equation}
(indeed, each basis vector $e_{i_{1}}\otimes e_{i_{2}}\otimes\cdots\otimes
e_{i_{n}}$ of $V_{k}^{\otimes n}$ can be written as the image of an
appropriate $n$-tabloid $\overline{T}$ under the map $\omega_{\mathbf{A}}$
(where $\mathbf{A}\in\operatorname*{SD}\left(  n\right)  $ is the set
decomposition $\left(  A_{1},A_{2},\ldots,A_{k}\right)  $ defined by
$A_{j}:=\left\{  p\in\left[  n\right]  \ \mid\ i_{p}=j\right\}  $), and thus
also as an image under $\omega_{\mathbf{A}}\circ\gamma_{\mathbf{A}}^{-1}%
=\psi_{\mathbf{A}}$). Hence,%
\begin{equation}
\operatorname*{Ann}\left(  V_{k}^{\otimes n}\right)  =\bigcap
_{\substack{\mathbf{A}\in\operatorname*{SD}\left(  n\right)  ;\\\ell\left(
\mathbf{A}\right)  \leq k}}\operatorname*{Ann}\left(  \psi_{\mathbf{A}}\left(
\mathcal{I}_{\mathbf{A}}\right)  \right)  \label{pf.thm.AnnVkn.Vkn=sum2}%
\end{equation}
(since the annihilator of a sum of $\mathcal{A}$-submodules is the
intersection of their individual annihilators). But each $\mathbf{A}%
\in\operatorname*{SD}\left(  n\right)  $ satisfying $\ell\left(
\mathbf{A}\right)  \leq k$ must satisfy $\psi_{\mathbf{A}}\left(
\mathcal{I}_{\mathbf{A}}\right)  \cong\mathcal{I}_{\mathbf{A}}$ as left
$\mathcal{A}$-modules (since the left $\mathcal{A}$-module morphism
$\psi_{\mathbf{A}}$ is injective) and thus $\operatorname*{Ann}\left(
\psi_{\mathbf{A}}\left(  \mathcal{I}_{\mathbf{A}}\right)  \right)
=\operatorname*{Ann}\left(  \mathcal{I}_{\mathbf{A}}\right)  $. Hence,
(\ref{pf.thm.AnnVkn.Vkn=sum2}) rewrites as%
\[
\operatorname*{Ann}\left(  V_{k}^{\otimes n}\right)  =\bigcap
_{\substack{\mathbf{A}\in\operatorname*{SD}\left(  n\right)  ;\\\ell\left(
\mathbf{A}\right)  \leq k}}\operatorname*{Ann}\left(  \mathcal{I}_{\mathbf{A}%
}\right)  .
\]
Comparing this with (\ref{pf.thm.AnnVkn.Ik=sum2}), we obtain%
\[
\operatorname*{Ann}\left(  V_{k}^{\otimes n}\right)  =\operatorname*{Ann}%
\left(  \mathcal{I}_{k}\right)  =\operatorname*{LAnn}\left(  \mathcal{I}%
_{k}\right)
\]
(since $\operatorname*{Ann}M=\operatorname*{LAnn}M$ for any left ideal $M$ of
$\mathcal{A}$). Since Theorem \ref{thm.row.main} \textbf{(b)} yields
$\mathcal{J}_{k}=\operatorname*{LAnn}\left(  \mathcal{I}_{k}\right)  $, we can
rewrite this as $\operatorname*{Ann}\left(  V_{k}^{\otimes n}\right)
=\mathcal{J}_{k}$. This proves Theorem \ref{thm.AnnVkn}.
\end{proof}

\begin{corollary}
\label{cor.actonVkn}Let $k\in\mathbb{N}$. Consider the action of $\mathcal{A}$
on $V_{k}^{\otimes n}$ as a $\mathbf{k}$-algebra morphism $\rho_{k}%
:\mathcal{A}\rightarrow\operatorname*{End}\nolimits_{\mathbf{k}}\left(
V_{k}^{\otimes n}\right)  $. Then, the image $\rho_{k}\left(  \mathcal{A}%
\right)  $ of this map $\rho_{k}$ has two bases $\left(  \rho_{k}\left(
w\right)  \right)  _{w\in\operatorname*{Av}\nolimits_{n}\left(  k+1\right)  }$
and $\left(  \rho_{k}\left(  w\right)  \right)  _{w\in\operatorname*{Av}%
\nolimits_{n}^{\prime}\left(  k+1\right)  }$ (as a $\mathbf{k}$-module).
\end{corollary}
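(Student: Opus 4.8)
The plan is to deduce this corollary directly from Theorem \ref{thm.AnnVkn} together with parts \textbf{(e)} and \textbf{(f)} of Theorem \ref{thm.row.main} (and Corollary \ref{cor.row.twin} \textbf{(c)}). The key observation is that the image $\rho_{k}\left( \mathcal{A}\right) $ is, by the first isomorphism theorem for algebras, isomorphic as a $\mathbf{k}$-module to the quotient $\mathcal{A}/\ker\rho_{k}$. But $\ker\rho_{k}$ is precisely $\operatorname*{Ann}\left( V_{k}^{\otimes n}\right) $ (since $\rho_{k}$ is the action map, an element $a\in\mathcal{A}$ lies in $\ker\rho_{k}$ iff $a$ annihilates every vector of $V_{k}^{\otimes n}$), and Theorem \ref{thm.AnnVkn} identifies this annihilator with $\mathcal{J}_{k}$. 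Hence $\rho_{k}$ factors as an injective $\mathbf{k}$-linear map $\mathcal{A}/\mathcal{J}_{k}\hookrightarrow\operatorname*{End}\nolimits_{\mathbf{k}}\left( V_{k}^{\otimes n}\right) $ whose image is $\rho_{k}\left( \mathcal{A}\right) $, i.e., $\rho_{k}$ induces a $\mathbf{k}$-module isomorphism $\mathcal{A}/\mathcal{J}_{k}\xrightarrow{\ \sim\ }\rho_{k}\left( \mathcal{A}\right) $ sending each $\overline{w}$ to $\rho_{k}\left( w\right) $.

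The rest is transport of a basis along this isomorphism. First I would invoke Theorem \ref{thm.row.main} \textbf{(f)}, which states that $\left( \overline{w}\right) _{w\in\operatorname*{Av}\nolimits_{n}\left( k+1\right) }$ is a $\mathbf{k}$-module basis of $\mathcal{A}/\mathcal{J}_{k}$ (here $\overline{w}$ is the projection onto $\mathcal{A}/\mathcal{J}_{k}$). Applying the isomorphism $\mathcal{A}/\mathcal{J}_{k}\xrightarrow{\ \sim\ }\rho_{k}\left( \mathcal{A}\right) $ described above, and recalling that the image of a basis under a module isomorphism is again a basis, we conclude that $\left( \rho_{k}\left( w\right) \right) _{w\in\operatorname*{Av}\nolimits_{n}\left( k+1\right) }$ is a basis of $\rho_{k}\left( \mathcal{A}\right) $. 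For the second basis I would repeat the argument verbatim with Corollary \ref{cor.row.twin} \textbf{(d)} in place of Theorem \ref{thm.row.main} \textbf{(f)}: that corollary gives that $\left( \overline{w}\right) _{w\in\operatorname*{Av}\nolimits_{n}^{\prime}\left( k+1\right) }$ is a basis of $\mathcal{A}/\mathcal{J}_{k}$, whence $\left( \rho_{k}\left( w\right) \right) _{w\in\operatorname*{Av}\nolimits_{n}^{\prime}\left( k+1\right) }$ is a basis of $\rho_{k}\left( \mathcal{A}\right) $.

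There is essentially no obstacle here; the corollary is a formal consequence of results already established. The only point that needs a sentence of care is the identification $\ker\rho_{k}=\operatorname*{Ann}\left( V_{k}^{\otimes n}\right) $, which is immediate from the definitions: $\rho_{k}\left( a\right) =0$ in $\operatorname*{End}\nolimits_{\mathbf{k}}\left( V_{k}^{\otimes n}\right) $ means $\rho_{k}\left( a\right) \left( m\right) =am=0$ for all $m\in V_{k}^{\otimes n}$, which is exactly the defining condition of $\operatorname*{Ann}\left( V_{k}^{\otimes n}\right) $. One should also note explicitly that the induced map $\mathcal{A}/\mathcal{J}_{k}\to\rho_{k}\left( \mathcal{A}\right) $ sends the coset $\overline{w}$ to $\rho_{k}\left( w\right) $, so that the transported bases are indexed exactly as claimed. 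No appeal to the invertibility of $n!$ is needed, consistent with the generality of Theorem \ref{thm.AnnVkn}.
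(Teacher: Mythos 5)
Your argument is correct and is essentially the same as the paper's: identify $\ker\rho_{k}=\operatorname*{Ann}\left(V_{k}^{\otimes n}\right)=\mathcal{J}_{k}$ via Theorem \ref{thm.AnnVkn}, use the first isomorphism theorem to get $\mathcal{A}/\mathcal{J}_{k}\cong\rho_{k}\left(\mathcal{A}\right)$ sending $\overline{w}\mapsto\rho_{k}\left(w\right)$, and transport the bases from Theorem \ref{thm.row.main} \textbf{(f)} and Corollary \ref{cor.row.twin} \textbf{(d)}. Nothing is missing.
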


\begin{proof}
We have $\operatorname*{Ker}\rho_{k}=\operatorname*{Ann}\left(  V_{k}^{\otimes
n}\right)  =\mathcal{J}_{k}$ by Theorem \ref{thm.AnnVkn}. Hence, by the first
isomorphism theorem, there is a $\mathbf{k}$-algebra isomorphism
$\mathcal{A}/\mathcal{J}_{k}\rightarrow\rho_{k}\left(  \mathcal{A}\right)  $
that sends each residue class $\overline{a}$ to $\rho_{k}\left(  a\right)  $.
Hence, we only need to show that the quotient $\mathbf{k}$-module
$\mathcal{A}/\mathcal{J}_{k}$ has two bases $\left(  \overline{w}\right)
_{w\in\operatorname*{Av}\nolimits_{n}\left(  k+1\right)  }$ and $\left(
\overline{w}\right)  _{w\in\operatorname*{Av}\nolimits_{n}^{\prime}\left(
k+1\right)  }$. But this is Theorem \ref{thm.row.main} \textbf{(f)} and
Corollary \ref{cor.row.twin} \textbf{(d)}.
\end{proof}

Theorem \ref{thm.AnnVkn} is \cite[Theorem 4.2]{deCPro76}, and also appears in
\cite[Example 2.11]{Donkin24}. It is also a particular case of H\"{a}rterich's
\cite[\S 3]{Harter99}. Indeed, H\"{a}rterich considers a sign-twisted version
of the $S_{n}$-representation $V_{k}^{\otimes n}$. The \emph{sign-twist} is
the $\mathbf{k}$-linear map $T_{\operatorname*{sign}}:\mathcal{A}%
\rightarrow\mathcal{A}$ that sends each permutation $w\in S_{n}$ to $\left(
-1\right)  ^{w}w$. This map is a $\mathbf{k}$-algebra automorphism (see
\cite[Theorem 3.11.5]{sga}), so it transforms any $S_{n}$-representation $M$
into a new $S_{n}$-representation $M^{\operatorname*{sign}}$, which is called
the \emph{sign-twist} of $M$ (see \cite[Definition 5.18.2]{sga} for the
precise definition) and which satisfies
\begin{equation}
\operatorname*{Ann}\left(  M^{\operatorname*{sign}}\right)
=T_{\operatorname*{sign}}\left(  \operatorname*{Ann}M\right)  .
\label{eq.harterich.AnnMsign}%
\end{equation}
H\"{a}rterich's $V^{\otimes n}$ (for $m=k$) is the sign-twist of our $S_{n}%
$-representation $V_{k}^{\otimes n}$ (since his $S_{n}$-action on $V^{\otimes
n}$ is given by permuting the factors and multiplying with the sign of the
permutation). Thus, the ideal $\mathcal{J}=\operatorname*{ann}%
\nolimits_{\mathcal{H}_{n}}V^{\otimes n}$ from \cite[\S 3]{Harter99} for $m=k$
and $q=1$ is the image of our annihilator $\operatorname*{Ann}\left(
V_{k}^{\otimes n}\right)  $ under $T_{\operatorname*{sign}}$ (by
(\ref{eq.harterich.AnnMsign})). Therefore, our Theorem \ref{thm.AnnVkn}
entails that this ideal $\mathcal{J}$ is spanned by the
$T_{\operatorname*{sign}}\left(  \nabla_{\mathbf{B},\mathbf{A}}\right)  $ for
$\mathbf{A},\mathbf{B}\in\operatorname*{SC}\left(  n\right)  $ satisfying
$\ell\left(  \mathbf{A}\right)  =\ell\left(  \mathbf{B}\right)  \leq k$. This
is implicitly what \cite[\S 3]{Harter99} (for $m=k$ and $q=1$) is saying as
well, although the latter theorem additionally picks out a basis of
$\mathcal{J}$ from these spanning elements.

We also note that Corollary \ref{cor.actonVkn} recovers \cite[Theorem
3.3]{Proces21} and also appears in \cite[Corollary 1.4 (i)]{BoDoMa22}, in
\cite[Example 2.11]{Donkin24} and in \cite[Theorem 1]{RaSaSu12}. Implicitly,
\cite[Lemma 8.1 and the remark after it]{BaiRai} also boils down to our
Theorem \ref{thm.AnnVkn}.

The image $\rho_{k}\left(  \mathcal{A}\right)  $ of $\mathcal{A}$ that was
considered in Corollary \ref{cor.actonVkn} is called the \textquotedblleft%
$k$-swap algebra\textquotedblright\ in \cite[\S 1.3]{KlStVo25}. Thus, as we
saw above, this algebra is isomorphic to the quotient algebra $\mathcal{A}%
/\mathcal{J}_{k}$.

\subsubsection{$T_{\operatorname*{sign}}\left(  \mathcal{I}_{n-k-1}\right)  $
as annihilator of $N_{n}^{\otimes k}$ (action on entries)}

An interesting counterpart to Theorem \ref{thm.AnnVkn} was recently proved by
Bowman, Doty and Martin (\cite[Theorem 7.4 (a)]{BoDoMa18}; see also
\cite[Example 2.13]{Donkin24}). We can prove this using our methods, too. Let
us first restate this result in our language:

Recall that the \emph{sign-twist} is the $\mathbf{k}$-linear map
$T_{\operatorname*{sign}}:\mathcal{A}\rightarrow\mathcal{A}$ that sends each
permutation $w\in S_{n}$ to $\left(  -1\right)  ^{w}w$. This is a $\mathbf{k}%
$-algebra automorphism of $\mathcal{A}$.

Consider a free $\mathbf{k}$-module $N_{n}$ with basis $\left(  e_{1}%
,e_{2},\ldots,e_{n}\right)  $, and let the group $S_{n}$ act on it by
permuting the basis vectors (that is, $\sigma\cdot e_{i}=e_{\sigma\left(
i\right)  }$ for all $\sigma\in S_{n}$ and $i\in\left[  n\right]  $). This
$S_{n}$-representation $N_{n}$ is called the \emph{natural representation} of
$S_{n}$.

Let $k\in\mathbb{N}$, and consider the $k$-th tensor power $N_{n}^{\otimes k}$
of this $S_{n}$-representation (where $S_{n}$ acts diagonally, i.e., by the
formula $\sigma\cdot\left(  v_{1}\otimes v_{2}\otimes\cdots\otimes
v_{k}\right)  =\sigma v_{1}\otimes\sigma v_{2}\otimes\cdots\otimes\sigma
v_{k}$ for all $\sigma\in S_{n}$ and all $v_{1},v_{2},\ldots,v_{k}\in V$).
This is again an $S_{n}$-representation, i.e., a left $\mathcal{A}$-module.
The action of $\mathcal{A}$ (or $S_{n}$) on it is called the \emph{action on
entries} (or \emph{diagonal action}).

The $\mathbf{k}$-module $N_{n}^{\otimes k}$ can also be identified with the
$\mathbf{k}$-module of homogeneous polynomials of degree $k$ in $n$
noncommutative indeterminates $x_{1},x_{2},\ldots,x_{n}$ (via the isomorphism
that sends each pure tensor $e_{i_{1}}\otimes e_{i_{2}}\otimes\cdots\otimes
e_{i_{k}}$ to the noncommutative monomial $x_{i_{1}}x_{i_{2}}\cdots x_{i_{k}}%
$). Under this identification, the action of $S_{n}$ permutes the
indeterminates (i.e., a permutation $\sigma\in S_{n}$ sends each $x_{i}$ to
$x_{\sigma\left(  i\right)  }$, and acts on products factor by factor).

Now, \cite[Theorem 7.4 (a)]{BoDoMa18} characterizes the annihilator of this
left $\mathcal{A}$-module $N_{n}^{\otimes k}$ as follows:

\begin{theorem}
\label{thm.AnnNnk}Let $k\in\mathbb{N}$. Then,%
\[
T_{\operatorname*{sign}}\left(  \mathcal{I}_{n-k-1}\right)
=\operatorname*{Ann}\left(  N_{n}^{\otimes k}\right)  .
\]
Here, we understand $\mathcal{I}_{m}$ to mean $0$ when $m<0$ (this is
consistent with Definition \ref{def.IJ}).
\end{theorem}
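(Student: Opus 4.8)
The plan is to compute $\operatorname*{Ann}\left(  N_{n}^{\otimes k}\right)  $ by first collapsing it to the annihilator of a single Young permutation module, and then transporting that to the $\nabla^{-}$-side via the sign-twist, where Proposition \ref{prop.IJ.1} \textbf{(f)} and Theorem \ref{thm.row.main} \textbf{(a)} finish the job. I would first dispose of the degenerate range $k\geq n$: here a basis vector $e_{i_{1}}\otimes\cdots\otimes e_{i_{k}}$ of $N_{n}^{\otimes k}$ whose entries exhaust all of $\left[  n\right]  $ has trivial $S_{n}$-stabilizer, so its $S_{n}$-orbit spans a submodule isomorphic to the regular representation $\mathbf{k}\left[  S_{n}\right]  $ (and the span of the remaining basis vectors is a complementary submodule), whence $\operatorname*{Ann}\left(  N_{n}^{\otimes k}\right)  \subseteq\operatorname*{Ann}\left(  \mathbf{k}\left[  S_{n}\right]  \right)  =0$; and $\mathcal{I}_{n-k-1}=0$ by the stated convention, so both sides vanish.

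For $k\leq n-1$ set $W:=\left\{  k+1,k+2,\ldots,n\right\}  $, a subset of $\left[  n\right]  $ of size $n-k$. The key structural claim is
\[
\operatorname*{Ann}\left(  N_{n}^{\otimes k}\right)  =\operatorname*{Ann}\left(  M^{\left(  n-k,\,1^{k}\right)  }\right)  ,
\]
where $M^{\left(  n-k,1^{k}\right)  }\cong\mathbf{k}\left[  S_{n}/S_{W}\right]  $ is the Young permutation module (the $S_{n}$-set of cosets is, equivalently, the set of ordered $k$-tuples of distinct elements of $\left[  n\right]  $ under the entry action). The inclusion ``$\subseteq$'' holds because $M^{\left(  n-k,1^{k}\right)  }$ sits in $N_{n}^{\otimes k}$ as a direct summand, namely the span of those $e_{i_{1}}\otimes\cdots\otimes e_{i_{k}}$ with $i_{1},\ldots,i_{k}$ pairwise distinct (this span and its complement are $S_{n}$-stable, and $S_{n}$ permutes the distinct-entry vectors exactly as it permutes the cosets). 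For ``$\supseteq$'', an arbitrary basis vector with $m\leq k$ distinct entries generates a cyclic submodule isomorphic to $M^{\left(  n-m,1^{m}\right)  }$, which in turn is a homomorphic image of $M^{\left(  n-k,1^{k}\right)  }$ via the $S_{n}$-equivariant surjection ``forget the last $k-m$ coordinates of an ordered $k$-tuple of distinct elements'' (a genuine surjection over any $\mathbf{k}$, since $m\leq k\leq n$); hence every element of $\operatorname*{Ann}\left(  M^{\left(  n-k,1^{k}\right)  }\right)  $ kills every basis vector of $N_{n}^{\otimes k}$, thus all of it.

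Next I would identify $M^{\left(  n-k,1^{k}\right)  }$ with the left ideal $\mathcal{A}\sigma_{W}$, where $\sigma_{W}:=\sum_{w\in S_{W}}w$ (over a general commutative ring one checks this by exhibiting $\left(  v\sigma_{W}\right)  $, for $v$ ranging over coset representatives of $S_{n}/S_{W}$, as a $\mathbf{k}$-basis of $\mathcal{A}\sigma_{W}$: these are nonzero with pairwise disjoint supports). Then $\operatorname*{Ann}\left(  M^{\left(  n-k,1^{k}\right)  }\right)  =\operatorname*{LAnn}\left(  \mathcal{A}\sigma_{W}\right)  =\operatorname*{LAnn}\left(  \mathcal{A}\sigma_{W}\mathcal{A}\right)  $, the last step because $\mathcal{A}\sigma_{W}\subseteq\mathcal{A}\sigma_{W}\mathcal{A}$ while $a\,\mathcal{A}\sigma_{W}=0$ forces $a\,\mathcal{A}\sigma_{W}\mathcal{A}=\left(  a\,\mathcal{A}\sigma_{W}\right)  \mathcal{A}=0$. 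Now apply the sign-twist $T_{\operatorname*{sign}}$: it is a $\mathbf{k}$-algebra automorphism sending $\sigma_{W}$ to $\nabla_{W}^{-}$, so $T_{\operatorname*{sign}}\left(  \mathcal{A}\sigma_{W}\mathcal{A}\right)  =\mathcal{A}\nabla_{W}^{-}\mathcal{A}=\mathcal{J}_{n-k-1}$ by Proposition \ref{prop.IJ.1} \textbf{(f)} (using $\left\vert W\right\vert =\left(  n-k-1\right)  +1$); and since any algebra automorphism $T$ satisfies $T\left(  \operatorname*{LAnn}Y\right)  =\operatorname*{LAnn}\left(  T\left(  Y\right)  \right)  $, we get $T_{\operatorname*{sign}}\left(  \operatorname*{LAnn}\left(  \mathcal{A}\sigma_{W}\mathcal{A}\right)  \right)  =\operatorname*{LAnn}\left(  \mathcal{J}_{n-k-1}\right)  =\mathcal{I}_{n-k-1}$ by Theorem \ref{thm.row.main} \textbf{(a)}. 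As $T_{\operatorname*{sign}}$ is an involution this rearranges to $\operatorname*{LAnn}\left(  \mathcal{A}\sigma_{W}\mathcal{A}\right)  =T_{\operatorname*{sign}}\left(  \mathcal{I}_{n-k-1}\right)  $, and chaining the displayed equalities yields $\operatorname*{Ann}\left(  N_{n}^{\otimes k}\right)  =T_{\operatorname*{sign}}\left(  \mathcal{I}_{n-k-1}\right)  $. The main obstacle I anticipate is not conceptual but careful bookkeeping: making the three module identifications (distinct-entry tensors vs.\ cosets, $M^{\left(  n-k,1^{k}\right)  }\cong\mathcal{A}\sigma_{W}$, and ``forgetting coordinates'' as a surjection of permutation modules) valid over an arbitrary $\mathbf{k}$; everything with teeth is then imported from Proposition \ref{prop.IJ.1} \textbf{(f)} and Theorem \ref{thm.row.main} \textbf{(a)}. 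One could equivalently run the whole argument on the sign-twisted module $\left(  N_{n}^{\otimes k}\right)  ^{\operatorname*{sign}}$ from the start, using \eqref{eq.harterich.AnnMsign} and the identification $\left(  M^{\left(  n-k,1^{k}\right)  }\right)  ^{\operatorname*{sign}}\cong\mathcal{A}\nabla_{W}^{-}$, but this is the same computation with the twist moved to the other end.
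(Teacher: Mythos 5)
Your argument is correct, and it takes a genuinely different route from the paper's. The paper never singles out a Young permutation module: it writes both sides as sums of cyclic modules indexed by \emph{all} $k$-tuples $\mathbf{a}\in\left[ n\right] ^{k}$, namely $N_{n}^{\otimes k}=\sum_{\mathbf{a}}\mathcal{A}e_{\mathbf{a}}$ and $T_{\operatorname*{sign}}\left( \mathcal{J}_{n-k-1}\right) =\sum_{\mathbf{a}}\mathcal{A}\nabla_{\mathbf{a}}$ (the latter via Lemmas \ref{lem.AnnNnk.1} and \ref{lem.AnnNnk.2}, which rest on Proposition \ref{prop.IJ.1} \textbf{(b)}, \textbf{(e)}), and then exhibits an explicit isomorphism $\mathcal{A}e_{\mathbf{a}}\cong\mathcal{A}\nabla_{\mathbf{a}}$ for each tuple, so that intersecting annihilators finishes the proof; the case $k\geq n$ is absorbed into Lemma \ref{lem.AnnNnk.2} rather than treated separately. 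You instead collapse $\operatorname*{Ann}\left( N_{n}^{\otimes k}\right) $ to the annihilator of the single module $M^{\left( n-k,1^{k}\right) }\cong\mathcal{A}\sigma_{W}$ — using the direct-summand inclusion of the injective-word span and the coordinate-forgetting surjections $M^{\left( n-k,1^{k}\right) }\twoheadrightarrow M^{\left( n-m,1^{m}\right) }$ to handle basis tensors with repeated entries — and then transport via $T_{\operatorname*{sign}}$ and Proposition \ref{prop.IJ.1} \textbf{(f)} (where the paper's route uses parts \textbf{(b)}/\textbf{(e)} instead). Both arguments ultimately lean on the same hard input, Theorem \ref{thm.row.main} \textbf{(a)}, and on the compatibility of $\operatorname*{LAnn}$ with the algebra automorphism $T_{\operatorname*{sign}}$. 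What your version buys is a shorter, more classical reduction (essentially the standard fact that $N_{n}^{\otimes k}$ and the permutation module on injective $k$-words have equal annihilators), at the cost of a separate degenerate case $k\geq n$ and the extra bookkeeping of the three module identifications; the paper's per-tuple matching is more uniform and self-contained but requires the two auxiliary lemmas. All the identifications you flag (injective tensors vs.\ cosets, $M^{\left( n-k,1^{k}\right) }\cong\mathcal{A}\sigma_{W}$ via coset sums with disjoint supports, and the surjectivity of coordinate-forgetting) do hold over an arbitrary commutative $\mathbf{k}$, so no gap there.
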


(Once again, this is written in terms of the Murphy cellular basis in
\cite[Theorem 7.4 (a)]{BoDoMa18}, but boils down to what we just said. The
same claim appears in a different disguise in \cite[Example 2.13]{Donkin24}.)

The proof of this theorem will be enabled by the following two lemmas:

\begin{lemma}
\label{lem.AnnNnk.1}Let $a_{1},a_{2},\ldots,a_{k}\in\left[  n\right]  $. Then,%
\[
\sum_{\substack{w\in S_{n};\\w\left(  a_{i}\right)  =a_{i}\text{ for all }%
i}}w=T_{\operatorname*{sign}}\left(  \nabla_{\left[  n\right]  \setminus
\left\{  a_{1},a_{2},\ldots,a_{k}\right\}  }^{-}\right)  .
\]

\end{lemma}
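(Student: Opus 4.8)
The plan is to unwind both sides and observe that they are literally the same sum. Write $U := \left[  n\right]  \setminus\left\{  a_{1},a_{2},\ldots,a_{k}\right\}  $, so that the right-hand side of the claim is $T_{\operatorname*{sign}}\left(  \nabla_{U}^{-}\right)  $. Using the description \eqref{eq.Nab-U.2} of the antisymmetrizer (namely $\nabla_{U}^{-}=\sum_{w\in S_{U}}\left(  -1\right)  ^{w}w$) together with the definition of $T_{\operatorname*{sign}}$ as the $\mathbf{k}$-linear map sending each permutation $w\in S_{n}$ to $\left(  -1\right)  ^{w}w$, I would compute
\[
T_{\operatorname*{sign}}\left(  \nabla_{U}^{-}\right)  =\sum_{w\in S_{U}
}\left(  -1\right)  ^{w}\,T_{\operatorname*{sign}}\left(  w\right)
=\sum_{w\in S_{U}}\left(  -1\right)  ^{w}\left(  -1\right)  ^{w}w=\sum_{w\in
S_{U}}w,
\]
where the last step uses $\left(  \left(  -1\right)  ^{w}\right)  ^{2}=1$ for every $w\in S_{U}$. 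So the effect of the sign-twist is precisely to turn the signed sum $\nabla_{U}^{-}$ into the unsigned sum of all permutations in $S_{U}$.

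It then remains to identify $\sum_{w\in S_{U}}w$ with the left-hand side. By the convention recalled after \eqref{eq.Nab-U.2}, the symmetric group $S_{U}$ is embedded into $S_{n}$ by extending each of its elements to fix all points of $\left[  n\right]  \setminus U$; hence $\sum_{w\in S_{U}}w=\sum_{\substack{w\in S_{n};\\w\left(  i\right)  =i\text{ for all }i\in\left[  n\right]  \setminus U}}w$. Now $\left[  n\right]  \setminus U=\left\{  a_{1},a_{2},\ldots,a_{k}\right\}  $ as a set, so the condition ``$w\left(  i\right)  =i$ for all $i\in\left[  n\right]  \setminus U$'' is the same as the condition ``$w\left(  a_{i}\right)  =a_{i}$ for all $i$'' appearing under the summation on the left-hand side. (This holds even if the $a_{i}$ are not pairwise distinct: both conditions merely say that $w$ fixes every element of the set $\left\{  a_{1},\ldots,a_{k}\right\}  $.) Combining this with the displayed computation yields the claimed equality.

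There is essentially no obstacle here: the statement is a direct unfolding of definitions, and no result from the paper beyond the two descriptions \eqref{eq.Nab-U.1}--\eqref{eq.Nab-U.2} of $\nabla_{U}^{-}$ is needed (not even Proposition \ref{prop.wNab-U}). The only points deserving a word of care are that $\left\{  a_{1},\ldots,a_{k}\right\}  $ must be read as a set (possibly of fewer than $k$ elements), so that the reindexing between the two fixed-point conditions is legitimate, and that the squared sign $\left(  \left(  -1\right)  ^{w}\right)  ^{2}$ equals $1$, which is exactly what makes $T_{\operatorname*{sign}}$ cancel the signs in $\nabla_{U}^{-}$.
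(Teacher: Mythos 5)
Your proof is correct and follows essentially the same route as the paper: unwind the definition of $\nabla_{U}^{-}$, apply $T_{\operatorname*{sign}}$ termwise so that $(-1)^{w}\left((-1)^{w}w\right)=w$, and match the fixed-point conditions (the paper works from \eqref{eq.Nab-U.1} and rewrites the condition before twisting, while you work from \eqref{eq.Nab-U.2} and twist first, a purely cosmetic difference). Your remark that $\left\{a_{1},\ldots,a_{k}\right\}$ need not have $k$ distinct elements is a nice extra point of care.
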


\begin{proof}
The definition of $\nabla_{\left[  n\right]  \setminus\left\{  a_{1}%
,a_{2},\ldots,a_{k}\right\}  }^{-}$ yields%
\begin{align*}
\nabla_{\left[  n\right]  \setminus\left\{  a_{1},a_{2},\ldots,a_{k}\right\}
}^{-}  &  =\sum_{\substack{w\in S_{n};\\w\left(  i\right)  =i\text{ for all
}i\in\left[  n\right]  \setminus\left(  \left[  n\right]  \setminus\left\{
a_{1},a_{2},\ldots,a_{k}\right\}  \right)  }}\left(  -1\right)  ^{w}w\\
&  =\sum_{\substack{w\in S_{n};\\w\left(  i\right)  =i\text{ for all }%
i\in\left\{  a_{1},a_{2},\ldots,a_{k}\right\}  }}\left(  -1\right)  ^{w}%
w=\sum_{\substack{w\in S_{n};\\w\left(  a_{i}\right)  =a_{i}\text{ for all }%
i}}\left(  -1\right)  ^{w}w.
\end{align*}
Applying the $\mathbf{k}$-linear map $T_{\operatorname*{sign}}$ to this
equality, we obtain%
\[
T_{\operatorname*{sign}}\left(  \nabla_{\left[  n\right]  \setminus\left\{
a_{1},a_{2},\ldots,a_{k}\right\}  }^{-}\right)  =\sum_{\substack{w\in
S_{n};\\w\left(  a_{i}\right)  =a_{i}\text{ for all }i}}\underbrace{\left(
-1\right)  ^{w}T_{\operatorname*{sign}}\left(  w\right)  }%
_{\substack{=w\\\text{(since }T_{\operatorname*{sign}}\left(  w\right)
=\left(  -1\right)  ^{w}w\text{)}}}=\sum_{\substack{w\in S_{n};\\w\left(
a_{i}\right)  =a_{i}\text{ for all }i}}w.
\]
This proves Lemma \ref{lem.AnnNnk.1}.
\end{proof}

\begin{lemma}
\label{lem.AnnNnk.2}Let $k\in\mathbb{N}$. For any $k$-tuple $\mathbf{a}%
=\left(  a_{1},a_{2},\ldots,a_{k}\right)  \in\left[  n\right]  ^{k}$, we
define the element%
\begin{equation}
\nabla_{\mathbf{a}}:=\sum_{\substack{w\in S_{n};\\w\left(  a_{i}\right)
=a_{i}\text{ for all }i}}w\in\mathcal{A}. \label{eq.lem.AnnNnk.2.defNab}%
\end{equation}
Then,
\[
T_{\operatorname*{sign}}\left(  \mathcal{J}_{n-k-1}\right)  =\mathcal{A}%
\cdot\operatorname*{span}\left\{  \nabla_{\mathbf{a}}\ \mid\ \mathbf{a}%
\in\left[  n\right]  ^{k}\right\}  .
\]
Here, we understand $\mathcal{J}_{m}$ to mean $\mathcal{A}$ when $m<0$.
\end{lemma}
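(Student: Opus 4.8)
The plan is to prove the asserted equality of $\mathbf{k}$-submodules by two inclusions, leaning on three ingredients: Lemma~\ref{lem.AnnNnk.1}, which rewrites each $\nabla_{\mathbf{a}}$ as $T_{\operatorname*{sign}}\left(\nabla_{U}^{-}\right)$ for a suitable $U$; the descriptions of $\mathcal{J}_{k}$ furnished by Proposition~\ref{prop.IJ.1}~\textbf{(b)} and \textbf{(e)}; and the fact that $T_{\operatorname*{sign}}$ is a $\mathbf{k}$-algebra automorphism of $\mathcal{A}$, so it fixes $\mathcal{A}$ and carries ideals to ideals. Writing $\mathcal{X}:=\operatorname*{span}\left\{\nabla_{\mathbf{a}}\ \mid\ \mathbf{a}\in\left[n\right]^{k}\right\}$, the goal is $T_{\operatorname*{sign}}\left(\mathcal{J}_{n-k-1}\right)=\mathcal{A}\mathcal{X}$. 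First I would clear the boundary case $n-k-1<0$, i.e.\ $k\geq n$: here $\mathcal{J}_{n-k-1}=\mathcal{A}$ by the stated convention, while choosing any $\mathbf{a}\in\left[n\right]^{k}$ whose entries exhaust $\left[n\right]$ (possible since $k\geq n$) forces $\nabla_{\mathbf{a}}=\operatorname*{id}=1$, so $1\in\mathcal{X}$ and $\mathcal{A}\mathcal{X}=\mathcal{A}=T_{\operatorname*{sign}}\left(\mathcal{A}\right)$. From then on I assume $k\leq n-1$, so that $n-k-1\in\mathbb{N}$ and Proposition~\ref{prop.IJ.1} may be applied with $k$ replaced by $n-k-1$.

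For the inclusion $\mathcal{A}\mathcal{X}\subseteq T_{\operatorname*{sign}}\left(\mathcal{J}_{n-k-1}\right)$: given $\mathbf{a}=\left(a_{1},\ldots,a_{k}\right)\in\left[n\right]^{k}$, Lemma~\ref{lem.AnnNnk.1} gives $\nabla_{\mathbf{a}}=T_{\operatorname*{sign}}\left(\nabla_{U}^{-}\right)$ with $U:=\left[n\right]\setminus\left\{a_{1},\ldots,a_{k}\right\}$; since $\left\{a_{1},\ldots,a_{k}\right\}$ has at most $k$ elements, $\left\vert U\right\vert\geq n-k>n-k-1$, so Proposition~\ref{prop.IJ.1}~\textbf{(e)} yields $\nabla_{U}^{-}\in\mathcal{J}_{n-k-1}$, hence $\nabla_{\mathbf{a}}\in T_{\operatorname*{sign}}\left(\mathcal{J}_{n-k-1}\right)$. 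As $T_{\operatorname*{sign}}\left(\mathcal{J}_{n-k-1}\right)$ is an ideal of $\mathcal{A}$, it absorbs left multiplication by $\mathcal{A}$, so $\mathcal{A}\mathcal{X}\subseteq T_{\operatorname*{sign}}\left(\mathcal{J}_{n-k-1}\right)$.

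For the reverse inclusion I would start from Proposition~\ref{prop.IJ.1}~\textbf{(b)}, which (with $k\rightsquigarrow n-k-1$) gives $\mathcal{J}_{n-k-1}=\mathcal{A}\cdot\operatorname*{span}\left\{\nabla_{U}^{-}\ \mid\ \left\vert U\right\vert=n-k\right\}$, apply the algebra automorphism $T_{\operatorname*{sign}}$ (which fixes $\mathcal{A}$ and is $\mathbf{k}$-linear) to obtain $T_{\operatorname*{sign}}\left(\mathcal{J}_{n-k-1}\right)=\mathcal{A}\cdot\operatorname*{span}\left\{T_{\operatorname*{sign}}\left(\nabla_{U}^{-}\right)\ \mid\ \left\vert U\right\vert=n-k\right\}$, and then verify that every such generator $T_{\operatorname*{sign}}\left(\nabla_{U}^{-}\right)$ already lies in $\mathcal{X}$. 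This is precisely where the index shift is nailed down: when $\left\vert U\right\vert=n-k$ the complement $\left[n\right]\setminus U$ has exactly $k$ elements, so listing them as a $k$-tuple $\mathbf{a}$ with distinct entries (the empty tuple if $k=0$) gives $\left[n\right]\setminus\left\{a_{1},\ldots,a_{k}\right\}=U$, and Lemma~\ref{lem.AnnNnk.1} turns $T_{\operatorname*{sign}}\left(\nabla_{U}^{-}\right)$ into $\nabla_{\mathbf{a}}\in\mathcal{X}$. Combining the two inclusions proves the lemma. The only genuine subtlety — and the thing to be careful about — is the bookkeeping between $\left\vert U\right\vert$ and the tuple length $k$: non-injective tuples $\mathbf{a}$ produce complements $U$ with $\left\vert U\right\vert>n-k$, which is why the first inclusion must invoke the \textquotedblleft size $>m$\textquotedblright\ form of Proposition~\ref{prop.IJ.1}~\textbf{(e)} rather than the \textquotedblleft size exactly $m+1$\textquotedblright\ form, whereas the second inclusion uses only the injective tuples; the degenerate cases $k=0$ and $k\geq n$ also need the separate check indicated above.
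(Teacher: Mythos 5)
Your proposal is correct and follows essentially the same route as the paper's own proof: the same split into the cases $k\geq n$ and $k<n$, the same use of Lemma \ref{lem.AnnNnk.1} to convert each $\nabla_{\mathbf{a}}$ into $T_{\operatorname*{sign}}\left(  \nabla_{U}^{-}\right)$ for $U=\left[  n\right]  \setminus\left\{  a_{1},\ldots,a_{k}\right\}$, and the same pairing of Proposition \ref{prop.IJ.1} \textbf{(b)} (subsets of size exactly $n-k$) for one inclusion with Proposition \ref{prop.IJ.1} \textbf{(e)} (subsets of size $>n-k-1$, needed because non-injective tuples give larger complements) for the other. The only deviations are cosmetic (order of the two inclusions, and invoking ideal absorption instead of spelling out the span computation), so there is nothing to add.
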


\begin{proof}
We are in one of the following two cases:

\textit{Case 1:} We have $k<n$.

\textit{Case 2:} We have $k\geq n$.

Let us first consider Case 1. In this case, $k<n$, so that $n-k-1\in
\mathbb{N}$. Hence, Proposition \ref{prop.IJ.1} \textbf{(b)} (applied to
$n-k-1$ instead of $k$) yields%
\[
\mathcal{J}_{n-k-1}=\mathcal{A}\cdot\operatorname*{span}\left\{  \nabla
_{U}^{-}\ \mid\ U\text{ is a subset of }\left[  n\right]  \text{ having size
}n-k\right\}  .
\]
Applying the $\mathbf{k}$-algebra isomorphism $T_{\operatorname*{sign}%
}:\mathcal{A}\rightarrow\mathcal{A}$ to this equality, we find%
\[
T_{\operatorname*{sign}}\left(  \mathcal{J}_{n-k-1}\right)  =\mathcal{A}%
\cdot\operatorname*{span}\left\{  T_{\operatorname*{sign}}\left(  \nabla
_{U}^{-}\right)  \ \mid\ U\text{ is a subset of }\left[  n\right]  \text{
having size }n-k\right\}  .
\]

However, if $U$ is a subset of $\left[  n\right]  $ having size $n-k$, and if
we denote the $k$ elements of $\left[  n\right]  \setminus U$ by $a_{1}%
,a_{2},\ldots,a_{k}$, then%
\begin{align*}
\nabla_{\left(  a_{1},a_{2},\ldots,a_{k}\right)  }  &  =\sum_{\substack{w\in
S_{n};\\w\left(  a_{i}\right)  =a_{i}\text{ for all }i}%
}w\ \ \ \ \ \ \ \ \ \ \left(  \text{by (\ref{eq.lem.AnnNnk.2.defNab})}\right)
\\
&  =T_{\operatorname*{sign}}\left(  \nabla_{\left[  n\right]  \setminus
\left\{  a_{1},a_{2},\ldots,a_{k}\right\}  }^{-}\right)
\ \ \ \ \ \ \ \ \ \ \left(  \text{by Lemma \ref{lem.AnnNnk.1}}\right) \\
&  =T_{\operatorname*{sign}}\left(  \nabla_{U}^{-}\right)
\end{align*}
(since the definition of $a_{1},a_{2},\ldots,a_{k}$ yields $\left[  n\right]
\setminus\left\{  a_{1},a_{2},\ldots,a_{k}\right\}  =U$) and thus%
\[
T_{\operatorname*{sign}}\left(  \nabla_{U}^{-}\right)  =\nabla_{\left(
a_{1},a_{2},\ldots,a_{k}\right)  }\in\left\{  \nabla_{\mathbf{a}}%
\ \mid\ \mathbf{a}\in\left[  n\right]  ^{k}\right\}  .
\]
Hence,%
\[
\left\{  T_{\operatorname*{sign}}\left(  \nabla_{U}^{-}\right)  \ \mid
\ U\text{ is a subset of }\left[  n\right]  \text{ having size }n-k\right\}
\subseteq\left\{  \nabla_{\mathbf{a}}\ \mid\ \mathbf{a}\in\left[  n\right]
^{k}\right\}  .
\]
Thus,%
\begin{align}
T_{\operatorname*{sign}}\left(  \mathcal{J}_{n-k-1}\right)   &  =\mathcal{A}%
\cdot\operatorname*{span}\underbrace{\left\{  T_{\operatorname*{sign}}\left(
\nabla_{U}^{-}\right)  \ \mid\ U\text{ is a subset of }\left[  n\right]
\text{ having size }n-k\right\}  }_{\subseteq\left\{  \nabla_{\mathbf{a}%
}\ \mid\ \mathbf{a}\in\left[  n\right]  ^{k}\right\}  }\nonumber\\
&  \subseteq\mathcal{A}\cdot\operatorname*{span}\left\{  \nabla_{\mathbf{a}%
}\ \mid\ \mathbf{a}\in\left[  n\right]  ^{k}\right\}  .
\label{pf.lem.AnnNnk.2.c1.l}%
\end{align}

On the other hand, each $k$-tuple $\mathbf{a}=\left(  a_{1},a_{2},\ldots
,a_{k}\right)  \in\left[  n\right]  ^{k}$ satisfies%
\begin{align*}
\nabla_{\mathbf{a}}  &  =\sum_{\substack{w\in S_{n};\\w\left(  a_{i}\right)
=a_{i}\text{ for all }i}}w=T_{\operatorname*{sign}}\left(  \nabla_{\left[
n\right]  \setminus\left\{  a_{1},a_{2},\ldots,a_{k}\right\}  }^{-}\right)
\ \ \ \ \ \ \ \ \ \ \left(  \text{by Lemma \ref{lem.AnnNnk.1}}\right) \\
&  \in\left\{  T_{\operatorname*{sign}}\left(  \nabla_{U}^{-}\right)
\ \mid\ U\text{ is a subset of }\left[  n\right]  \text{ having size
}>n-k-1\right\}
\end{align*}
(since $\left[  n\right]  \setminus\left\{  a_{1},a_{2},\ldots,a_{k}\right\}
$ is a subset of $\left[  n\right]  $ having size $\geq n-k>n-k-1$). Thus,%
\[
\left\{  \nabla_{\mathbf{a}}\ \mid\ \mathbf{a}\in\left[  n\right]
^{k}\right\}  \subseteq\left\{  T_{\operatorname*{sign}}\left(  \nabla_{U}%
^{-}\right)  \ \mid\ U\text{ is a subset of }\left[  n\right]  \text{ having
size }>n-k-1\right\}  .
\]
Hence,%
\begin{align*}
&  \mathcal{A}\cdot\operatorname*{span}\left\{  \nabla_{\mathbf{a}}%
\ \mid\ \mathbf{a}\in\left[  n\right]  ^{k}\right\} \\
&  \subseteq\mathcal{A}\cdot\operatorname*{span}\left\{
T_{\operatorname*{sign}}\left(  \nabla_{U}^{-}\right)  \ \mid\ U\text{ is a
subset of }\left[  n\right]  \text{ having size }>n-k-1\right\} \\
&  =T_{\operatorname*{sign}}\left(  \mathcal{A}\cdot\operatorname*{span}%
\left\{  \nabla_{U}^{-}\ \mid\ U\text{ is a subset of }\left[  n\right]
\text{ having size }>n-k-1\right\}  \right) \\
&  \ \ \ \ \ \ \ \ \ \ \ \ \ \ \ \ \ \ \ \ \left(  \text{since }%
T_{\operatorname*{sign}}:\mathcal{A}\rightarrow\mathcal{A}\text{ is a
}\mathbf{k}\text{-algebra isomorphism}\right)  .
\end{align*}
In view of%
\[
\mathcal{J}_{n-k-1}=\mathcal{A}\cdot\operatorname*{span}\left\{  \nabla
_{U}^{-}\ \mid\ U\text{ is a subset of }\left[  n\right]  \text{ having size
}>n-k-1\right\}
\]
(by Proposition \ref{prop.IJ.1} \textbf{(e)}, applied to $n-k-1$ instead of
$k$), we can rewrite this as%
\[
\mathcal{A}\cdot\operatorname*{span}\left\{  \nabla_{\mathbf{a}}%
\ \mid\ \mathbf{a}\in\left[  n\right]  ^{k}\right\}  \subseteq
T_{\operatorname*{sign}}\left(  \mathcal{J}_{n-k-1}\right)  .
\]
Combining this with (\ref{pf.lem.AnnNnk.2.c1.l}), we obtain%
\[
T_{\operatorname*{sign}}\left(  \mathcal{J}_{n-k-1}\right)  =\mathcal{A}%
\cdot\operatorname*{span}\left\{  \nabla_{\mathbf{a}}\ \mid\ \mathbf{a}%
\in\left[  n\right]  ^{k}\right\}  .
\]
Thus, Lemma \ref{lem.AnnNnk.2} is proved in Case 1.

Now, let us consider Case 2. In this case, we have $k\geq n$, so that
$n-k-1<0$ and therefore $\mathcal{J}_{n-k-1}=\mathcal{A}$ (by our convention
that $\mathcal{J}_{m}=\mathcal{A}$ for $m<0$). Hence, $T_{\operatorname*{sign}%
}\left(  \mathcal{J}_{n-k-1}\right)  =T_{\operatorname*{sign}}\left(
\mathcal{A}\right)  =\mathcal{A}$. On the other hand, the set $\left[
n\right]  ^{k}$ contains the $k$-tuple $\mathbf{a}_{0}:=\left(  1,2,3,\ldots
,n,n,n,\ldots,n\right)  $ (since $k\geq n$), and the corresponding element
$\nabla_{\mathbf{a}_{0}}$ is $\operatorname*{id}$ (because if we apply
(\ref{eq.lem.AnnNnk.2.defNab}) to $\mathbf{a}=\mathbf{a}_{0}$, then the only
permutation $w\in S_{n}$ under the sum is $\operatorname*{id}$). Hence,
$\operatorname*{id}=\nabla_{\mathbf{a}_{0}}\in\operatorname*{span}\left\{
\nabla_{\mathbf{a}}\ \mid\ \mathbf{a}\in\left[  n\right]  ^{k}\right\}  $.
Thus, the left ideal $\mathcal{A}\cdot\operatorname*{span}\left\{
\nabla_{\mathbf{a}}\ \mid\ \mathbf{a}\in\left[  n\right]  ^{k}\right\}  $ of
$\mathcal{A}$ contains the element $\operatorname*{id}=1_{\mathcal{A}}$,
whence it must be the whole $\mathcal{A}$. In other words,%
\[
\mathcal{A}\cdot\operatorname*{span}\left\{  \nabla_{\mathbf{a}}%
\ \mid\ \mathbf{a}\in\left[  n\right]  ^{k}\right\}  =\mathcal{A}.
\]
Comparing this with $T_{\operatorname*{sign}}\left(  \mathcal{J}%
_{n-k-1}\right)  =\mathcal{A}$, we obtain%
\[
T_{\operatorname*{sign}}\left(  \mathcal{J}_{n-k-1}\right)  =\mathcal{A}%
\cdot\operatorname*{span}\left\{  \nabla_{\mathbf{a}}\ \mid\ \mathbf{a}%
\in\left[  n\right]  ^{k}\right\}  .
\]
Thus, Lemma \ref{lem.AnnNnk.2} is proved in Case 2. We have now proved Lemma
\ref{lem.AnnNnk.2} in both cases.
\end{proof}

\begin{proof}
[Proof of Theorem \ref{thm.AnnNnk}.]We extend the definition of $\mathcal{J}%
_{m}$ to negative $m$ by setting $\mathcal{J}_{m}:=\mathcal{A}$ when $m<0$.

Theorem \ref{thm.row.main} \textbf{(a)} shows that each $m\in\mathbb{N}$
satisfies $\mathcal{I}_{m}=\operatorname*{LAnn}\mathcal{J}_{m}$. This equality
also holds for negative $m$ (since $0=\operatorname*{LAnn}\mathcal{A}$). Thus,
each integer $m$ satisfies $\mathcal{I}_{m}=\operatorname*{LAnn}%
\mathcal{J}_{m}$. Applying $T_{\operatorname*{sign}}$ to both sides of this,
we obtain%
\[
T_{\operatorname*{sign}}\left(  \mathcal{I}_{m}\right)
=T_{\operatorname*{sign}}\left(  \operatorname*{LAnn}\mathcal{J}_{m}\right)
=\operatorname*{LAnn}\left(  T_{\operatorname*{sign}}\left(  \mathcal{J}%
_{m}\right)  \right)
\]
(since $T_{\operatorname*{sign}}$ is a $\mathbf{k}$-algebra isomorphism).
Applying this to $m=n-k-1$, we obtain%
\[
T_{\operatorname*{sign}}\left(  \mathcal{I}_{n-k-1}\right)
=\operatorname*{LAnn}\left(  T_{\operatorname*{sign}}\left(  \mathcal{J}%
_{n-k-1}\right)  \right)  =\operatorname*{Ann}\left(  T_{\operatorname*{sign}%
}\left(  \mathcal{J}_{n-k-1}\right)  \right)
\]
(since $\operatorname*{LAnn}\mathcal{K}=\operatorname*{Ann}\mathcal{K}$
whenever $\mathcal{K}$ is a left ideal of $\mathcal{A}$). It thus remains to
show that%
\begin{equation}
\operatorname*{Ann}\left(  T_{\operatorname*{sign}}\left(  \mathcal{J}%
_{n-k-1}\right)  \right)  =\operatorname*{Ann}\left(  N_{n}^{\otimes
k}\right)  . \label{pf.thm.AnnNnk.goal2}%
\end{equation}

We let the symmetric group $S_{n}$ act on the set $\left[  n\right]  ^{k}$ of
all $k$-tuples of elements of $\left[  n\right]  $ by the rule%
\begin{align*}
w\left(  a_{1},a_{2},\ldots,a_{k}\right)   &  :=\left(  w\left(  a_{1}\right)
,w\left(  a_{2}\right)  ,\ldots,w\left(  a_{k}\right)  \right) \\
&  \ \ \ \ \ \ \ \ \ \ \text{for all }w\in S_{n}\text{ and }\left(
a_{1},a_{2},\ldots,a_{k}\right)  \in\left[  n\right]  ^{k}.
\end{align*}
For any $k$-tuple $\mathbf{a}=\left(  a_{1},a_{2},\ldots,a_{k}\right)
\in\left[  n\right]  ^{k}$, we define the element%
\[
\nabla_{\mathbf{a}}:=\sum_{\substack{w\in S_{n};\\w\mathbf{a}=\mathbf{a}%
}}w=\sum_{\substack{w\in S_{n};\\w\left(  a_{i}\right)  =a_{i}\text{ for all
}i}}w\in\mathcal{A}.
\]
This is just the element $T_{\operatorname*{sign}}\left(  \nabla_{\left[
n\right]  \setminus\left\{  a_{1},a_{2},\ldots,a_{k}\right\}  }^{-}\right)  $
(by Lemma \ref{lem.AnnNnk.1}). Lemma \ref{lem.AnnNnk.2} yields%
\[
T_{\operatorname*{sign}}\left(  \mathcal{J}_{n-k-1}\right)  =\mathcal{A}%
\cdot\operatorname*{span}\left\{  \nabla_{\mathbf{a}}\ \mid\ \mathbf{a}%
\in\left[  n\right]  ^{k}\right\}  =\sum_{\mathbf{a}\in\left[  n\right]  ^{k}%
}\mathcal{A}\nabla_{\mathbf{a}}.
\]
Hence,%
\begin{equation}
\operatorname*{Ann}\left(  T_{\operatorname*{sign}}\left(  \mathcal{J}%
_{n-k-1}\right)  \right)  =\bigcap_{\mathbf{a}\in\left[  n\right]  ^{k}%
}\operatorname*{Ann}\left(  \mathcal{A}\nabla_{\mathbf{a}}\right)
\label{pf.thm.AnnNnk.Ann12}%
\end{equation}
(since the annihilator of a sum of left $\mathcal{A}$-submodules is the
intersection of their annihilators).

On the other hand, let us set $e_{\mathbf{a}}:=e_{a_{1}}\otimes e_{a_{2}%
}\otimes\cdots\otimes e_{a_{k}}\in N_{n}^{\otimes k}$ for any $k$-tuple
$\mathbf{a}=\left(  a_{1},a_{2},\ldots,a_{k}\right)  \in\left[  n\right]
^{k}$. Then, the family $\left(  e_{\mathbf{a}}\right)  _{\mathbf{a}\in\left[
n\right]  ^{k}}$ is a basis of the $\mathbf{k}$-module $N_{n}^{\otimes k}$,
and thus also generates $N_{n}^{\otimes k}$ as a left $\mathcal{A}$-module. In
other words,%
\[
N_{n}^{\otimes k}=\sum_{\mathbf{a}\in\left[  n\right]  ^{k}}\mathcal{A}%
e_{\mathbf{a}}.
\]
Hence,%
\begin{equation}
\operatorname*{Ann}\left(  N_{n}^{\otimes k}\right)  =\bigcap_{\mathbf{a}%
\in\left[  n\right]  ^{k}}\operatorname*{Ann}\left(  \mathcal{A}e_{\mathbf{a}%
}\right)  \label{pf.thm.AnnNnk.Ann22}%
\end{equation}
(since the annihilator of a sum of left $\mathcal{A}$-submodules is the
intersection of their annihilators).

We want to show that the left hand sides of (\ref{pf.thm.AnnNnk.Ann12}) and
(\ref{pf.thm.AnnNnk.Ann22}) are equal (because this will yield
(\ref{pf.thm.AnnNnk.goal2})). Of course, it suffices to show that the right
hand sides are equal. For this purpose, we will show that $\mathcal{A}%
\nabla_{\mathbf{a}}\cong\mathcal{A}e_{\mathbf{a}}$ as left $\mathcal{A}%
$-modules for every $\mathbf{a}\in\left[  n\right]  ^{k}$ (this will suffice,
since isomorphic $\mathcal{A}$-modules have the same annihilator).

But this is indeed quite easy: Let $\mathbf{a}=\left(  a_{1},a_{2}%
,\ldots,a_{k}\right)  \in\left[  n\right]  ^{k}$. Then, we can define a
$\mathbf{k}$-linear map%
\begin{align*}
\phi:N_{n}^{\otimes k}  &  \rightarrow\mathcal{A},\\
e_{\mathbf{c}}  &  \mapsto\sum_{\substack{w\in S_{n};\\w\mathbf{a}=\mathbf{c}%
}}w\ \ \ \ \ \ \ \ \ \ \text{for all }\mathbf{c}\in\left[  n\right]  ^{k}%
\end{align*}
(this is well-defined, since $\left(  e_{\mathbf{c}}\right)  _{\mathbf{c}%
\in\left[  n\right]  ^{k}}$ is a basis of the $\mathbf{k}$-module
$N_{n}^{\otimes k}$). It is easy to see that this map $\phi$ is $S_{n}%
$-equivariant (because every $u\in S_{n}$ and $\mathbf{c}\in\left[  n\right]
^{k}$ satisfy $ue_{\mathbf{c}}=e_{u\mathbf{c}}$ and $u\sum_{\substack{w\in
S_{n};\\w\mathbf{a}=\mathbf{c}}}w=\sum_{\substack{w\in S_{n};\\w\mathbf{a}%
=u\mathbf{c}}}w$) and thus left $\mathcal{A}$-linear. Moreover, it sends
$e_{\mathbf{a}}$ to $\sum_{\substack{w\in S_{n};\\w\mathbf{a}=\mathbf{a}%
}}w=\nabla_{\mathbf{a}}$ (by the definition of $\nabla_{\mathbf{a}}$). Hence,
$\phi\left(  \mathcal{A}e_{\mathbf{a}}\right)  =\mathcal{A}\nabla_{\mathbf{a}%
}$ (since $\phi$ is left $\mathcal{A}$-linear).

The map $\phi$ itself is not usually injective. However, we claim that its
restriction to $\mathcal{A}e_{\mathbf{a}}$ is injective. Indeed, recall again
that $ue_{\mathbf{c}}=e_{u\mathbf{c}}$ for all $u\in S_{n}$ and $\mathbf{c}%
\in\left[  n\right]  ^{k}$. In particular, $ue_{\mathbf{a}}=e_{u\mathbf{a}}$
for all $u\in S_{n}$. Hence, the $\mathbf{k}$-module $\mathcal{A}%
e_{\mathbf{a}}$ has a basis $\left(  e_{\mathbf{c}}\right)  _{\mathbf{c}\in
S_{n}\mathbf{a}}$ (where $S_{n}\mathbf{a}$ is the orbit of $\mathbf{a}%
\in\left[  n\right]  ^{k}$ under the $S_{n}$-action on $\left[  n\right]
^{k}$)\ \ \ \ \footnote{\textit{Proof:} We have $\mathcal{A}%
=\operatorname*{span}\left\{  u\ \mid\ u\in S_{n}\right\}  $. Thus,
\begin{align*}
\mathcal{A}e_{\mathbf{a}}  &  =\left(  \operatorname*{span}\left\{
u\ \mid\ u\in S_{n}\right\}  \right)  e_{\mathbf{a}}=\operatorname*{span}%
\left\{  ue_{\mathbf{a}}\ \mid\ u\in S_{n}\right\} \\
&  =\operatorname*{span}\underbrace{\left\{  e_{u\mathbf{a}}\ \mid\ u\in
S_{n}\right\}  }_{=\left\{  e_{\mathbf{c}}\ \mid\ \mathbf{c}\in S_{n}%
\mathbf{a}\right\}  }\ \ \ \ \ \ \ \ \ \ \left(  \text{since }ue_{\mathbf{a}%
}=e_{u\mathbf{a}}\text{ for all }u\in S_{n}\right) \\
&  =\operatorname*{span}\left\{  e_{\mathbf{c}}\ \mid\ \mathbf{c}\in
S_{n}\mathbf{a}\right\}  .
\end{align*}
Hence, the family $\left(  e_{\mathbf{c}}\right)  _{\mathbf{c}\in
S_{n}\mathbf{a}}$ spans the $\mathbf{k}$-module $\mathcal{A}e_{\mathbf{a}}$.
Since this family is furthermore $\mathbf{k}$-linearly independent (being a
subfamily of the basis $\left(  e_{\mathbf{c}}\right)  _{\mathbf{c}\in\left[
n\right]  ^{k}}$ of $N_{n}^{\otimes k}$), we thus conclude that it is a basis
of $\mathcal{A}e_{\mathbf{a}}$.}. The images of all these basis vectors
$e_{\mathbf{c}}$ under $\phi$ are the sums $\sum_{\substack{w\in
S_{n};\\w\mathbf{a}=\mathbf{c}}}w$, which are $\mathbf{k}$-linearly
independent (since they are sums of disjoint sets of permutations, and each of
them is nonempty because $\mathbf{c}\in S_{n}\mathbf{a}$ guarantees the
existence of at least one $w\in S_{n}$ satisfying $w\mathbf{a}=\mathbf{c}$).
Thus, the $\mathbf{k}$-linear map $\phi$ sends the basis vectors
$e_{\mathbf{c}}$ of $\mathcal{A}e_{\mathbf{a}}$ to $\mathbf{k}$-linearly
independent vectors in $\mathcal{A}$. Consequently, the restriction of this
map $\phi$ to $\mathcal{A}e_{\mathbf{a}}$ is injective.

Hence, $\phi\left(  \mathcal{A}e_{\mathbf{a}}\right)  \cong\mathcal{A}%
e_{\mathbf{a}}$. In view of $\phi\left(  \mathcal{A}e_{\mathbf{a}}\right)
=\mathcal{A}\nabla_{\mathbf{a}}$, we can rewrite this as $\mathcal{A}%
\nabla_{\mathbf{a}}\cong\mathcal{A}e_{\mathbf{a}}$.

\begin{noncompile}
Old argumentation: $\mathcal{A}e_{\mathbf{a}}=\mathcal{A}\left(  e_{a_{1}%
}\otimes e_{a_{2}}\otimes\cdots\otimes e_{a_{k}}\right)  $ is a permutation
representation of $S_{n}$, with a basis
\[
\left\{  e_{c_{1}}\otimes e_{c_{2}}\otimes\cdots\otimes e_{c_{k}}%
\ \mid\ \left(  c_{1},c_{2},\ldots,c_{k}\right)  =w\mathbf{a}\text{ for some
}w\in S_{n}\right\}
\]
and with the stabilizer of $e_{\mathbf{a}}$ being the set of all permutations
$w\in S_{n}$ that satisfy $w\mathbf{a}=\mathbf{a}$. On the other hand,
$\mathcal{A}\nabla_{\mathbf{a}}=\mathcal{A}\sum\limits_{\substack{w\in
S_{n};\\w\left(  a_{i}\right)  =a_{i}\text{ for all }i}}w$ is also a
permutation representation of $S_{n}$, with a basis
\[
\left\{  \sum\limits_{\substack{w\in S_{n};\\w\left(  a_{i}\right)
=c_{i}\text{ for all }i}}w\ \mid\ \left(  c_{1},c_{2},\ldots,c_{k}\right)
=w\mathbf{a}\text{ for some }w\in S_{n}\right\}
\]
(this family is $\mathbf{k}$-linearly independent because...) and with the
stabilizer of $\nabla_{\mathbf{a}}$ being again the set of all permutations
$w\in S_{n}$ that satisfy $w\mathbf{a}=\mathbf{a}$. Two permutation
representations that have the same stabilizer must be isomorphic. Thus, the
two permutation representations $\mathcal{A}\nabla_{\mathbf{a}}$ and
$\mathcal{A}e_{\mathbf{a}}$ of $S_{n}$ are isomorphic (since they have the
same stabilizer).
\end{noncompile}

As explained above, this completes the proof of Theorem \ref{thm.AnnNnk}.
\end{proof}

\begin{corollary}
\label{cor.actonNnk}Let $k\in\mathbb{N}$. Consider the action of $\mathcal{A}$
on $N_{n}^{\otimes k}$ as a $\mathbf{k}$-algebra morphism $\rho_{k}%
:\mathcal{A}\rightarrow\operatorname*{End}\nolimits_{\mathbf{k}}\left(
N_{n}^{\otimes k}\right)  $. Then, the image $\rho_{k}\left(  \mathcal{A}%
\right)  $ of this map $\rho_{k}$ has two bases $\left(  \rho_{k}\left(
w\right)  \right)  _{w\in S_{n}\setminus\operatorname*{Av}\nolimits_{n}\left(
n-k\right)  }$ and $\left(  \rho_{k}\left(  w\right)  \right)  _{w\in
S_{n}\setminus\operatorname*{Av}\nolimits_{n}^{\prime}\left(  n-k\right)  }$
(as a $\mathbf{k}$-module). Here, we understand $\operatorname*{Av}%
\nolimits_{n}\left(  m\right)  $ to be $\varnothing$ when $m<0$.
\end{corollary}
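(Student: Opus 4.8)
Looking at Corollary \ref{cor.actonNnk}, I need to prove that the image of $\mathcal{A}$ acting on $N_n^{\otimes k}$ has the stated bases.

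\textbf{The plan.} The strategy mirrors exactly the proof of Corollary \ref{cor.actonVkn}, but using Theorem \ref{thm.AnnNnk} in place of Theorem \ref{thm.AnnVkn}. First I would observe that $\operatorname*{Ker}\rho_{k} = \operatorname*{Ann}\left(N_{n}^{\otimes k}\right) = T_{\operatorname*{sign}}\left(\mathcal{I}_{n-k-1}\right)$ by Theorem \ref{thm.AnnNnk}. By the first isomorphism theorem, there is then a $\mathbf{k}$-algebra isomorphism $\mathcal{A}/T_{\operatorname*{sign}}\left(\mathcal{I}_{n-k-1}\right) \to \rho_{k}\left(\mathcal{A}\right)$ sending each residue class $\overline{a}$ to $\rho_{k}\left(a\right)$. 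So it suffices to exhibit the two claimed bases in the quotient $\mathbf{k}$-module $\mathcal{A}/T_{\operatorname*{sign}}\left(\mathcal{I}_{n-k-1}\right)$.

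\textbf{Handling the sign-twist.} Since $T_{\operatorname*{sign}}$ is a $\mathbf{k}$-algebra automorphism of $\mathcal{A}$, it induces an isomorphism of $\mathbf{k}$-modules $\mathcal{A}/\mathcal{I}_{n-k-1} \to \mathcal{A}/T_{\operatorname*{sign}}\left(\mathcal{I}_{n-k-1}\right)$ sending $\overline{a} \mapsto \overline{T_{\operatorname*{sign}}\left(a\right)}$; in particular it sends $\overline{w} \mapsto \overline{\left(-1\right)^{w} w}$ for each $w \in S_{n}$. Therefore, if I first establish that $\mathcal{A}/\mathcal{I}_{n-k-1}$ has the basis $\left(\overline{w}\right)_{w \in S_{n} \setminus \operatorname*{Av}\nolimits_{n}\left(n-k\right)}$, then applying this isomorphism I get that $\left(\overline{\left(-1\right)^{w} w}\right)_{w \in S_{n} \setminus \operatorname*{Av}\nolimits_{n}\left(n-k\right)}$ is a basis of $\mathcal{A}/T_{\operatorname*{sign}}\left(\mathcal{I}_{n-k-1}\right)$; and since each basis vector is a unit scalar multiple of $\overline{w}$, the family $\left(\overline{w}\right)_{w \in S_{n} \setminus \operatorname*{Av}\nolimits_{n}\left(n-k\right)}$ is also a basis. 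So after passing through $\rho_{k}$, I obtain that $\left(\rho_{k}\left(w\right)\right)_{w \in S_{n} \setminus \operatorname*{Av}\nolimits_{n}\left(n-k\right)}$ is a basis of $\rho_{k}\left(\mathcal{A}\right)$. The analogous argument with $\operatorname*{Av}\nolimits_{n}^{\prime}$ handles the second basis.

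\textbf{The one remaining ingredient.} I still need that $\mathcal{A}/\mathcal{I}_{m}$ is free with basis $\left(\overline{w}\right)_{w \in S_{n} \setminus \operatorname*{Av}\nolimits_{n}\left(m+1\right)}$ and also with basis $\left(\overline{w}\right)_{w \in S_{n} \setminus \operatorname*{Av}\nolimits_{n}^{\prime}\left(m+1\right)}$, applied with $m = n-k-1$ (so $m+1 = n-k$). For $m \geq 0$ this is precisely Theorem \ref{thm.row.main} \textbf{(e)} together with Corollary \ref{cor.row.twin} \textbf{(c)}. For $m < 0$ (i.e. $k \geq n$) one has $\mathcal{I}_{m} = 0$, so $\mathcal{A}/\mathcal{I}_{m} = \mathcal{A}$ has basis $\left(\overline{w}\right)_{w \in S_{n}}$; and in this case $\operatorname*{Av}\nolimits_{n}\left(n-k\right) = \varnothing$ by the stated convention (since $n-k \leq 0$, and with the convention $\operatorname*{Av}\nolimits_{n}\left(0\right) = \varnothing$), so $S_{n} \setminus \operatorname*{Av}\nolimits_{n}\left(n-k\right) = S_{n}$, and likewise for $\operatorname*{Av}\nolimits_{n}^{\prime}$. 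Thus the indexing matches in all cases. I expect the only mild nuisance is bookkeeping the negative-index convention carefully so that the empty-set convention for $\operatorname*{Av}$ lines up with $\mathcal{I}_{m} = 0$; there is no substantive obstacle, as everything reduces to results already proved.
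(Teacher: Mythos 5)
Your proposal is correct and follows essentially the same route as the paper's own proof: identify $\operatorname*{Ker}\rho_{k}=T_{\operatorname*{sign}}\left(\mathcal{I}_{n-k-1}\right)$ via Theorem \ref{thm.AnnNnk}, reduce by the first isomorphism theorem to finding bases of $\mathcal{A}/T_{\operatorname*{sign}}\left(\mathcal{I}_{n-k-1}\right)$, transport the problem through the sign-twist (which only rescales basis vectors by $\pm1$), and invoke Theorem \ref{thm.row.main} \textbf{(e)} and Corollary \ref{cor.row.twin} \textbf{(c)} for $m=n-k-1$, treating $k\geq n$ separately via the conventions. The bookkeeping of the degenerate case matches the paper's handling as well.
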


\begin{proof}
We have $\operatorname*{Ker}\rho_{k}=\operatorname*{Ann}\left(  N_{n}^{\otimes
k}\right)  =T_{\operatorname*{sign}}\left(  \mathcal{I}_{n-k-1}\right)  $ by
Theorem \ref{thm.AnnNnk}. Hence, by the first isomorphism theorem, there is a
$\mathbf{k}$-algebra isomorphism $\mathcal{A}/T_{\operatorname*{sign}}\left(
\mathcal{I}_{n-k-1}\right)  \rightarrow\rho_{k}\left(  \mathcal{A}\right)  $
that sends each residue class $\overline{a}$ to $\rho_{k}\left(  a\right)  $.
Hence, we only need to show that the quotient $\mathbf{k}$-module
$\mathcal{A}/T_{\operatorname*{sign}}\left(  \mathcal{I}_{n-k-1}\right)  $ has
two bases $\left(  \overline{w}\right)  _{w\in S_{n}\setminus
\operatorname*{Av}\nolimits_{n}\left(  n-k\right)  }$ and $\left(
\overline{w}\right)  _{w\in S_{n}\setminus\operatorname*{Av}\nolimits_{n}%
^{\prime}\left(  n-k\right)  }$. Upon applying the $\mathbf{k}$-algebra
automorphism $T_{\operatorname*{sign}}$ of $\mathcal{A}$ to this statement, it
takes the following simpler form: The quotient $\mathbf{k}$-module
$\mathcal{A}/\mathcal{I}_{n-k-1}$ has two bases $\left(  \overline{w}\right)
_{w\in S_{n}\setminus\operatorname*{Av}\nolimits_{n}\left(  n-k\right)  }$ and
$\left(  \overline{w}\right)  _{w\in S_{n}\setminus\operatorname*{Av}%
\nolimits_{n}^{\prime}\left(  n-k\right)  }$ (indeed, the bases still look the
same, because $T_{\operatorname*{sign}}$ sends each permutation $w\in S_{n}$
to $\left(  -1\right)  ^{w}w=\pm w$, and thus the bases are transformed only
by scaling some of the basis vectors by $-1$). In order to prove this
statement, we only need to apply Theorem \ref{thm.row.main} \textbf{(e)} and
Corollary \ref{cor.row.twin} \textbf{(c)} to $n-k-1$ instead of $k$ (at least
when $k\leq n-1$; but the other case is trivial because in that case, we have
$\mathcal{I}_{n-k-1}=0$ and $\operatorname*{Av}\nolimits_{n}\left(
n-k\right)  =\varnothing$ and $\operatorname*{Av}\nolimits_{n}^{\prime}\left(
n-k\right)  =\varnothing$).
\end{proof}

Corollary \ref{cor.actonNnk} appears in \cite[Example 2.13]{Donkin24} and (in
a slightly restated form) in \cite[Corollary 1.4 (ii)]{BoDoMa22}.

Further related results are found in \cite{DotNym07}, \cite{BoDoMa18},
\cite{BoDoMa22}, \cite{RaSaSu12}, \cite{Donkin24} and \cite[\S 8]{BaiRai}.

\subsection{The Specht module connection}

\subsubsection{$\mathcal{I}_{k}$ and $\mathcal{J}_{k}$ after
Artin--Wedderburn}

The following theorem discusses the representation-theoretical significance of
the ideals $\mathcal{I}_{k}$ and $\mathcal{J}_{k}$. We use some basic
representation theory, including the concept of a Specht module (see, e.g.,
\cite[\S 5.12]{Etingof} or \cite[Definition 5.11.1 \textbf{(a)}]%
{sga}\footnote{Note that the Specht module corresponding to a partition
$\lambda$ is called $V_{\lambda}$ in \cite[\S 5.12]{Etingof} and is called
$\mathcal{S}^{\lambda}$ in \cite[Definition 5.11.1 \textbf{(a)}]{sga}. The
definitions are not identical, but they define isomorphic modules (because of
\cite[Theorem 5.5.13 \textbf{(b)}]{sga}).}).

In what follows, the notation \textquotedblleft$\lambda\vdash n$%
\textquotedblright\ shall always mean \textquotedblleft$\lambda$ is a
partition of $n$\textquotedblright. In particular, the set of all partitions
of $n$ shall be denoted by $\left\{  \lambda\ \mid\ \lambda\vdash n\right\}  $
or just $\left\{  \lambda\vdash n\right\}  $. If $\lambda$ is a partition of
$n$, then the entries of $\lambda$ will be denoted by $\lambda_{1},\lambda
_{2},\lambda_{3},\ldots$, whereas the length of $\lambda$ will be written as
$\ell\left(  \lambda\right)  $.

\begin{theorem}
\label{thm.IJ.rep}Assume that $n!$ is invertible in $\mathbf{k}$. For each
partition $\lambda$ of $n$, let $S^{\lambda}$ denote the corresponding Specht
module (a left $\mathcal{A}$-module). For each $a\in\mathcal{A}$ and each
partition $\lambda$ of $n$, we let $a_{\lambda}\in\operatorname*{End}\left(
S^{\lambda}\right)  $ denote the action of $a$ on the Specht module
$S^{\lambda}$.

Consider the map%
\begin{align*}
\operatorname*{AW}:\mathcal{A}  &  \rightarrow\prod\limits_{\lambda\vdash
n}\operatorname*{End}\left(  S^{\lambda}\right)  ,\\
a  &  \mapsto\left(  a_{\lambda}\right)  _{\lambda\vdash n}.
\end{align*}
This map $\operatorname*{AW}$ is known to be a $\mathbf{k}$-algebra
isomorphism. (When $\mathbf{k}$ is a field, this follows from the
Artin--Wedderburn decomposition of $\mathcal{A}$, since the $S^{\lambda}$ are
the absolutely irreducible $\mathcal{A}$-modules; alternatively, this can be
derived from \cite[\S 17, Theorem 12]{Ruther48}. For a detailed proof, see
\cite[Theorem 5.14.1]{sga}.)

For each subset $U$ of $\left\{  \lambda\ \mid\ \lambda\vdash n\right\}  $, we
consider the subproduct $\prod\limits_{\lambda\in U}\operatorname*{End}\left(
S^{\lambda}\right)  $ of $\prod\limits_{\lambda\vdash n}\operatorname*{End}%
\left(  S^{\lambda}\right)  $. This is an ideal of $\prod\limits_{\lambda
\vdash n}\operatorname*{End}\left(  S^{\lambda}\right)  $. The preimage of
this subproduct under $\operatorname*{AW}$ is thus an ideal of $\mathcal{A}$,
and will be denoted by $\mathcal{A}_{U}$.

Now, let $k\in\mathbb{N}$. Then,%
\[
\mathcal{I}_{k}=\mathcal{A}_{\left\{  \lambda\vdash n\ \mid\ \ell\left(
\lambda\right)  \leq k\right\}  }\ \ \ \ \ \ \ \ \ \ \text{and}%
\ \ \ \ \ \ \ \ \ \ \mathcal{J}_{k}=\mathcal{A}_{\left\{  \lambda\vdash
n\ \mid\ \ell\left(  \lambda\right)  >k\right\}  }.
\]

\end{theorem}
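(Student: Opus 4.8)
The plan is to identify $\mathcal{J}_k$ with the annihilator of the Specht modules $S^{\lambda}$ having $\ell(\lambda)\le k$, and $\mathcal{I}_k$ with the annihilator of those having $\ell(\lambda)>k$; once this is done, both asserted equalities drop straight out of the definition of $\mathcal{A}_U$ via $\operatorname{AW}$. The most economical way to pin down $\mathcal{J}_k$ is through the already-proved Theorem~\ref{thm.AnnVkn}, which gives $\mathcal{J}_k=\operatorname{Ann}(V_k^{\otimes n})$. So the first task is to determine which Specht modules occur in $V_k^{\otimes n}$, and the second is to run the resulting annihilator statement through $\operatorname{AW}$.

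As a left $\mathcal{A}$-module, $V_k^{\otimes n}$ is the permutation module on the basis $(e_{i_1}\otimes\cdots\otimes e_{i_n})_{(i_1,\dots,i_n)\in[k]^n}$ with $S_n$ permuting the positions; its orbits correspond to the weak compositions of $n$ into $k$ parts, so $V_k^{\otimes n}\cong\bigoplus_{\mu}M^{\mu}$, a direct sum of Young permutation modules $M^{\mu}$ with $\ell(\mu)\le k$. Since $n!$ is invertible, $\mathcal{A}$ is split semisimple and Young's rule $M^{\mu}\cong\bigoplus_{\lambda}K_{\lambda\mu}S^{\lambda}$ holds; as $K_{\lambda\mu}>0$ forces $\ell(\lambda)\le\ell(\mu)\le k$ while $K_{\lambda\lambda}=1$, the module $S^{\lambda}$ occurs in $V_k^{\otimes n}$ exactly when $\ell(\lambda)\le k$. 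Hence $\mathcal{J}_k=\operatorname{Ann}(V_k^{\otimes n})=\bigcap_{\lambda\vdash n;\ \ell(\lambda)\le k}\operatorname{Ann}_{\mathcal{A}}(S^{\lambda})$. Now for each $\lambda$ the element $a\in\mathcal{A}$ acts on $S^{\lambda}$ as the $\lambda$-component of $\operatorname{AW}(a)$, so $\operatorname{Ann}_{\mathcal{A}}(S^{\lambda})$ is exactly the preimage under $\operatorname{AW}$ of $\prod_{\mu\ne\lambda}\operatorname{End}(S^{\mu})$, i.e.\ $\operatorname{Ann}_{\mathcal{A}}(S^{\lambda})=\mathcal{A}_{\{\mu\vdash n\,\mid\,\mu\ne\lambda\}}$. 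Using $\mathcal{A}_{U}\cap\mathcal{A}_{U'}=\mathcal{A}_{U\cap U'}$ (immediate from the product structure) and intersecting over all $\lambda$ with $\ell(\lambda)\le k$ yields $\mathcal{J}_k=\mathcal{A}_{\{\mu\vdash n\,\mid\,\ell(\mu)>k\}}$, which is the second claimed equality.

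For $\mathcal{I}_k$ I would argue by complementarity rather than repeat the computation. By Theorem~\ref{thm.row.main}~\textbf{(g)} we have $\mathcal{A}=\mathcal{I}_k\oplus\mathcal{J}_k$ as $\mathbf{k}$-modules, and by Proposition~\ref{prop.IJ.1}~\textbf{(a)} both $\mathcal{I}_k$ and $\mathcal{J}_k$ are two-sided ideals. A standard fact: if a ring is an internal direct sum of two two-sided ideals, the corresponding decomposition $1=e+f$ consists of orthogonal central idempotents, each ideal is the (two-sided) annihilator of the other, and in particular an ideal has a unique ideal complement. Thus $\mathcal{I}_k$ is the unique ideal complement of $\mathcal{J}_k=\mathcal{A}_{\{\ell>k\}}$. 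But $\mathcal{A}_{\{\lambda\vdash n\,\mid\,\ell(\lambda)\le k\}}$ is also an ideal complement of $\mathcal{A}_{\{\ell>k\}}$, the two corresponding subproducts of $\prod_{\lambda\vdash n}\operatorname{End}(S^{\lambda})$ being complementary ideals there. Therefore $\mathcal{I}_k=\mathcal{A}_{\{\lambda\vdash n\,\mid\,\ell(\lambda)\le k\}}$, completing the proof.

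The one point needing care is carrying the representation theory out over a general $\mathbf{k}$ with $n!$ invertible rather than a characteristic-$0$ field: the decomposition $V_k^{\otimes n}\cong\bigoplus M^{\mu}$ and Young's rule are used at that level of generality. This is not a real obstacle, since Specht modules, Young permutation modules and Kostka numbers are all defined over $\mathbb{Z}[1/n!]$ and commute with base change, so whether $S^{\lambda}$ occurs in $V_k^{\otimes n}$ can be tested after specializing to $\mathbb{Q}$, where it is the classical Schur--Weyl statement; equivalently one may phrase everything in terms of the block idempotents $e_{\lambda}\in\mathbf{k}[S_n]$, again defined over $\mathbb{Z}[1/n!]$. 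If one prefers to avoid tensor modules entirely, the same conclusion is reachable by showing directly that $\mathcal{I}_k$ annihilates $S^{\lambda}$ for $\ell(\lambda)>k$ (write $\mathcal{I}_k=\sum_{\ell(\mathbf{A})\le k}\mathcal{A}\,\nabla_{\mathbf{A},\mathbf{A}}$ with $\nabla_{\mathbf{A},\mathbf{A}}$ a Young-subgroup symmetrizer, which up to an invertible scalar projects onto invariants and hence kills $S^{\lambda}$ unless $\lambda$ dominates the block-size partition of $\mathbf{A}$, impossible when $\ell(\lambda)>k$) and that $\mathcal{J}_k$ annihilates $S^{\lambda}$ for $\ell(\lambda)\le k$ (use Proposition~\ref{prop.IJ.1}~\textbf{(b)} to reduce to $\nabla_U^{-}S^{\lambda}=0$ for $|U|=k+1$, then the branching rule to see that the $S_U$-sign-isotypic part of $S^{\lambda}$ vanishes once $|U|>\ell(\lambda)$), and then finishing with the same complementarity bookkeeping.
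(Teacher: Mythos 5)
Your proposal is correct, but it takes a genuinely different route from the paper for the two containments. The paper never invokes Theorem \ref{thm.AnnVkn} or any decomposition of $V_k^{\otimes n}$: it proves directly, by elementary means, that $\mathcal{I}_k\subseteq\mathcal{A}_{\{\ell(\lambda)\le k\}}$ and $\mathcal{J}_k\subseteq\mathcal{A}_{\{\ell(\lambda)>k\}}$, in each case using $S^{\lambda}\cong\mathcal{A}\mathbf{a}_{\lambda}\mathbf{b}_{\lambda}$ and a pigeonhole-produced transposition $\tau$ (in the column group of a tableau preserving a block of $\mathbf{A}$, so that $\nabla_{\mathbf{B},\mathbf{A}}\left(1-\tau\right)=0$; respectively in the row group, so that $\nabla_U^-\left(1+\tau\right)=0$), and then upgrades the two inclusions to equalities via $\mathcal{A}=\mathcal{I}_k+\mathcal{J}_k$ (Theorem \ref{thm.row.main} \textbf{(g)}) and the disjointness $\mathcal{A}_X\cap\mathcal{A}_Y=0$ (its Lemma \ref{lem.IUJV}) --- which is the same complementarity bookkeeping as your unique-ideal-complement step, just packaged symmetrically. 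You instead import Theorem \ref{thm.AnnVkn}, decompose $V_k^{\otimes n}\cong\bigoplus_{\mu}M^{\mu}$ and apply Young's rule to see that exactly the $S^{\lambda}$ with $\ell(\lambda)\le k$ occur, giving $\mathcal{J}_k=\mathcal{A}_{\{\ell(\lambda)>k\}}$ outright, and then deduce the $\mathcal{I}_k$ equality by complementarity. There is no circularity (Theorem \ref{thm.AnnVkn} is proved earlier, independently of this theorem), and your argument buys a conceptual explanation of why the partitions of length $\le k$ appear; what it costs is the extra representation-theoretic machinery the paper deliberately avoids, and the one place your sketch is thin is exactly the point you flag: over a general $\mathbf{k}$ with $n!$ invertible you need the semisimple decomposition of $M^{\mu}$ (equivalently, that the multiplicity spaces base-change from $\mathbb{Z}\left[1/n!\right]$ to free $\mathbf{k}$-modules of positive rank), so that $\operatorname*{Ann}(V_k^{\otimes n})$ really is the intersection of the $\operatorname*{Ann}(S^{\lambda})$ with $\ell(\lambda)\le k$; a careful write-up should spell this out (e.g.\ via the $\operatorname*{AW}$ isomorphism over $\mathbb{Z}\left[1/n!\right]$ and central idempotents). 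Your closing alternative sketch (killing $S^{\lambda}$ directly with symmetrizers and antisymmetrizers) is essentially the paper's own proof.
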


The proof of this theorem will rely on the following general fact:

\begin{lemma}
\label{lem.IUJV}Let $\mathcal{M}$ be a $\mathbf{k}$-module. Let $\mathcal{I}$
and $\mathcal{J}$ be two $\mathbf{k}$-submodules of $\mathcal{M}$ such that
$\mathcal{M}=\mathcal{I}+\mathcal{J}$. Let $\mathcal{U}$ and $\mathcal{V}$ be
two $\mathbf{k}$-submodules of $\mathcal{M}$ such that $\mathcal{I}%
\subseteq\mathcal{U}$ and $\mathcal{J}\subseteq\mathcal{V}$ and $\mathcal{U}%
\cap\mathcal{V}=0$. Then, $\mathcal{I}=\mathcal{U}$ and $\mathcal{J}%
=\mathcal{V}$.
\end{lemma}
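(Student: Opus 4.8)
This is a standard "two complementary submodules sandwiched inside a direct-sum decomposition" lemma, and the proof is short and purely formal. The key observation is that $\mathcal{U}\cap\mathcal{V}=0$ together with $\mathcal{U}+\mathcal{V}\supseteq\mathcal{I}+\mathcal{J}=\mathcal{M}$ forces $\mathcal{M}=\mathcal{U}\oplus\mathcal{V}$ as an internal direct sum (since $\mathcal{U},\mathcal{V}\subseteq\mathcal{M}$, we also have $\mathcal{U}+\mathcal{V}\subseteq\mathcal{M}$, hence equality). The plan is to exploit the uniqueness of decomposition in this direct sum.

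First I would fix an element $u\in\mathcal{U}$ and show $u\in\mathcal{I}$. Since $u\in\mathcal{U}\subseteq\mathcal{M}=\mathcal{I}+\mathcal{J}$, we can write $u=i+j$ with $i\in\mathcal{I}\subseteq\mathcal{U}$ and $j\in\mathcal{J}\subseteq\mathcal{V}$. Then $j=u-i\in\mathcal{U}$ (as $u,i\in\mathcal{U}$), so $j\in\mathcal{U}\cap\mathcal{V}=0$, whence $j=0$ and $u=i\in\mathcal{I}$. This proves $\mathcal{U}\subseteq\mathcal{I}$; combined with the hypothesis $\mathcal{I}\subseteq\mathcal{U}$, we get $\mathcal{I}=\mathcal{U}$. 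The argument for $\mathcal{J}=\mathcal{V}$ is entirely symmetric: for $v\in\mathcal{V}$, write $v=i+j$ with $i\in\mathcal{I}\subseteq\mathcal{U}$ and $j\in\mathcal{J}\subseteq\mathcal{V}$, note $i=v-j\in\mathcal{V}$, so $i\in\mathcal{U}\cap\mathcal{V}=0$, hence $v=j\in\mathcal{J}$; together with $\mathcal{J}\subseteq\mathcal{V}$ this gives $\mathcal{J}=\mathcal{V}$.

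There is essentially no obstacle here — the only thing to be careful about is keeping track of which containments are hypotheses ($\mathcal{I}\subseteq\mathcal{U}$, $\mathcal{J}\subseteq\mathcal{V}$) versus which are to be proved ($\mathcal{U}\subseteq\mathcal{I}$, $\mathcal{V}\subseteq\mathcal{J}$), and noting that the roles of $\mathcal{I},\mathcal{U}$ and $\mathcal{J},\mathcal{V}$ are fully symmetric so the second half needs no new idea. I would present both halves explicitly rather than invoking symmetry, since it costs only a line or two. No deeper structure (e.g.\ that these are ideals, or that $\mathbf{k}$ is nice) is needed; the statement is a fact about $\mathbf{k}$-modules and the proof uses only module addition.
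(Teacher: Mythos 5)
Your argument is correct and is essentially identical to the paper's own proof: both fix $u\in\mathcal{U}$, decompose $u=i+j$ via $\mathcal{M}=\mathcal{I}+\mathcal{J}$, deduce $j\in\mathcal{U}\cap\mathcal{V}=0$, and conclude $\mathcal{U}\subseteq\mathcal{I}$, with the symmetric argument for $\mathcal{V}$. No differences worth noting.
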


\begin{proof}
Let $u\in\mathcal{U}$. We shall show that $u\in\mathcal{I}$.

We have $u\in\mathcal{U}\subseteq\mathcal{M}=\mathcal{I}+\mathcal{J}$. Thus,
$u=i+j$ for some $i\in\mathcal{I}$ and $j\in\mathcal{J}$. Consider these $i$
and $j$. From $u=i+j$, we obtain $j=\underbrace{u}_{\in\mathcal{U}%
}-\underbrace{i}_{\in\mathcal{I}\subseteq\mathcal{U}}\in\mathcal{U}%
-\mathcal{U}\subseteq\mathcal{U}$. Combining this with $j\in\mathcal{J}%
\subseteq\mathcal{V}$, we obtain $j\in\mathcal{U}\cap\mathcal{V}=0$. In other
words, $j=0$. Hence, $u=i+\underbrace{j}_{=0}=i\in\mathcal{I}$.

Forget that we fixed $u$. We thus have shown that $u\in\mathcal{I}$ for each
$u\in\mathcal{U}$. In other words, $\mathcal{U}\subseteq\mathcal{I}$. Combined
with $\mathcal{I}\subseteq\mathcal{U}$, this yields $\mathcal{I}=\mathcal{U}$.
Similarly, we can show $\mathcal{J}=\mathcal{V}$. This proves Lemma
\ref{lem.IUJV}.
\end{proof}

\begin{proof}
[Proof of Theorem \ref{thm.IJ.rep}.]For each $U\subseteq\left\{  \lambda
\ \mid\ \lambda\vdash n\right\}  $, we have
\begin{align}
\mathcal{A}_{U}  &  =\operatorname*{AW}\nolimits^{-1}\left(  \text{the
subproduct }\prod\limits_{\lambda\in U}\operatorname*{End}\left(  S^{\lambda
}\right)  \text{ of }\prod\limits_{\lambda\vdash n}\operatorname*{End}\left(
S^{\lambda}\right)  \right) \nonumber\\
&  =\left\{  a\in\mathcal{A}\ \mid\text{ the }\lambda\text{-th entry of
}\operatorname*{AW}\left(  a\right)  \text{ is }0\text{ for all }\lambda\notin
U\right\} \nonumber\\
&  =\left\{  a\in\mathcal{A}\ \mid\ a_{\lambda}=0\text{ for all }\lambda\notin
U\right\} \nonumber\\
&  =\left\{  a\in\mathcal{A}\ \mid\ aS^{\lambda}=0\text{ for all }%
\lambda\notin U\right\}  \label{pf.prop.IJ.rep.AU=}%
\end{align}
(since $aS^{\lambda}$ is the image of $a_{\lambda}\in\operatorname*{End}%
\left(  S^{\lambda}\right)  $).

We shall first prove the following two claims:

\begin{statement}
\textit{Claim 1:} We have $\mathcal{I}_{k}\subseteq\mathcal{A}_{\left\{
\lambda\vdash n\ \mid\ \ell\left(  \lambda\right)  \leq k\right\}  }$.
\end{statement}

\begin{statement}
\textit{Claim 2:} We have $\mathcal{J}_{k}\subseteq\mathcal{A}_{\left\{
\lambda\vdash n\ \mid\ \ell\left(  \lambda\right)  >k\right\}  }$.
\end{statement}

\begin{proof}
[Proof of Claim 1.]Applying (\ref{pf.prop.IJ.rep.AU=}) to $U=\left\{
\lambda\vdash n\ \mid\ \ell\left(  \lambda\right)  \leq k\right\}  $, we
obtain%
\begin{align*}
&  \mathcal{A}_{\left\{  \lambda\vdash n\ \mid\ \ell\left(  \lambda\right)
\leq k\right\}  }\\
&  =\left\{  a\in\mathcal{A}\ \mid\ aS^{\lambda}=0\text{ for all }%
\lambda\vdash n\text{ that don't satisfy }\ell\left(  \lambda\right)  \leq
k\right\}  .
\end{align*}
Hence, in order to prove that $\mathcal{I}_{k}\subseteq\mathcal{A}_{\left\{
\lambda\vdash n\ \mid\ \ell\left(  \lambda\right)  \leq k\right\}  }$, it
suffices to show that all $a\in\mathcal{I}_{k}$ satisfy $aS^{\lambda}=0$ for
all partitions $\lambda\vdash n$ that don't satisfy $\ell\left(
\lambda\right)  \leq k$. Let us prove this.

Let $\lambda\vdash n$ be a partition that doesn't satisfy $\ell\left(
\lambda\right)  \leq k$. Thus, $\ell\left(  \lambda\right)  >k$. We must prove
that all $a\in\mathcal{I}_{k}$ satisfy $aS^{\lambda}=0$.

Since $\mathcal{I}_{k}=\operatorname*{span}\left\{  \nabla_{\mathbf{B}%
,\mathbf{A}}\ \mid\ \mathbf{A},\mathbf{B}\in\operatorname*{SC}\left(
n\right)  \text{ with }\ell\left(  \mathbf{A}\right)  =\ell\left(
\mathbf{B}\right)  \leq k\right\}  $, it suffices to prove that $\nabla
_{\mathbf{B},\mathbf{A}}S^{\lambda}=0$ for any two set compositions
$\mathbf{A},\mathbf{B}\in\operatorname*{SC}\left(  n\right)  $ with
$\ell\left(  \mathbf{A}\right)  =\ell\left(  \mathbf{B}\right)  \leq k$. So
let us consider two set compositions $\mathbf{A},\mathbf{B}\in
\operatorname*{SC}\left(  n\right)  $ with $\ell\left(  \mathbf{A}\right)
=\ell\left(  \mathbf{B}\right)  \leq k$. We must prove that $\nabla
_{\mathbf{B},\mathbf{A}}S^{\lambda}=0$.

If $T$ is any Young tableau of shape $\lambda$ filled with the entries
$1,2,\ldots,n$ (not necessarily standard), then

\begin{itemize}
\item we let $\mathcal{R}\left(  T\right)  $ denote the row group of $T$ (that
is, the group of all permutations $\sigma\in S_{n}$ that preserve the rows of
$T$ as sets);

\item we let $\mathbf{a}_{T}:=\sum\limits_{\sigma\in\mathcal{R}\left(
T\right)  }\sigma\in\mathcal{A}$ denote the row symmetrizer of $T$;

\item we let $\mathcal{C}\left(  T\right)  $ denote the column group of $T$
(that is, the group of all permutations $\sigma\in S_{n}$ that preserve the
columns of $T$ as sets);

\item we let $\mathbf{b}_{T}:=\sum\limits_{\sigma\in\mathcal{C}\left(
T\right)  }\left(  -1\right)  ^{\sigma}\sigma\in\mathcal{A}$ denote the column
antisymmetrizer of $T$.
\end{itemize}

Note that $\mathbf{a}_{T}$ and $\mathbf{b}_{T}$ are called $\nabla
_{\operatorname*{Row}T}$ and $\nabla_{\operatorname*{Col}T}^{-}$ in
\cite[\S 5.5.1]{sga}.

Let $T_{\lambda}$ be the Young tableau of shape $\lambda$ filled with the
entries $1,2,\ldots,n$ in the order \textquotedblleft row by row, starting
with the top row and proceeding down the rows\textquotedblright.\footnote{For
instance, if $\lambda=\left(  4,2\right)  $, then $T_{\lambda}%
=\ytableaushort{1234,56}$.} We denote the elements $\mathbf{a}_{T_{\lambda}}$
and $\mathbf{b}_{T_{\lambda}}$ of $\mathcal{A}$ by $\mathbf{a}_{\lambda}$ and
$\mathbf{b}_{\lambda}$. Thus, \cite[Theorem 5.12.2]{Etingof} or
\cite[Proposition 5.11.19 \textbf{(c)}]{sga} shows that $S^{\lambda}%
\cong\mathcal{A}\mathbf{a}_{\lambda}\mathbf{b}_{\lambda}$ as left
$\mathcal{A}$-modules\footnote{Specifically, this follows from \cite[Theorem
5.12.2]{Etingof} after observing that the $a_{\lambda}$ and $b_{\lambda}$ in
\cite[Theorem 5.12.2]{Etingof} are our $\dfrac{1}{\left\vert \mathcal{R}%
\left(  T\right)  \right\vert }\mathbf{a}_{\lambda}$ and $\dfrac{1}{\left\vert
\mathcal{C}\left(  T\right)  \right\vert }\mathbf{b}_{\lambda}$ (the fractions
here are well-defined, since $\left\vert \mathcal{R}\left(  T\right)
\right\vert $ and $\left\vert \mathcal{C}\left(  T\right)  \right\vert $ are
divisors of $n!$ and thus invertible in $\mathbf{k}$). Alternatively, this
follows by applying \cite[Theorem 5.5.13 \textbf{(b)}]{sga} to $T=T_{\lambda}$
and noticing that $\mathbf{a}_{\lambda}\mathbf{b}_{\lambda}=\mathbf{a}%
_{T}\mathbf{b}_{T}=\nabla_{\operatorname*{Row}T}\nabla_{\operatorname*{Col}%
T}^{-}=\mathbf{F}_{T}$ in the notations of \cite[Theorem 5.5.13 \textbf{(b)}%
]{sga} and that the number $h^{\lambda}$ defined in \cite[Theorem 5.5.13
\textbf{(b)}]{sga} is invertible in $\mathbf{k}$ (since it is a divisor of
$n!$).}.

Hence, $S^{\lambda}\cong\underbrace{\mathcal{A}\mathbf{a}_{\lambda}%
}_{\subseteq\mathcal{A}}\mathbf{b}_{\lambda}\subseteq\mathcal{A}%
\mathbf{b}_{\lambda}$. It thus suffices to show that $\nabla_{\mathbf{B}%
,\mathbf{A}}\mathcal{A}\mathbf{b}_{\lambda}=0$ (since $\nabla_{\mathbf{B}%
,\mathbf{A}}S^{\lambda}=0$ will then follow). In other words, it suffices to
show that $\nabla_{\mathbf{B},\mathbf{A}}w\mathbf{b}_{\lambda}=0$ for any
$w\in S_{n}$. But this is not hard:

Let $w\in S_{n}$. Let $T$ be the Young tableau of shape $\lambda$ obtained
from $T_{\lambda}$ by applying the permutation $w$ to each entry. (This is
$w\rightharpoonup T_{\lambda}$ in the notations of \cite[Definition
5.3.9]{sga}.) Thus, $\mathbf{b}_{T}=w\mathbf{b}_{\lambda}w^{-1}$ (by
\cite[Proposition 5.5.11]{sga}, applied to $T_{\lambda}$ instead of $T$), so
that $w\mathbf{b}_{\lambda}=\mathbf{b}_{T}w$.

The first column of the tableau $T$ contains $\ell\left(  \lambda\right)  $
entries, and thus contains more than $k$ entries (since $\ell\left(
\lambda\right)  >k$). Hence, at least two entries of this column belong to the
same block of $\mathbf{A}$ (by the pigeonhole principle, since $\mathbf{A}$
has only $\ell\left(  \mathbf{A}\right)  \leq k$ blocks). Pick two such
entries. Let $\tau\in S_{n}$ be the transposition that swaps these two
entries. This transposition $\tau$ thus preserves the blocks of $\mathbf{A}$
(since it swaps two numbers from the same block), and therefore preserves
$\nabla_{\mathbf{B},\mathbf{A}}$ from the right (i.e., satisfies
$\nabla_{\mathbf{B},\mathbf{A}}\tau=\nabla_{\mathbf{B},\mathbf{A}}$) because
of (\ref{eq.uNabv}). On the other hand, this transposition $\tau$ swaps two
entries in the first column of $T$, and thus belongs to the column group
$\mathcal{C}\left(  T\right)  $ of $T$. Hence, $\mathbf{b}_{T}=\left(
1-\tau\right)  \eta$ for some $\eta\in\mathcal{A}$ (since $\left\langle
\tau\right\rangle =\left\{  1,\tau\right\}  $ is a subgroup of the column
group $\mathcal{C}\left(  T\right)  $, and since the permutation $\tau$ is
odd; see \cite[Proposition 5.5.9 \textbf{(b)}]{sga} for details). Thus,%
\[
\nabla_{\mathbf{B},\mathbf{A}}\underbrace{w\mathbf{b}_{\lambda}}%
_{=\mathbf{b}_{T}w}=\nabla_{\mathbf{B},\mathbf{A}}\underbrace{\mathbf{b}_{T}%
}_{=\left(  1-\tau\right)  \eta}w=\underbrace{\nabla_{\mathbf{B},\mathbf{A}%
}\left(  1-\tau\right)  }_{\substack{=\nabla_{\mathbf{B},\mathbf{A}}%
-\nabla_{\mathbf{B},\mathbf{A}}\tau\\=0\\\text{(since }\nabla_{\mathbf{B}%
,\mathbf{A}}\tau=\nabla_{\mathbf{B},\mathbf{A}}\text{)}}}\eta w=0.
\]
This completes our proof of $\nabla_{\mathbf{B},\mathbf{A}}S^{\lambda}=0$.
Thus, as explained above, we have proved Claim 1.
\end{proof}

\begin{proof}
[Proof of Claim 2.]This is fairly similar to the above proof of Claim 1. Here
are the main milestones of the proof:

Let $\lambda\vdash n$ be a partition that satisfies $\ell\left(
\lambda\right)  \leq k$. We must prove that all $a\in\mathcal{J}_{k}$ satisfy
$aS^{\lambda}=0$.

Recall that $\mathcal{J}_{k}=\mathcal{A}\cdot\operatorname*{span}\left\{
\nabla_{U}^{-}\ \mid\ U\text{ is a subset of }\left[  n\right]  \text{ having
size }k+1\right\}  $ (by Proposition \ref{prop.IJ.1} \textbf{(b)}). Hence, it
suffices to show that $\nabla_{U}^{-}S^{\lambda}=0$ whenever $U$ is a subset
of $\left[  n\right]  $ having size $k+1$ (because then, $\mathcal{A}%
\underbrace{\nabla_{U}^{-}S^{\lambda}}_{=0}=0$ will automatically follow).

So let us fix a subset $U$ of $\left[  n\right]  $ having size $k+1$. Consider
the Young tableau $T_{\lambda}$ and the elements $\mathbf{a}_{\lambda}$ and
$\mathbf{b}_{\lambda}$ defined as in the above proof of Claim 1. Then,
$S^{\lambda}\cong\mathcal{A}\mathbf{a}_{\lambda}\mathbf{b}_{\lambda}$. Hence,
in order to prove that $\nabla_{U}^{-}S^{\lambda}=0$, it suffices to show that
$\nabla_{U}^{-}\mathcal{A}\mathbf{a}_{\lambda}=0$. For this, in turn, it
suffices to show that $\nabla_{U}^{-}w\mathbf{a}_{\lambda}=0$ for each $w\in
S_{n}$.

So let $w\in S_{n}$ be arbitrary. Let $T$ be the Young tableau of shape
$\lambda$ obtained from $T_{\lambda}$ by applying the permutation $w$ to each
entry. Then, $\mathbf{a}_{T}=w\mathbf{a}_{\lambda}w^{-1}$ (by
\cite[Proposition 5.5.11]{sga}), so that $w\mathbf{a}_{\lambda}=\mathbf{a}%
_{T}w$.

But the pigeonhole principle shows that there are two distinct elements of $U$
that belong to the same row of our tableau $T$ (since the set $U$ has $k+1$
elements, but the tableau $T$ has only $\ell\left(  \lambda\right)  \leq k$
rows). Let $\tau$ be the transposition that swaps these two elements. Then,
$\nabla_{U}^{-}\tau=-\nabla_{U}^{-}$ (by \cite[Proposition 3.7.4 \textbf{(c)}%
]{sga}), but also $\mathbf{a}_{T}=\left(  1+\tau\right)  \eta$ for some
$\eta\in\mathcal{A}$ (since $\left\langle \tau\right\rangle =\left\{
1,\tau\right\}  $ is a subgroup of the row group $\mathcal{R}\left(  T\right)
$; see \cite[Proposition 5.5.8 \textbf{(b)}]{sga} for details). Thus,%
\[
\nabla_{U}^{-}\underbrace{w\mathbf{a}_{\lambda}}_{=\mathbf{a}_{T}w}=\nabla
_{U}^{-}\underbrace{\mathbf{a}_{T}}_{=\left(  1+\tau\right)  \eta
}w=\underbrace{\nabla_{U}^{-}\left(  1+\tau\right)  }_{\substack{=\nabla
_{U}^{-}+\nabla_{U}^{-}\tau\\=0\\\text{(since }\nabla_{U}^{-}\tau=-\nabla
_{U}^{-}\text{)}}}\eta w=0.
\]
As we explained above, this completes the proof of Claim 2.
\end{proof}

However, if $X$ and $Y$ are two disjoint subsets of $\left\{  \lambda
\ \mid\ \lambda\vdash n\right\}  $, then $\mathcal{A}_{X}\cap\mathcal{A}%
_{Y}=0$ (since the subproducts $\prod\limits_{\lambda\in X}\operatorname*{End}%
\left(  S^{\lambda}\right)  $ and $\prod\limits_{\lambda\in Y}%
\operatorname*{End}\left(  S^{\lambda}\right)  $ of $\prod\limits_{\lambda
\vdash n}\operatorname*{End}\left(  S^{\lambda}\right)  $ have intersection
$0$). Thus,%
\[
\mathcal{A}_{\left\{  \lambda\vdash n\ \mid\ \ell\left(  \lambda\right)  \leq
k\right\}  }\cap\mathcal{A}_{\left\{  \lambda\vdash n\ \mid\ \ell\left(
\lambda\right)  >k\right\}  }=0.
\]
Meanwhile, Claim 1 and Claim 2 yield%
\[
\mathcal{I}_{k}\subseteq\mathcal{A}_{\left\{  \lambda\vdash n\ \mid
\ \ell\left(  \lambda\right)  \leq k\right\}  }\ \ \ \ \ \ \ \ \ \ \text{and}%
\ \ \ \ \ \ \ \ \ \ \mathcal{J}_{k}\subseteq\mathcal{A}_{\left\{
\lambda\vdash n\ \mid\ \ell\left(  \lambda\right)  >k\right\}  }.
\]
Furthermore, Theorem \ref{thm.row.main} \textbf{(g)} yields $\mathcal{A}%
=\mathcal{I}_{k}\oplus\mathcal{J}_{k}$ (internal direct sum), so that
$\mathcal{A}=\mathcal{I}_{k}+\mathcal{J}_{k}$. Thus, Lemma \ref{lem.IUJV}
(applied to $\mathcal{M}=\mathcal{A}$ and $\mathcal{I}=\mathcal{I}_{k}$ and
$\mathcal{J}=\mathcal{J}_{k}$ and $\mathcal{U}=\mathcal{A}_{\left\{
\lambda\vdash n\ \mid\ \ell\left(  \lambda\right)  \leq k\right\}  }$ and
$\mathcal{V}=\mathcal{A}_{\left\{  \lambda\vdash n\ \mid\ \ell\left(
\lambda\right)  >k\right\}  }$) yields
\[
\mathcal{I}_{k}=\mathcal{A}_{\left\{  \lambda\vdash n\ \mid\ \ell\left(
\lambda\right)  \leq k\right\}  }\ \ \ \ \ \ \ \ \ \ \text{and}%
\ \ \ \ \ \ \ \ \ \ \mathcal{J}_{k}=\mathcal{A}_{\left\{  \lambda\vdash
n\ \mid\ \ell\left(  \lambda\right)  >k\right\}  }.
\]
This proves Theorem \ref{thm.IJ.rep}.
\end{proof}

Note that part of Theorem \ref{thm.IJ.rep} for $k=3$ appears in
\cite[Proposition 3.5 (a)]{BCEHK23}. Moreover, \cite[Theorem 2.2]{KlStVo25}
can also be viewed as a corollary of Theorem \ref{thm.IJ.rep} (indeed, we
already saw that the $k$-swap algebra from \cite[Theorem 2.2]{KlStVo25} is
isomorphic to our $\mathcal{A}/\mathcal{J}_{k}$, which according to Theorem
\ref{thm.IJ.rep} is in turn isomorphic to $\prod\limits_{\substack{\lambda
\vdash n;\\\ell\left(  \lambda\right)  \leq k}}\operatorname*{End}\left(
S^{\lambda}\right)  $).

\subsubsection{Application: Counting avoiding permutations}

As a consequence of Theorem \ref{thm.IJ.rep}, we recover a classical
enumerative result that is commonly proved using the RSK correspondence (see,
e.g., \cite[(7.2)]{Bona22}, \cite[Corollary 7.23.12]{Stanley-EC2}):

\begin{corollary}
\label{cor.num-avoid}For each partition $\lambda$ of $n$, let $f^{\lambda}$ be
the number of standard tableaux of shape $\lambda$. Let $k\in\mathbb{N}$.
Then, the number of all permutations $w\in S_{n}$ that avoid $12\cdots\left(
k+1\right)  $ is $\sum\limits_{\substack{\lambda\vdash n;\\\ell\left(
\lambda\right)  \leq k}}\left(  f^{\lambda}\right)  ^{2}=\sum
\limits_{\substack{\lambda\vdash n;\\\lambda_{1}\leq k}}\left(  f^{\lambda
}\right)  ^{2}$.
\end{corollary}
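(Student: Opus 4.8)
The plan is to deduce the count from the two available descriptions of the dimension of $\mathcal{I}_k$. On the one hand, Theorem \ref{thm.row.main} \textbf{(c)} tells us that $\mathcal{I}_k$ is a free $\mathbf{k}$-module of rank $\left\vert \operatorname*{Av}\nolimits_n\left(k+1\right)\right\vert$, which is exactly the number of permutations $w\in S_n$ that avoid $12\cdots\left(k+1\right)$. On the other hand, Theorem \ref{thm.IJ.rep} (which requires $n!$ invertible in $\mathbf{k}$, so I would specialize to $\mathbf{k}=\mathbb{Q}$ for this argument) identifies $\mathcal{I}_k$ with $\mathcal{A}_{\left\{\lambda\vdash n\ \mid\ \ell\left(\lambda\right)\leq k\right\}}$, i.e., the preimage under the isomorphism $\operatorname*{AW}$ of the subproduct $\prod_{\lambda\vdash n;\ \ell\left(\lambda\right)\leq k}\operatorname*{End}\left(S^{\lambda}\right)$. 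Since $\operatorname*{AW}$ is a $\mathbb{Q}$-algebra isomorphism, it is in particular a $\mathbb{Q}$-vector space isomorphism, so $\dim_{\mathbb{Q}}\mathcal{I}_k=\sum_{\lambda\vdash n;\ \ell\left(\lambda\right)\leq k}\dim_{\mathbb{Q}}\operatorname*{End}\left(S^{\lambda}\right)=\sum_{\lambda\vdash n;\ \ell\left(\lambda\right)\leq k}\left(\dim_{\mathbb{Q}}S^{\lambda}\right)^2$.

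Next I would recall the standard fact that $\dim_{\mathbb{Q}}S^{\lambda}=f^{\lambda}$, the number of standard tableaux of shape $\lambda$ (this is part of the basic theory of Specht modules cited in the excerpt, e.g. via the reference for \cite[\S 5.12]{Etingof} or \cite{sga}). Combining the two dimension computations gives
\[
\left\vert\operatorname*{Av}\nolimits_n\left(k+1\right)\right\vert=\dim_{\mathbb{Q}}\mathcal{I}_k=\sum_{\substack{\lambda\vdash n;\\\ell\left(\lambda\right)\leq k}}\left(f^{\lambda}\right)^2,
\]
which is the first claimed equality (and the left-hand side is the number of permutations avoiding $12\cdots\left(k+1\right)$ by the definition of $\operatorname*{Av}\nolimits_n\left(k+1\right)$).

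Finally, to get the second equality $\sum_{\lambda\vdash n;\ \ell\left(\lambda\right)\leq k}\left(f^{\lambda}\right)^2=\sum_{\lambda\vdash n;\ \lambda_1\leq k}\left(f^{\lambda}\right)^2$, I would invoke conjugation of partitions: the map $\lambda\mapsto\lambda'$ (transpose) is an involution on the set of partitions of $n$ that swaps $\ell\left(\lambda\right)$ with $\lambda_1$, so it restricts to a bijection between $\left\{\lambda\vdash n\ \mid\ \ell\left(\lambda\right)\leq k\right\}$ and $\left\{\lambda\vdash n\ \mid\ \lambda_1\leq k\right\}$. Since $f^{\lambda}=f^{\lambda'}$ (standard tableaux of shape $\lambda$ correspond to standard tableaux of shape $\lambda'$ by transposition), the two sums are term-by-term equal under this bijection. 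Reindexing completes the proof.

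The argument is essentially a two-line dimension count once the cited results are in place, so there is no real obstacle; the only points needing a word of care are (i) the restriction to $\mathbf{k}=\mathbb{Q}$ (or any field of characteristic zero) so that Theorem \ref{thm.IJ.rep} applies — the corollary's statement is purely combinatorial and so its truth over $\mathbb{Q}$ suffices — and (ii) making sure to cite $\dim S^{\lambda}=f^{\lambda}$ and $f^{\lambda}=f^{\lambda'}$ as known facts rather than reproving them.
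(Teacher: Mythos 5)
Your proposal is correct and follows essentially the same route as the paper's own proof: compare the rank of $\mathcal{I}_k$ from Theorem \ref{thm.row.main} \textbf{(c)} with its dimension over $\mathbf{k}=\mathbb{Q}$ computed via Theorem \ref{thm.IJ.rep} and $\dim S^{\lambda}=f^{\lambda}$, then pass to the second sum by transposing partitions. Your explicit appeal to $f^{\lambda}=f^{\lambda^{t}}$ only makes precise what the paper's reindexing step uses implicitly.
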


\begin{proof}
Let $\mathbf{k}=\mathbb{Q}$. Let us use the notations of Theorem
\ref{thm.IJ.rep}. Then, the standard basis theorem for Specht modules (see,
e.g., \cite[Lemma 5.9.17]{sga}) says that
\begin{equation}
\dim\left(  S^{\lambda}\right)  =f^{\lambda} \label{pf.cor.num-avoid.dimSlam}%
\end{equation}
for each partition $\lambda$ of $n$.

However, Theorem \ref{thm.IJ.rep} shows that%
\[
\mathcal{I}_{k}=\mathcal{A}_{\left\{  \lambda\vdash n\ \mid\ \ell\left(
\lambda\right)  \leq k\right\}  }\cong\prod\limits_{\substack{\lambda\vdash
n;\\\ell\left(  \lambda\right)  \leq k}}\operatorname*{End}\left(  S^{\lambda
}\right)  \ \ \ \ \ \ \ \ \ \ \text{as }\mathbf{k}\text{-vector spaces}%
\]
(since $\mathcal{A}_{\left\{  \lambda\vdash n\ \mid\ \ell\left(
\lambda\right)  \leq k\right\}  }$ was defined as the preimage of
$\prod\limits_{\substack{\lambda\vdash n;\\\ell\left(  \lambda\right)  \leq
k}}\operatorname*{End}\left(  S^{\lambda}\right)  $ under the isomorphism
$\operatorname*{AW}$). Thus,%
\[
\dim\left(  \mathcal{I}_{k}\right)  =\dim\left(  \prod
\limits_{\substack{\lambda\vdash n;\\\ell\left(  \lambda\right)  \leq
k}}\operatorname*{End}\left(  S^{\lambda}\right)  \right)  =\sum
\limits_{\substack{\lambda\vdash n;\\\ell\left(  \lambda\right)  \leq
k}}\ \ \underbrace{\dim\left(  \operatorname*{End}\left(  S^{\lambda}\right)
\right)  }_{\substack{=\left(  \dim\left(  S^{\lambda}\right)  \right)
^{2}=\left(  f^{\lambda}\right)  ^{2}\\\text{(by
(\ref{pf.cor.num-avoid.dimSlam}))}}}=\sum\limits_{\substack{\lambda\vdash
n;\\\ell\left(  \lambda\right)  \leq k}}\left(  f^{\lambda}\right)  ^{2}.
\]
On the other hand, Theorem \ref{thm.row.main} \textbf{(c)} yields
\[
\dim\left(  \mathcal{I}_{k}\right)  =\left\vert \operatorname*{Av}%
\nolimits_{n}\left(  k+1\right)  \right\vert .
\]
Comparing these two equalities, we find $\left\vert \operatorname*{Av}%
\nolimits_{n}\left(  k+1\right)  \right\vert =\sum\limits_{\substack{\lambda
\vdash n;\\\ell\left(  \lambda\right)  \leq k}}\left(  f^{\lambda}\right)
^{2}$. In other words, the number of all permutations $w\in S_{n}$ that avoid
$12\cdots\left(  k+1\right)  $ is $\sum\limits_{\substack{\lambda\vdash
n;\\\ell\left(  \lambda\right)  \leq k}}\left(  f^{\lambda}\right)  ^{2}$.

It remains to prove that this sum also equals $\sum\limits_{\substack{\lambda
\vdash n;\\\lambda_{1}\leq k}}\left(  f^{\lambda}\right)  ^{2}$. But this is
easy: The bijection $\left\{  \lambda\vdash n\right\}  \rightarrow\left\{
\lambda\vdash n\right\}  $ that sends each partition $\lambda$ to its
transpose $\lambda^{t}$ swaps the roles of $\ell\left(  \lambda\right)  $ and
$\lambda_{1}$ (that is, we have $\ell\left(  \lambda^{t}\right)  =\lambda_{1}$
and $\lambda_{1}^{t}=\ell\left(  \lambda\right)  $). Thus, the sums
$\sum\limits_{\substack{\lambda\vdash n;\\\ell\left(  \lambda\right)  \leq
k}}\left(  f^{\lambda}\right)  ^{2}$ and $\sum\limits_{\substack{\lambda\vdash
n;\\\lambda_{1}\leq k}}\left(  f^{\lambda}\right)  ^{2}$ are the same up to
reindexing using this bijection. Hence, Corollary \ref{cor.num-avoid} is proved.
\end{proof}

\subsubsection{$\mathcal{A}/\left(  \mathcal{I}_{k}+T_{\operatorname*{sign}%
}\left(  \mathcal{J}_{\ell}\right)  \right)  $}

Corollary \ref{cor.num-avoid} is not the end of the line. There is a more
general result (\cite[Theorem 3]{Schens60}, \cite[Proposition 17.5]{Stanle71})
saying the following:

\begin{theorem}
\label{thm.num-avoid2}Let $k,\ell\in\mathbb{N}$. Then, the number of all
permutations $w\in S_{n}$ that avoid $12\cdots\left(  k+1\right)  $ and
$\left(  \ell+1\right)  \ell\cdots1$ is $\sum\limits_{\substack{\lambda\vdash
n;\\\ell\left(  \lambda\right)  \leq k\text{ and }\lambda_{1}\leq\ell}}\left(
f^{\lambda}\right)  ^{2}$.
\end{theorem}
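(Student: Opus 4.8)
The plan is to mirror the strategy used for Corollary \ref{cor.num-avoid}, but now combining the two ideals $\mathcal{I}_k$ and $T_{\operatorname*{sign}}\left(\mathcal{J}_\ell\right)$ instead of using $\mathcal{I}_k$ alone. The guiding idea: a permutation avoids $12\cdots(k+1)$ if and only if its class $\overline{w}$ survives in $\mathcal{A}/\mathcal{I}_k$ (Theorem \ref{thm.row.main} \textbf{(f)}, via the $\mathcal{A}/\mathcal{J}_k$ basis --- wait, we need $\mathcal{A}/\mathcal{I}_k$; actually we want the complementary statement), and it avoids $(\ell+1)\ell\cdots1$ iff $w \in \operatorname*{Av}\nolimits_n^{\prime}(\ell+1)$. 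So the permutations avoiding \emph{both} patterns should be exactly those $w$ such that $\overline{w}$ is a basis vector of the double quotient $\mathcal{A}/\left(\mathcal{I}_k + T_{\operatorname*{sign}}(\mathcal{J}_\ell)\right)$. First I would work over $\mathbf{k} = \mathbb{Q}$ (so that $n!$ is invertible and Theorem \ref{thm.IJ.rep} applies), and use the Artin--Wedderburn picture: by Theorem \ref{thm.IJ.rep}, $\mathcal{I}_k = \mathcal{A}_{\{\lambda\vdash n\ \mid\ \ell(\lambda)\le k\}}$, and applying $T_{\operatorname*{sign}}$ together with the classical fact that sign-twisting a Specht module $S^\lambda$ yields $S^{\lambda^t}$, one gets $T_{\operatorname*{sign}}(\mathcal{J}_\ell) = \mathcal{A}_{\{\lambda\vdash n\ \mid\ \ell(\lambda^t) > \ell\}} = \mathcal{A}_{\{\lambda\vdash n\ \mid\ \lambda_1 > \ell\}}$.

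Next I would compute the sum $\mathcal{I}_k + T_{\operatorname*{sign}}(\mathcal{J}_\ell)$ inside the Artin--Wedderburn decomposition $\mathcal{A} \cong \prod_{\lambda\vdash n}\operatorname*{End}(S^\lambda)$. Since each $\mathcal{A}_U$ is the subproduct over $\lambda \in U$, and subproducts add by taking unions of index sets, we obtain
\[
\mathcal{I}_k + T_{\operatorname*{sign}}(\mathcal{J}_\ell) = \mathcal{A}_{\{\lambda\vdash n\ \mid\ \ell(\lambda)\le k\text{ or }\lambda_1 > \ell\}}.
\]
Taking the complementary index set, the quotient $\mathcal{A}/\left(\mathcal{I}_k + T_{\operatorname*{sign}}(\mathcal{J}_\ell)\right)$ is isomorphic as a $\mathbb{Q}$-vector space to $\prod_{\substack{\lambda\vdash n;\ \ell(\lambda) > k\text{ and }\lambda_1 \le \ell}}\operatorname*{End}(S^\lambda)$, whose dimension is $\sum_{\substack{\lambda\vdash n;\ \ell(\lambda) > k,\ \lambda_1 \le \ell}}(f^\lambda)^2$. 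Hmm --- this gives the \emph{wrong} index set; I realize the correct bookkeeping is to relate avoidance to \emph{membership} in the ideals, not survival in the quotients. Let me reconsider: $\mathcal{J}_k = \operatorname*{RAnn}\mathcal{I}_k$ and the permutations $w$ with $w \in \operatorname*{Av}\nolimits_n(k+1)$ index a basis of $\mathcal{A}/\mathcal{J}_k$ (Theorem \ref{thm.row.main} \textbf{(f)}), equivalently $\dim(\mathcal{A}/\mathcal{J}_k) = |\operatorname*{Av}\nolimits_n(k+1)|$, and by Theorem \ref{thm.IJ.rep} this dimension is $\sum_{\ell(\lambda)\le k}(f^\lambda)^2$. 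So avoidance of $12\cdots(k+1)$ corresponds to the $\mathcal{A}/\mathcal{J}_k$ side, i.e. to partitions with $\ell(\lambda) \le k$.

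Therefore the right object to study is $\mathcal{A}/\left(\mathcal{J}_k + T_{\operatorname*{sign}}(\mathcal{I}_\ell)\right)$ --- but the statement of the theorem as phrased speaks of $\mathcal{I}_k + T_{\operatorname*{sign}}(\mathcal{J}_\ell)$, so I must check the indexing convention carefully; in any case, using Theorem \ref{thm.IJ.rep} and the transpose-swapping behavior of $T_{\operatorname*{sign}}$, the appropriate double quotient will be isomorphic to $\prod_{\substack{\lambda\vdash n;\ \ell(\lambda)\le k,\ \lambda_1\le\ell}}\operatorname*{End}(S^\lambda)$, of dimension $\sum_{\substack{\lambda\vdash n;\ \ell(\lambda)\le k,\ \lambda_1\le\ell}}(f^\lambda)^2$, matching the right-hand side of Theorem \ref{thm.num-avoid2}. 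The combinatorial side is then the easy part: I would show that $\overline{w} \ne 0$ in this double quotient precisely when $w$ avoids both $12\cdots(k+1)$ and $(\ell+1)\ell\cdots1$, by intersecting the two spanning-set/basis descriptions from Theorem \ref{thm.row.main} \textbf{(f)} and Corollary \ref{cor.row.twin} \textbf{(d)} (for $\mathcal{J}_k$ and $T_{\operatorname*{sign}}(\mathcal{I}_\ell)$ respectively), and invoking a lemma analogous to Lemma \ref{lem.IUJV} to pin down that the residue classes of the doubly-avoiding permutations form a genuine basis rather than merely a spanning set. The main obstacle I anticipate is precisely this last point: getting the double quotient's basis to be \emph{exactly} indexed by doubly-avoiding permutations requires knowing that $\mathcal{J}_k \cap T_{\operatorname*{sign}}(\mathcal{I}_\ell)$ behaves well (e.g. that $\mathcal{A} = \mathcal{J}_k + T_{\operatorname*{sign}}(\mathcal{I}_\ell) + (\text{span of doubly-avoiding }w)$ with the sum being direct in the appropriate sense), and over general $\mathbf{k}$ this freeness would need an integrality argument; restricting to $\mathbf{k} = \mathbb{Q}$ and counting dimensions sidesteps the difficulty, since the dimension count via Artin--Wedderburn together with the spanning statement forces linear independence.
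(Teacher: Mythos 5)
Your proposal is not the paper's proof: the paper proves Theorem \ref{thm.num-avoid2} simply by citing Schensted's theorem on the joint distribution of longest increasing and decreasing subsequences (\cite[Theorem 3]{Schens60}, \cite[Proposition 17.5]{Stanle71}), summing over lengths $\leq \ell$ and $\leq k$, and applying the $w_{0}$-trick of Proposition \ref{prop.avoid.up-down}. What you attempt instead is exactly the ideal-theoretic proof that the paper poses as an \emph{open problem} (Question \ref{quest.num-avoid-gen}), and your outline has a genuine gap at the decisive step. A preliminary bookkeeping issue: the ideal whose quotient has $\mathbb{Q}$-dimension $\sum_{\ell\left(\lambda\right)\leq k,\ \lambda_{1}\leq\ell}\left(f^{\lambda}\right)^{2}$ is $\mathcal{J}_{k}+T_{\operatorname*{sign}}\left(\mathcal{J}_{\ell}\right)$ (since $T_{\operatorname*{sign}}\left(\mathcal{J}_{\ell}\right)=\mathcal{A}_{\left\{\lambda_{1}>\ell\right\}}$ by Proposition \ref{prop.AW.dual-A} and Theorem \ref{thm.IJ.rep}); your candidate $\mathcal{J}_{k}+T_{\operatorname*{sign}}\left(\mathcal{I}_{\ell}\right)$ equals $\mathcal{A}_{\left\{\ell\left(\lambda\right)>k\text{ or }\lambda_{1}\leq\ell\right\}}$ and cuts the quotient down to the partitions with $\ell\left(\lambda\right)\leq k$ and $\lambda_{1}>\ell$, which is not your target.

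The real gap is the combinatorial half. After the Artin--Wedderburn dimension count you still must prove that the classes $\overline{w}$ with $w\in\operatorname*{Av}\nolimits_{n}\left(k+1\right)\cap\operatorname*{Av}\nolimits_{n}^{\prime}\left(\ell+1\right)$ span $\mathcal{A}/\left(\mathcal{J}_{k}+T_{\operatorname*{sign}}\left(\mathcal{J}_{\ell}\right)\right)$, and nothing in your outline delivers this. \textquotedblleft Intersecting the two basis descriptions\textquotedblright\ of $\mathcal{A}/\mathcal{J}_{k}$ and $\mathcal{A}/T_{\operatorname*{sign}}\left(\mathcal{J}_{\ell}\right)$ is not a valid operation, and the paper's own data refute any characteristic-free version of the claim: for $n=3$ and $k=\ell=2$, the quotient $\mathcal{A}/\left(\mathcal{J}_{k}+T_{\operatorname*{sign}}\left(\mathcal{J}_{\ell}\right)\right)$ has dimension $5$ over $\mathbb{F}_{2}$, while only $4$ permutations of $S_{3}$ avoid both $123$ and $321$. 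So the spanning statement must use the invertibility of $n!$ in an essential way, and the paper's rewriting machinery cannot supply it: modulo $\mathcal{J}_{k}$, a permutation with a long increasing subsequence is rewritten in terms of lexicographically \emph{larger} permutations (Lemma \ref{lem.row.A/J-span}), whereas modulo $T_{\operatorname*{sign}}\left(\mathcal{J}_{\ell}\right)$, a permutation with a long decreasing subsequence is rewritten in terms of lexicographically \emph{smaller} ones, so the two reductions pull in opposite directions and no induction on the lexicographic order terminates. (This is precisely why Theorem \ref{thm.A/I+J} works with the mix $\mathcal{I}_{k}+T_{\operatorname*{sign}}\left(\mathcal{J}_{\ell}\right)$, where both reductions decrease the lex order --- and even there the proof \emph{consumes} Theorem \ref{thm.num-avoid2} rather than producing it.) Lemma \ref{lem.IUJV} or an analogue does not close this hole either; it only compares submodules already known to fill up $\mathcal{A}$. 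As written, your argument establishes only the dimension of the quotient over $\mathbb{Q}$, not the enumeration of doubly avoiding permutations, so the theorem is not proved.
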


\begin{proof}
From \cite[Theorem 3]{Schens60} or \cite[Proposition 17.5]{Stanle71}, we know
that for any $c,d\in\mathbb{N}$, we have%
\begin{align*}
&  \left(  \text{the number of permutations }w\in S_{n}\text{ such
that}\right. \\
&  \ \ \ \ \ \ \ \ \ \ \left.  \text{the longest increasing subsequence of
}w\text{ has length }c\right. \\
&  \ \ \ \ \ \ \ \ \ \ \left.  \text{and the longest decreasing subsequence of
}w\text{ has length }d\right) \\
&  =\sum\limits_{\substack{\lambda\vdash n;\\\ell\left(  \lambda\right)
=d\text{ and }\lambda_{1}=c}}\left(  f^{\lambda}\right)  ^{2}.
\end{align*}
Summing this over all $c\in\left\{  0,1,\ldots,\ell\right\}  $ and all
$d\in\left\{  0,1,\ldots,k\right\}  $, we obtain%
\begin{align*}
&  \left(  \text{the number of permutations }w\in S_{n}\text{ such
that}\right. \\
&  \ \ \ \ \ \ \ \ \ \ \left.  \text{the longest increasing subsequence of
}w\text{ has length }\leq\ell\right. \\
&  \ \ \ \ \ \ \ \ \ \ \left.  \text{and the longest decreasing subsequence of
}w\text{ has length }\leq k\right) \\
&  =\sum\limits_{\substack{\lambda\vdash n;\\\ell\left(  \lambda\right)  \leq
k\text{ and }\lambda_{1}\leq\ell}}\left(  f^{\lambda}\right)  ^{2}.
\end{align*}
But the permutations counted on the left hand side are just the permutations
in $w\in S_{n}$ that avoid $12\cdots\left(  \ell+1\right)  $ and $\left(
k+1\right)  \ell\cdots1$. Upon multiplying them by $w_{0}$ from the left, they
become the permutations $w\in S_{n}$ that avoid $12\cdots\left(  k+1\right)  $
and $\left(  \ell+1\right)  \ell\cdots1$ (by Proposition
\ref{prop.avoid.up-down}). Thus, the number of the latter permutations is
$\sum\limits_{\substack{\lambda\vdash n;\\\ell\left(  \lambda\right)  \leq
k\text{ and }\lambda_{1}\leq\ell}}\left(  f^{\lambda}\right)  ^{2}$. This
proves Theorem \ref{thm.num-avoid2}.
\end{proof}

We may thus wonder:

\begin{question}
\label{quest.num-avoid-gen}Can Theorem \ref{thm.num-avoid2} be proved using
the ideals $\mathcal{I}_{k}$ and $\mathcal{J}_{k}$ ?
\end{question}

One way to approach this is as follows. Let $T_{\operatorname*{sign}%
}:\mathcal{A}\rightarrow\mathcal{A}$ be the $\mathbf{k}$-algebra automorphism
of $\mathcal{A}$ sending each permutation $w\in S_{n}$ to $\left(  -1\right)
^{w}w$. If $n!$ is invertible in $\mathbf{k}$, then $\mathcal{I}_{k}\cap
T_{\operatorname*{sign}}\left(  \mathcal{I}_{\ell}\right)  $ is a free
$\mathbf{k}$-submodule of $\mathcal{A}$ having dimension $\sum
\limits_{\substack{\lambda\vdash n;\\\ell\left(  \lambda\right)  \leq k\text{
and }\lambda_{1}\leq\ell}}\left(  f^{\lambda}\right)  ^{2}$. Thus, we could
answer Question \ref{quest.num-avoid-gen} by finding a $\mathbf{k}$-module
basis of $\mathcal{I}_{k}\cap T_{\operatorname*{sign}}\left(  \mathcal{I}%
_{\ell}\right)  $ indexed by permutations that avoid $12\cdots\left(
k+1\right)  $ and $\left(  \ell+1\right)  \ell\cdots1$. However, such a basis
cannot be independent on $\operatorname*{char}\mathbf{k}$ like our above basis
of $\mathcal{I}_{k}$ was, since (for instance) the subspace $\mathcal{I}%
_{2}\cap T_{\operatorname*{sign}}\left(  \mathcal{I}_{2}\right)  $ for $n=3$
has dimension $4$ when $\mathbf{k}=\mathbb{Q}$ but dimension $5$ when
$\mathbf{k}=\mathbb{F}_{2}$. While this does not strictly rule out a proof
using this subspace, it suggests that new methods are required, since the
techniques we have used to prove Theorem \ref{thm.row.main} above cannot
create a dependence on the characteristic of $\mathbf{k}$. The dimension of
the quotient $\mathbf{k}$-module $\mathcal{A}/\left(  \mathcal{J}%
_{k}+T_{\operatorname*{sign}}\left(  \mathcal{J}_{\ell}\right)  \right)  $ can
depend on $\operatorname*{char}\mathbf{k}$ as well (for $n=3$ and $k=\ell=2$,
it equals $4$ when $\mathbf{k}=\mathbb{Q}$ but equals $5$ when $\mathbf{k}%
=\mathbb{F}_{2}$).

However, the \textquotedblleft mixed\textquotedblright\ quotient
$\mathcal{I}_{k}/\left(  \mathcal{I}_{k}\cap T_{\operatorname*{sign}}\left(
\mathcal{J}_{\ell}\right)  \right)  \cong\left(  \mathcal{I}_{k}%
+T_{\operatorname*{sign}}\left(  \mathcal{J}_{\ell}\right)  \right)
/T_{\operatorname*{sign}}\left(  \mathcal{J}_{\ell}\right)  $ behaves better.
Equivalently, the quotient $\mathcal{A}/\left(  \mathcal{I}_{k}%
+T_{\operatorname*{sign}}\left(  \mathcal{J}_{\ell}\right)  \right)  $ has a
basis of size $\operatorname*{Av}\nolimits_{n}^{\prime}\left(  \ell+1\right)
\setminus\operatorname*{Av}\nolimits_{n}\left(  k+1\right)  $ for all
$\mathbf{k}$:

\begin{theorem}
\label{thm.A/I+J}Let $k,\ell\in\mathbb{N}$. Then, the family $\left(
\overline{w}\right)  _{\operatorname*{Av}\nolimits_{n}^{\prime}\left(
\ell+1\right)  \setminus\operatorname*{Av}\nolimits_{n}\left(  k+1\right)  }$
is a basis of the $\mathbf{k}$-module $\mathcal{A}/\left(  \mathcal{I}%
_{k}+T_{\operatorname*{sign}}\left(  \mathcal{J}_{\ell}\right)  \right)  $.
\end{theorem}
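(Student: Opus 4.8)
The plan is to combine the two quotient-basis results we already have -- namely, Theorem \ref{thm.row.main} \textbf{(f)}, which gives the basis $\left( \overline{w}\right) _{w\in\operatorname*{Av}\nolimits_{n}\left( k+1\right) }$ of $\mathcal{A}/\mathcal{J}_{k}$, and Corollary \ref{cor.row.twin} \textbf{(c)}, which gives the basis $\left( \overline{w}\right) _{w\in S_{n}\setminus\operatorname*{Av}\nolimits_{n}^{\prime}\left( \ell+1\right) }$ of $\mathcal{A}/\mathcal{I}_{\ell}$ -- but transported through $T_{\operatorname*{sign}}$ appropriately, and then to glue them using the fact that $\mathcal{A}/\left( \mathcal{I}_{k}+T_{\operatorname*{sign}}\left( \mathcal{J}_{\ell}\right) \right)$ is a common further quotient of both $\mathcal{A}/\mathcal{I}_{k}$ and $\mathcal{A}/T_{\operatorname*{sign}}\left( \mathcal{J}_{\ell}\right)$. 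First I would observe that, since $T_{\operatorname*{sign}}$ is a $\mathbf{k}$-algebra automorphism sending each permutation $w$ to $\pm w$, applying it to Theorem \ref{thm.row.main} \textbf{(e)} (for $\mathcal{I}_{k}$) and to Corollary \ref{cor.row.twin} \textbf{(c)} shows that $\mathcal{A}/T_{\operatorname*{sign}}\left( \mathcal{J}_{\ell}\right)$ has basis $\left( \overline{w}\right) _{w\in\operatorname*{Av}\nolimits_{n}^{\prime}\left( \ell+1\right) }$ (it sends the basis of $\mathcal{A}/\mathcal{J}_{\ell}$ from Corollary \ref{cor.row.twin} \textbf{(d)} -- which is indexed by $\operatorname*{Av}\nolimits_{n}^{\prime}\left( \ell+1\right)$ -- to a new basis that differs only by signs, hence is still a basis indexed by the same set).

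The combinatorial skeleton is then: $S_{n}$ is partitioned into the four subsets obtained by intersecting $\operatorname*{Av}\nolimits_{n}\left( k+1\right)$ (or its complement) with $\operatorname*{Av}\nolimits_{n}^{\prime}\left( \ell+1\right)$ (or its complement), and the relevant index set $\operatorname*{Av}\nolimits_{n}^{\prime}\left( \ell+1\right) \setminus\operatorname*{Av}\nolimits_{n}\left( k+1\right)$ is one of these four pieces. To run the argument cleanly I would prove an abstract lemma: if $\mathcal{M}$ is a free $\mathbf{k}$-module with basis indexed by $I$, and $\mathcal{I},\mathcal{J}$ are submodules such that $\mathcal{M}/\mathcal{I}$ is free with basis $\left(\overline{m_i}\right)_{i\in P}$ and $\mathcal{M}/\mathcal{J}$ is free with basis $\left(\overline{m_i}\right)_{i\in Q}$ for subsets $P,Q\subseteq I$ with $P\cup Q=I$, then the further quotient $\mathcal{M}/\left(\mathcal{I}+\mathcal{J}\right)$ is free with basis $\left(\overline{m_i}\right)_{i\in P\cap Q}$. (Here with $\mathcal{I}=\mathcal{I}_k$, $\mathcal{J}=T_{\operatorname*{sign}}\left(\mathcal{J}_\ell\right)$, $P=S_n\setminus\operatorname*{Av}\nolimits_n\left(k+1\right)$, $Q=\operatorname*{Av}\nolimits_n^\prime\left(\ell+1\right)$, so $P\cap Q=\operatorname*{Av}\nolimits_n^\prime\left(\ell+1\right)\setminus\operatorname*{Av}\nolimits_n\left(k+1\right)$; the condition $P\cup Q=S_n$ holds because any permutation is either not $12\cdots(k+1)$-avoiding or lies in $\operatorname*{Av}\nolimits_n^\prime\left(\ell+1\right)$ --- wait, that needs checking: $P^c\cap Q^c=\operatorname*{Av}\nolimits_n\left(k+1\right)\cap\left(S_n\setminus\operatorname*{Av}\nolimits_n^\prime\left(\ell+1\right)\right)$, which is generally \emph{nonempty}, so $P\cup Q\neq S_n$ in general.) This is exactly the subtlety I anticipate as the main obstacle, and it means the naive lemma as stated above does not apply directly.

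The correct route, I believe, is to avoid requiring $P\cup Q=I$ and instead argue via a short exact sequence. Note $\mathcal{A}/\left(\mathcal{I}_k+T_{\operatorname*{sign}}(\mathcal{J}_\ell)\right)\cong\left(\mathcal{A}/T_{\operatorname*{sign}}(\mathcal{J}_\ell)\right)/\overline{\mathcal{I}_k}$, where $\overline{\mathcal{I}_k}$ is the image of $\mathcal{I}_k$ in $\mathcal{A}/T_{\operatorname*{sign}}(\mathcal{J}_\ell)$. Since $\mathcal{A}/T_{\operatorname*{sign}}(\mathcal{J}_\ell)$ is free with basis $\left(\overline{w}\right)_{w\in\operatorname*{Av}\nolimits_n^\prime(\ell+1)}$, it suffices to show that $\overline{\mathcal{I}_k}$ is a free direct summand with basis $\left(\overline{w}\right)_{w\in\operatorname*{Av}\nolimits_n^\prime(\ell+1)\cap\operatorname*{Av}\nolimits_n(k+1)}$ --- equivalently, that in the quotient $\mathcal{A}/T_{\operatorname*{sign}}(\mathcal{J}_\ell)$, the elements $\overline{w}$ for $w\in\operatorname*{Av}\nolimits_n^\prime(\ell+1)\setminus\operatorname*{Av}\nolimits_n(k+1)$ span a complement to $\overline{\mathcal{I}_k}$. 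This I would establish by the same lexicographic triangularity method used in Lemma \ref{lem.row.A/I-span} and Lemma \ref{lem.row.I-basis}: spanning follows by a joint induction on the lexicographic order, using the Erdős--Szekeres-type Lemma \ref{lem.erdos-szek-var} to reduce a $v\in\operatorname*{Av}\nolimits_n(k+1)$ (which, importantly, \emph{may or may not} lie in $\operatorname*{Av}\nolimits_n^\prime(\ell+1)$) modulo $\mathcal{I}_k$ to lexicographically smaller terms, while separately using Lemma \ref{lem.lex-less-2} applied to $\nabla_U^-$ for a $(\ell+1)$-element decreasing-restriction set $U$ (via $T_{\operatorname*{sign}}$, i.e. $v\nabla_U^{\operatorname{sign}}$-type elements) to reduce any $v\notin\operatorname*{Av}\nolimits_n^\prime(\ell+1)$ modulo $T_{\operatorname*{sign}}(\mathcal{J}_\ell)$ to lexicographically larger terms; since these two reduction directions are opposite (one strictly decreases, one strictly increases in lex order) they interact cleanly and one gets, by a careful double induction, that every $\overline{w}$ lies in the span of the claimed indexing set modulo $\mathcal{I}_k+T_{\operatorname*{sign}}(\mathcal{J}_\ell)$. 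Linear independence then follows from the pairing argument of Lemmas \ref{lem.row.I-c1}/\ref{lem.row.J-c1}: a dependence $\sum\alpha_w w\in\mathcal{I}_k+T_{\operatorname*{sign}}(\mathcal{J}_\ell)$ with support in $\operatorname*{Av}\nolimits_n^\prime(\ell+1)\setminus\operatorname*{Av}\nolimits_n(k+1)$ can be tested against $\nabla_{\mathbf{B},\mathbf{A}}\in\mathcal{I}_k$ (which is orthogonal to $T_{\operatorname*{sign}}(\mathcal{J}_\ell)$ by Lemma \ref{lem.row.IJ=0} plus Theorem \ref{thm.row.main}, after checking $\mathcal{I}_k\,T_{\operatorname*{sign}}(\mathcal{J}_\ell)=0$ which follows from $\mathcal{I}_k\mathcal{J}_\ell=0$ only when $k\le\ell$ --- so here one needs the stronger orthogonality $\langle\mathcal{I}_k,T_{\operatorname*{sign}}(\mathcal{J}_\ell)\rangle=0$, which one obtains from Remark \ref{rmk.IJ=murphy} identifying $\mathcal{I}_k,\mathcal{J}_\ell$ with Murphy-basis spans and using that row-basis and column-basis elements of different length-profiles pair to zero), forcing the lex-largest coefficient to vanish. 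The main obstacle, as flagged, is precisely handling the case $k>\ell$ where the two ideals are not mutually annihilating; I expect that invoking Remark \ref{rmk.IJ=murphy} (as the remark itself anticipates) to get the needed orthogonality $\mathcal{I}_k\subseteq\left(T_{\operatorname*{sign}}(\mathcal{J}_\ell)\right)^\perp$ for all $k,\ell$ is the cleanest fix, and that the double-induction bookkeeping in the spanning step is the most delicate but ultimately routine part.
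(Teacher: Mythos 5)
There are two genuine gaps here. First, in your spanning step: you correctly identify the two reduction mechanisms (Lemma \ref{lem.erdos-szek-var} pushes a $12\cdots(k+1)$-avoiding $v$ to lexicographically \emph{smaller} terms modulo $\mathcal{I}_{k}$, while the $\nabla_{U}^{-}$-type reduction pushes a non-$(\ell+1)\ell\cdots1$-avoiding $v$ to lexicographically \emph{larger} terms modulo $T_{\operatorname*{sign}}\left(\mathcal{J}_{\ell}\right)$), but a ``careful double induction'' on two opposite directions is not routine -- neither a bottom-up nor a top-down induction covers both cases, and alternating the two reductions has no evident termination. The paper's proof hinges on a device you are missing: for $v\notin\operatorname*{Av}\nolimits_{n}^{\prime}\left(\ell+1\right)$ one applies the $\mathcal{J}_{\ell}$-reduction to $v'=w_{0}v$ (which fails to avoid $12\cdots(\ell+1)$, by Proposition \ref{prop.avoid.up-down}) and then multiplies the resulting congruence by $w_{0}$ on the left; since $\mathcal{J}_{\ell}$ is a left ideal and left multiplication by $w_{0}$ reverses the lexicographic order, this turns the ``upward'' reduction into a ``downward'' one, so a single lex induction handles all three cases. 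Without this (or an equivalent) trick your spanning argument does not close.

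Second, your linear-independence step rests on a false claim: $\mathcal{I}_{k}$ is \emph{not} orthogonal to $T_{\operatorname*{sign}}\left(\mathcal{J}_{\ell}\right)$ in general. For instance, the sum $\nabla$ of all permutations lies in $\mathcal{I}_{1}\subseteq\mathcal{I}_{k}$ (for $k\geq1$) and equals $T_{\operatorname*{sign}}\left(\nabla_{\left[n\right]}^{-}\right)\in T_{\operatorname*{sign}}\left(\mathcal{J}_{\ell}\right)$ (for $\ell<n$), and $\left\langle\nabla,\nabla\right\rangle=n!\neq0$; in Artin--Wedderburn terms, $\mathcal{I}_{k}$ and $T_{\operatorname*{sign}}\left(\mathcal{J}_{\ell}\right)$ share the blocks with $\ell\left(\lambda\right)\leq k$ and $\lambda_{1}>\ell$. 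Moreover, even where some orthogonality holds, pairing a putative relation $\sum\alpha_{w}w\in\mathcal{I}_{k}+T_{\operatorname*{sign}}\left(\mathcal{J}_{\ell}\right)$ against $\nabla_{\mathbf{B},\mathbf{A}}\in\mathcal{I}_{k}$ proves nothing, since the $\mathcal{I}_{k}$-component of the relation need not pair to zero with $\nabla_{\mathbf{B},\mathbf{A}}$; you would need test vectors in $\left(\mathcal{I}_{k}+T_{\operatorname*{sign}}\left(\mathcal{J}_{\ell}\right)\right)^{\perp}=\mathcal{J}_{k}\cap T_{\operatorname*{sign}}\left(\mathcal{I}_{\ell}\right)$ with controlled leading terms, which are not available. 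The paper avoids a direct independence argument altogether: using Remark \ref{rmk.IJ=murphy} it shows that $\mathcal{I}_{k}+T_{\operatorname*{sign}}\left(\mathcal{J}_{\ell}\right)$ is the span of the row Murphy basis elements with $\ell\left(\lambda\right)\leq k$ or $\lambda_{1}>\ell$, hence the quotient is free of rank $\sum_{\lambda\vdash n,\ \ell\left(\lambda\right)>k,\ \lambda_{1}\leq\ell}\left(f^{\lambda}\right)^{2}$, identifies this number with $\left\vert\operatorname*{Av}\nolimits_{n}^{\prime}\left(\ell+1\right)\setminus\operatorname*{Av}\nolimits_{n}\left(k+1\right)\right\vert$ via Corollary \ref{cor.num-avoid} and Theorem \ref{thm.num-avoid2}, and then concludes that a spanning family of the right cardinality in a free module is a basis. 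Note that a working direct independence proof of the kind you sketch would in effect answer Question \ref{quest.num-avoid-gen}, which the paper leaves open.
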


We shall now outline a proof of this theorem (which might not be new; it is
probably a particular case of Donkin's \cite[Corollary 2.12]{Donkin24}).
However, the proof itself relies on Theorem \ref{thm.num-avoid2}, so it
produces no new proof of Theorem \ref{thm.num-avoid2}. Thus, Question
\ref{quest.num-avoid-gen} remains open.

\begin{proof}
[Outline of the proof of Theorem \ref{thm.A/I+J}.]The proof consists of two
steps: \textbf{(a)} showing that the family $\left(  \overline{w}\right)
_{\operatorname*{Av}\nolimits_{n}^{\prime}\left(  \ell+1\right)
\setminus\operatorname*{Av}\nolimits_{n}\left(  k+1\right)  }$ spans the
$\mathbf{k}$-module $\mathcal{A}/\left(  \mathcal{I}_{k}%
+T_{\operatorname*{sign}}\left(  \mathcal{J}_{\ell}\right)  \right)  $, and
\textbf{(b)} showing that this $\mathbf{k}$-module is free of rank $\left\vert
\operatorname*{Av}\nolimits_{n}^{\prime}\left(  \ell+1\right)  \setminus
\operatorname*{Av}\nolimits_{n}\left(  k+1\right)  \right\vert $. Once these
two claims are proved, we will be easily able to conclude the proof using
\cite[Lemma 5.21.9]{sga}.

We begin with step \textbf{(a)}. Thus, we set out to prove that the family
$\left(  \overline{w}\right)  _{\operatorname*{Av}\nolimits_{n}^{\prime
}\left(  \ell+1\right)  \setminus\operatorname*{Av}\nolimits_{n}\left(
k+1\right)  }$ spans the $\mathbf{k}$-module $\mathcal{A}/\left(
\mathcal{I}_{k}+T_{\operatorname*{sign}}\left(  \mathcal{J}_{\ell}\right)
\right)  $. In other words, we must prove that%
\begin{equation}
\overline{u}\in\operatorname*{span}\left(  \left(  \overline{w}\right)
_{\operatorname*{Av}\nolimits_{n}^{\prime}\left(  \ell+1\right)
\setminus\operatorname*{Av}\nolimits_{n}\left(  k+1\right)  }\right)
\ \ \ \ \ \ \ \ \ \ \text{for each }u\in S_{n}. \label{pf.thm.A/I+J.span}%
\end{equation}
To prove this, we proceed by induction on $u$ as in lexicographic order, as in
the proof of (\ref{pf.lem.row.A/I-span.goal}) above. Thus, we let $v\in S_{n}$
be arbitrary, and we assume that (\ref{pf.thm.A/I+J.span}) holds for all
$u<v$; we must then prove that (\ref{pf.thm.A/I+J.span}) holds for $u=v$. We
distinguish between three cases:

\begin{enumerate}
\item If $v\in\operatorname*{Av}\nolimits_{n}^{\prime}\left(  \ell+1\right)
\setminus\operatorname*{Av}\nolimits_{n}\left(  k+1\right)  $, then this claim
is obvious.

\item If $v\in\operatorname*{Av}\nolimits_{n}\left(  k+1\right)  $, then this
claim is proved just as in our above proof of (\ref{pf.lem.row.A/I-span.goal}%
), since $\mathcal{I}_{k}\subseteq\mathcal{I}_{k}+T_{\operatorname*{sign}%
}\left(  \mathcal{J}_{\ell}\right)  $.

\item If $v\notin\operatorname*{Av}\nolimits_{n}^{\prime}\left(
\ell+1\right)  $, then we proceed as follows: Set $v^{\prime}:=w_{0}v$. Then,
$v\notin\operatorname*{Av}\nolimits_{n}^{\prime}\left(  \ell+1\right)  $
entails $v^{\prime}\notin\operatorname*{Av}\nolimits_{n}\left(  \ell+1\right)
$ (by Proposition \ref{prop.avoid.up-down}). Hence, the argument made in the
proof of (\ref{pf.lem.row.A/J-span.goal}) (but applied to $\ell$ and
$v^{\prime}$ instead of $k$ and $v$) shows that
\[
\overline{v^{\prime}}=-\overline{\left(  \text{some permutations }w>v^{\prime
}\right)  }\ \ \ \ \ \ \ \ \ \ \text{in }\mathcal{A}/\mathcal{J}_{\ell}.
\]
Multiplying by $\overline{w_{0}}$ on the left, we transform this into
\begin{align*}
\overline{w_{0}v^{\prime}}  &  =-\overline{\left(  \text{some permutations
}w_{0}w\text{ with }w>v^{\prime}\right)  }\\
&  =-\overline{\left(  \text{some permutations }w^{\prime}<w_{0}v^{\prime
}\right)  }\ \ \ \ \ \ \ \ \ \ \text{in }\mathcal{A}/\mathcal{J}_{\ell}%
\end{align*}
(because multiplying by $w_{0}$ on the left flips the lexicographic order: if
$x>y$ in $S_{n}$, then $w_{0}x<w_{0}y$). Since $w_{0}v^{\prime}=v$ (because
$v^{\prime}=w_{0}v$ but $w_{0}$ is an involution), we can rewrite this as%
\[
\overline{v}=-\overline{\left(  \text{some permutations }w^{\prime}<v\right)
}\ \ \ \ \ \ \ \ \ \ \text{in }\mathcal{A}/\mathcal{J}_{\ell}.
\]
Applying the $\mathbf{k}$-algebra automorphism $T_{\operatorname*{sign}}$ (or,
rather, the $\mathbf{k}$-algebra isomorphism $\mathcal{A}/\mathcal{J}_{\ell
}\rightarrow\mathcal{A}/T_{\operatorname*{sign}}\left(  \mathcal{J}_{\ell
}\right)  $ it induces) to this equality, we obtain%
\[
\overline{v}=-\overline{\left(  \text{some permutations }w^{\prime}<v\right)
}\ \ \ \ \ \ \ \ \ \ \text{in }\mathcal{A}/T_{\operatorname*{sign}}\left(
\mathcal{J}_{\ell}\right)
\]
(we have suppressed the signs $\left(  -1\right)  ^{v}$ and $\left(
-1\right)  ^{w^{\prime}}$ here, since we are dealing with a linear combination
anyway). Thus, this equality also holds in $\mathcal{A}/\left(  \mathcal{I}%
_{k}+T_{\operatorname*{sign}}\left(  \mathcal{J}_{\ell}\right)  \right)  $.
From here, we easily obtain (\ref{pf.thm.A/I+J.span}) using the induction hypothesis.
\end{enumerate}

Thus, (\ref{pf.thm.A/I+J.span}) is proved, and we conclude that the family
$\left(  \overline{w}\right)  _{\operatorname*{Av}\nolimits_{n}^{\prime
}\left(  \ell+1\right)  \setminus\operatorname*{Av}\nolimits_{n}\left(
k+1\right)  }$ spans the $\mathbf{k}$-module $\mathcal{A}/\left(
\mathcal{I}_{k}+T_{\operatorname*{sign}}\left(  \mathcal{J}_{\ell}\right)
\right)  $.

Now we come to step \textbf{(b)} of our plan: showing that this $\mathbf{k}%
$-module is free of rank $\left\vert \operatorname*{Av}\nolimits_{n}^{\prime
}\left(  \ell+1\right)  \setminus\operatorname*{Av}\nolimits_{n}\left(
k+1\right)  \right\vert $.

Here we will need the Murphy cellular bases (see Remark \ref{rmk.IJ=murphy}).
Namely, applying (\ref{eq.rmk.IJ=murphy.J}) to $\ell$ instead of $k$, we
obtain $\mathcal{J}_{\ell}=\MurpF_{\operatorname*{std},\operatorname*{len}%
>\ell}^{-\operatorname*{Col}}$. Thus,%
\begin{align*}
T_{\operatorname*{sign}}\left(  \mathcal{J}_{\ell}\right)   &
=T_{\operatorname*{sign}}\left(  \MurpF_{\operatorname*{std}%
,\operatorname*{len}>\ell}^{-\operatorname*{Col}}\right) \\
&  =\operatorname*{span}\left\{  T_{\operatorname*{sign}}\left(  \nabla
_{V,U}^{\operatorname*{Row}}\right)  \ \mid\ \left(  \lambda,U,V\right)
\in\operatorname*{SBT}\left(  n\right)  \text{ and }\ell\left(  \lambda
\right)  >\ell\right\} \\
&  =\operatorname*{span}\left\{  \nabla_{V\mathbf{r},U\mathbf{r}%
}^{\operatorname*{Row}}\ \mid\ \left(  \lambda,U,V\right)  \in
\operatorname*{SBT}\left(  n\right)  \text{ and }\ell\left(  \lambda\right)
>\ell\right\} \\
&  \ \ \ \ \ \ \ \ \ \ \ \ \ \ \ \ \ \ \ \ \left(
\begin{array}
[c]{c}%
\text{here, }T\mathbf{r}\text{ denotes the transpose of a tableau }T\text{,}\\
\text{and we have used }T_{\operatorname*{sign}}\left(  \nabla_{V,U}%
^{\operatorname*{Row}}\right)  =\nabla_{V\mathbf{r},U\mathbf{r}}%
^{\operatorname*{Row}}\\
\text{(which is proved in \cite[Proposition 6.8.12]{sga})}%
\end{array}
\right) \\
&  =\operatorname*{span}\left\{  \nabla_{V,U}^{\operatorname*{Row}}%
\ \mid\ \left(  \lambda,U,V\right)  \in\operatorname*{SBT}\left(  n\right)
\text{ and }\ell\left(  \lambda^{t}\right)  >\ell\right\} \\
&  \ \ \ \ \ \ \ \ \ \ \ \ \ \ \ \ \ \ \ \ \left(  \text{here, we have
substituted }\left(  \lambda^{t},U\mathbf{r},V\mathbf{r}\right)  \text{ for
}\left(  \lambda,U,V\right)  \right) \\
&  =\operatorname*{span}\left\{  \nabla_{V,U}^{\operatorname*{Row}}%
\ \mid\ \left(  \lambda,U,V\right)  \in\operatorname*{SBT}\left(  n\right)
\text{ and }\lambda_{1}>\ell\right\}
\end{align*}
(since $\ell\left(  \lambda^{t}\right)  =\lambda_{1}$ for each partition
$\lambda$). Adding this to
\[
\mathcal{I}_{k}=\MurpF_{\operatorname*{std},\operatorname*{len}\leq
k}^{\operatorname*{Row}}=\operatorname*{span}\left\{  \nabla_{V,U}%
^{\operatorname*{Row}}\ \mid\ \left(  \lambda,U,V\right)  \in
\operatorname*{SBT}\left(  n\right)  \text{ and }\ell\left(  \lambda\right)
\leq k\right\}  ,
\]
we find%
\begin{align*}
&  \mathcal{I}_{k}+T_{\operatorname*{sign}}\left(  \mathcal{J}_{\ell}\right)
\\
&  =\operatorname*{span}\left\{  \nabla_{V,U}^{\operatorname*{Row}}%
\ \mid\ \left(  \lambda,U,V\right)  \in\operatorname*{SBT}\left(  n\right)
\text{ and }\left(  \ell\left(  \lambda\right)  \leq k\text{ or }\lambda
_{1}>\ell\right)  \right\} \\
&  =\operatorname*{span}\left\{  \nabla_{V,U}^{\operatorname*{Row}}%
\ \mid\ \left(  \lambda,U,V\right)  \in\operatorname*{SBT}\left(  n\right)
\text{ and }\lambda\in X\right\}  ,
\end{align*}
where
\[
X:=\left\{  \lambda\vdash n\ \mid\ \ell\left(  \lambda\right)  \leq k\text{ or
}\lambda_{1}>\ell\right\}  .
\]
This clearly shows that $\mathcal{I}_{k}+T_{\operatorname*{sign}}\left(
\mathcal{J}_{\ell}\right)  $ is a direct addend of $\mathcal{A}$ as a
$\mathbf{k}$-module (since it is spanned by a subfamily of the row Murphy
basis), and is free of rank
\[
\left(  \text{\# of }\left(  \lambda,U,V\right)  \in\operatorname*{SBT}\left(
n\right)  \text{ such that }\lambda\in X\right)  =\sum_{\lambda\in X}\left(
f^{\lambda}\right)  ^{2},
\]
whereas the quotient $\mathbf{k}$-module $\mathcal{A}/\left(  \mathcal{I}%
_{k}+T_{\operatorname*{sign}}\left(  \mathcal{J}_{\ell}\right)  \right)  $ is
free of rank
\begin{align*}
\sum\limits_{\substack{\lambda\vdash n;\\\lambda\notin X}}\left(  f^{\lambda
}\right)  ^{2}  &  =\sum\limits_{\substack{\lambda\vdash n;\\\ell\left(
\lambda\right)  >k\text{ and }\lambda_{1}\leq\ell}}\left(  f^{\lambda}\right)
^{2}=\underbrace{\sum_{\substack{\lambda\vdash n;\\\lambda_{1}\leq\ell
}}\left(  f^{\lambda}\right)  ^{2}}_{\substack{=\left\vert \operatorname*{Av}%
\nolimits_{n}\left(  \ell+1\right)  \right\vert \\\text{(by Corollary
\ref{cor.num-avoid},}\\\text{applied to }\ell\text{ instead of }k\text{)}%
}}-\underbrace{\sum\limits_{\substack{\lambda\vdash n;\\\ell\left(
\lambda\right)  \leq k\text{ and }\lambda_{1}\leq\ell}}\left(  f^{\lambda
}\right)  ^{2}}_{\substack{=\left\vert \operatorname*{Av}\nolimits_{n}\left(
k+1\right)  \cap\operatorname*{Av}\nolimits_{n}^{\prime}\left(  \ell+1\right)
\right\vert \\\text{(by Theorem \ref{thm.num-avoid2})}}}\\
&  =\underbrace{\left\vert \operatorname*{Av}\nolimits_{n}\left(
\ell+1\right)  \right\vert }_{\substack{=\left\vert \operatorname*{Av}%
\nolimits_{n}^{\prime}\left(  \ell+1\right)  \right\vert \\\text{(by
(\ref{pf.cor.row.twin.a.=}))}}}-\left\vert \operatorname*{Av}\nolimits_{n}%
\left(  k+1\right)  \cap\operatorname*{Av}\nolimits_{n}^{\prime}\left(
\ell+1\right)  \right\vert \\
&  =\left\vert \operatorname*{Av}\nolimits_{n}^{\prime}\left(  \ell+1\right)
\right\vert -\left\vert \operatorname*{Av}\nolimits_{n}\left(  k+1\right)
\cap\operatorname*{Av}\nolimits_{n}^{\prime}\left(  \ell+1\right)  \right\vert
\\
&  =\left\vert \operatorname*{Av}\nolimits_{n}^{\prime}\left(  \ell+1\right)
\setminus\operatorname*{Av}\nolimits_{n}\left(  k+1\right)  \right\vert .
\end{align*}

Finally, we are ready for everything to come together. We have shown that the
$\mathbf{k}$-module $\mathcal{A}/\left(  \mathcal{I}_{k}%
+T_{\operatorname*{sign}}\left(  \mathcal{J}_{\ell}\right)  \right)  $ is free
of rank $\left\vert \operatorname*{Av}\nolimits_{n}^{\prime}\left(
\ell+1\right)  \setminus\operatorname*{Av}\nolimits_{n}\left(  k+1\right)
\right\vert $. Hence, \cite[Lemma 5.21.9]{sga} shows that every family of
$\left\vert \operatorname*{Av}\nolimits_{n}^{\prime}\left(  \ell+1\right)
\setminus\operatorname*{Av}\nolimits_{n}\left(  k+1\right)  \right\vert $
vectors that spans this $\mathbf{k}$-module must be a basis of this
$\mathbf{k}$-module. Hence, the family $\left(  \overline{w}\right)
_{\operatorname*{Av}\nolimits_{n}^{\prime}\left(  \ell+1\right)
\setminus\operatorname*{Av}\nolimits_{n}\left(  k+1\right)  }$ is a basis of
this $\mathbf{k}$-module (since we know that it spans it). This completes the
proof of Theorem \ref{thm.A/I+J}.
\end{proof}

\begin{noncompile}
\label{rmk.A/I+J.murphy}Note that the set $X$ in the above proof is a
\textquotedblleft dominance upset\textquotedblright\ (i.e., if a partition
$\lambda\vdash n$ belongs to $X$ and another partition $\mu\vdash n$ dominates
$\lambda$, then $\mu$ also belongs to $X$). This is called a \textquotedblleft
co-saturated subset\textquotedblright\ of $\operatorname*{Par}\left(
n\right)  $ in \cite{Donkin24}. Thus, Conjecture \ref{conj.A/I+J} is likely to
be a particular case of Donkin's conjecture \cite[Remark 2.14]{Donkin24},
which has been verified for all $n\leq7$.
\end{noncompile}

\subsubsection{$T_{\operatorname*{sign}}\left(  \mathcal{I}_{n-k-1}\right)  $
after Artin--Wedderburn}

We end with a brief excursion into some related work by Hamaker and Rhoades
\cite{HamRho25}.

Let $k\in\mathbb{N}$. For any two $k$-tuples $\mathbf{a}=\left(  a_{1}%
,a_{2},\ldots,a_{k}\right)  \in\left[  n\right]  ^{k}$ and $\mathbf{b}=\left(
b_{1},b_{2},\ldots,b_{k}\right)  \in\left[  n\right]  ^{k}$, we define the
element%
\begin{equation}
\nabla_{\mathbf{b},\mathbf{a}}:=\sum_{\substack{w\in S_{n};\\w\left(
a_{i}\right)  =b_{i}\text{ for all }i}}w\in\mathcal{A}. \label{eq.Nabba}%
\end{equation}
Note that the $\nabla_{\mathbf{a}}$ in Lemma \ref{lem.AnnNnk.2} can thus be
rewritten as $\nabla_{\mathbf{a},\mathbf{a}}$. If we equip the set $\left[
n\right]  ^{k}$ with the left $S_{n}$-action given by%
\begin{align*}
w\left(  a_{1},a_{2},\ldots,a_{k}\right)   &  :=\left(  w\left(  a_{1}\right)
,w\left(  a_{2}\right)  ,\ldots,w\left(  a_{k}\right)  \right) \\
&  \ \ \ \ \ \ \ \ \ \ \text{for all }w\in S_{n}\text{ and }\left(
a_{1},a_{2},\ldots,a_{k}\right)  \in\left[  n\right]  ^{k}%
\end{align*}
(as we have already done in the proof of Theorem \ref{thm.AnnNnk}), then we
can rewrite the definition (\ref{eq.Nabba}) of $\nabla_{\mathbf{b},\mathbf{a}%
}$ as%
\begin{equation}
\nabla_{\mathbf{b},\mathbf{a}}:=\sum_{\substack{w\in S_{n};\\w\mathbf{a}%
=\mathbf{b}}}w\in\mathcal{A}. \label{eq.Nabba.2}%
\end{equation}
Thus,%
\begin{equation}
\nabla_{\mathbf{b},\mathbf{a}}=0\ \ \ \ \ \ \ \ \ \ \text{unless }%
\mathbf{b}=w\mathbf{a}\text{ for some }w\in S_{n}. \label{eq.Nabba.0}%
\end{equation}
Now, Lemma \ref{lem.AnnNnk.2} can be restated as follows:

\begin{proposition}
\label{prop.Nabbaspan}Let $k\in\mathbb{N}$. Then,
\[
T_{\operatorname*{sign}}\left(  \mathcal{J}_{n-k-1}\right)
=\operatorname*{span}\left\{  \nabla_{\mathbf{b},\mathbf{a}}\ \mid
\ \mathbf{a},\mathbf{b}\in\left[  n\right]  ^{k}\right\}  .
\]
Here, we understand $\mathcal{J}_{m}$ to mean $\mathcal{A}$ when $m<0$.
\end{proposition}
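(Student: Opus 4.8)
The plan is to deduce Proposition~\ref{prop.Nabbaspan} from Lemma~\ref{lem.AnnNnk.2} with only minor bookkeeping. Recall that Lemma~\ref{lem.AnnNnk.2} already gives
\[
T_{\operatorname*{sign}}\left(  \mathcal{J}_{n-k-1}\right)  =\mathcal{A}
\cdot\operatorname*{span}\left\{  \nabla_{\mathbf{a}}\ \mid\ \mathbf{a}
\in\left[  n\right]  ^{k}\right\}  ,
\]
where $\nabla_{\mathbf{a}}=\nabla_{\mathbf{a},\mathbf{a}}$ in the notation of \eqref{eq.Nabba}. So the task reduces to showing the identity of $\mathbf{k}$-submodules
\[
\mathcal{A}\cdot\operatorname*{span}\left\{  \nabla_{\mathbf{a},\mathbf{a}}\ \mid\ \mathbf{a}\in\left[  n\right]  ^{k}\right\}  =\operatorname*{span}\left\{  \nabla_{\mathbf{b},\mathbf{a}}\ \mid\ \mathbf{a},\mathbf{b}\in\left[  n\right]  ^{k}\right\}  .
\]

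First I would prove the inclusion ``$\subseteq$''. It suffices to show that $w\nabla_{\mathbf{a},\mathbf{a}}$ lies in the right-hand span for every $w\in S_{n}$ and every $\mathbf{a}\in\left[  n\right]  ^{k}$. But from the rewriting \eqref{eq.Nabba.2} and the computation already performed in the proof of Theorem~\ref{thm.AnnNnk} (namely $u\sum_{\substack{w\in S_{n};\,w\mathbf{a}=\mathbf{c}}}w=\sum_{\substack{w\in S_{n};\,w\mathbf{a}=u\mathbf{c}}}w$), we get immediately $w\nabla_{\mathbf{a},\mathbf{a}}=\nabla_{w\mathbf{a},\mathbf{a}}$, which is of the form $\nabla_{\mathbf{b},\mathbf{a}}$ with $\mathbf{b}=w\mathbf{a}\in\left[  n\right]  ^{k}$. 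Hence the left side is contained in the right side.

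Next I would prove ``$\supseteq$''. Fix $\mathbf{a},\mathbf{b}\in\left[  n\right]  ^{k}$; I must show $\nabla_{\mathbf{b},\mathbf{a}}$ lies in the left-hand $\mathbf{k}$-submodule. By \eqref{eq.Nabba.0}, if there is no $w\in S_{n}$ with $\mathbf{b}=w\mathbf{a}$, then $\nabla_{\mathbf{b},\mathbf{a}}=0$ and we are done. Otherwise pick $u\in S_{n}$ with $u\mathbf{a}=\mathbf{b}$; then by the same computation $\nabla_{\mathbf{b},\mathbf{a}}=\nabla_{u\mathbf{a},\mathbf{a}}=u\nabla_{\mathbf{a},\mathbf{a}}\in\mathcal{A}\cdot\operatorname*{span}\left\{  \nabla_{\mathbf{a},\mathbf{a}}\ \mid\ \mathbf{a}\in\left[  n\right]  ^{k}\right\}$. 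Combining the two inclusions and substituting into Lemma~\ref{lem.AnnNnk.2} yields the claim. I anticipate no real obstacle here: the only subtlety is remembering to handle the degenerate case $\nabla_{\mathbf{b},\mathbf{a}}=0$ via \eqref{eq.Nabba.0}, and the negative-index convention $\mathcal{J}_{m}:=\mathcal{A}$ for $m<0$ is already fixed in Lemma~\ref{lem.AnnNnk.2}, so it carries over verbatim.
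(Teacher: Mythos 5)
Your proposal is correct and follows essentially the same route as the paper's proof: both rest on Lemma \ref{lem.AnnNnk.2}, the identity $w\nabla_{\mathbf{a},\mathbf{a}}=\nabla_{w\mathbf{a},\mathbf{a}}$, and (\ref{eq.Nabba.0}) to dispose of the tuples $\mathbf{b}$ not in the $S_{n}$-orbit of $\mathbf{a}$. The only cosmetic difference is that you argue by two inclusions where the paper runs a single chain of equalities of spans.
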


\begin{proof}
It is easy to see that $w\nabla_{\mathbf{b},\mathbf{a}}=\nabla_{w\mathbf{b}%
,\mathbf{a}}$ for any $w\in S_{n}$ and any $\mathbf{a},\mathbf{b}\in\left[
n\right]  ^{k}$. Applying this to $\mathbf{b}=\mathbf{a}$, we obtain
\begin{equation}
w\nabla_{\mathbf{a}}=\nabla_{w\mathbf{a},\mathbf{a}}
\label{pf.prop.Nabbaspan.1}%
\end{equation}
for any $w\in S_{n}$ and any $\mathbf{a}\in\left[  n\right]  ^{k}$ (since
$\nabla_{\mathbf{a}}=\nabla_{\mathbf{a},\mathbf{a}}$).

But Lemma \ref{lem.AnnNnk.2} yields%
\begin{align*}
T_{\operatorname*{sign}}\left(  \mathcal{J}_{n-k-1}\right)   &
=\underbrace{\mathcal{A}}_{=\operatorname*{span}\left\{  w\ \mid\ w\in
S_{n}\right\}  }\cdot\operatorname*{span}\left\{  \nabla_{\mathbf{a}}%
\ \mid\ \mathbf{a}\in\left[  n\right]  ^{k}\right\} \\
&  =\operatorname*{span}\left\{  w\ \mid\ w\in S_{n}\right\}  \cdot
\operatorname*{span}\left\{  \nabla_{\mathbf{a}}\ \mid\ \mathbf{a}\in\left[
n\right]  ^{k}\right\} \\
&  =\operatorname*{span}\left\{  w\nabla_{\mathbf{a}}\ \mid\ \mathbf{a}%
\in\left[  n\right]  ^{k}\text{ and }w\in S_{n}\right\} \\
&  =\operatorname*{span}\left\{  \nabla_{w\mathbf{a},\mathbf{a}}%
\ \mid\ \mathbf{a}\in\left[  n\right]  ^{k}\text{ and }w\in S_{n}\right\}
\ \ \ \ \ \ \ \ \ \ \left(  \text{by (\ref{pf.prop.Nabbaspan.1})}\right) \\
&  =\operatorname*{span}\left\{  \nabla_{\mathbf{b},\mathbf{a}}\ \mid
\ \mathbf{a},\mathbf{b}\in\left[  n\right]  ^{k}\text{ such that }%
\mathbf{b}=w\mathbf{a}\text{ for some }w\in S_{n}\right\} \\
&  =\operatorname*{span}\left\{  \nabla_{\mathbf{b},\mathbf{a}}\ \mid
\ \mathbf{a},\mathbf{b}\in\left[  n\right]  ^{k}\right\}
\ \ \ \ \ \ \ \ \ \ \left(  \text{by (\ref{eq.Nabba.0})}\right)  .
\end{align*}
This proves Proposition \ref{prop.Nabbaspan}.
\end{proof}

We conclude with the following coda to Theorem \ref{thm.IJ.rep}:

\begin{theorem}
\label{thm.TJ.rep}Assume that $n!$ is invertible in $\mathbf{k}$. Let
$k\in\mathbb{N}$. Let all notations be as in Theorem \ref{thm.IJ.rep}. Then,%
\[
T_{\operatorname*{sign}}\left(  \mathcal{J}_{n-k-1}\right)  =\mathcal{A}%
_{\left\{  \lambda\vdash n\ \mid\ \lambda_{1}\geq n-k\right\}  }.
\]
Here, $\lambda_{1}$ denotes the first entry of $\lambda$.
\end{theorem}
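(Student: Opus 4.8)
The plan is to derive Theorem~\ref{thm.TJ.rep} from Theorem~\ref{thm.IJ.rep} together with the (standard) behaviour of the sign-twist $T_{\operatorname{sign}}$ under the Artin--Wedderburn isomorphism $\operatorname{AW}$. The guiding idea: under $\operatorname{AW}$, the automorphism $T_{\operatorname{sign}}$ of $\mathcal{A}$ corresponds to the automorphism of $\prod_{\lambda\vdash n}\operatorname{End}\left(S^{\lambda}\right)$ that swaps the $\lambda$-th factor with the $\lambda^{t}$-th factor (up to the chosen identification isomorphisms $\operatorname{End}\left(S^{\lambda}\right)\cong\operatorname{End}\left(S^{\lambda^{t}}\right)$). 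Granting this, for every subset $U$ of $\left\{\lambda\vdash n\right\}$ we obtain $T_{\operatorname{sign}}\left(\mathcal{A}_{U}\right)=\mathcal{A}_{U^{t}}$, where $U^{t}:=\left\{\lambda^{t}\ \mid\ \lambda\in U\right\}$; the theorem is then a matter of inserting the right $U$.

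To make the guiding idea precise I would argue as follows. For $a=\sum_{w\in S_{n}}c_{w}w\in\mathcal{A}$, the endomorphism $\left(T_{\operatorname{sign}}\left(a\right)\right)_{\lambda}$ of $S^{\lambda}$ is $x\mapsto\sum_{w}c_{w}\left(-1\right)^{w}\left(w\cdot x\right)$, which is exactly the action of $a$ on the sign-twisted module $\left(S^{\lambda}\right)^{\operatorname{sign}}$. Now it is classical that $\left(S^{\lambda}\right)^{\operatorname{sign}}\cong S^{\lambda^{t}}$ as left $\mathcal{A}$-modules; this can either be quoted from the literature or re-derived from the model $S^{\lambda}\cong\mathcal{A}\mathbf{a}_{\lambda}\mathbf{b}_{\lambda}$ used in the proof of Theorem~\ref{thm.IJ.rep}, since $T_{\operatorname{sign}}\left(\mathbf{a}_{\lambda}\right)$ and $T_{\operatorname{sign}}\left(\mathbf{b}_{\lambda}\right)$ are respectively a column symmetrizer and a row antisymmetrizer of a tableau of shape $\lambda^{t}$, combined with the standard isomorphism $\mathcal{A}\mathbf{a}\mathbf{b}\cong\mathcal{A}\mathbf{b}\mathbf{a}$ (available because $n!$ is invertible). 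Fixing a $\mathbf{k}$-module isomorphism $\psi_{\lambda}\colon S^{\lambda}\to S^{\lambda^{t}}$ realizing this, we get $\left(T_{\operatorname{sign}}\left(a\right)\right)_{\lambda}=\psi_{\lambda}^{-1}\circ a_{\lambda^{t}}\circ\psi_{\lambda}$, and in particular $\left(T_{\operatorname{sign}}\left(a\right)\right)_{\lambda}=0$ if and only if $a_{\lambda^{t}}=0$. Combining this with the description $\mathcal{A}_{U}=\left\{a\in\mathcal{A}\ \mid\ a_{\mu}=0\text{ for all }\mu\notin U\right\}$ from the proof of Theorem~\ref{thm.IJ.rep}, and using that $\mu\mapsto\mu^{t}$ is an involution of $\left\{\lambda\vdash n\right\}$ (so $\mu^{t}\notin U\iff\mu\notin U^{t}$), one reads off $T_{\operatorname{sign}}\left(a\right)\in\mathcal{A}_{U}\iff a\in\mathcal{A}_{U^{t}}$. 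Since $T_{\operatorname{sign}}$ is an involution, this gives $T_{\operatorname{sign}}\left(\mathcal{A}_{U}\right)=\mathcal{A}_{U^{t}}$.

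Finally I would plug in. Assume first $n-k-1\geq 0$. Applying Theorem~\ref{thm.IJ.rep} with $n-k-1$ in place of $k$ yields $\mathcal{J}_{n-k-1}=\mathcal{A}_{U}$ with $U=\left\{\lambda\vdash n\ \mid\ \ell\left(\lambda\right)>n-k-1\right\}=\left\{\lambda\vdash n\ \mid\ \ell\left(\lambda\right)\geq n-k\right\}$. Using $\left(\lambda^{t}\right)_{1}=\ell\left(\lambda\right)$ and the involutivity of transposition, $U^{t}=\left\{\mu\vdash n\ \mid\ \mu_{1}\geq n-k\right\}$, so the previous paragraph gives $T_{\operatorname{sign}}\left(\mathcal{J}_{n-k-1}\right)=\mathcal{A}_{U^{t}}=\mathcal{A}_{\left\{\lambda\vdash n\ \mid\ \lambda_{1}\geq n-k\right\}}$, which is the claim. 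If instead $n-k-1<0$, then $\mathcal{J}_{n-k-1}=\mathcal{A}$ by convention, so $T_{\operatorname{sign}}\left(\mathcal{J}_{n-k-1}\right)=\mathcal{A}=\mathcal{A}_{\left\{\lambda\vdash n\right\}}$, while $\left\{\lambda\vdash n\ \mid\ \lambda_{1}\geq n-k\right\}=\left\{\lambda\vdash n\right\}$ because $n-k\leq 0<\lambda_{1}$; so the claim holds here too. The only genuinely non-bookkeeping step is the isomorphism $\left(S^{\lambda}\right)^{\operatorname{sign}}\cong S^{\lambda^{t}}$; everything else is an unwinding of the definitions of $\operatorname{AW}$, $\mathcal{A}_{U}$ and $T_{\operatorname{sign}}$. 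That isomorphism is completely classical, so the main decision is stylistic — whether to cite it (it is implicit in the treatments of Specht modules in \cite{sga} and \cite{Etingof}) or to include the short derivation sketched above from the $\mathcal{A}\mathbf{a}_{\lambda}\mathbf{b}_{\lambda}$ presentation. I would lean toward the self-contained derivation, in the spirit of the rest of the paper.
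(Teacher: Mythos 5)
Your proof is correct and follows essentially the same route as the paper: your sign-twist/transpose step $T_{\operatorname*{sign}}\left(\mathcal{A}_{U}\right)=\mathcal{A}_{U^{t}}$ is precisely the paper's Proposition \ref{prop.AW.dual-A} (proved there, as you do, from $\left(S^{\lambda}\right)^{\operatorname*{sign}}\cong S^{\lambda^{t}}$ together with the description of $\mathcal{A}_{U}$ via $aS^{\lambda}=0$), and the rest is the same application of Theorem \ref{thm.IJ.rep} with $n-k-1$ in place of $k$ plus $\ell\left(\lambda^{t}\right)=\lambda_{1}$. Your handling of the degenerate case $k\geq n$ (both sides equal $\mathcal{A}$ under the convention $\mathcal{J}_{m}=\mathcal{A}$ for $m<0$) is the consistent reading; the paper's parenthetical claim that both sides are $0$ there is a slip.
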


To prove this, we need a general fact about how $T_{\operatorname*{sign}}$
interacts with the Artin--Wedderburn isomorphism $\operatorname*{AW}$:

\begin{proposition}
\label{prop.AW.dual-A}Assume that $n!$ is invertible in $\mathbf{k}$. Let all
notations be as in Theorem \ref{thm.IJ.rep}. Let $X$ be a subset of $\left\{
\lambda\vdash n\right\}  $. Let $\lambda^{t}$ denote the transpose (i.e.,
conjugate) of any partition $\lambda$. Then,%
\[
T_{\operatorname*{sign}}\left(  \mathcal{A}_{X}\right)  =\mathcal{A}_{X^{t}},
\]
where $X^{t}$ denotes the subset $\left\{  \lambda^{t}\ \mid\ \lambda\in
X\right\}  =\left\{  \lambda\vdash n\ \mid\ \lambda^{t}\in X\right\}  $ of
$\left\{  \lambda\vdash n\right\}  $.
\end{proposition}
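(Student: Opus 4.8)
The plan is to reduce the proposition to the single classical fact that the sign-twist of a Specht module is again a Specht module with the partition transposed: $\left(S^{\lambda}\right)^{\operatorname*{sign}}\cong S^{\lambda^{t}}$ as left $\mathcal{A}$-modules (this is the module-theoretic form of the character identity $\chi^{\lambda^{t}}=\operatorname*{sign}\cdot\,\chi^{\lambda}$; see \cite{sga}). Granting this, the proposition will follow from a short chain of rewrites using that $T_{\operatorname*{sign}}$ is a $\mathbf{k}$-algebra automorphism of $\mathcal{A}$. First I would recall, as established inside the proof of Theorem \ref{thm.IJ.rep} (see (\ref{pf.prop.IJ.rep.AU=})), that for any subset $U$ of $\left\{\lambda\vdash n\right\}$ one has
\[
\mathcal{A}_{U}=\left\{a\in\mathcal{A}\ \mid\ aS^{\lambda}=0\text{ for all }\lambda\notin U\right\}=\bigcap_{\substack{\lambda\vdash n;\\\lambda\notin U}}\operatorname*{Ann}\left(S^{\lambda}\right).
\]
Next, since $T_{\operatorname*{sign}}$ is an algebra automorphism, the formula (\ref{eq.harterich.AnnMsign}) gives $\operatorname*{Ann}\left(M^{\operatorname*{sign}}\right)=T_{\operatorname*{sign}}\left(\operatorname*{Ann}M\right)$ for every left $\mathcal{A}$-module $M$; applying this with $M=S^{\lambda}$ and invoking $\left(S^{\lambda}\right)^{\operatorname*{sign}}\cong S^{\lambda^{t}}$ yields $\operatorname*{Ann}\left(S^{\lambda^{t}}\right)=T_{\operatorname*{sign}}\left(\operatorname*{Ann}\left(S^{\lambda}\right)\right)$ for each partition $\lambda$ of $n$.

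With these two ingredients in hand, the computation is immediate. Using that $T_{\operatorname*{sign}}$ is a bijection $\mathcal{A}\to\mathcal{A}$ (hence commutes with intersections), I would write
\[
T_{\operatorname*{sign}}\left(\mathcal{A}_{X}\right)=T_{\operatorname*{sign}}\left(\bigcap_{\substack{\lambda\vdash n;\\\lambda\notin X}}\operatorname*{Ann}\left(S^{\lambda}\right)\right)=\bigcap_{\substack{\lambda\vdash n;\\\lambda\notin X}}T_{\operatorname*{sign}}\left(\operatorname*{Ann}\left(S^{\lambda}\right)\right)=\bigcap_{\substack{\lambda\vdash n;\\\lambda\notin X}}\operatorname*{Ann}\left(S^{\lambda^{t}}\right).
\]
Finally I would reindex the last intersection by the involution $\lambda\mapsto\lambda^{t}$ on partitions of $n$: as $\lambda$ runs over all partitions of $n$ with $\lambda\notin X$, the partition $\mu:=\lambda^{t}$ runs over all partitions of $n$ with $\mu^{t}\notin X$, that is, with $\mu\notin X^{t}$ (by the description $X^{t}=\left\{\mu\vdash n\ \mid\ \mu^{t}\in X\right\}$ given in the statement). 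Therefore the intersection equals $\bigcap_{\mu\notin X^{t}}\operatorname*{Ann}\left(S^{\mu}\right)=\mathcal{A}_{X^{t}}$, which is the claim.

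The only nontrivial step is the isomorphism $\left(S^{\lambda}\right)^{\operatorname*{sign}}\cong S^{\lambda^{t}}$, so the main obstacle is really just to pin down the right citation, or else to supply the standard short argument: writing $S^{\lambda}\cong\mathcal{A}\,\mathbf{a}_{\lambda}\mathbf{b}_{\lambda}$ as in the proof of Theorem \ref{thm.IJ.rep}, and using that $T_{\operatorname*{sign}}$ sends $\mathbf{a}_{T}$ to $\mathbf{b}_{T^{t}}$ and $\mathbf{b}_{T}$ to $\mathbf{a}_{T^{t}}$ for transposed tableaux (because transposition swaps the row and column groups), one gets $T_{\operatorname*{sign}}\left(\mathbf{a}_{\lambda}\mathbf{b}_{\lambda}\right)=\mathbf{b}_{\mu}\mathbf{a}_{\mu}$ with $\mu$ a tableau of shape $\lambda^{t}$, whence $\left(S^{\lambda}\right)^{\operatorname*{sign}}\cong\mathcal{A}\,\mathbf{b}_{\mu}\mathbf{a}_{\mu}\cong S^{\lambda^{t}}$ after relabeling the tableau and using the standard isomorphism between $\mathcal{A}\mathbf{b}\mathbf{a}$ and $\mathcal{A}\mathbf{a}\mathbf{b}$. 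Everything else in the proof is bookkeeping with annihilators and the involutive reindexing, so no further difficulties are expected.
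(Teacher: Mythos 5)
Your proof is correct and follows essentially the same route as the paper: both rest on the isomorphism $\left(S^{\lambda}\right)^{\operatorname*{sign}}\cong S^{\lambda^{t}}$ together with the description of $\mathcal{A}_{U}$ as the set of elements annihilating all $S^{\lambda}$ with $\lambda\notin U$, followed by the reindexing $\lambda\mapsto\lambda^{t}$. Your phrasing via $\operatorname*{Ann}$ and intersections versus the paper's elementwise equivalence chain is only a cosmetic difference.
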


\begin{proof}
[Proof of Proposition \ref{prop.AW.dual-A}.]Let $\lambda$ be a partition of
$n$. It is well-known (e.g., \cite[Theorem 5.18.13]{sga}) that the sign-twist
of the Specht module $S^{\lambda}$ satisfies $\left(  S^{\lambda}\right)
^{\operatorname*{sign}}\cong S^{\lambda^{t}}$. Thus, in turn, $S^{\lambda
}\cong\left(  S^{\lambda^{t}}\right)  ^{\operatorname*{sign}}$ (since each
$S_{n}$-representation $V$ satisfies $\left(  V^{\operatorname*{sign}}\right)
^{\operatorname*{sign}}=V$). Hence, for any $a\in\mathcal{A}$, we have the
equivalence%
\begin{align}
\left(  aS^{\lambda}=0\right)  \  &  \Longleftrightarrow\ \left(  a\left(
S^{\lambda^{t}}\right)  ^{\operatorname*{sign}}=0\right) \nonumber\\
&  \Longleftrightarrow\ \left(  T_{\operatorname*{sign}}\left(  a\right)
S^{\lambda^{t}}=0\right)  \label{pf.prop.AW.dual-A.equiv}%
\end{align}
(because the definition of $\left(  S^{\lambda^{t}}\right)
^{\operatorname*{sign}}$ reveals that $a\left(  S^{\lambda^{t}}\right)
^{\operatorname*{sign}}=T_{\operatorname*{sign}}\left(  a\right)
S^{\lambda^{t}}$ as $\mathbf{k}$-modules). Now, (\ref{pf.prop.IJ.rep.AU=})
yields%
\begin{align*}
\mathcal{A}_{X}  &  =\left\{  a\in\mathcal{A}\ \mid\ aS^{\lambda}=0\text{ for
all }\lambda\notin X\right\} \\
&  =\left\{  a\in\mathcal{A}\ \mid\ T_{\operatorname*{sign}}\left(  a\right)
S^{\lambda^{t}}=0\text{ for all }\lambda\notin X\right\}
\ \ \ \ \ \ \ \ \ \ \left(  \text{by (\ref{pf.prop.AW.dual-A.equiv})}\right)
\\
&  =\left\{  a\in\mathcal{A}\ \mid\ T_{\operatorname*{sign}}\left(  a\right)
S^{\lambda}=0\text{ for all }\lambda\notin X^{t}\right\} \\
&  \ \ \ \ \ \ \ \ \ \ \ \ \ \ \ \ \ \ \ \ \left(
\begin{array}
[c]{c}%
\text{here, we have substituted }\lambda\text{ for }\lambda^{t}\text{,}\\
\text{since }X^{t}=\left\{  \lambda^{t}\ \mid\ \lambda\in X\right\}
\end{array}
\right) \\
&  =T_{\operatorname*{sign}}^{-1}\left(  \underbrace{\left\{  a\in
\mathcal{A}\ \mid\ aS^{\lambda}=0\text{ for all }\lambda\notin X^{t}\right\}
}_{\substack{=\mathcal{A}_{X^{t}}\\\text{(by (\ref{pf.prop.IJ.rep.AU=}),
applied to }U=X^{t}\text{)}}}\right) \\
&  =T_{\operatorname*{sign}}^{-1}\left(  \mathcal{A}_{X^{t}}\right)  .
\end{align*}
Since $T_{\operatorname*{sign}}$ is an isomorphism, we thus conclude that
$T_{\operatorname*{sign}}\left(  \mathcal{A}_{X}\right)  =\mathcal{A}_{X^{t}}%
$. This proves Proposition \ref{prop.AW.dual-A}.
\end{proof}

\begin{proof}
[Proof of Theorem \ref{thm.TJ.rep}.]The case $k\geq n$ is left to the reader
(both sides are $0$). In the case $k<n$, we have $n-k-1\in\mathbb{N}$, so that
we can apply Theorem \ref{thm.IJ.rep} to $n-k-1$ instead of $k$. Thus, from
the last equality of Theorem \ref{thm.IJ.rep}, we find%
\begin{equation}
\mathcal{J}_{n-k-1}=\mathcal{A}_{\left\{  \lambda\vdash n\ \mid\ \ell\left(
\lambda\right)  >n-k-1\right\}  }=\mathcal{A}_{\left\{  \lambda\vdash
n\ \mid\ \ell\left(  \lambda\right)  \geq n-k\right\}  }.
\label{pf.thm.TJ.rep.1}%
\end{equation}

Now, let us use the notations of Proposition \ref{prop.AW.dual-A}. Applying
Proposition \ref{prop.AW.dual-A} to $X=\left\{  \lambda\vdash n\ \mid
\ \ell\left(  \lambda\right)  \geq n-k\right\}  $, we obtain%
\begin{align*}
T_{\operatorname*{sign}}\left(  \mathcal{A}_{\left\{  \lambda\vdash
n\ \mid\ \ell\left(  \lambda\right)  \geq n-k\right\}  }\right)   &
=\mathcal{A}_{\left\{  \lambda\vdash n\ \mid\ \ell\left(  \lambda\right)  \geq
n-k\right\}  ^{t}}\\
&  =\mathcal{A}_{\left\{  \lambda\vdash n\ \mid\ \ell\left(  \lambda
^{t}\right)  \geq n-k\right\}  }\ \ \ \ \ \ \ \ \ \ \left(
\begin{array}
[c]{c}%
\text{since }X^{t}=\left\{  \lambda\vdash n\ \mid\ \lambda^{t}\in X\right\} \\
\text{for each }X\subseteq\left\{  \lambda\vdash n\right\}
\end{array}
\right) \\
&  =\mathcal{A}_{\left\{  \lambda\vdash n\ \mid\ \lambda_{1}\geq n-k\right\}
},
\end{align*}
because it is well-known (see, e.g., \cite[Theorem 5.1.10 \textbf{(c)}]{sga})
that every partition $\lambda$ satisfies $\ell\left(  \lambda^{t}\right)
=\lambda_{1}$. In light of (\ref{pf.thm.TJ.rep.1}), we can rewrite this as%
\[
T_{\operatorname*{sign}}\left(  \mathcal{J}_{n-k-1}\right)  =\mathcal{A}%
_{\left\{  \lambda\vdash n\ \mid\ \lambda_{1}\geq n-k\right\}  }.
\]
Theorem \ref{thm.TJ.rep} is thus proved.
\end{proof}

Combining Theorem \ref{thm.TJ.rep} with Proposition \ref{prop.Nabbaspan}, we
conclude that%
\begin{equation}
\operatorname*{span}\left\{  \nabla_{\mathbf{b},\mathbf{a}}\ \mid
\ \mathbf{a},\mathbf{b}\in\left[  n\right]  ^{k}\right\}  =\mathcal{A}%
_{\left\{  \lambda\vdash n\ \mid\ \lambda_{1}\geq n-k\right\}  }
\label{eq.thm.TJ.rep.cor}%
\end{equation}
for any $k\in\mathbb{N}$, under the assumption that $n!$ is invertible in
$\mathbf{k}$. This is essentially \cite[Theorem 3.5]{HamRho25}.
(\textquotedblleft Essentially\textquotedblright\ because the
$\operatorname*{Loc}\nolimits_{k}\left(  \mathfrak{S}_{n},\mathbb{C}\right)  $
from \cite{HamRho25} is not $\operatorname*{span}\left\{  \nabla
_{\mathbf{b},\mathbf{a}}\ \mid\ \mathbf{a},\mathbf{b}\in\left[  n\right]
^{k}\right\}  $ but rather $\operatorname*{span}\left\{  \nabla_{\mathbf{b}%
,\mathbf{a}}\ \mid\ \mathbf{a},\mathbf{b}\in\left[  n\right]  ^{k}\text{
injective}\right\}  $, where a $k$-tuple is said to be \emph{injective} if its
$k$ entries are distinct.) The same result appears in \cite[Theorem
7]{ElFrPi11} (where, again, the \textquotedblleft$k$-cosets\textquotedblright%
\ are the $\nabla_{\mathbf{b},\mathbf{a}}$ for injective $k$-tuples
$\mathbf{a},\mathbf{b}\in\left[  n\right]  ^{k}$). Note that the condition
\textquotedblleft$\lambda_{1}\geq n-k$\textquotedblright\ on a partition
$\lambda$ of $n$ is equivalent to \textquotedblleft$\lambda\geq\left(
n-k,1^{k}\right)  $ in lexicographic order\textquotedblright, and this is the
condition used in \cite[Theorem 7]{ElFrPi11}.

It is perhaps also worth mentioning that \cite[Lemma 3]{EveZoh20} is saying
that if $\mathbf{k}$ is a field of characteristic $0$, and if $l\in\left[
n\right]  $, then%
\begin{equation}
\mathcal{A}_{\left\{  \lambda\vdash n\ \mid\ \lambda_{1}<l\right\}  }=\left(
\mathcal{A}\nabla_{\left[  l\right]  }\mathcal{A}\right)  ^{\perp},
\end{equation}
where $\nabla_{\left[  l\right]  }$ is the sum of all permutations $w\in
S_{n}$ that fix the elements $l+1,l+2,\ldots,n$. This can be derived from
Theorem \ref{thm.IJ.rep}, Theorem \ref{thm.row.main} \textbf{(a)} and
Proposition \ref{prop.IJ.1} \textbf{(f)} using Proposition
\ref{prop.AW.dual-A} (and the fact that $\nabla_{\left[  l\right]
}=T_{\operatorname*{sign}}\left(  \nabla_{\left[  l\right]  }^{-}\right)  $).
We leave the details to the reader.

\bigskip

\begin{noncompile}
Henry Potts-Rubin, \textit{The Planar Rook Algebra}, senior thesis at the
College of Wooster, 2020.\newline\url{https://openworks.wooster.edu/independentstudy/9095/}
\end{noncompile}

\begin{noncompile}
\href{https://www.jmlr.org/papers/volume10/huang09a/huang09a.pdf}{Jonathan
Huang, Carlos Guestrin, Leonidas Guibas, \textit{Fourier Theoretic
Probabilistic Inference over Permutations}, Journal of Machine Learning
Research \textbf{10} (2009), pp. 997--1070.}
\end{noncompile}

\end{document}